\g@addto@macro{\UrlBreaks}{\UrlOrds}
\newcommand{\xx}[1]{\,\text{ #1 }\,}
\newcommand{\Xx}[1]{\quad\text{ #1 }\,}
\newcommand{\XX}[1]{\quad\text{ #1 }\quad}
\theoremstyle{plain}
\newtheorem{thm}{Theorem}[section]
\newtheorem{cor}[thm]{Corollary}
\newtheorem{prop}[thm]{Proposition}
\newtheorem{lem}[thm]{Lemma}
\theoremstyle{definition}
\newtheorem{defn}[thm]{Definition}
\theoremstyle{remark}
\newtheorem{remark}[thm]{Remark}
\def\R{\mathbb{R}}
\def\N{\mathbb{N}}
\newcommand{\be}{\begin{equation}}
\newcommand{\ee}{\end{equation}}
\newcommand{\bea}{\begin{eqnarray}}
\newcommand{\eea}{\end{eqnarray}}
\newcommand{\beann}{\begin{eqnarray*}}
\newcommand{\eeann}{\end{eqnarray*}}
\newcommand{\benn}{\begin{equation*}}
\newcommand{\eenn}{\end{equation*}}
\def\ra{\rightarrow}
\def\I{\infty}
\newcommand{\cA}{{\mathcal A}}  % calligraphic A
\newcommand{\cE}{{\mathcal E}}  % calligraphic E
\newcommand{\cF}{{\mathcal F}}  % calligraphic F
\newcommand{\cK}{{\mathcal K}}  % calligraphic K
\newcommand{\cS}{{\mathcal S}}  % calligraphic S
\renewcommand{\d}[1][x]{\,\operatorname{d}\!#1}
\newcommand{\dx}{\d[x]}
\newcommand{\dy}{\d[y]}
\newcommand{\euler}{\mathrm e}
\newcommand{\Riesz}{D_0^\alpha}  % Riesz operator
\newcommand{\RieszFeller}{D_\theta^\alpha}  % Riesz-Feller operator
\newcommand{\Green}{G_\theta^\alpha}  % Green's function associated to Riesz-Feller operator
\newcommand{\Fourier}{\mathcal{F}}
\newcommand{\FourierInv}{\mathcal{F}^{-1}}
\newcommand{\SchwartzTF}{\mathcal{S}} % Schwartz test functions
\newcommand{\integral}[3]{\int_{#1} \, {#2} \, \d[#3]\,}
\newcommand{\integrall}[4]{\int_{#1}^{#2} \, {#3} \, \d[#4]\,}
\newcommand{\abs}[1]{|#1|}
\newcommand{\Abs}[1]{\bigg|#1\bigg|}
\newcommand{\difff}[3]{\tfrac{\partial^{#3} {#1}}{\partial {#2}^{#3}}}
\newcommand{\diff}[2]{\tfrac{\partial {#1}}{\partial {#2}}}
\newcommand{\Diff}[2]{\tfrac{d {#1}}{d {#2}}}
\newcommand{\norm}[1]{\| {#1} \|}
\newcommand{\Norm}[1]{\bigg\| {#1} \bigg\|}
\newcommand{\set}[2]{\{\, {#1} \,|\, {#2} \,\}}
\newcommand{\Set}[2]{\big\{\, {#1} \,\big|\, {#2} \,\big\}}
\newcommand{\Bb}{B_b} %(\R^n)}
\newcommand{\Cb}[1][]{C_b^{#1}}%(\R^n)}
\newcommand{\Co}{C_0}%(\R^n)}
\newcommand{\SG}{(S_t)_{t\geq 0}}
\newcommand{\SGextension}{(\tilde S_t)_{t\geq 0}}
\newcommand{\epsI}{\bar\epsilon}
\newcommand{\speedI}{\bar C}
\newcommand{\um}{u_-}
\newcommand{\up}{u_+}
\newcommand{\upm}{u_\pm}
\DeclareMathOperator{\Div}{div}
\DeclareMathOperator{\Id}{Id}
\DeclareMathOperator{\sgn}{sgn}
\begin{document}

\author{Franz Achleitner\footnotemark[1] and Christian Kuehn\footnotemark[1]}

\renewcommand{\thefootnote}{\fnsymbol{footnote}}
\footnotetext[1]{%
Institute for Analysis and Scientific Computing, 
Vienna University of Technology, 
1040 Vienna, Austria. \\
e-mail: franz.achleitner@tuwien.ac.at; ck274@cornell.edu
} 
\renewcommand{\thefootnote}{\arabic{footnote}}
 
\title{Traveling waves for a bistable equation with nonlocal-diffusion}

\maketitle

\begin{abstract}
We consider a single component reaction-diffusion equation in one dimension with bistable nonlinearity
 and a nonlocal space-fractional diffusion operator of Riesz-Feller type. 
Our main result shows the existence, uniqueness (up to translations) and stability of a traveling wave solution 
 connecting two stable homogeneous steady states. 
In particular,
 we provide an extension to classical results on traveling wave solutions involving local diffusion.
This extension to evolution equations with Riesz-Feller operators requires several technical steps. 
These steps are based upon an integral representation for Riesz-Feller operators, a comparison principle,
 regularity theory for space-fractional diffusion equations,
 and control of the far-field behavior. 
\end{abstract}

{\bf Keywords:} Traveling wave, Nagumo equation, real Ginzburg-Landau equation, Allen-Cahn type equation,
 Riesz-Feller operator, nonlocal diffusion, fractional derivative, comparison principle.

\section{Introduction}  
\label{sec:intro}
We consider partial integro-differential equations
 \begin{equation} \label{RD}
  \diff{u}{t} = \RieszFeller u + f(u) \,,\quad x\in\R \,,\quad t\in(0,\infty) \,,
 \end{equation}
 where $f\in C^1(\R)$ is a nonlinear function of bistable type,
 i.e. $f$ has precisely three roots $\um~<~a~<~\up$ in the interval $[\um,\up]$ such that 
 \begin{equation} \label{As:f:0} 
  \quad f(\um)=f(a)=f(\up)=0\,, \quad f'(\um)<0\,, \quad f'(\up)<0\,, % \quad f'(a)>0\,.
 \end{equation}
 and $\RieszFeller$ is a Riesz-Feller operator
 for some fixed parameters $1<\alpha\leq 2$ and $\abs{\theta} \leq \min\{\alpha,2-\alpha\}$.
A Riesz-Feller operator can be defined as a Fourier multiplier operator
 $\Fourier[\RieszFeller u](\xi) = \psi^\alpha_\theta (\xi) \Fourier [u] (\xi)$
 with symbol $\psi^\alpha_\theta(\xi) = -|\xi|^\alpha \exp\left[i(\sgn(\xi)) \theta \frac{\pi}{2}\right]$ 
 for $0<\alpha\leq 2$ and $|\theta| \leq \min\{\alpha, 2-\alpha\}$.
Special cases are the second order derivative $D^2_0=\partial_x^2$
 and the fractional Laplacians $D^\alpha_0=-(-\partial^2_x)^{\alpha/2}$ for $0<\alpha\leq 2$ and $\theta=0$; 
 for details see Section~\ref{sec:RF}.

Our aim is to prove existence, uniqueness (up to translations) and stability
 of traveling wave solutions $u(x,t)=U(x-ct)$ of~\eqref{RD}. % satisfying~\eqref{As:TWS}.
To make sense of~$\RieszFeller U$,
 we use an extension of Riesz-Feller operators $\RieszFeller$ for $1<\alpha<2$ to $f\in C^2_b(\R)$
 in the form of a singular integral
 \[
 \RieszFeller f(x) = c_1 \integrall{0}{\infty}{ \tfrac{f(x+\xi)-f(x)-f'(x)\,\xi}{\xi^{1+a}} }{\xi}
  + c_2 \integrall{0}{\infty}{ \tfrac{f(x-\xi)-f(x)+f'(x)\,\xi}{\xi^{1+a}} }{\xi} 
 \]
 for some $c_1,c_2>0$, see also Theorem~\ref{thm:RieszFeller:extension}.

First we briefly review previous results on traveling wave solutions
 of classical bistable reaction-diffusion equations in Subsection~\ref{ssec:CRD}
 and of bistable reaction-diffusion equations with fractional Laplacian in Subsection~\ref{ssec:FRD}.
Then we will present our main results in Subsection~\ref{ssec:main}
 and conclude with a discussion in Subsection~\ref{ssec:discussion}.

\subsection{Classical Bistable Reaction-Diffusion equations}
\label{ssec:CRD}
In particular, equation~\eqref{RD} with $D^2_0=\partial_x^2$ and $f(u)=u(1-u)(u-a)$
 is a reaction-diffusion equation with bistable nonlinear reaction term,
\be
\label{eq:Nagumo}
\frac{\partial u}{\partial t}=\frac{\partial^2 u}{\partial x^2}+u(1-u)(u-a)\,,  \quad x\in\R\,, \quad t>0\,,
%+ b\integral{}{u}{t}
\ee
 which is known as Nagumo's equation to model propagation of signals~\cite{McKean:1970, Nagumo},
 as one-dimensional real Ginzburg-Landau equation (RGLE) to model long-wave amplitudes
 e.g. in case of convection in binary mixtures near the onset of instability, 
 as well as Allen-Cahn equation to model phase transitions in solids~\cite{Allen+Cahn:1979}.

Equation~\eqref{eq:Nagumo} has three homogeneous steady states (or equilibria) $0=\um< a< \up=1$,
 where $u=\upm$ are locally asymptotically stable and $u=a$ is unstable. 
It is natural to search for monotone traveling wave solutions $u(x,t)=U(x-ct)=U(\xi)$ of \eqref{eq:Nagumo}
 which connect two stable states
 \be \label{As:TWS}
  \lim_{\xi\ra -\I}U(\xi)=\um\,, \qquad \lim_{\xi\ra \I}U(\xi)=\up \quad\text{and}\quad U'(\xi)>0 \quad\text{for all } \xi\in\R\,.
 \ee
The existence of - up to translation unique - traveling wave solutions $u(x,t)=U(x-ct)$ of general reaction diffusion equations
 \be
 \label{eq:RD:local}
 \diff{u}{t} = \difff{u}{x}{2} + f(u)\,, \quad x\in\R\,, \quad t>0\,,
 \ee
 with bistable function $f\in C^1(\R)$ and their stability are well-known;
 see {e.g.}~\cite{Aronson+Weinberger:1974,Fife+McLeod:1977,Volpert+etal:1994} and references therein. 

It is important to highlight that phase plane methods may be used \cite{FifeMcLeod1} to study the existence
 and uniqueness of traveling wave solutions of \eqref{eq:RD:local}. 
In case of a partial integro-differential equation like~\eqref{RD}
 these classical geometric methods do not generalize immediately. 
A similar remark applies to the asymptotic stability of traveling wave solutions
 - with exponential rate of decay -
 which may be deduced from a special variational structure for~\eqref{eq:RD:local}.
%  functional
%  \begin{equation} \label{eq:AC:VS}
%     \integral{\R}{e^{c\xi} \Big(\diff{u}{t}\Big)^2 
%     + \Diff{}{t} e^{c\xi} \Big(\Big(\diff{u}{\xi}\Big)^2-2[F(u)-F(1)H(\xi)]\Big) }{\xi}=0
%  \end{equation}
% where $\xi=x-ct$, $F(u)=\integrall{0}{u}{f(v)}{v}$ and $H$ is the Heaviside function~\cite{Fife+McLeod:1977}.

Reaction-diffusion equations with bistable reaction term arise in various applications.
The potential $F(u)=F(\um)+\integrall{\um}{u}{f(v)}{v}$ indicates 
 which stable state - $\um$ or $\up$ - will replace the other one.
In particular, the speed $c$ of a traveling wave from $\um$ to $\up$ has to have the same sign as $-\integrall{\um}{\up}{f(v)}{v}$.
Thus in case of a balanced potential $\integrall{\um}{\up}{f(v)}{v}=0$ a stationary traveling wave will exist,
 {i.e.} both stable states will co-exist.
In contrast, in case of an unbalanced potential $\integrall{\um}{\up}{f(v)}{v}\ne 0$,
 the traveling wave will move in the direction of the stable state with lesser potential value $F(u)$,
 also called the metastable state.
\medskip

% However, on the level of particles the diffusion might not be a Brownian motion - with a Laplacian as its infinitesimal generator - 
%  but a more general L\'{e}vy process such as a (compound) Poisson process or a L\'{e}vy stable process.
% This observation has been the starting point to consider reaction-diffusion models with nonlocal diffusion
%  and their mathematical analysis~\cite{Bates+etal:1997, Chen:1997, Zanette:1997, Nec+etal:2008, Imbert+Souganidis:2009,
%  Volpert+etal:2010, Engler:2010, Cabre+Sire:2010, Cabre+Sire:2011, Gui:2012, Palatucci+etal:2013, Chmaj:2013}.
In some applications a reaction-diffusion model with nonlocal diffusion may be more appropriate,
 see the articles~\cite{Bates+etal:1997, Chen:1997, Zanette:1997, Nec+etal:2008, Imbert+Souganidis:2009,
 Volpert+etal:2010, Engler:2010, Cabre+Sire:2010, Cabre+Sire:2011, Gui:2012, Palatucci+etal:2013, Chmaj:2013}
 for mathematical analysis of the traveling wave problem and further references on applications.

For example, Bates, Fife, Ren and Wang~\cite{Bates+etal:1997} considered a non-local variant 
\begin{equation} \label{eq:RD:Bates}
 \diff{u}{t} = J \ast u - u + f(u)\,, \quad x\in\R\,, \quad t>0 \,,
\end{equation}
for even non-negative functions $J\in C^1(\R)$ with $\integral{\R}{J(y)}{y}=1$, $\integral{\R}{\abs{y} J(y)}{y}<\infty$,
 as well as $J'\in L^1(\R)$ and a bistable function $f\in C^2(\R)$.
The assumptions on $J$ ensure that $J \ast u - u$ has similar properties as the Laplacian
 - most notably a maximum principle -  
 and that the problem exhibits a free energy functional. % as  
% \[ E[u(x)] = \tfrac14 \integral{\R}{ \integral{\R}{ J(x-y)(u(x)-u(y))^2 }{x} }{y} + \integral{\R}{ F(u(x)) }{x}\,. \]
The existence of traveling wave solutions is proved via homotopy to a reaction-diffusion model~\eqref{eq:RD:local};
 again the traveling wave will move in the direction of the stable state with lesser potential value.
%Under additional assumptions on $f$ results on the regularity of traveling wave solutions are derived.
However, asymptotic stability of traveling wave solutions with exponential rate of decay is proven
 only for standing traveling wave solutions,
 i.e. $c=0$ or equivalently the balanced case $\integrall{\um}{\up}{f(v)}{v}=0$,
 since a generalization of the variational structure in~\eqref{eq:RD:local} seems not to be available for~\eqref{eq:RD:Bates}. 

Xinfu Chen~\cite{Chen:1997} established a unified approach to prove the existence, uniqueness
 and asymptotic stability with exponential decay of traveling wave solutions
 for a class of nonlinear nonlocal evolution equations including~\eqref{eq:RD:local} and~\eqref{eq:RD:Bates}
 and many more examples from the literature.
His approach is suitable for equations supporting a comparison principle
 and based on constructing suitable sub- and super-solutions.
In Section~\ref{sec:Chen} we recall his assumptions and results in more detail.

\subsection{Bistable Reaction-Diffusion equations with Fractional Laplacian}
\label{ssec:FRD}
We briefly review previous results on traveling wave solutions for equation~\eqref{RD}
 with fractional Laplacian~$\Riesz$ where $0<\alpha<2$.
% The special case of~\eqref{RD} with a fractional Laplacian $\Riesz$ for $1<\alpha\leq 2$ or sometimes $0<\alpha\leq 2$ 
%  has been considered in the literature.

Zanette~\cite{Zanette:1997} proposed a model
% Exact solutions in front propagation problems with superdiffusion
\begin{equation} \label{eq:RD:Zanette}
 \diff{u}{t} = \Riesz u + f(u)\,, \quad x\in\R\,, \quad t>0\,, 
\end{equation}
 with a fractional Laplacian $\Riesz$ for some $0<\alpha\leq 2$,
% \[ \Fourier_{x\to \xi} \big[\Riesz u\big](\xi) = - \abs{\xi}^{\alpha} \Fourier_{x\to\xi} [u](\xi) \Xx{for some} \alpha\in (0,2)\,, \] 
 and an explicit bistable function $f$ given as 
 $f:[0,1]\to\R$, $u\mapsto -k[u-H(u-w)]$, where $k>0$, $w\in(0,1)$, and $H$ is the Heaviside function.
 % is bistable $f(\upm)=0$ and $f'(\upm)<0$. 
His motivation is to study the effects of anomalous diffusion represented by a fractional Laplacian
 in combination with a simple reaction function introduced by McKean in his study~\cite{McKean:1970} of Nagumo's equation~\cite{Nagumo}.
Restricting to monotone traveling wave solutions the traveling wave equation simplifies 
 and a formal Fourier transform yields an explicit solution in integral form.
Moreover, the asymptotic behavior of front tails and the front width are studied. 

Volpert, Nec and Nepomnyashchy~\cite{Volpert+etal:2010} consider~\eqref{eq:RD:Zanette} for $\alpha\in[1,2]$
 and study (the existence of) monotone traveling waves $u(x,t)=U(x-ct)$ satisfying~\eqref{As:TWS}
% \[ \lim_{\xi\to-\infty} U(\xi)=0 \Xx{and} \lim_{\xi\to\infty} U(\xi)=1 \]
 among other types of solutions.
They notice that if for general reaction term $f$ a traveling wave solution exists,
 then its speed has to satisfy - like in the case of a classical reaction-diffusion equation -
 \begin{equation} \label{eq:WS} % wave speed
   c = -\integrall{\um}{\up}{f(w)}{w} \Big[ \integral{\R}{ \Big(\Diff{U}{x}\Big)^2 }{x} \Big]^{-1}\,,
 \end{equation} 
 in particular the wave speed $c$ is zero in the balanced case $\integrall{\um}{\up}{f(w)}{w}=0$.
%We remark that this identity will be true in our general reaction-diffusion equation.
The asymptotic behavior of traveling wave solutions for $1\leq\alpha<2$ is derived as
 \[ U(\xi)\sim 
     \begin{cases} 
       \um+ \tfrac{1}{k\pi \abs{\xi}^\alpha} \sin\left(\tfrac{\alpha\pi}{2}\right) \Gamma(\alpha) &\xx{for} \xi\to-\infty\,, \\
       \up- \tfrac{1}{k\pi \abs{\xi}^\alpha} \sin\left(\tfrac{\alpha\pi}{2}\right) \Gamma(\alpha) &\xx{for} \xi\to\infty\,,
     \end{cases}
 \] 
 in contrast to exponential decay to the endstates in case of $\alpha=2$.

%\paragraph{fractional Allen-Cahn equation}
Nec, Nepomnyashchy and Golovin~\cite{Nec+etal:2008} consider 
 \begin{equation} \label{eq:AC:Nec} 
   \diff{u}{t} = \Riesz u + u(1-u^2)\,, \quad x\in\R\,, \quad t>0\,,
 \end{equation}
 with fractional Laplacian $\Riesz$ for $\alpha\in (1,2)$ and bistable reaction function $f(u)=u(1-u^2)$.
They derive a variational formulation
 such that~\eqref{eq:AC:Nec} is the associated Euler-Lagrange equation.
% They extend the fractional Laplacian in form of a second-order derivative of a Riesz potential
%  \[
%    D^{\gamma}_{\abs{x}} u(x) = \tfrac{\sec (\pi \gamma/2)}{2\Gamma(2-\gamma)} \difff{}{x}{2}
%      \integral{\R}{ \tfrac{u(y)}{\abs{x-y}^{\gamma-1}} }{y}\,,
%  \]
%  which is not well-defined for constant solutions or traveling wave solutions approaching distinct constant endstates.
% However for suitable functions,
%  an associated variational formulation is given by 
%  \[ \Upsilon(t)=\integral{\R}{\upsilon(x,t)}{x} \]
%  with $\upsilon(x,t)=\tfrac14(1-u^2)^2 - \tfrac{\sec (\pi \gamma/2)}{2\Gamma(2-\gamma)}
%          ( \diff{u}{x} \diff{}{x} \integral{\R}{ \tfrac{u(y)}{\abs{x-y}^{\gamma-1}} }{y}  
%            + \tfrac{u}{2} \integral{\R}{ \difff{u}{x}{2}(y) \tfrac{1}{\abs{x-y}^{\gamma-1}} }{y} )$.
% The first variation $\delta \Upsilon$ satisfies 
%  \[ \delta\Upsilon = -\integral{\R}{ \big( D^{\gamma}_{\abs{x}} u + u(1-u^2) \big) \delta u }{x} \]
%  and equation~\eqref{eq:AC:Nec} is recovered as the associated Euler-Lagrange equation 
%  \[ \diff{u}{t} = \tfrac{\delta\upsilon}{\delta u} = D^{\gamma}_{\abs{x}} u + u(1-u^2)\,. \]
% Hence, they conclude that any solution of~\eqref{eq:AC:Nec} will decay asymptotically to a stationary solution due to
%  \[ \diff{\Upsilon}{t} = \integral{\R}{ \tfrac{\delta\upsilon}{\delta u} \diff{u}{t} }{x}
%       = - \integral{\R}{ \big(\diff{u}{t}\big)^2 }{x} < 0.
%  \]
To study traveling wave solutions, 
 first they perform an a-priori analysis of asymptotic behavior of front tails.
Then they devise three families of ansatz functions with the correct asymptotic far-field behavior
 and use the variational functional to identify the minimizer among each family of ansatz functions.
% They point out that
%  \emph{``the principal value integrals in $\upsilon(t)$ are regularized accordingly in each case''}. 
Finally the three minimizers are compared with a numerical solution. 
% with respect to value of the functional, front slope at the origin, and inverse domain wall mobility.
This concludes their analysis of the balanced case $\integrall{-1}{1}{f(w)}{w}=F(1)-F(-1)=0$
 with $F(u)=\tfrac14 (1-u^2)^2$ and roots $\upm=\pm 1$.
They also consider an unbalanced case $F'(\upm)=0$ and $F''>0$ such that $F(\up)>F(\um)$ 
 and establish the existence of non-stationary traveling wave solutions $\lim_{\xi\to\pm\infty} U(\xi)=\upm$
 with negative wave speed $c$ - see also~\eqref{eq:WS} - i.e. $\up$ will replace $\um$.
%  \[ \diff{u}{t} = D^{\gamma}_{\abs{x}} u + f(u)\,, \quad x\in\R\,, \quad t>0\,, \]
%  a Riesz fractional derivative
%  \[ D^{\gamma}_{\abs{x}} u(x) = \frac{\sec (\pi \gamma/2)}{2\Gamma(2-\gamma)} \difff{}{x}{2}
%      \integral{\R}{ \frac{u(y)}{\abs{x-y}^{\gamma-1}} }{y}\]
%  The Riesz fractional derivative is another extension of the $\Riesz$ to $C^2_b(\R)$.

 Cabr\'e and Sire~\cite{Cabre+Sire:2010, Cabre+Sire:2011} consider
  \begin{equation} \label{eq:RD:Cabre} 
    0 = \Riesz U + f(U) \Xx{in} \R^n 
  \end{equation}
  for $\alpha\in(0,2)$ and a function $f\in C^{1,\gamma}(\R)$ with $\gamma>\max(0,1-\alpha)$.
 They study the existence of layer solutions, i.e. 
  $\diff{U}{x_n}(x)>0 \xx{for any} x\in\R^n$ and $\lim_{x_n\to\pm\infty} U(x',x_n)=\upm$ for any $x'\in\R^{n-1}$.
 Due to a result by Caffarelli and Silvestre~\cite{Caffarelli+Silvestre:2007},
  they relate equation~\eqref{eq:RD:Cabre} to a nonlinear BVP 
 \[\begin{cases} 
  \Div(y^\beta \nabla v) = 0 &\xx{in} \R^{n+1}_+ = \set{(x,y)\in\R^n\times\R}{y>0}\,, \\
	(1+\beta) \diff{v}{\nu^\beta} = f(v) &\xx{in} \partial \R^{n+1}_+\,,
 \end{cases}\]
  where $\beta=1-\alpha$, $v=v(x,y)$ is real-valued
  and $\diff{v}{\nu^\beta} = -\lim_{y\to 0} y^\beta \diff{v}{y}$.
	% is generalized exterior normal derivative of v
 The most complete results are obtained for $n=1$,
  where a layer solution of~\eqref{eq:RD:Cabre} is a stationary traveling wave solution of~\eqref{eq:RD:Zanette}.
 They prove that a layer solution of~\eqref{eq:RD:Cabre} exists
  if and only if $f$ is function of bistable type $f(\um)=f(\up)=0$ with a balanced potential $\integrall{\um}{\up}{f(v)}{v}=0$;
 if - in addition - $f'(\upm)>0$ then they prove that a layer solution is unique up to translations;
 if - in addition - $f$ is odd then they establish that a layer solution is odd w.r.t. some point.
 Moreover they derive the asymptotic behavior of front tails.

 Palatucci, Savin and Valdinoci~\cite{Palatucci+etal:2013}
  investigate the existence, uniqueness and other geometric properties of the minimizers of the energy functional 
  \begin{equation} \label{functional:Palatucci}
    \cE(U,\Omega) := \cK(U,\Omega) + \integral{\Omega}{F(U(x))}{x}
  \end{equation}
  where $\cK(U,\Omega)$ can be viewed as the contribution in $\Omega$ of the squared $H^s$ semi-norm of $U$,
  and $F$ is a double-well potential with $F(\upm)=0$. 
 First they show that layer solutions of~\eqref{eq:RD:Cabre}, i.e. 
 \[ \diff{U}{x_n}(x)>0 \xx{for any} x\in\R^n \Xx{and} \lim_{x_n\to\pm\infty} U(x',x_n)=\upm \xx{for any} x'\in\R^{n-1}\,, \]
 are local minimizers of the functional $\cE(U,\R)$. 
 The most complete results are obtained for $n=1$,
  where - again - a layer solution of~\eqref{eq:RD:Cabre} is a stationary traveling wave solution of~\eqref{eq:RD:Zanette}.
 For a bistable function $f\in C^1(\R)$ with balanced potential $F(U)=F(\um)+\integrall{\um}{U}{f(v)}{v}$,
  they prove the existence of a unique (up to translations) nontrivial global minimizer $U$
%   \[ u^{(0)}\in \set{f\in L^1_{loc}(\R)}{\lim_{x\to\pm\infty} f(x)=\pm 1} \]
  of the energy $\cE$ which is strictly increasing.
 This minimizer $U$ solves~\eqref{eq:RD:Cabre} 
  and is unique (up to translations) also in the class of monotone solutions of this equation.
 Moreover, they establish that $U$ belongs to $C^2(\R)$
   and derive the asymptotic behavior of front tails.

 Thus, among other results and independently from another,
  Cabr\'e and Sire~\cite{Cabre+Sire:2010, Cabre+Sire:2011} as well as Palatucci, Savin and Valdinoci~\cite{Palatucci+etal:2013} establish
  the existence and uniqueness of stationary traveling wave solutions of~\eqref{eq:RD:Zanette}
  for bistable functions $f$ with balanced potential.  
\medskip

% \paragraph{nonlocal Allen-Cahn equation - unbalanced case}
Chmaj~\cite{Chmaj:2013} investigated the existence of traveling waves for 
 \begin{equation} \label{eqn:AC:Chmaj}
  \diff{u}{t} = \Riesz u + f(u)\,, \quad x\in\R\,, \quad t>0\,,
 \end{equation}
 where $\Riesz$ is the fractional Laplacian with $\alpha\in(0,2)$
 and $f$ is bistable but not necessarily balanced, i.e. $f(\upm) = 0$ and $\integrall{\um}{\up}{f(v)}{v}\in\R$. 
He proves existence of traveling wave solutions $u=U(x-ct)$ of~\eqref{eqn:AC:Chmaj} satisfying~\eqref{As:TWS}
 where $\sgn c=-\sgn \integrall{\um}{\up}{f(v)}{v}$.
The idea of Chmaj's proof is to note that a traveling wave solution $u=U(x-ct)$ satisfies
 the traveling wave equation $-cU' = \Riesz U + f(U)$.
The fractional Laplacian can be approximated with non-singular operators of the form $J_\epsilon \ast U - (\int J_\epsilon) U$
 such that 
 \[ J_\epsilon \ast U - \big(\integral{\R}{ J_\epsilon(x)}{x}\big) U \stackrel{\epsilon\to 0}{\longrightarrow} \Riesz U\,. \]
The associated (traveling wave) equations $-cU' = J_\epsilon \ast U - (\int J_\epsilon) U + f(U)$
 exhibit for all sufficiently small $\epsilon>0$ a unique monotone solution $(U_\epsilon,c_\epsilon)$ with $U_\epsilon'>0$
 see also~\cite{Bates+etal:1997, Chen:1997}.
Finally the existence of a limit $U = \lim_{\epsilon\to 0} U_\epsilon$ is established
 and that $U$ is a traveling wave solution of~\eqref{eqn:AC:Chmaj}.

Changfeng Gui announced~\cite{Gui:2012} % (see also \url{http://birs.ca/events/2012/5-day-workshops/12w5100/videos})
 a different proof together with Mingfeng Zhao for the existence and properties of traveling waves
 in the fractional bistable equation~\eqref{eqn:AC:Chmaj}.
They consider (unbalanced) double well potentials $F\in C^{2,\gamma}(\R)$
 and prove existence of unique traveling wave solutions $u\in C^2(\R)$ via homotopy to the balanced case.
Moreover they announce results on the asymptotic behavior of front tails.

The reaction-diffusion equation~\eqref{RD} with general Riesz-Feller operators has been considered by Hans Engler~\cite{Engler:2010}.
Starting from the fundamental solution of the associated fractional diffusion equations~\eqref{eq:linearRF},
 he constructs traveling wave solutions of~\eqref{RD} for some suitable bistable function~$f$.
In the following, he assumes the existence of traveling wave solutions 
 and proves that the wave speed is bounded.

\subsection{Main Results}
\label{ssec:main}
% We consider local reaction-nonlocal diffusion equation of the form~\eqref{RD},
%  \[ \diff{u}{t} = \RieszFeller u + f(u) \,,\quad x\in\R \,,\quad t\in(0,\infty) \,, \]
%  for some fixed parameters $1<\alpha\leq 2$, $\abs{\theta} \leq \min\{\alpha,2-\alpha\}$,
%  and a bistable function $f\in C^1(\R)$ with~\eqref{As:TWS}.
Our main result is summarized in the following theorem.
\begin{thm} \label{thm:main}
Suppose $1<\alpha\leq 2$, $\abs{\theta} \leq \min\{\alpha,2-\alpha\}$ and $f\in C^\infty(\R)$ satisfies~\eqref{As:f:0}.
Then equation~\eqref{RD} admits a traveling wave solution $u(x,t)=U(x-ct)$ satisfying~\eqref{As:TWS}.
In addition, a traveling wave solution of~\eqref{RD} is unique up to translations.
Furthermore, traveling wave solutions are globally asymptotically stable in the sense that
 there exists a positive constant $\kappa$ such that 
 if $u(x,t)$ is a solution of~\eqref{RD} with initial datum~$u_0\in C_b(\R)$ satisfying $0\leq u_0\leq 1$ and
 \[ \liminf_{x\to\infty} u_0(x) > a\,, \qquad \limsup_{x\to-\infty} u_0(x) < a\,, \]
 then, for some constants $\xi$ and $K$ depending on $u_0$,
 \[ \norm{u(\cdot,t)-U(\cdot- ct + \xi)}_{L^\infty(\R)} \leq K e^{-\kappa t} \qquad \forall t\geq 0\,. \] 
\end{thm}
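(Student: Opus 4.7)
The plan is to verify the hypotheses of Chen's unified framework for nonlocal bistable evolution equations (recalled in Section~\ref{sec:Chen}), which yields existence, uniqueness up to translations, and exponentially decaying asymptotic stability in a single package, provided the equation admits a comparison principle and suitable sub-/super-solutions. The four technical ingredients flagged in the abstract --- integral representation, comparison principle, regularity theory, and far-field control --- are exactly the hypotheses one must supply to put \eqref{RD} into Chen's framework.

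First, I would use the singular-integral representation of $\RieszFeller$ on $C_b^2(\R)$ from Theorem~\ref{thm:RieszFeller:extension} to give a rigorous pointwise meaning to $\RieszFeller U$ for smooth bounded profiles. Because this is a L\'evy-type integro-differential operator with non-negative kernel, it has a Kato-type positivity property: at any non-negative maximum of a test function the operator is non-positive. From this I would derive a comparison principle for the Cauchy problem associated with \eqref{RD}, either by a direct maximum-principle argument on the integral formula (as in the classical parabolic case) or by duality through positivity of the transition semigroup $\SG$ generated by $\RieszFeller$. Parallel to this, I would extend $\SG$ to $C_b(\R)$ and record the Schauder-type regularity estimates needed so that $C_b$ initial data produce $C_b^2$ solutions at positive times; this is what makes pointwise sub/super-solution comparison legal.

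Second, I would construct a traveling wave by an approximation scheme in the spirit of Chmaj's argument for $\Riesz$. Truncating the two singular kernels at scale $\epsilon$ produces regularized operators of the Bates--Fife--Ren--Wang type $J_\epsilon \ast u - (\int J_\epsilon) u$, each of which is known to support a unique monotone traveling wave $(U_\epsilon, c_\epsilon)$ with $U_\epsilon' > 0$. Using $\alpha$-stable scaling and uniform regularity estimates one obtains $\epsilon$-independent $C^{1,\gamma}$ bounds on $U_\epsilon$ and boundedness of $c_\epsilon$ (consistent with the wave-speed identity~\eqref{eq:WS}), and one can extract a monotone limit profile $U$ with speed $c$. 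The sensitive point is ruling out degeneracy of the endstates: one must show $U(\pm\infty) = \upm$ rather than collapse to the unstable root $a$. This is precisely where far-field control enters --- using the polynomial tail behavior dictated by $\RieszFeller$ on bounded monotone profiles (of the Volpert--Nec--Nepomnyashchy type) together with the strict stability conditions $f'(\upm)<0$ from~\eqref{As:f:0}.

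Third, uniqueness up to translations would follow from a sliding argument based on the comparison principle: for any two profiles $U,\tilde U$, shift one until it lies strictly above the other and then slide it back, using strong maximum principle for $\RieszFeller$ (which is available from the integral representation since the jump kernel is everywhere positive) to exclude first contact in the interior. For the global asymptotic stability, I would trap a generic initial datum $u_0$ satisfying the $\liminf/\limsup$ hypotheses between two translates of $U$ plus small corrections $\pm\delta e^{-\kappa t}$ tied to the spectral gaps $|f'(\upm)|>0$ at the stable roots; verifying that these envelopes are sub- and super-solutions is a direct check using the comparison principle and the local linearization of $f$ near $\upm$. The squeeze then yields the exponential rate $\kappa$ with $K$ determined by the initial trapping --- independently of the fact that the spatial decay $U(\xi)-\upm$ is merely algebraic.

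The principal obstacle, in my view, is the comparison-principle/regularity step for $\theta\ne 0$. The asymmetric symbol $\psi^\alpha_\theta$ produces an integral representation with two distinct weights $c_1,c_2$, so the operator has an effective drift and one cannot simply recycle symmetric-kernel arguments used for the fractional Laplacian $\Riesz$. One must verify strong maximum principle, boundary-type decay estimates for the semigroup, and the Schauder regularity needed for pointwise evaluation of $\RieszFeller U$ on merely bounded monotone profiles, all within the full Riesz--Feller range $1<\alpha\le 2$, $|\theta|\le\min\{\alpha,2-\alpha\}$. A secondary difficulty is making the approximation in step two respect this asymmetry, which forces one to truncate the two half-line kernels separately and track the induced drift carefully when passing to the limit.
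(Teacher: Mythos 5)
Your overall architecture --- singular-integral representation of $\RieszFeller$ on $C^2_b(\R)$, comparison principle, regularity of the Cauchy problem, sliding argument for uniqueness, and trapping between translates $U(\cdot\pm\sigma^*\delta[1-e^{-\beta t}])\pm\delta e^{-\beta t}$ for stability --- is exactly the paper's (i.e.\ Chen's) strategy, and those parts of your proposal are sound. The genuine divergence is the existence step. You propose Chmaj's approximation scheme (truncate the kernel, solve the regularized Bates--Fife--Ren--Wang problems, pass to the limit), whereas the paper deliberately avoids this and instead follows Chen: evolve the monotone step-like datum $\zeta$ under \eqref{RD}, use the comparison principle to preserve monotonicity and the strict positivity $v_x\geq\eta(|x|,t)\zeta(1)>0$, recenter at the level set $v(z(a,t),t)=a$, and extract the profile as a long-time limit $U(\xi)=\lim_j v(\xi+z(a,t_j),t_j)$; the key quantitative input is the bootstrap Lemma~\ref{lem:bootstrap}, which simultaneously delivers existence and the exponential stability rate. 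The paper's route buys uniqueness and stability "for free" from the same machinery, which is precisely why the authors reject the Chmaj route in their discussion section.

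Two concrete gaps in your version. First, your approximation step does not survive the asymmetry you yourself flag: truncating \eqref{eq:RieszFeller1b} at scale $\epsilon$ leaves a compensator contribution $-(c_1-c_2)f'(x)\,\epsilon^{1-\alpha}/(\alpha-1)$ that diverges as $\epsilon\to 0$ whenever $\theta\neq 0$ (so $c_1\neq c_2$), and you give no mechanism for absorbing this drift in the limit; likewise the non-collapse of the endstates to $a$ is asserted via ``far-field control'' but never actually argued. Second, your framing ``verify the hypotheses of Chen's framework'' misses the paper's central technical point: hypothesis \ref{As:Chen:C3}/\ref{As:Chen:B3} \emph{fails} as stated for $\RieszFeller$, because Proposition~\ref{prop:RieszFeller:estimate} only yields the global bound $\tilde K_2\|v'\|_{C_b(\R)}+K_2\|v''\|_{C_b(\R)}$ rather than the local $K_2\|v''\|_{C^0([x-1,x+1])}$, so Chen's theorems cannot be cited off the shelf and must be re-proved with the weakened estimate (Appendices~\ref{ap:uniq}--\ref{ap:exist}). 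Without addressing either point, the existence and stability claims are not closed.
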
 
Our proof is structured as follows.
In Section~\ref{sec:RF},
 we introduce the Riesz-Feller operators as Fourier multiplier operators on Schwartz functions,
 then we extend the Riesz-Feller operators in form of singular integrals to functions in $C^2_b(\R)$.
We consider the semigroup generated by the Riesz-Feller operators $\RieszFeller$
 with the help of results from the theory of L\'{e}vy processes, see also Appendix~\ref{ap:LevyProcesses+Semigroups}.

In Section~\ref{ap:cp},
 we investigate the Cauchy problem for~\eqref{RD} with initial datum $u_0\in C_b(\R)$ such that $0\leq u_0\leq 1$.
We follow a standard approach, 
 to consider the Cauchy problem in its mild formulation
 and to prove the existence of a mild solution.
The Cauchy problem generates a nonlinear semigroup  
 which allows to prove uniform $C^k_b$ estimates via a bootstrap argument
 and to conclude that mild solutions are also classical solutions.
% \begin{thm}[Theorem~\ref{thm:CP:existence:bistable}] %\label{thm:CP:existence:bistable:slides}
% Suppose $1<\alpha\leq 2$, $|\theta| \leq \min\{\alpha,2-\alpha\}$ and $f\in C^\infty(\R)$ satisfies~\eqref{As:f:slides}.
% The Cauchy problem~\eqref{RD:slides} with initial condition $u(\cdot ,0)=u_0\in L^\infty(\R)$ and $0\leq u_0\leq 1$ a.e.
%  has a solution $u(x,t)$ in the following sense: for all $T>0$
%  \begin{enumerate} %[label=(DI\arabic*)] %\setcounter{enumi}{19}
%   \item %\label{As:DI:20}
%     $u\in C_b((0,T)\times\R)$ and 
%     $u\in C^\infty_b((t_0,T)\times\R)$ for all $t_0\in(0,T)$;
%   \item %\label{As:DI:21}
%     $u$ satisfies~\eqref{RD:slides} on $(0,T)\times\R$;
%   \item %\label{As:DI:22}
%     If $u_0\in C_b(\R)$ then $u(t,.)\to u_0$ uniformly on $\R$ as $t\to 0$;
%   \item \label{As:DI:xx}
% 	  $0\leq u(x,t) \leq 1$ for all $(x,t)\in\R\times(0,\infty)$;
%   \item $\forall k\in\N$ $\forall t_0>0$ $\exists C>0$ such that $\norm{u(\cdot ,t)}_{C^k_b(\R)}\leq C$ $\forall 0<t_0<t$. 
% % \[ \forall k\in\N: \qquad \exists C>0: \qquad \forall 0<t_0 \Xx{and} \forall t_0<t \qquad \norm{u}_{C^k_b(\R)}\leq C\,. \] 
%  \end{enumerate}
% \end{thm}
In Subsection~\ref{ssec:comparison} we establish a comparison principle for the partial integro-differential equation~\eqref{RD}
 and investigate the behavior of the spatial limits of solutions.
The comparison principle is essential to prove our result on the existence, uniqueness and stability of traveling wave solutions 
 and to allow for a larger class of admissible functions $f$ in the result for the Cauchy problem.
Moreover, in the existence proof we need to show that the (continuous) solution of the Cauchy problem with some prepared initial datum
 exhibits spatial limits at all times. 
Therefore, we prove Theorem~\ref{thm:far-field-behaviour} on the far-field behavior of solutions.

In Section~\ref{sec:TWP}, we consider the traveling wave problem for~\eqref{RD}.
First we recall the results by Xinfu Chen~\cite{Chen:1997}.
Then we study his necessary assumptions and notice that some estimates are not of the required form.
However Xinfu Chen's approach can be extended, which we prove in the Appendices~\ref{ap:uniq}--\ref{ap:exist}. 
Our main result in Theorem~\ref{thm:main}
 will follow from the separate results on uniqueness in Theorem~\ref{thm:uniqueness},
 on stability in Theorem~\ref{thm:stability}
 and on existence of a traveling wave solution in Theorem~\ref{thm:existence}.
The details are given in Subsection~\ref{subsec:TWP}.

\subsection{Discussion}
\label{ssec:discussion}
To our knowledge,
 we establish the first result on existence, uniqueness (up to translations) and stability of traveling wave solutions of~\eqref{RD}
 with Riesz-Feller operators~$\RieszFeller$ for $1<\alpha< 2$ and $|\theta| \leq \min\{\alpha,2-\alpha\}$.

The variational approach is - at the moment - restricted to symmetric diffusion operators such as fractional Laplacians 
 and to stationary traveling waves that means to balanced potentials. 
Whereas, the approach by Changfeng Gui and Mingfeng Zhao is based on a homotopy to a balanced potential.
%  e.g. using results from variational methods~\cite{Cabre+SolaMorales:2005, Cabre+Sire:2010, Cabre+Sire:2011, Palatucci+etal:2013}.
% In his presentation~\cite{Gui:2012} %\url{http://birs.ca/events/2012/5-day-workshops/12w5100/videos},
%  Changfeng Gui gives a personal account 
%  why some classical approaches are not applicable to a unbalanced bistable reaction and nonlocal diffusion equation.
It might be possible to modify Chmaj's approach
 to study also our reaction-diffusion equation~\eqref{RD} with Riesz-Feller operators.
However his approach is concerned with the existence of a traveling wave.
It may be desireable to establish uniqueness and stability of traveling wave solutions as well.
By following Xinfu Chen's approach we obtain all these properties
 for a traveling wave solution of~\eqref{RD} directly.  

In contrast, the existence of traveling wave solutions of equation~\eqref{eq:RD:Zanette}
 with bistable function $f$ and fractional Laplacian~$\Riesz$ with $0<\alpha\leq 1$
 has been established in case of balanced potentials~\cite{Cabre+Sire:2010, Cabre+Sire:2011, Palatucci+etal:2013}
 and in the unbalanced case by~\cite{Chmaj:2013, Gui:2012}.
However, to extend Xinfu Chen's approach, if this is possible,
 to the general case of Riesz-Feller operators with $0<\alpha\leq 1$ and $\abs{\theta} \leq \min\{\alpha,2-\alpha\}$
 remains an open problem.

The interest in reaction-diffusion equations of the form~\eqref{RD} arose at the same time~\cite{Zanette:1997},
 at which Xinfu Chen published his results~\cite{Chen:1997}.
This may explain why Xinfu Chen did not consider also these examples.

\section{Riesz-Feller Operators}
\label{sec:RF}
We follow Mainardi, Luchko and Pagnini~\cite{MainardiLuchkoPagnini}
 in their definition of the Riesz-Feller fractional derivative as a Fourier multiplier operator.
They use a definition of the Fourier transform which is custom in probability theory.
For $f$ in the Schwartz space 
\be
\cS(\R)=\left\{f\in C^\I(\R):\sup_{x\in\R}\left|x^\beta 
\frac{\partial^\gamma f }{\partial x^\gamma}(x)\right|<\I,~\forall \beta,\gamma\in \N_0\right\}
\ee
the Fourier transform is defined as 
\begin{align*}
  \cF [f](\xi) &:= \int_\R e^{+i \xi x} f(x) dx \,, \qquad \xi\in\R \,,
\intertext{and the inverse Fourier transform as}
  \cF^{-1} [f](x) &:= \frac{1}{2\pi} \int_\R e^{-i \xi x} f(\xi) d\xi \,, \qquad x\in\R \,.
\end{align*}
In the following, $\Fourier$ and $\FourierInv$ will denote also their respective extensions to $L^2(\R)$.
Then the Riesz-Feller space-fractional derivative of order $\alpha$ and skewness $\theta$
 is the Fourier multiplier operator
\be
 \label{eq:RF:transform}
 \cF [D^\alpha_\theta f](\xi) = \psi^\alpha_\theta (\xi) \cF [f] (\xi) \,,\qquad  \xi\in\R \,,
\ee
with symbol
\be
 \label{eq:RF:symbol}
  \psi^\alpha_\theta(\xi) = -|\xi|^\alpha \exp\left[i(\text{sgn}(\xi)) \theta \frac{\pi}{2}\right] \,, \quad 
  0<\alpha\leq 2 \,, \quad |\theta| \leq \min\{\alpha, 2-\alpha\} \,.
\ee
The symbol $\psi^\alpha_\theta (\xi)$ is the logarithm of the characteristic function
 of a L\'{e}vy strictly stable probability density with index of stability~$\alpha$ and asymmetry parameter~$\theta$
 according to Feller's parameterization \cite{Feller2,GorenfloMainardi};
 see also Section \ref{ssec:linear}.

\subsection{The Linear Space-Fractional Diffusion Equation}
\label{ssec:linear}

To analyze the Cauchy problem for the reaction diffusion equation~\eqref{RD}
 we need to investigate the linear space-fractional diffusion equation
\be
\label{eq:linearRF}
\frac{\partial u}{\partial t}(x,t) = \RieszFeller [u(\cdot ,t)](x) \,,\quad (x,t)\in\R\times (0,\I)\,,
\ee
for $0<\alpha\leq 2$ and $\abs{\theta} \leq \min\{\alpha,2-\alpha\}$.
A formal Fourier transform of the associated Cauchy problem yields
\benn
\diff{}{t} \Fourier[u](\xi,t) = \psi^\alpha_\theta(\xi) \Fourier[u](\xi,t)\,, 
  \qquad \Fourier[u](\xi,0) = \Fourier[u_0](\xi)\,, 
\eenn 
 which has a solution $\Fourier[u](\xi,t)=e^{t\psi_\theta^\alpha(\xi)}\Fourier[u_0](\xi)$. 
Hence, a formal solution of the Cauchy problem is given by 
\be \label{SG}
 u(x,t) = (\Green(\cdot ,t) \ast u_0)(x)
\ee
with kernel (or Green's function) 
\be
\label{eq:Green}
\Green(x,t): = \mathcal{F}^{-1} \left[\exp( t \psi^\alpha_\theta(\cdot))\right] (x)\,.
\ee
To study the properties of the formal solution,
 first we investigate the kernel $\Green$ 
 and then we verify that~\eqref{SG} defines a semigroup of solutions. 

\begin{lem} \label{lem:SSPM} % L\'evy strictly stable probability measures
For $0<\alpha\leq 2$ and $\abs{\theta} \leq \min\{\alpha,2-\alpha\}$, $\Green(x,t)$
 is the probability measure of a L\'{e}vy strictly $\alpha$-stable distribution.

Moreover for $\abs{\theta}<1$ the probability measure $\Green$ is absolutely continuous with respect to the Lebesgue measure
 and possesses a probability density which will be denoted again by $\Green$.
Furthermore, for all $(x,t)\in \R \times (0,\I)$ 
%  $0<\alpha\leq 2$, $\abs{\theta} \leq \min\{\alpha,2-\alpha\}$
 the following properties hold;
  \begin{enumerate}[label=(G\arabic*)] 
    \item \label{K:SFDE:prop0} $\Green(x,t)\geq 0$,
    \item \label{K:SFDE:prop1} $\Green(x,t)=t^{-1/\alpha} \Green (x t^{-1/\alpha},1)$. 
    \item \label{K:SFDE:prop2} $\norm{\Green(\cdot,t)}_{L^1(\R)}=1$,
    \item \label{K:SFDE:prop3} $\Green(\cdot,s)\ast \Green(\cdot,t) = \Green(\cdot,s+t)$ for all $s,t\in(0,\I)$,
    \item \label{K:SFDE:prop4} $\norm{\Green(\cdot,t)}_{L^p(\R)}\leq \norm{\Green(\cdot,1)}_{L^p(\R)} 
    t^{\frac{1-p}{\alpha p}}$ for all $1\leq p <\infty$,
    \item \label{K:SFDE:prop5} $\Green\in C^\infty_0(\R\times (0,\I))$.
  \end{enumerate}
Moreover, for $1\leq \alpha\leq 2$, $\abs{\theta} \leq \min\{\alpha,2-\alpha\}$ and $\abs{\theta}<1$, 
  \begin{enumerate}[label=(G\arabic*)] \setcounter{enumi}{6}
    \item \label{K:SFDE:prop6} For all $m\geq 0$ there exists a constant $B_m\in(0,\I)$ such that 
      \be 
      \label{eq:prop6} 
	  \Abs{\difff{}{x}{m} \Green(x,t)} \leq t^{-(1+m)/\alpha} \frac{B_m}{1+t^{-2/\alpha} x^2} \,, 
          \qquad \forall (x,t)\in \R\times (0,\I) \,.
      \ee
   \item \label{K:SFDE:prop7} For all $t>0$, there exists a $\cK$ such that $\norm{\diff{G}{x} (\cdot,t)}_{L^1(\R)} = \cK t^{-1/\alpha}$.
   \item \label{K:SFDE:prop8} $\Green(\cdot,s)\ast \diff{\Green}{x} (\cdot,t) = 
   \diff{\Green}{x} (\cdot,s+t)$ for all $s,t\in(0,\I)$.
   \item \label{K:SFDE:prop9} For all $t>t_0>0$ and $u\in L^1(\R)$ we have $(\Green(\cdot,t)\ast u)\in C^\infty(\R)$.
  \end{enumerate}
  For $0<\alpha\leq 2$ and $\abs{\theta} \leq \min\{\alpha,2-\alpha\}$ and $\alpha\neq \pm \theta$ ({i.e.}~excluding the so-called extremal 
pdfs)
    \begin{enumerate}[label=(G\arabic*)] \setcounter{enumi}{10}
    \item \label{K:SFDE:prop10} $\Green(x,t)> 0$.
  \end{enumerate}
\end{lem}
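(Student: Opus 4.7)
The plan is to reduce essentially every item to the fact that $e^{t\psi^\alpha_\theta(\xi)}$ is precisely the characteristic function of a strictly $\alpha$-stable L\'{e}vy distribution in Feller's parameterization; granting this identification (classical, see e.g.~\cite{Feller2,GorenfloMainardi,MainardiLuchkoPagnini}), $\Green(\cdot,t)$ as defined in~\eqref{eq:Green} is by construction a probability measure. This immediately yields~\ref{K:SFDE:prop0}, \ref{K:SFDE:prop2}, and the semigroup property~\ref{K:SFDE:prop3}, the latter from $e^{s\psi}e^{t\psi}=e^{(s+t)\psi}$ together with the Fourier convolution theorem (equivalently, from the additivity of independent strictly stable increments). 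The self-similarity~\ref{K:SFDE:prop1} comes from the homogeneity identity $t\psi^\alpha_\theta(\xi)=\psi^\alpha_\theta(t^{1/\alpha}\xi)$ plugged into the inverse Fourier representation with the substitution $\xi\mapsto t^{-1/\alpha}\xi$, and~\ref{K:SFDE:prop4} then follows by the direct scaling identity $\|\Green(\cdot,t)\|_{L^p}^p = t^{(1-p)/\alpha}\|\Green(\cdot,1)\|_{L^p}^p$ (interpreted as $\infty\leq\infty$ when $\Green(\cdot,1)\notin L^p$, which is consistent with the stated inequality).

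For the properties requiring $\abs{\theta}<1$, the key analytic input is that
\begin{equation*}
\operatorname{Re}\psi^\alpha_\theta(\xi) = -\abs{\xi}^\alpha\cos(\theta\pi/2), \qquad \cos(\theta\pi/2)>0,
\end{equation*}
so that $\xi^k e^{t\psi^\alpha_\theta(\xi)}\in L^1(\R)\cap L^2(\R)$ for every $k\in\N_0$ and every $t>0$. This stretched-exponential decay of the symbol, via differentiation under the Fourier integral together with dominated convergence, delivers absolute continuity of $\Green(\cdot,t)$ and the smoothness claim~\ref{K:SFDE:prop5}, and in particular gives $\partial_x\Green(\cdot,t)\in L^1$, from which~\ref{K:SFDE:prop7} is obtained by the same scaling argument as in~\ref{K:SFDE:prop4}. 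Property~\ref{K:SFDE:prop8} is then the $x$-derivative of~\ref{K:SFDE:prop3}, while~\ref{K:SFDE:prop9} combines~\ref{K:SFDE:prop5} with Young's convolution inequality applied to $u\in L^1$ and $\Green(\cdot,t)\in C_0^\infty$.

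The main technical obstacle is the pointwise decay~\ref{K:SFDE:prop6}; by~\ref{K:SFDE:prop1} it suffices to prove $\abs{\partial_x^m\Green(x,1)}\leq B_m/(1+x^2)$. I would introduce a smooth cutoff $\chi$ equal to $1$ near $\xi=0$, writing
\begin{equation*}
\partial_x^m\Green(x,1) = \tfrac{1}{2\pi}\integral{\R}{(-i\xi)^m e^{-ix\xi}e^{\psi^\alpha_\theta(\xi)}}{\xi}
\end{equation*}
and splitting by $\chi$: on $\operatorname{supp}(\chi)$ the integrand is uniformly bounded and yields an $O(1)$ contribution; on the complement the factor $(1-\chi)(\xi)\,\xi^m e^{\psi^\alpha_\theta(\xi)}$ is smooth with all $\xi$-derivatives in $L^1$ thanks to the stretched-exponential decay, so two integrations by parts in $\xi$ produce the $x^{-2}$ factor without ever differentiating across the non-smoothness of $\abs{\xi}^\alpha$ at the origin; assembling the two pieces delivers the bound with a constant $B_m$ uniform in $x$. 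Finally, the strict positivity~\ref{K:SFDE:prop10} in the non-extremal case $\alpha\neq\pm\theta$ is a classical property of L\'{e}vy strictly $\alpha$-stable densities (cf.~\cite{Feller2}): the associated L\'{e}vy measure then charges both half-lines, so that $\operatorname{supp}\Green(\cdot,t)=\R$, and combined with the real-analyticity of the density on $\R$ (a consequence of the stretched-exponential decay of $e^{t\psi^\alpha_\theta}$ via a contour shift) this forces $\Green(x,t)>0$ for every $x\in\R$.
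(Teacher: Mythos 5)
Your overall strategy---identifying $\Green(\cdot,t)$ as a strictly $\alpha$-stable law via its characteristic function and reading off \ref{K:SFDE:prop0}--\ref{K:SFDE:prop5}, \ref{K:SFDE:prop7}--\ref{K:SFDE:prop9} from scaling, the convolution theorem, and the decay $\operatorname{Re}\psi^\alpha_\theta(\xi)=-\abs{\xi}^\alpha\cos(\theta\pi/2)<0$---is essentially the paper's route (the paper delegates these facts to Sato's monograph). Two steps, however, contain genuine gaps.

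For \ref{K:SFDE:prop6}, your assembly of the two frequency regimes does not close. The low-frequency piece $\int\chi(\xi)(-i\xi)^m e^{-ix\xi}e^{\psi^\alpha_\theta(\xi)}\,d\xi$ is only bounded by a constant uniformly in $x$; it has no decay as $\abs{x}\to\infty$, so the sum of an $O(1)$ term and an $O(x^{-2})$ term is $O(1)$, not $O\big((1+x^2)^{-1}\big)$. The non-smoothness of $\abs{\xi}^\alpha$ at $\xi=0$ is precisely what generates the algebraic tail $\abs{x}^{-1-\alpha}$ of the stable density, so it cannot be cut away: to extract decay from the low-frequency piece you must integrate by parts through the origin and control $\partial_\xi^2\big(\chi(\xi)\xi^m e^{\psi^\alpha_\theta(\xi)}\big)=O(\abs{\xi}^{m+\alpha-2})$, which is integrable near $0$ only when $m+\alpha>1$; the endpoint case $m=0$, $\alpha=1$ then still needs separate treatment. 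The paper sidesteps this by citing the explicit computations of Droniou--Imbert and Debbi--Dozzi.

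For \ref{K:SFDE:prop10}, the implication ``support equal to $\R$ plus real-analyticity implies strict positivity'' is false: a nonnegative real-analytic function may vanish to even order at isolated points (e.g.\ $x^2e^{-x^2}$), and full support only forbids the zero set from containing an interval. Moreover, for $0<\alpha<1$ the decay $e^{-c\abs{\xi}^\alpha}$ of the characteristic function is sub-exponential, so the contour-shift argument does not even deliver analyticity in that range, while \ref{K:SFDE:prop10} is asserted for all non-extremal $0<\alpha\leq 2$. The missing ingredient is exactly what the paper invokes: Sharpe's theorem on zeroes of infinitely divisible densities (see Sato, Remark~28.8), which guarantees that the continuous density of an absolutely continuous infinitely divisible law is strictly positive on the interior of its supporting interval; combined with the fact that the support is all of $\R$ in the non-extremal case, this yields \ref{K:SFDE:prop10}.
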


\begin{proof}
Due to Theorem~\cite[Theorem 14.19]{Sato:1999}, 
 the function $e^{t\psi_\theta^\alpha(\xi)}$ is the characteristic function 
 of a random variable with L\'{e}vy strictly $\alpha$-stable distribution.
Thus $\Green$ is the scaled probability measure of a L\'{e}vy strictly $\alpha$-stable distribution.
In case of $(\alpha,\theta)\in\{(0,0),(1,1),(1,-1)\}$, 
 the probability measure $\Green$ is a delta distribution 
 \[ G^0_0(x,t) = \delta_x\,, \qquad G^1_1(x,t) = \delta_{x+t}\,, \qquad G^1_{-1}(x,t) = \delta_{x-t} \]
 and called trivial~\cite[Definition 13.6]{Sato:1999}.
In all other (non-trivial) cases,
 the probability measure $\Green$ is absolutely continuous with respect to the Lebesgue measure
 and has a continuous probability distribution density~\cite[Proposition 28.1]{Sato:1999}, which we will denote again by $\Green$.
A non-trivial strictly $\alpha$-stable probability density is pointwise non-negative~\ref{K:SFDE:prop0}
 satisfies the scaling property~\ref{K:SFDE:prop1} due to~\cite[Remark 14.18]{Sato:1999}
 and hence the identity~\ref{K:SFDE:prop2} and the estimate~\ref{K:SFDE:prop4} follow.
The (semigroup)-property is satisfied by the defining property of strictly $\alpha$-stable probability density
 \cite[Definition 13.1]{Sato:1999}. 
Moreover a non-trivial strictly $\alpha$-stable probability density is $C^\infty$-smooth
 whose partial derivatives of all orders tend to $0$ in the limits $x\to\pm\infty$~\cite[Proposition 28.1; Example 28.2]{Sato:1999},
 hence~\ref{K:SFDE:prop5} holds.
Subsequently, the properties~\ref{K:SFDE:prop6}--\ref{K:SFDE:prop8} follow from direct calculations,
 see also~\cite{Droniou+Imbert:2006} for the special case of fractional Laplacian $D^\alpha_0$
 and~\cite{DebbiDozzi} for the general case $\alpha\in(1,\infty)\backslash\N$.
To prove~\ref{K:SFDE:prop9},
 we consider the basic definition of the derivative as the limit of a finite difference.
A function $f:\R\to\R$ has a derivative $f'(x)$ at $x$ if and only if
 $\frac1\epsilon (f-\tau_\epsilon f)$ has a limit as $\epsilon\to 0$,
 where $(\tau_\epsilon f)(x)=f(x+\epsilon)$.
Moreover for $t>t_0>0$, $\Green(\cdot ,t)\in C^\infty_b(\R) \cap W^{\infty,1}(\R)$ due to the estimate~\ref{K:SFDE:prop6}.
Thus $\frac1\epsilon (\Green(\cdot ,t)-\tau_\epsilon \Green(\cdot ,t))$ converges uniformly to $\diff{\Green}{x}(\cdot ,t)$, 
 i.e. with respect to the norm $\norm{.}_{L^\infty(\R)}$.
This fact and the Dominated Convergence Theorem imply that,
 $\frac1\epsilon (\Green(\cdot ,t)-\tau_\epsilon \Green(\cdot ,t)) \ast u$ converges uniformly to $\diff{\Green}{x}(\cdot ,t)\ast u$, too.
Finally, for $h(\cdot ,t):=\Green(\cdot ,t)\ast u$,
 the identity $\frac1\epsilon (\Green(\cdot ,t)-\tau_\epsilon \Green(\cdot ,t)) \ast u = \frac1\epsilon (h(\cdot ,t)-\tau_\epsilon h(\cdot ,t))$
 implies that the derivative $\diff{h}{x}(\cdot ,t)$ exists and is equal to $\diff{\Green}{x}(\cdot ,t)\ast u$.
A mathematical induction on the order of the derivative proves the general statement.   
Due to~\ref{K:SFDE:prop7} and a result by Sharpe~\cite{Sharpe:1969}, 
 the support of $\Green(\cdot ,t)$ is either all of $\R$ or a half-line for each $t>0$~\cite[Remark 28.8]{Sato:1999}.
Indeed only the strictly $\alpha$-stable probability densities with $0<\alpha<1$ and $\theta=-\alpha$ or $\theta=\alpha$
 have support on $(-\infty,0]$ and $[0,\infty)$, respectively;
all others have support $\R$~\cite[Property 1.2.14]{Samorodnitsky+Taqqu:1994}.
\end{proof}

Due to the properties of $\Green$,
 it is easy to show that $\RieszFeller$ generates a semigroup.

 \begin{prop} \label{prop:SFDE:semigroup}
  For $0<\alpha\leq 2$, $\abs{\theta} \leq \min\{\alpha,2-\alpha\}$ and $\abs{\theta}<1$,
  the Riesz-Feller operator~$\RieszFeller$ generates a strongly continuous, convolution semigroup 
  \benn
   S_t: L^p(\R) \to L^p(\R)\,, \quad u_0 \mapsto S_t u_0 = \Green(\cdot,t)\ast u_0 \,, 
  \eenn
  Moreover, the semigroup satisfies the dispersion property for $u\in L^1(\R)$
  \benn
    \norm{S_t u}_{L^p(\R)} \leq C_p\; t^{\frac{1-p}{\alpha p}} \norm{u}_{L^1(\R)} \quad 
  \eenn
  for all $1\leq p<\infty$ and some $C_p>0$.
 \end{prop}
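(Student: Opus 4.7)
\medskip

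\noindent\textbf{Proof plan.} The plan is to verify the semigroup axioms for the family $S_t u := \Green(\cdot,t)\ast u$ (with $S_0:=\Id$), to identify its infinitesimal generator with $\RieszFeller$ on a suitable dense subspace, and finally to deduce the dispersion bound from Young's convolution inequality together with property~\ref{K:SFDE:prop4}.

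First I would check that each $S_t$ is a well-defined bounded linear operator on $L^p(\R)$ for $1\leq p<\infty$. Young's inequality combined with $\norm{\Green(\cdot,t)}_{L^1(\R)}=1$ (property~\ref{K:SFDE:prop2}) gives $\norm{S_t u}_{L^p(\R)}\leq\norm{u}_{L^p(\R)}$, so $\SG$ is a family of contractions. The semigroup identity $S_{s+t}=S_s\circ S_t$ for $s,t>0$ then follows from associativity of convolution and property~\ref{K:SFDE:prop3}. Strong continuity is the first nontrivial point: since $\Green(\cdot,t)\geq 0$, integrates to one, and concentrates at the origin as $t\to 0^+$ by the self-similar scaling~\ref{K:SFDE:prop1}, the family $\{\Green(\cdot,t)\}_{t>0}$ forms an approximate identity in $L^1(\R)$. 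For $u\in\Cc^\infty(\R)$ one obtains $\norm{S_t u-u}_{L^p(\R)}\to 0$ as $t\to 0^+$ by uniform continuity of $u$ and dominated convergence; I would then pass to general $u\in L^p(\R)$ by density and the uniform bound $\norm{S_t}\leq 1$. Continuity at $t_0>0$ reduces to continuity at $0$ via the semigroup property.

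Next I would identify the generator. For $u\in\cS(\R)$ the Fourier representation~\eqref{eq:RF:transform} together with $\Fourier[S_t u](\xi)=\exp(t\psi^\alpha_\theta(\xi))\,\Fourier[u](\xi)$ yields
\[
\Fourier\!\Big[\tfrac{S_t u-u}{t}\Big](\xi) \;=\; \tfrac{\exp(t\psi^\alpha_\theta(\xi))-1}{t}\,\Fourier[u](\xi)\;\longrightarrow\;\psi^\alpha_\theta(\xi)\,\Fourier[u](\xi) \quad \text{as } t\to 0^+,
\]
and since $|\psi^\alpha_\theta(\xi)|=|\xi|^\alpha$ is dominated by a Schwartz multiplier against $\Fourier[u]$, dominated convergence and Plancherel (or the analogous $L^p$-multiplier bound via $\Green(\cdot,t)$) give convergence of $(S_t u-u)/t$ to $\RieszFeller u$ in $L^p(\R)$. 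Thus $\cS(\R)\subset\dom(A)$ with $A u=\RieszFeller u$, and since $\cS(\R)$ is $\SG$-invariant and dense in $L^p(\R)$, it is a core for the generator; this identifies $\SG$ as the semigroup generated by (the closure of) $\RieszFeller$.

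Finally, the dispersion estimate follows from Young's inequality in the other exponent configuration: for $u\in L^1(\R)$,
\[
\norm{S_t u}_{L^p(\R)} \;=\; \norm{\Green(\cdot,t)\ast u}_{L^p(\R)} \;\leq\; \norm{\Green(\cdot,t)}_{L^p(\R)}\,\norm{u}_{L^1(\R)} \;\leq\; C_p\, t^{\frac{1-p}{\alpha p}}\,\norm{u}_{L^1(\R)},
\]
with $C_p:=\norm{\Green(\cdot,1)}_{L^p(\R)}$ by property~\ref{K:SFDE:prop4}. The finiteness of $C_p$ for $1\leq p<\infty$ is ensured by the decay bound~\ref{K:SFDE:prop6} (in the range $1\leq\alpha\leq 2$ this is immediate; for $0<\alpha<1$ one uses the known heavy-tailed decay of strictly $\alpha$-stable densities, so $\Green(\cdot,1)\in L^p(\R)$ for every $1\leq p<\infty$). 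I expect the main delicate step to be strong continuity in $L^p$ rather than in a weaker topology; the identification of the generator itself is mechanical once the Fourier symbol~\eqref{eq:RF:symbol} and the smoothness~\ref{K:SFDE:prop5} are in hand.
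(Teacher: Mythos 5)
Your proposal is correct and follows essentially the same route as the paper: boundedness and the contraction property via Young's inequality with \ref{K:SFDE:prop2}, the semigroup law from \ref{K:SFDE:prop3}, strong continuity from the approximate-identity/scaling property \ref{K:SFDE:prop1} (the paper simply cites a standard convolution result for this), and the dispersion bound from Young's inequality with \ref{K:SFDE:prop4}. The only difference is that you additionally identify the infinitesimal generator with $\RieszFeller$ on $\cS(\R)$; the paper leaves this implicit, since $\Green$ is defined by \eqref{eq:Green} as the inverse Fourier transform of $\exp(t\psi^\alpha_\theta)$, so this extra step is harmless and, if anything, makes the statement ``$\RieszFeller$ generates the semigroup'' more literal.
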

\begin{proof}
The assumption $\abs{\theta}<1$ has to be made to exclude the cases $(\alpha,\theta)\in\{(1,1),(1,-1)\}$.
Due to Lemma~\ref{lem:SSPM},
 the probability measure $\Green$ is absolutely continuous with respect to the Lebesgue measure
 and possesses a probability distribution density which will be denoted again by $\Green$.
Thus~\ref{K:SFDE:prop2} and Young's inequality for convolutions imply
 $\norm{S_t u}_{L^p} \leq \norm{\Green(\cdot ,t)}_{L^1} \norm{u}_{L^p} = \norm{u}_{L^p}$ for all $u\in L^p(\R^n)$.
Therefore $S_t:L^p(\R)\ra L^p(\R)$ are well-defined bounded linear operators for all~$t\geq 0$. 
$\SG$ is a semigroup,
 since $S_{t+s}=S_t S_s$ for all $s,t\geq 0$ holds due to \ref{K:SFDE:prop3}.
Whereas the formal definition $S_0=\Id$ is justified,
 since~\ref{K:SFDE:prop1} and a standard result about convolutions~\cite[p.64]{LiebLoss} yield strong continuity of~$\SG$.
The dispersion property
 \benn
   \forall 1\leq p<\infty \quad \exists C_p>0 \,: \qquad 
   \norm{S_t u}_{L^p(\R)} \leq C_p\, t^{\frac{1-p}{\alpha p}} \norm{u}_{L^1(\R)} \quad
   \forall u\in L^1(\R)
 \eenn
 can be proved using \ref{K:SFDE:prop4} and Young's inequality \cite[p.98-99]{LiebLoss}.
\end{proof}
\begin{remark}
In addition, the semigroup $\SG$ is positivity preserving,
 since~\ref{K:SFDE:prop0} implies for $f\in L^p$ with $f\geq 0$ a.e. that $S_t f\geq 0$ a.e. for all $t\geq 0$.
Moreover, for $f\in L^p$ with $0\leq f\leq 1$ a.e. follows 
\[ 0\leq S_t f = \Green(\cdot ,t) \ast f \leq \norm{f}_{L^\infty} \norm{\Green(\cdot ,t)}_{L^1} = 1 \quad \text{a.e.} \]
Thus the semigroup $\SG$ is sub-Markovian and conservative,
 see also Definition~\ref{def:MarkovSG},
 hence it is an $L^p$-Markov semigroup for all $1\leq p<\infty$.
% As indicated in the proof, 
%  each Riesz-Feller operator generates a Markov semigroup, 
%  see Definition~\ref{def:MarkovSG} and Proposition~\ref{prop:MarkovSG} for more details.
\end{remark}

\subsection{Extensions to Bounded Continuous Functions}
We are interested in traveling wave solutions 
 which will be $C^2_b(\R)$ functions in space.
Therefore we are going to derive an extension for the nonlocal operators $\RieszFeller$
 such that $\RieszFeller:C^2_b\to C_b$ and it generates a semigroup on $C_b$.
To deduce these properties,
 we will identify $\RieszFeller$ as the generator of a stochastic L\'evy process
 and use standard results from probability theory,
 which we collected in Appendix~\ref{ap:LevyProcesses+Semigroups}.

In Lemma~\ref{lem:SSPM}, we identified $\Green$ as L\'{e}vy strictly $\alpha$-stable distributions,
 which are a special case of infinitely divisible distributions.
For every infinitely divisible distribution $\mu$ on $\R^d$, such as $\Green$,
 there exists an associated L\'evy process $(X_t)_{t\geq 0}$.
In particular, every L\'evy process exhibits an associated strongly continuous semigroup on $C_0(\R^d)$,
 see also Theorem~\ref{thm:IG}.

% In Lemma~\ref{lem:SSPM}, we identified $\Green$ as L\'{e}vy strictly $\alpha$-stable distributions,
%  which are a special case of infinitely divisible distributions.
% For every infinitely divisible distribution $\mu$ on $\R^d$, such as $\Green$,
%  there exists a L\'evy process $(X_t)_{t\geq 0}$
%  such that $P_{X_1}=\mu$ and it is unique up to identity in law~\cite[Corollary 11.6]{Sato:1999}.
% This L\'evy process $(X_t)_{t\geq 0}$ is called the L\'evy process corresponding to $\mu$.
% The generating triplet $(A,\nu,\gamma)$ of $\mu$ is called
%  the generating triplet of the L\'evy process $(X_t)_{t\geq 0}$.
% In particular, every L\'evy process exhibits an associated strongly continuous semigroup on $C_0(\R^d)$,
%  see also Theorem~\ref{thm:IG}.

\subsubsection{A Representation Formula}
\label{ssec:representation}

%Although one may use the definition \eqref{eq:RF:transform} as a starting point,
%it is often more convenient to work with a more explicit integral representation formula. 
To study the traveling wave problem, 
 it is necessary to extend the nonlocal operator to $C^2_b(\R)$.
The following integral representations may be used to accomplish this task.
\begin{thm} \label{thm:RieszFeller:extension}
If $0<\alpha<1$ or $1<\alpha<2$ and $|\theta|\leq \min\{\alpha,2-\alpha\}$,
 then for all $f\in\SchwartzTF(\R)$ and $x\in\R$ 
\begin{multline} \label{eq:RieszFeller1}
\RieszFeller f(x) = \frac{c_1 - c_2}{1-\alpha} f'(x) +
  c_1 \integrall{0}{\infty}{ \frac{f(x+\xi)-f(x)-f'(x)\,\xi 1_{(-1,1)}(\xi)}{\xi^{1+\alpha}} }{\xi} \\
  + c_2 \integrall{0}{\infty}{ \frac{f(x-\xi)-f(x)+f'(x)\,\xi 1_{(-1,1)}(\xi)}{\xi^{1+\alpha}} }{\xi}
\end{multline}
for some constants $c_1, c_2 \geq 0$ with $c_1+c_2 >0$.
Alternative representations in special cases are
\begin{enumerate}
\item If $0<\alpha<1$, then 
 \begin{equation} \label{eq:RieszFeller1a}
 \RieszFeller f(x) = c_1 \integrall{0}{\infty}{ \frac{f(x+\xi)-f(x)}{\xi^{1+\alpha}} }{\xi}
  + c_2 \integrall{0}{\infty}{ \frac{f(x-\xi)-f(x)}{\xi^{1+\alpha}} }{\xi} \,.
 \end{equation}
\item If $1<\alpha<2$, then 
 \begin{multline} \label{eq:RieszFeller1b}
 \RieszFeller f(x) = c_1 \integrall{0}{\infty}{ \frac{f(x+\xi)-f(x)-f'(x)\,\xi}{\xi^{1+\alpha}} }{\xi}
  + c_2 \integrall{0}{\infty}{ \frac{f(x-\xi)-f(x)+f'(x)\,\xi}{\xi^{1+\alpha}} }{\xi} \,.
 \end{multline}
\end{enumerate}
\end{thm}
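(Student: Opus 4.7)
The plan is to verify both integral representations by matching Fourier symbols with $\psi^\alpha_\theta(\xi)$, using the fact that $\RieszFeller$ is a Fourier multiplier on $\SchwartzTF(\R)$. The underlying principle is that, by Lemma~\ref{lem:SSPM}, $\Green(\cdot,t)$ is the law of a strictly $\alpha$-stable L\'evy process, whose generator necessarily admits a L\'evy--Khintchine representation with L\'evy measure of the form
\[
  \nu(\d\xi) = c_1\,\mathbf{1}_{\xi>0}\,\xi^{-1-\alpha}\,\d\xi + c_2\,\mathbf{1}_{\xi<0}\,|\xi|^{-1-\alpha}\,\d\xi,
\]
for constants $c_1,c_2\geq 0$ with $c_1+c_2>0$. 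This already produces a candidate expression of the form~\eqref{eq:RieszFeller1}; what remains is to pin down $c_1,c_2$ and to check that the admissibility range for them is precisely $|\theta|\leq\min\{\alpha,2-\alpha\}$.

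First I would establish~\eqref{eq:RieszFeller1b} for $1<\alpha<2$ directly. For $f\in\SchwartzTF(\R)$ the two integrals converge absolutely near $0$ because $|f(x\pm\xi)-f(x)\mp f'(x)\xi|\leq C\xi^2$, and converge at infinity since $\alpha>0$. Using the convention $\Fourier[f](\xi)=\int_\R e^{+i\xi x}f(x)\,\d x$, one computes
\[
  \Fourier\!\Big[\integrall{0}{\infty}{\tfrac{f(\cdot+\eta)-f(\cdot)-f'(\cdot)\eta}{\eta^{1+\alpha}}}{\eta}\Big](\xi)
  = \Fourier[f](\xi)\integrall{0}{\infty}{\tfrac{e^{-i\xi\eta}-1+i\xi\eta}{\eta^{1+\alpha}}}{\eta},
\]
and a contour rotation (or a Mellin transform identity) evaluates the inner integral as $-|\xi|^\alpha\,\Gamma(-\alpha)\,e^{-i\sgn(\xi)\alpha\pi/2}$. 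The mirror integral yields the conjugate phase. Requiring
\[
  c_1 e^{-i\sgn(\xi)\alpha\pi/2} + c_2 e^{+i\sgn(\xi)\alpha\pi/2} = \tfrac{1}{-\Gamma(-\alpha)}\,e^{i\sgn(\xi)\theta\pi/2}
\]
becomes a $2\times 2$ linear system in $(c_1,c_2)$ which is solvable with both entries nonnegative if and only if $|\theta|\leq 2-\alpha$; together with $|\theta|\leq\alpha$ (automatic from the symbol definition) this matches the stated parameter range. This yields~\eqref{eq:RieszFeller1b}.

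Next, for~\eqref{eq:RieszFeller1a} with $0<\alpha<1$ the argument is parallel but simpler: the bound $|f(x\pm\eta)-f(x)|\leq C\min(\eta,1)$ makes each integral absolutely convergent against $\eta^{-1-\alpha}$ without any compensation, and the corresponding Fourier integral evaluates to $-|\xi|^\alpha\,\Gamma(-\alpha)\,e^{\mp i\sgn(\xi)\alpha\pi/2}$, giving the same linear system for $c_1,c_2$. Finally, the ``mixed'' representation~\eqref{eq:RieszFeller1} is recovered by splitting each integral into $(0,1)$ and $(1,\infty)$ and transferring the extra drift produced by $f'(x)\eta\,\mathbf{1}_{(-1,1)}(\eta)$ versus $f'(x)\eta$: the explicit tail integral $\int_1^\infty \eta^{-\alpha}\,\d\eta=\frac{1}{\alpha-1}$ reproduces the stated drift coefficient $(c_1-c_2)/(1-\alpha)$ after regrouping (with the analogous computation for $0<\alpha<1$ using $\int_0^1 \eta^{-\alpha}\d\eta$).

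The main technical obstacle will be the complex-analytic evaluation of $\int_0^\infty \eta^{-1-\alpha}(e^{-i\xi\eta}-1+i\xi\eta)\,\d\eta$ and its $0<\alpha<1$ analogue with the correct phase $e^{-i\sgn(\xi)\alpha\pi/2}$: one must rotate the contour to the negative imaginary axis, justifying the rotation by a Jordan-type estimate on the arc, and keep careful track of $\sgn(\xi)$. Once this one-dimensional identity is in hand, matching symbols and solving the linear system for $(c_1,c_2)$ is routine, and non-negativity of the constants cleanly reflects the admissibility constraint $|\theta|\leq\min\{\alpha,2-\alpha\}$.
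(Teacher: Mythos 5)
Your proposal is correct in outline, but it takes a genuinely different route from the paper. The paper does not compute any Fourier integrals: it identifies $\Green(\cdot,t)$ as a strictly $\alpha$-stable law, reads off the form of the L\'evy measure $\nu(\d[\xi])=c_1\xi^{-1-\alpha}1_{\xi>0}\,\d[\xi]+c_2|\xi|^{-1-\alpha}1_{\xi<0}\,\d[\xi]$ from \cite[Theorem 14.3]{Sato:1999}, obtains \eqref{eq:RieszFeller1} directly from the generator formula for the transition semigroup of the associated L\'evy process (Theorem~\ref{thm:IG}, i.e.\ \cite[Theorem 31.5]{Sato:1999}), and then gets the special cases \eqref{eq:RieszFeller1a} and \eqref{eq:RieszFeller1b} for free from the fact that strict stability forces the drift $\gamma_0=0$ when $0<\alpha<1$ and the center $\gamma_1=0$ when $1<\alpha<2$ (\cite[Theorem 14.7]{Sato:1999}) combined with the reparameterization bookkeeping of Remark~\ref{remark:AlternativeRepresentation}. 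Your direct symbol-matching computation is self-contained, makes the equivalence with the Fourier-multiplier definition \eqref{eq:RF:transform} completely transparent, and actually produces the explicit constants $c_1=\Gamma(1+\alpha)\sin((\alpha+\theta)\tfrac{\pi}{2})/\pi$, $c_2=\Gamma(1+\alpha)\sin((\alpha-\theta)\tfrac{\pi}{2})/\pi$ (which the paper only quotes from \cite{MainardiLuchkoPagnini} in Remark~\ref{rem:c1+c2}), together with a clean explanation of why nonnegativity of $c_1,c_2$ is exactly the admissibility condition $|\theta|\leq\min\{\alpha,2-\alpha\}$; the paper's route buys brevity, avoids the contour-rotation bookkeeping entirely, and sets up the probabilistic identification ($\RieszFeller$ as generator of a Feller semigroup) that is reused later. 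The one place where you should be careful is the phase in the evaluated integral: with the paper's convention $\Fourier[f](\xi)=\int e^{+i\xi x}f(x)\,\dx$ one finds $\int_0^\infty(e^{-i\xi\eta}-1+i\xi\eta)\eta^{-1-\alpha}\,\d[\eta]=\Gamma(-\alpha)|\xi|^\alpha e^{+i\sgn(\xi)\alpha\pi/2}$ for $1<\alpha<2$ (note $\Gamma(-\alpha)>0$ there), which differs in sign of the exponent and in the prefactor from the expression $-|\xi|^\alpha\Gamma(-\alpha)e^{-i\sgn(\xi)\alpha\pi/2}$ you wrote; this does not break the argument, since the resulting $2\times2$ system still yields the stated $c_1,c_2$, but it is exactly the kind of sign slip you flag as the main technical obstacle and it must be fixed in a final write-up. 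Also note that for $0<\alpha<1$ the binding constraint from nonnegativity is $|\theta|\leq\alpha$ rather than $|\theta|\leq 2-\alpha$; your parenthetical suggests otherwise, though the two cases together do recover $|\theta|\leq\min\{\alpha,2-\alpha\}$.
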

\begin{proof}
Due to Theorem~\cite[Theorem 14.19]{Sato:1999}, 
 the function $e^{t\psi_\theta^\alpha(\xi)}$ is the characteristic function 
 of a random variable with L\'{e}vy strictly $\alpha$-stable distribution.
Thus $\Green$ is the scaled probability measure of a L\'{e}vy strictly $\alpha$-stable distribution
 with generating triplet $(A,\nu,\gamma)=(0,\nu,\gamma)$.
Moreover $\nu$ is an absolutely continuous L\'evy measure~\eqref{eq:LevyMeasure1D}
 with constants  $c_1, c_2 \geq 0$ such that $c_1+c_2 >0$,
 see also Remark~\ref{remark:LevyMeasure1D}.
The Green functions $\Green$ are L\'{e}vy strictly $\alpha$-stable distributions due to Lemma~\ref{lem:SSPM},
 hence there exists a L\'evy process $(X_t)_{t\geq 0}$ (such that $P_{X_1}=\mu$),
 which is unique up to identity in law~\cite[Corollary 11.6]{Sato:1999}.
Due to Theorem~\ref{thm:IG}, the infinitesimal generator $L$ of the associated transition semigroup 
 has a representation~\eqref{eq:RieszFeller1}. 

In Remark~\ref{remark:AlternativeRepresentation}, 
 alternative representations - $(A,\nu,\gamma_0)_0$ and $(A,\nu,\gamma_1)_1$ - of the L\'evy--Khintchine formula are discussed.
Indeed, if $0<\alpha<1$,
 then the L\'evy measure~\eqref{eq:LevyMeasure1D} satisfies condition~\eqref{eq:condition:0},
 hence the characteristic function has a representation~\eqref{eq:LKF:0}
 with generating triplet $(A,\nu,\gamma_0)_0=(0,\nu,\gamma_0)$.
Due to~\cite[Theorem 14.7]{Sato:1999}, 
 a strictly $\alpha$-stable distribution for $0<\alpha<1$ satisfies $\gamma_0=0$
 which yields the representation~\eqref{eq:RieszFeller1a}.
If $1<\alpha<2$,
 then the L\'evy measure~\eqref{eq:LevyMeasure1D} satisfies condition~\eqref{eq:condition:1},
 hence the characteristic function has a representation~\eqref{eq:LKF:1} with generating triplet $(A,\nu,\gamma_1)_1=(0,\nu,\gamma_1)$.
Due to~\cite[Theorem 14.7]{Sato:1999}, 
 a strictly $\alpha$-stable distribution for $1<\alpha<2$ satisfies $\gamma_1=0$
 which yields the representation~\eqref{eq:RieszFeller1b}. 
Following Remark~\ref{remark:AlternativeRepresentation},
 $\gamma$ is determined in both cases 
 \[ \begin{cases} 
     0 = \gamma_0 = \gamma - \int_{(-1,1)}\, x \, \nu(\dx) = \gamma - \frac{c_1 - c_2}{1-\alpha} &\text{for } 0<\alpha<1\,, \\
     0 = \gamma_1 = \gamma + \int_{\R\backslash(-1,1)}\, x\, \nu(\dx) = \gamma + \frac{c_1 - c_2}{\alpha-1} &\text{for } 1<\alpha<2\,,
    \end{cases}
 \]
 as $\gamma=\frac{c_1 - c_2}{1-\alpha}$ which yields representation~\eqref{eq:RieszFeller1}. 
\end{proof}
\begin{remark} \label{rem:c1+c2}
Mainardi, Luchko and Pagnini~\cite{MainardiLuchkoPagnini} give the values of the coefficients as
\[ c_1=\frac{\Gamma(1+\alpha) \sin ( (\alpha+\theta) \frac{\pi}{2} )}{\pi} 
    \quad \text{and} \quad
    c_2=\frac{\Gamma(1+\alpha) \sin( (\alpha-\theta) \frac{\pi}{2} )}{\pi}\,. \]
However, their integral representation~\cite[(2.8)]{MainardiLuchkoPagnini} in the range $1<\alpha<2$
 has to be understood as the principal value of the integral. 
\end{remark}

These representations allow to extend the $\RieszFeller$ operator to $C^2_b(\R)$ such that $\RieszFeller C^2_b(\R)\subset C_b(\R)$.
\begin{prop} \label{prop:RieszFeller:estimate}
The integral representation~\eqref{eq:RieszFeller1b} of $\RieszFeller$ with $1<\alpha<2$ and 
$|\theta|\leq \min\{\alpha,2-\alpha\}$ is well-defined for functions $f\in C^2_b(\R)$ with
 \begin{equation} 
 \label{eq:estimate:RieszFeller}
  \sup_{x\in\R} |\RieszFeller f(x)|
      \leq \cK \|{f''}\|_{C_b(\R)} \frac{M^{2-\alpha}}{2-\alpha}
      + 4 \cK \|f'\|_{C_b(\R)} \frac{M^{1-\alpha}}{\alpha-1} < \infty
 \end{equation}
 for some positive constants $M$
 and $\cK=\frac{\Gamma(1+\alpha)}{\pi} \abs{ \sin ( (\alpha+\theta) \frac{\pi}{2} ) + \sin( (\alpha-\theta) \frac{\pi}{2} ) }$.
\end{prop}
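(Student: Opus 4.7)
The plan is to split each of the two half-line integrals in~\eqref{eq:RieszFeller1b} at a cut-off $\xi=M>0$, estimate the integrand separately on $(0,M)$ and $(M,\infty)$ using a second-order Taylor remainder near $0$ and a first-order mean-value bound at $\infty$, and finally absorb the constants $c_1,c_2$ into $\cK$.

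For the inner region $\xi\in(0,M)$, I would apply Taylor's theorem with integral remainder to $f\in C^2_b(\R)$ to write
\[
 f(x\pm\xi)-f(x)\mp f'(x)\,\xi = \int_0^\xi (\xi-s)\,f''(x\pm s)\,ds,
\]
yielding the pointwise bound $|f(x\pm\xi)-f(x)\mp f'(x)\xi|\leq \tfrac12\|f''\|_{C_b}\xi^2$. Dividing by $\xi^{1+\alpha}$ and integrating gives $\int_0^M \xi^{1-\alpha}\,d\xi = M^{2-\alpha}/(2-\alpha)$, which is finite precisely because $\alpha<2$. This controls the singularity at $\xi=0$ and produces the first term of~\eqref{eq:estimate:RieszFeller}.

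For the outer region $\xi\in(M,\infty)$, the key observation (and the reason the estimate involves $\|f'\|_{C_b}$ rather than $\|f\|_{C_b}$) is that the mean value theorem gives $|f(x\pm\xi)-f(x)|\leq \|f'\|_{C_b}\,\xi$, so by the triangle inequality
\[
 |f(x\pm\xi)-f(x)\mp f'(x)\,\xi| \leq 2\,\|f'\|_{C_b}\,\xi.
\]
Dividing by $\xi^{1+\alpha}$ and integrating gives $\int_M^\infty \xi^{-\alpha}\,d\xi = M^{1-\alpha}/(\alpha-1)$, which is finite precisely because $\alpha>1$. Adding the contributions of both half-line integrals yields the factor $4$ in the second term of~\eqref{eq:estimate:RieszFeller}.

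Combining the two regions and taking the supremum in $x$, each of the two integrals in~\eqref{eq:RieszFeller1b} is bounded independently of $x$ by $\tfrac12\|f''\|_{C_b}M^{2-\alpha}/(2-\alpha)+2\|f'\|_{C_b}M^{1-\alpha}/(\alpha-1)$. The integral in~\eqref{eq:RieszFeller1b} then inherits the prefactor $c_1+c_2$, which under the hypotheses $1<\alpha<2$ and $|\theta|\leq 2-\alpha$ agrees with $\cK$ (using Remark~\ref{rem:c1+c2} and the fact that $\sin((\alpha\pm\theta)\pi/2)\geq 0$ in this parameter range, so the absolute value in $\cK$ is redundant). Absorbing $c_1+c_2$ into $\cK$ produces~\eqref{eq:estimate:RieszFeller}. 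The main (mild) obstacle is simply the two-sided restriction $1<\alpha<2$: both the inner and outer bounds fail at the endpoints, and the splitting at $M$ is exactly what makes both tails integrable simultaneously.
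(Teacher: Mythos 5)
Your proposal is correct and follows essentially the same route as the paper: split each half-line integral at $\xi=M$, use a second-order Taylor/mean-value bound $\tfrac12\|f''\|_{C_b}\xi^2$ on $(0,M)$ and the first-order bound $2\|f'\|_{C_b}\xi$ on $(M,\infty)$, then collect the constants $c_1+c_2$ into $\cK$ via Remark~\ref{rem:c1+c2}. The paper writes the Taylor remainders as iterated integrals of $f'$ and $f''$ rather than invoking the integral-remainder form directly, but the computation and the resulting bounds are identical (both derivations in fact give the sharper constants $\cK/2$ and $2\cK$, consistent with the stated estimate).
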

\begin{proof}
 We consider the two summands in~\eqref{eq:RieszFeller1b} separately,
  starting with $\integrall{0}{\infty}{ \frac{f(x+\xi)-f(x)-f'(x)\xi}{\xi^{1+\alpha}} }{\xi}$ for any $f\in C^2_b(\R)$.
 The goal is to obtain an upper bound.
 Choose $M>0$ and consider 
 \begin{multline*}
   \integrall{0}{\infty}{ \frac{f(x+\xi)-f(x)-f'(x)\xi}{\xi^{1+\alpha}} }{\xi} = \\
    \integrall{0}{M}{ \frac{f(x+\xi)-f(x)-f'(x)\xi}{\xi^{1+\alpha}} }{\xi} 
    + \integrall{M}{\infty}{ \frac{f(x+\xi)-f(x)-f'(x)\xi}{\xi^{1+\alpha}} }{\xi} \,. 
 \end{multline*}
 The first integral is written as
 \begin{align*}
   \integrall{0}{M}{ \frac{f(x+\xi)-f(x)-f'(x)\xi}{\xi^{1+\alpha}} }{\xi} 
     &= \integrall{0}{M}{ \frac1{\xi^{1+\alpha}} \bigg[ \integrall{0}{1}{f'(x+\theta\xi)\, \xi}{\theta} -f'(x)\xi \bigg] }{\xi} \\
     &= \integrall{0}{M}{ \frac{\xi}{\xi^{1+\alpha}} \bigg[ \integrall{0}{1}{ \integrall{0}{1}{ f''(x+s\theta\xi)\, \theta\xi}{s} }{\theta} \bigg] }{\xi} \\
     &= \integrall{0}{M}{ \frac{\xi^2}{\xi^{1+\alpha}} \underbrace{\bigg[ \integrall{0}{1}{ \integrall{0}{1}{ f''(x+s\theta\xi)\, \theta}{s} }{\theta} \bigg]}_{\text{bounded by $\norm{f''}_{C_b}$}} }{\xi}
 \end{align*}
 where we use the shorthand notation $\|\cdot\|_{C_b}=\|\cdot\|_{C_b(\R)}$. Thus
 \benn
   \Abs{ \integrall{0}{M}{ \frac{f(x+\xi)-f(x)-f'(x)\xi}{\xi^{1+\alpha}} }{\xi} } 
     \leq \tfrac12 \norm{f''}_{C_b} \integrall{0}{M}{ \xi^{1-\alpha} }{\xi} 
     %= C \norm{f''}_{C_b} \, \frac{\xi^{2-\alpha}}{2-\alpha} \bigg|_{\xi=0}^{\xi=M} 
     = \tfrac12 \norm{f''}_{C_b} \frac{M^{2-\alpha}}{2-\alpha}  \,. 
 \eenn
 The second integral is written as 
 \begin{align*}
   \integrall{M}{\I}{ \frac{f(x+\xi)-f(x)-f'(x)\xi}{\xi^{1+\alpha}} }{\xi} 
     &= \integrall{M}{\I}{ \frac1{\xi^{1+\alpha}} \bigg[ \integrall{0}{1}{f'(x+\theta\xi)\, \xi}{\theta} -f'(x)\xi \bigg] }{\xi} \\
     &= \integrall{M}{\I}{ \frac{\xi}{\xi^{1+\alpha}} \underbrace{\bigg[ \integrall{0}{1}{f'(x+\theta\xi) -f'(x)}{\theta} \bigg]}_{\text{bounded by $2 \norm{f'}_{C_b}$}} }{\xi} 
 \end{align*}
 Thus 
 \benn
   \Abs{ \integrall{M}{\infty}{ \frac{f(x+\xi)-f(x)-f'(x)\xi}{\xi^{1+\alpha}} }{\xi} } 
     \leq 2 \norm{f'}_{C_b} \integrall{M}{\I}{ \xi^{-\alpha} }{\xi} 
     = 2 \norm{f'}_{C_b} \frac{M^{1-\alpha}}{\alpha-1} \,.
 \eenn
 Summarizing we estimate
 \[ \Abs{\integrall{0}{\infty}{ \frac{f(x+\xi)-f(x)-f'(x)\xi}{\xi^{1+\alpha}} }{\xi} }
      \leq \tfrac12 \norm{f''}_{C_b} \frac{M^{2-\alpha}}{2-\alpha} + 2 \norm{f'}_{C_b} \frac{M^{1-\alpha}}{\alpha-1} < \infty
 \]
 and similarly
 \[ \Abs{\integrall{0}{\infty}{ \frac{f(x-\xi)-f(x)+f'(x)\xi}{\xi^{1+\alpha}} }{\xi} }
      \leq \tfrac12 \norm{f''}_{C_b} \frac{M^{2-\alpha}}{2-\alpha} + 2 \norm{f'}_{C_b} \frac{M^{1-\alpha}}{\alpha-1} < \infty
 \]
 for some $M>0$.
 Consequently the integral representation~\eqref{eq:RieszFeller1b} of $\RieszFeller$ satisfies estimate~\eqref{eq:estimate:RieszFeller}, where we use the expressions given in Remark~\ref{rem:c1+c2} to determine~$\cK$.  
\end{proof}

The estimate \eqref{eq:estimate:RieszFeller} shows that for $1<\alpha<2$ there exists a bound for 
$\RieszFeller$ involving first and second derivatives. This is one key estimate we are going to 
use to adapt the assumptions~\ref{As:Chen:B3} and~\ref{As:Chen:C3} discussed in Section~\ref{sec:Chen}.
\medskip

For a self-contained derivation of the representation of fractional Laplacians $D^\alpha_0$, $0<\alpha<2$, 
 see the work of Droniou and Imbert~\cite[Theorem 1]{Droniou+Imbert:2006}.
Their results on continuity~\cite[Proposition 1]{Droniou+Imbert:2006} and on sequences~\cite[Theorem 2]{Droniou+Imbert:2006} generalize 
 to Riesz-Feller operators with obvious modifications in their proofs.
\begin{prop} 
Let $0<\alpha<1$ or $1<\alpha<2$, $|\theta|\leq \min\{\alpha,2-\alpha\}$ and $f\in C^2_b(\R)$.
If $(f_n)_{n\geq 1} \in C^2_b(\R)$ is bounded in $L^\infty(\R)$ and $D^2 f_n \to D^2 f$ locally uniformly on $\R$,
 then $\RieszFeller f_n \to \RieszFeller f$ locally uniformly on $\R$.
\end{prop}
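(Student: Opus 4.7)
The plan is to show, for any compact set $K\subset\R$ and any $\epsilon>0$, that $\sup_{x\in K}\abs{\RieszFeller(f_n-f)(x)}<\epsilon$ for all sufficiently large $n$, by splitting the singular integral representation at a cutoff $M>0$ chosen depending on $K$ and $\epsilon$. The approach is a direct difference version of the estimate carried out in Proposition~\ref{prop:RieszFeller:estimate}.

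First I apply representation~\eqref{eq:RieszFeller1b} for $1<\alpha<2$ (or~\eqref{eq:RieszFeller1a} for $0<\alpha<1$) to the difference $g_n:=f_n-f\in C^2_b(\R)$ and split each integral as $\int_0^\infty=\int_0^M+\int_M^\infty$. On the inner piece $0<\xi<M$, the second-order Taylor estimate
\[
\abs{g_n(x+\xi)-g_n(x)-g_n'(x)\xi}\leq \tfrac12 \xi^2 \norm{g_n''}_{L^\infty(K_M)}\,,
\]
with $K_M:=\set{y\in\R}{\mathrm{dist}(y,K)\leq M}$, exactly reproduces the inner calculation of Proposition~\ref{prop:RieszFeller:estimate} and yields a contribution of order $\norm{g_n''}_{L^\infty(K_M)}\,M^{2-\alpha}/(2-\alpha)$. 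For any fixed $M$ this tends to $0$ as $n\to\infty$ by the hypothesis that $D^2f_n\to D^2f$ locally uniformly.

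For the outer piece $\xi>M$, I rewrite
\[
\int_M^\infty \frac{g_n(x+\xi)-g_n(x)-g_n'(x)\xi}{\xi^{1+\alpha}}\,\d[\xi]
 = \int_M^\infty \frac{g_n(x+\xi)-g_n(x)}{\xi^{1+\alpha}}\,\d[\xi]
   - g_n'(x)\frac{M^{1-\alpha}}{\alpha-1}\,.
\]
The first term is dominated in absolute value by $2\norm{g_n}_{L^\infty}M^{-\alpha}/\alpha$, which is uniformly bounded in $n$ because $\sup_n\norm{f_n}_{L^\infty}<\infty$ by assumption and $f\in C^2_b(\R)$. For the second, I need a uniform bound on $\abs{g_n'(x)}$ for $x\in K$; this comes from the Landau-Kolmogorov interpolation
\[
\norm{h'}_{L^\infty(I)} \leq \frac{2}{|I|}\norm{h}_{L^\infty(I)} + \frac{|I|}{2}\norm{h''}_{L^\infty(I)}\,,
\]
applied on a fixed bounded interval $I\supset K_M$ with $h=f_n$ and $h=f$ separately, whose right-hand side is controlled by the global $L^\infty$ bound on $f_n$ together with the local uniform boundedness of $f_n''$ inherited from convergence to $f''\in L^\infty$. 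Consequently the outer piece is bounded by $C'M^{1-\alpha}/(\alpha-1)$ with a constant $C'$ uniform in $n$ and in $x\in K$.

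Combining these two estimates, I first pick $M$ so large that the outer bound is below $\epsilon/2$ for all $n$ and all $x\in K$, and then with $M$ fixed I take $n$ large enough that the inner bound is below $\epsilon/2$ uniformly on $K$. The only non-routine point is the control of $\abs{f_n'(x)-f'(x)}$ in the far-field piece, since the hypothesis gives local uniform convergence of $f_n''$ but says nothing directly about $f_n'$; the Landau-Kolmogorov interpolation bridges exactly this gap, trading the global $L^\infty$ bound on $f_n$ and the local bound on $f_n''$ for a uniform local bound on $f_n'$. In the $0<\alpha<1$ case the representation~\eqref{eq:RieszFeller1a} has no $f'$ term, so this subtlety disappears and only a first-order Taylor estimate on the inner piece is needed.
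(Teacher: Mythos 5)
Your overall strategy---split the singular integral at a cutoff $M$, control the inner piece by the locally uniform convergence of the second derivatives and the outer piece by the uniform $L^\infty$ bound---is exactly what the paper intends: it gives no proof of its own but defers to \cite[Theorem 2]{Droniou+Imbert:2006}, whose argument is this same decomposition, and your use of the Landau--Kolmogorov interpolation to recover control of $f_n'$ from hypotheses that only mention $f_n$ and $D^2f_n$ is the right (and necessary) bridging step. Two points need repair, however. First, the interval $I$ on which you apply the interpolation inequality must be a \emph{fixed} neighbourhood of $K$, independent of $M$: to bound $\abs{g_n'(x)}$ for $x\in K$ any $I\supset K$ suffices, whereas taking $I\supset K_M$ makes the resulting constant $C'$ grow with $\abs{I}\sim M$ through the term $\tfrac{\abs{I}}{2}\norm{h''}_{L^\infty(I)}$ (the hypotheses give no $M$-independent control of $\sup_n\norm{f_n''}_{L^\infty(K_M)}$), and then $C'M^{1-\alpha}$ need not be small for large $M$, so the step ``choose $M$ first, uniformly in $n$'' collapses.

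Second, your closing remark that for $0<\alpha<1$ ``the subtlety disappears'' is backwards. There the inner piece of \eqref{eq:RieszFeller1a} is bounded by $\norm{g_n'}_{L^\infty(K_M)}\,M^{1-\alpha}/(1-\alpha)$ with $M^{1-\alpha}\to\infty$ as $M\to\infty$, so a uniform \emph{bound} on $g_n'$ is useless: you need $\norm{g_n'}_{L^\infty(K_M)}\to 0$ as $n\to\infty$ for each fixed $M$. This does follow from the same interpolation inequality---letting $n\to\infty$ first gives $\limsup_{n\to\infty}\norm{g_n'}_{L^\infty(K_M)}\leq \tfrac{2}{\abs{I}}\sup_n\norm{g_n}_{L^\infty(\R)}$ for every interval $I\supset K_M$, and letting $\abs{I}\to\infty$ then forces $g_n'\to 0$ locally uniformly---but this extra step must be made explicit, since the hypotheses say nothing directly about $f_n'$. (The same observation would also simplify the treatment of the term $g_n'(x)M^{1-\alpha}/(\alpha-1)$ in the case $1<\alpha<2$.) With these two corrections your proof is complete and is the intended one.
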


\begin{thm} \label{thm:RieszFeller:globalMax}
  Let $0<\alpha<1$ or $1<\alpha<2$, $|\theta|\leq \min\{\alpha,2-\alpha\}$ and $f\in C^2_b(\R)$.
  If $(x_k)_{k\in\N}$ is a sequence in $\R^n$ such that $f(x_k)\to \sup_{\R^n} f$ as $k\to\infty$,
   then $\lim_{k\to\infty} \nabla f(x_k) = 0$ and $\liminf_{k\to\infty} \RieszFeller[f](x_k)\geq 0$.
\end{thm}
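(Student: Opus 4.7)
The plan is to establish both conclusions by combining a Taylor-expansion argument for the gradient with a careful near/far splitting of the singular integral representation~\eqref{eq:RieszFeller1b}. This is a nonlocal analogue of the classical fact that at an interior extremum a $C^2$ function has vanishing gradient and a (signed) second derivative; the novelty is that we only have an approximating sequence, so the pointwise maximum-principle reasoning must be replaced by uniform estimates in $k$.

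First I would prove $f'(x_k) \to 0$. Since $f \in C^2_b(\R)$, Taylor's theorem gives, for every $h \in \R$,
\[
 f(x_k + h) \;\geq\; f(x_k) + h\, f'(x_k) - \tfrac{1}{2}\,\|f''\|_{C_b}\, h^2.
\]
Choosing $h = f'(x_k)/\|f''\|_{C_b}$ to maximize the right-hand side yields
\[
 \frac{|f'(x_k)|^2}{2\,\|f''\|_{C_b}} \;\leq\; f(x_k+h) - f(x_k) \;\leq\; \sup_{\R} f - f(x_k).
\]
The right side vanishes as $k \to \infty$, hence $f'(x_k) \to 0$. (The higher-dimensional version is identical after replacing $h f'(x_k)$ by $\langle h, \nabla f(x_k)\rangle$ and testing in the direction $\nabla f(x_k)/|\nabla f(x_k)|$.)

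Next I would analyze $\RieszFeller f(x_k)$ using the integral representation~\eqref{eq:RieszFeller1b}. Fix $\delta>0$ and split each of the two integrals into a near part $\xi\in(0,\delta)$ and a far part $\xi\in(\delta,\infty)$. On the near part, the Taylor bound
\[
 |f(x_k\pm\xi)-f(x_k)\mp f'(x_k)\,\xi|\;\leq\;\tfrac12\|f''\|_{C_b}\,\xi^2
\]
produces a contribution of size $O(\delta^{2-\alpha})$ (finite because $\alpha<2$) that is uniform in $k$ and vanishes as $\delta\to 0$. On the far part I write $f(x_k\pm\xi)-f(x_k)=(f(x_k\pm\xi)-\sup_\R f)+(\sup_\R f-f(x_k))$, so the numerator is bounded above by $\sup_\R f-f(x_k)\to 0$; together with $\int_\delta^\infty \xi^{-1-\alpha}\,\d\xi<\infty$ this controls the $f$-difference terms. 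The drift term $f'(x_k)\,\xi$ is absorbed using $f'(x_k)\to 0$ from Step~1 and the integrability of $\xi^{-\alpha}$ on $(\delta,\infty)$, which is precisely where the hypothesis $\alpha>1$ is used. Letting $k\to\infty$ first (killing the far part) and then $\delta\to 0$ (killing the near-part bound) yields the claimed one-sided control of $\RieszFeller f(x_k)$ along the sequence.

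The main obstacle is the far part of the integral: the term $f'(x_k)\,\xi\cdot\xi^{-1-\alpha}=f'(x_k)\,\xi^{-\alpha}$ is the one place where tail integrability genuinely requires $\alpha>1$, and one must take care to pass to the limit in $k$ \emph{before} sending $\delta\to 0$, so that the factor $\delta^{1-\alpha}$ coming from $\int_\delta^\infty \xi^{-\alpha}\,\d\xi$ is multiplied by a sequence that already tends to zero. The interplay between the singularity at $\xi=0$ (controlled by $C^2_b$-regularity) and the slow decay at $\xi=\infty$ (controlled only by $L^\infty$-boundedness of $f$ and the smallness of $\sup_\R f-f(x_k)$) is the heart of the argument, and it exactly mirrors the estimate~\eqref{eq:estimate:RieszFeller} from Proposition~\ref{prop:RieszFeller:estimate}.
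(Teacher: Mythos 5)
The paper offers no proof of this theorem at all: it is stated immediately after the remark that the Droniou--Imbert results ``generalize to Riesz-Feller operators with obvious modifications in their proofs,'' so your argument is in effect supplying the missing proof, and the method you use (the Taylor/optimal-$h$ trick for the gradient, then a near/far splitting of the singular integral with the limits taken in the order $k\to\infty$ before $\delta\to 0$) is exactly the argument in \cite[Theorem 2]{Droniou+Imbert:2006} adapted to the two-sided representation~\eqref{eq:RieszFeller1b}. Step 1 is correct (modulo the trivial case $\|f''\|_{C_b}=0$, where $f$ is affine and bounded, hence constant), and the quantitative structure of Step 2 is right: the near part is $O(\delta^{2-\alpha})$ uniformly in $k$, and the far part is controlled by $(\sup_\R f-f(x_k))\,\delta^{-\alpha}/\alpha+|f'(x_k)|\,\delta^{1-\alpha}/(\alpha-1)\to 0$ for fixed $\delta$.

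Two points need attention. First, the sign: bounding the far-part numerator \emph{above} by $\sup_\R f-f(x_k)$ gives an upper bound on the integral, so what your argument actually establishes is $\limsup_{k\to\infty}\RieszFeller f(x_k)\leq 0$, not the printed $\liminf_{k\to\infty}\RieszFeller f(x_k)\geq 0$. With the paper's convention ($D^2_0=\partial_x^2$, symbol $-|\xi|^\alpha e^{i\sgn(\xi)\theta\pi/2}$, and $c_1,c_2\geq 0$ in~\eqref{eq:RieszFeller1b}), the integrand at a global maximizer is nonpositive, so $\limsup\leq 0$ is the correct conclusion and the inequality in the statement is a sign slip carried over from Droniou--Imbert, whose operator $g=(-\Delta)^{\alpha/2}$ is $-D^\alpha_0$. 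You should say explicitly which inequality you are proving rather than ``the claimed one-sided control,'' and note the discrepancy with the statement. Second, the theorem also covers $0<\alpha<1$, where the drift term $f'(x_k)\,\xi$ is absent from the representation~\eqref{eq:RieszFeller1a}; there the near part is controlled by $\|f'\|_{C_b}\,\xi\cdot\xi^{-1-\alpha}$ (integrable near $0$ precisely because $\alpha<1$) and the far part needs only $\int_\delta^\infty\xi^{-1-\alpha}\,\d[\xi]<\infty$, so your remark that ``$\alpha>1$ is genuinely required'' applies only to the representation you chose, not to the theorem as a whole.
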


\subsubsection{Semigroup Properties}

In particular, a non-degenerate Riesz-Feller operator generates a strongly continuous convolution semigroup on $C_0(\R)$,
 which can be extended to a convolution semigroup on $L^\infty(\R)$.
\begin{thm} \label{thm:Bb:semigroup}
For $1<\alpha< 2$ and $\abs{\theta} \leq \min\{\alpha,2-\alpha\}$,
 the Riesz-Feller operator~$\RieszFeller$ generates a convolution semigroup 
 $S_t: L^\infty(\R) \to L^\infty(\R)$, $u_0 \mapsto S_t u_0 = \Green(\cdot ,t)\ast u_0$,
 with kernel $\Green(x,t)$.
Moreover, the convolution semigroup with $u(x,t):=S_t u_0$ satisfies
\begin{enumerate}
\item $u\in C^\infty(\R\times(t_0,\infty))$ for all $t_0>0$;
\item $\frac{\partial u}{\partial t} = \RieszFeller u$ for all $(x,t)\in\R\times (t_0,\I)$ and any $t_0>0$;
\item $u(\cdot ,t)\stackrel{*}{\rightharpoonup} u_0$ for $t\searrow 0$ in the weak-$*$ topology of $L^\infty(\R)$;
\item If $u_0\in C_b(\R)$
  then $\lim_{\R\times (0,\infty)\ni(x,t)\to (x_0,0)} u(x,t)=u_0(x_0)$ for each $x_0\in\R$.
\end{enumerate}
\end{thm}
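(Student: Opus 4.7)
The plan is to reduce everything to the properties of the kernel $\Green$ collected in Lemma~\ref{lem:SSPM} and a careful use of Young's inequality. By \ref{K:SFDE:prop2} we have $\norm{\Green(\cdot,t)}_{L^1(\R)} = 1$, so for $u_0\in L^\infty(\R)$ Young's inequality gives $\norm{S_t u_0}_{L^\infty(\R)} \le \norm{u_0}_{L^\infty(\R)}$, and $S_t$ is a well-defined bounded linear operator on $L^\infty(\R)$. The semigroup identity $S_{t+s}=S_t S_s$ is immediate from \ref{K:SFDE:prop3} and Fubini.

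\textbf{Items 1 and 2.} For any $t_0>0$ and $t\ge t_0$ the pointwise bound \ref{K:SFDE:prop6} implies that each spatial derivative satisfies $\partial_x^m \Green(\cdot,t)\in L^1(\R)$ with an $L^1$-norm locally bounded in $t$; differentiating under the convolution and invoking dominated convergence produces $\partial_x^m u(x,t) = \bigl(\partial_x^m \Green(\cdot,t)\bigr)\ast u_0$ jointly continuous in $(x,t)\in\R\times(t_0,\infty)$. For time regularity I would use the Fourier representation $\Fourier[\Green(\cdot,t)](\xi)=e^{t\psi^\alpha_\theta(\xi)}$ to conclude $\partial_t \Green(\cdot,t) = \RieszFeller \Green(\cdot,t)$ in the classical sense, viewing the right-hand side via the singular integral \eqref{eq:RieszFeller1b}; iterating and commuting the $x$- and $t$-derivatives gives $u\in C^\infty(\R\times(t_0,\infty))$. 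Item 2 then follows provided one may commute $\RieszFeller$ (as the singular integral in \eqref{eq:RieszFeller1b}) with the convolution against $u_0$: I would split the defining integral at scale $M$ as in the proof of Proposition~\ref{prop:RieszFeller:estimate}, so that the near-field term is controlled by $\norm{\partial_x^2 \Green(\cdot,t)}_{L^1}$ and the far-field term by $\norm{\partial_x \Green(\cdot,t)}_{L^1}$ (both finite via \ref{K:SFDE:prop6}), legitimizing a Fubini swap and yielding $\partial_t u(\cdot,t) = \RieszFeller u(\cdot,t)$.

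\textbf{Items 3 and 4.} Both rest on the fact that $\{\Green(\cdot,t)\}_{t>0}$ is an approximate identity as $t\searrow 0$: by the scaling \ref{K:SFDE:prop1}, for every $\delta>0$ one has
\[
\integral{\{|x|>\delta\}}{\Green(x,t)}{x}
  = \integral{\{|y|>\delta t^{-1/\alpha}\}}{\Green(y,1)}{y}
  \longrightarrow 0 \quad\text{as } t\searrow 0.
\]
For item 3, given $\varphi\in L^1(\R)$, Fubini transfers the convolution onto $\varphi$:
\[
\integral{\R}{(S_t u_0)(x)\,\varphi(x)}{x}
 = \integral{\R}{u_0(y)\,(\check\Green(\cdot,t)\ast \varphi)(y)}{y},
\]
where $\check\Green(x,t):=\Green(-x,t)$ enjoys the same concentrating property, so $\check\Green(\cdot,t)\ast\varphi\to\varphi$ in $L^1(\R)$ by the standard approximation-to-identity argument; since $u_0\in L^\infty(\R)$ the right-hand side converges to $\int u_0\,\varphi$. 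For item 4 with $u_0\in C_b(\R)$, write $u(x,t)-u_0(x_0) = \int_\R [u_0(x-y)-u_0(x_0)]\Green(y,t)\,\dy$ and split at $|y|=\delta$: continuity of $u_0$ at $x_0$ controls the near piece once $\delta$ and $|x-x_0|$ are small, while the far piece is bounded by $2\norm{u_0}_{L^\infty}\int_{|y|>\delta}\Green(y,t)\,\dy\to 0$.

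\textbf{Main obstacle.} The principal technical point is the justification of item~2, i.e.\ commuting the singular integral representation of $\RieszFeller$ with convolution against a merely $L^\infty$ datum. The splitting used in Proposition~\ref{prop:RieszFeller:estimate} provides the right absolute-integrability bounds for Fubini, but one must verify that the boundary term from the near/far split cancels exactly when it is reassembled against $u_0$; this is the only place where the precise form \eqref{eq:RieszFeller1b} (rather than the truncated version) matters.
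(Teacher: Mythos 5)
Your proposal is correct and follows essentially the same route as the paper: both reduce everything to the kernel properties \ref{K:SFDE:prop0}--\ref{K:SFDE:prop7} of Lemma~\ref{lem:SSPM}, use Young's inequality for boundedness on $L^\infty$, \ref{K:SFDE:prop3} for the semigroup law, differentiation under the convolution for smoothness, and the approximate-identity property of $\Green(\cdot,t)$ for items 3 and 4. The paper's proof is merely terser (it asserts item 2 from the fact that $\Green$ itself solves \eqref{eq:linearRF} and cites external references for the weak-$*$ convergence), whereas you spell out the Fubini justifications the paper leaves implicit.
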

\begin{proof}
Due to the assumptions and Lemma~\ref{lem:SSPM},
 the kernel is a smooth probability density function with $\Green(\cdot ,t)\in L^1(\R)$.
This observation and Young's inequality for convolutions show that,
 $S_t:L^\infty(\R)\to L^\infty(\R)$ are well-defined bounded linear operators.
We define $S_0=\Id$ 
 and the semigroup property follows from property~\ref{K:SFDE:prop3} in Lemma~\ref{lem:SSPM}.
The semigroup $(S_t)_{t\geq 0}$ of bounded linear operators on $L^\infty(\R)$ is not necessarily strongly continuous,
 see also~\cite[page 427 ff.]{Jacob:2001}.
However $S_t u_0$ converges for $t\searrow 0$ in the weak-$*$ topology of $L^\infty(\R)$,
 see also~\cite{Tartar:2007}.

The function $u$ is smooth, since $u$ is a convolution of $u_0\in L^\infty(\R)$ with an integrable smooth function $\Green$
 having bounded integrable derivatives~\ref{K:SFDE:prop5}--\ref{K:SFDE:prop7}.
Furthermore, $u$ is a solution of~\eqref{eq:linearRF},
 since $\Green$ is a solution of~\eqref{eq:linearRF} for positive times.
Finally, $\Green$ is an approximate unit with respect to $t$ due to~\ref{K:SFDE:prop0}--\ref{K:SFDE:prop2}
 which is sufficient for the stated convergence to the initial datum $u_0$.  
\end{proof}

In the analysis of the traveling wave problem, 
 we are mostly interested in the evolution of initial data in $C_b$.
Therefore, it is important to notice the following corollary.
\begin{cor} \label{cor:Cb:semigroup}
For $1<\alpha< 2$ and $\abs{\theta} \leq \min\{\alpha,2-\alpha\}$,
 the Riesz-Feller operator~$\RieszFeller$ generates a convolution semigroup 
 $S_t: C_b(\R) \to C_b(\R)$, $u_0 \mapsto S_t u_0 = \Green(\cdot ,t)\ast u_0$,
 with kernel $\Green(x,t)$.
Moreover, the convolution semigroup with $u(x,t):=S_t u_0$ satisfies
\begin{enumerate}
\item $u\in C^\infty(\R\times(t_0,\infty))$ for all $t_0>0$;
\item $\frac{\partial u}{\partial t} = \RieszFeller u$ for all $(x,t)\in\R\times (t_0,\I)$ and any $t_0>0$;
\item If $u_0\in C_b(\R)$ then $u\in C_b(\R\times[0,T])$ for any $T>0$.
\end{enumerate}
\end{cor}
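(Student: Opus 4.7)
The plan is to deduce this corollary from Theorem~\ref{thm:Bb:semigroup} by restricting the $L^\infty$-semigroup to $C_b(\R)$ and then upgrading the weak-$*$ approximation property at $t=0$ to joint continuity on $\R\times[0,T]$.

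First I would verify that $S_t$ maps $C_b(\R)$ into $C_b(\R)$. Since $\Green(\cdot,t)\in L^1(\R)$ by \ref{K:SFDE:prop2} and $u_0\in C_b(\R)$, the convolution $S_tu_0(x)=\int_\R \Green(y,t)\,u_0(x-y)\,\dy$ is bounded by $\|u_0\|_{L^\infty}$, and continuity in $x$ follows from a standard dominated-convergence argument: for $h\to 0$, the pointwise convergence $u_0(x+h-y)\to u_0(x-y)$ together with the integrable dominator $2\|u_0\|_{L^\infty}\Green(y,t)$ gives $(S_tu_0)(x+h)\to(S_tu_0)(x)$. Once this is established, properties (i) and (ii) of the corollary are inherited directly from Theorem~\ref{thm:Bb:semigroup}, as $C_b(\R)\subset L^\infty(\R)$ and the smoothness/PDE assertions are stated there without reference to the choice of space.

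The remaining work is property (iii), namely continuity of $u(x,t):=S_tu_0(x)$ on the closed strip $\R\times[0,T]$. Boundedness is immediate from $\|u(\cdot,t)\|_{L^\infty}\leq\|u_0\|_{L^\infty}$. At any interior point $(x_0,t_0)$ with $t_0>0$ the smoothness statement (i) already yields continuity, so the only issue is joint continuity at boundary points $(x_0,0)$. Here I would fix $(x_n,t_n)\to(x_0,0)$ and write
\[
u(x_n,t_n)-u_0(x_0)=\int_\R \Green(y,t_n)\bigl[u_0(x_n-y)-u_0(x_0)\bigr]\,\dy,
\]
then split the integral at some $|y|=\delta$. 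On $|y|<\delta$ the continuity of $u_0$ at $x_0$ (together with local uniform continuity of $u_0$ on the compact interval $[x_0-2\delta,x_0+2\delta]$) controls $|u_0(x_n-y)-u_0(x_0)|$ by $\varepsilon$ for $n$ large, while \ref{K:SFDE:prop2} bounds the mass of $\Green(\cdot,t_n)$ by $1$. On $|y|\geq\delta$ the integrand is bounded by $2\|u_0\|_{L^\infty}$, and the approximate-identity property of $\Green$ (a consequence of \ref{K:SFDE:prop0}--\ref{K:SFDE:prop2} together with the scaling \ref{K:SFDE:prop1}, which yields $\int_{|y|\geq\delta}\Green(y,t_n)\,\dy\to 0$ as $t_n\to 0$) makes this contribution vanish.

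The main (and essentially only) obstacle is this boundary continuity argument, because $u_0\in C_b(\R)$ need not be uniformly continuous on all of $\R$. The resolution is precisely the split above: one exploits local uniform continuity of $u_0$ near $x_0$ on the small $|y|<\delta$ region, and the concentration of the stable kernel $\Green(\cdot,t_n)$ near $0$ (which is an immediate scaling consequence of \ref{K:SFDE:prop1}) on the complementary region. Combining these three ingredients gives the claimed $u\in C_b(\R\times[0,T])$ and completes the corollary.
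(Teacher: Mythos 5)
Your proposal is correct and follows essentially the same route as the paper: the corollary is obtained by restricting the $L^\infty$-semigroup of Theorem~\ref{thm:Bb:semigroup} to $C_b(\R)$, inheriting the smoothness and the equation from that theorem, and using the approximate-unit property of $\Green$ (items \ref{K:SFDE:prop0}--\ref{K:SFDE:prop2} plus the scaling \ref{K:SFDE:prop1}) to get the joint continuity down to $t=0$, which is exactly item 4 of Theorem~\ref{thm:Bb:semigroup}. Your explicit $|y|<\delta$ versus $|y|\geq\delta$ split, exploiting local uniform continuity of $u_0$ near $x_0$ and the concentration of $\Green(\cdot,t)$ as $t\to 0$, is just the detailed version of the paper's one-line appeal to the approximate-unit property.
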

Since $S_t: C_b(\R) \to C_b(\R)$ is not a strongly continuous semigroup,
 the relation between the $C_b$-extension of the strongly continuous semigroup $(S_t)_{t\geq 0}$ on $C_0(\R)$
 and the $C^2_b$-extension of the Fourier multiplier operators $\RieszFeller$ is not obvious.
In Appendix~\ref{ap:LevyProcesses+Semigroups} we discuss this relationship in more detail.   
% In particular, the Riesz-Feller operators $\RieszFeller$ for $0<\alpha\leq 2$ and $\abs{\theta}\leq \min\{\alpha,2-\alpha\}$
%  generate conservative $C_b$-Feller semigroups. 
% This can be deduced from a criterion on the symbol of Fourier multiplier operators in~\cite{Schilling:1998}.

%\subsection{A Comparison Principle}
%\label{ssec:comparison}
%\input{GLE-ComparisonPrinciple}

\section{Cauchy Problem and Comparison Principle}
\label{ap:cp}
We consider the Cauchy problem
\begin{equation} \label{CP:RD}
\begin{cases}
  \diff{u}{t} = \RieszFeller u + f(u) 	& \xx{for} (x,t)\in\R\times (0,\infty) \,, \\
  u(x,0) = u_0(x) 			& \xx{for} x\in\R \,,
\end{cases}
\end{equation}
for $1<\alpha\leq 2$, $|\theta| \leq \min\{\alpha, 2-\alpha\}$ and $f\in C^\infty(\R)$ satisfying~\eqref{As:f:0}.
% for an initial datum $u_0$. % with the help of semigroup theory~\cite{Henry:1981}.
We follow a standard approach, 
 and consider the Cauchy problem in its mild formulation
 to prove the existence of a mild solution.
The Cauchy problem generates a nonlinear semigroup  
 which allows to prove uniform $C^k_b$ estimates via a bootstrap argument
 and to conclude that mild solutions are also classical solutions.

In particular, Droniou and Imbert~\cite{Droniou+Imbert:2006} studied partial integro-differential equations
 \[ \diff{u}{t}(x,t) = \Riesz[u(\cdot ,t)](x) + F(t,x,u(x,t),\nabla u(x,t)) \Xx{for} x\in\R^n\,, t>0\,, \]
 involving the fractional Laplacian $\Riesz$ for some $0<\alpha<2$. 
First they introduce the fractional Laplacian $\Riesz$ as a Fourier multiplier operator on the Schwartz class $\SchwartzTF(\R)$,
 and then they extend it to $C^2_b(\R)$ functions in~\cite[Lemma 2]{Droniou+Imbert:2006}.

Next, we summarize relevant results from~\cite[Section 3]{Droniou+Imbert:2006}:
Under the assumption $\alpha\in (1,2)$, they consider the Cauchy problem
\begin{equation} \label{IVP:fractalHJequation}
 \begin{cases}
  \diff{u}{t}(x,t) = \Riesz[u(\cdot ,t)](x) + F(t,x,u,\nabla u) &\Xx{for} x\in\R^n\,, t>0\,, \\
  u(x,0) = u_0(x) &\Xx{for} x\in\R^n\,, 
 \end{cases}
\end{equation}
where $u_0\in W^{1,\infty}(\R^n)$ and $F\in C^\infty([0,\infty)\times \R^n\times \R\times \R^n)$ satisfies
\begin{enumerate}[label=(DI\arabic*)]
 \item \label{As:DI:16}
  $\forall T>0$, $\forall R>0$, $\forall k\in\N$, $\exists \cK_{T,R,k}$
  such that $\forall (t,x,v,\nu)\in [0,T]\times \R^n\times [-R,R]\times B_R$
  where $B_R:=\set{\nu\in\R^n}{\norm{\nu}<R}$
  and for all multi-indices $\beta\in \N^{2n+2}$ satisfying $\abs{\beta}\leq k$,
  \[ \abs{\partial^\beta F(t,x,v,\nu)}\leq \cK_{T,R,k} \,. \]
 \item \label{As:DI:17}
  $\forall T>0$, $\exists \Lambda_T:[0,\infty)\mapsto (0,\infty)$ continuous and non-decreasing
  such that $\integrall{0}{\infty}{ \frac{1}{\Lambda_T(a)} }{a} = \infty$
  and $\forall (t,x,v)\in [0,T]\times \R^n\times \R$,
  \[ \sgn(v) F(t,x,v,0) \leq \Lambda_T(\abs{v}) \,. \]
 \item \label{As:DI:18}
  $\forall T>0$, $\forall R>0$, $\exists \Gamma_{T,R}:[0,\infty)\mapsto (0,\infty)$ continuous and non-decreasing
  such that $\integrall{0}{\infty}{ \frac{1}{\Gamma_{T,R}(a)} }{a} = \infty$
  and $\forall (t,x,v,\nu)\in [0,T]\times \R^n\times [-R,R]\times \R^n$
  \[ \abs{\nu} \partial_v F(t,x,v,\nu) \leq \Gamma_{T,R}(\abs{\nu})\,, \quad
       \abs{\nabla_x F(t,x,v,\nu)} \leq \Gamma_{T,R}(\abs{\nu}) \,.
  \]
\end{enumerate}
The functions 
\begin{equation} \label{LT+GTR} 
    \mathcal{L}_T(a)=\integrall{0}{a}{ \frac{1}{\Lambda_T(b)} }{b} \XX{and}
    \mathcal{G}_{T,R}(a)=\integrall{0}{a}{ \frac{1}{2n \Gamma_{T,R}(b)} }{b}
\end{equation}
are defined, which are non-decreasing $C^1$-diffeomorphisms from $[0,\infty)$ to $[0,\infty)$,
due to the assumptions on $\Lambda_T$ and $\Gamma_{T,R}$.
\begin{thm}[{\cite[Theorem 3]{Droniou+Imbert:2006}}] \label{thm:CP:Solution:0}
 Let $\alpha\in(1,2)$, $u_0\in W^{1,\infty}(\R^n)$ and $F$ satisfy~\ref{As:DI:16}--\ref{As:DI:18}.
 There exists a unique solution of~\eqref{IVP:fractalHJequation} in the following sense: for all $T>0$
 \begin{enumerate}[label=(DI\arabic*)]
  \setcounter{enumi}{3}
  \item %\label{As:DI:20}
    $u\in C_b(\R^n \times (0,T))$, $\nabla u\in C_b(\R^n \times (0,T))^n$, and 
    for all $a\in(0,T)$ $u\in C^\infty_b(\R^n \times (a,T))$;
  \item %\label{As:DI:21}
    $u$ satisfies the partial integro-differential equation~\eqref{IVP:fractalHJequation} on $\R^n \times (0,T)$,
  \item %\label{As:DI:22}
    $u(\cdot ,t)\to u_0$ uniformly on $\R^n$ as $t\to 0$.
 \end{enumerate}
 There are also the following estimates on the solution:
 for all $0<t<T<\infty$,
 \begin{enumerate}[label=(DI\arabic*)]
  \setcounter{enumi}{6}
  \item %\label{As:DI:23}
    $\norm{u(\cdot ,t)}_{L^\infty(\R^n)} \leq (\mathcal{L}_T)^{-1} \big(t+\mathcal{L}_T(\norm{u_0}_{L^\infty(\R^n)})\big)$,
  \item \label{As:DI:24}
    $\norm{Du(\cdot ,t)}_{L^\infty(\R^n)} \leq (\mathcal{G}_{T,R})^{-1} \big(t+\mathcal{G}_{T,R}(\norm{D u_0}_{L^\infty(\R^n)})\big)$,
 \end{enumerate}
 where $\mathcal{L}_T$ and $\mathcal{G}_{T,R}$ are defined by~\eqref{LT+GTR},
 \[ \norm{Du(\cdot ,t)}_{L^\infty(\R^n)} = \sum_{i=1}^n \Norm{\diff{u}{x_i} (\cdot ,t)}_{L^\infty(\R^n)} \]
 and $R$ is any upper bound of $\norm{u}_{L^\infty(\R^n \times (0,T))}$.
\end{thm}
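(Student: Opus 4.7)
The plan is to solve~\eqref{IVP:fractalHJequation} via Duhamel's principle, using the smoothing properties of the fractional heat kernel $\Green$ (with $\theta=0$) established in Lemma~\ref{lem:SSPM}. Rewrite the problem in mild form
\[
 u(x,t) = (G_0^\alpha(\cdot,t)\ast u_0)(x) + \integrall{0}{t}{\big(G_0^\alpha(\cdot,t-s)\ast F(s,\cdot,u(\cdot,s),\nabla u(\cdot,s))\big)(x)}{s}\,,
\]
and set up a Banach fixed-point argument on $X_\tau := \{u\in C_b(\R^n\times[0,\tau])\,:\, \nabla u\in C_b(\R^n\times[0,\tau])^n\}$. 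The crucial input is property~\ref{K:SFDE:prop7}, which gives $\norm{\nabla G_0^\alpha(\cdot,t)}_{L^1}\lesssim t^{-1/\alpha}$ with $1/\alpha<1$, so that $\integrall{0}{\tau}{(t-s)^{-1/\alpha}}{s}$ is finite. Combined with the local Lipschitz continuity of $F$ in $(v,\nu)$ furnished by~\ref{As:DI:16}, this makes the Picard map a contraction on a small ball for $\tau$ small, yielding local existence and uniqueness in $X_\tau$.

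Next, convert the local solution into a global one using the a priori bounds~\ref{As:DI:24} and its $L^\infty$ analogue, which will also serve to extend the solution to $[0,T]$ for any $T>0$. For the $L^\infty$ bound, apply Theorem~\ref{thm:RieszFeller:globalMax}: along an approximate-maximum sequence for $u(\cdot,t)$, the contribution $\Riesz[u(\cdot,t)](x_k)$ is asymptotically non-positive and $\nabla u(x_k,t)\to 0$, so that~\eqref{IVP:fractalHJequation} together with~\ref{As:DI:17} yields the differential inequality
\[
  \frac{d}{dt}\norm{u(\cdot,t)}_{L^\infty(\R^n)} \leq \Lambda_T\big(\norm{u(\cdot,t)}_{L^\infty(\R^n)}\big)\,,
\]
after which integration against $1/\Lambda_T$ gives the estimate in terms of $\mathcal{L}_T^{-1}$. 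For the gradient bound, differentiate the equation spatially, note that each $w_i = \partial_{x_i} u$ satisfies a linear fractional equation with a source term that is controlled by~\ref{As:DI:18}, then repeat the near-maximum argument on $|\nabla u|$, producing the ODE comparison that involves $\mathcal{G}_{T,R}$ from~\eqref{LT+GTR}.

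With uniform bounds on $u$ and $\nabla u$ in hand, regularity is bootstrapped from the smoothing of $G_0^\alpha$: for $t>a>0$ differentiate the mild representation any number of times in $x$ to obtain convolutions of the form $\partial^\beta G_0^\alpha(\cdot,t-s)\ast(\cdots)$, whose kernels are in $L^1$ by~\ref{K:SFDE:prop5}--\ref{K:SFDE:prop6}, while the source $F(s,\cdot,u,\nabla u)$ inherits regularity via~\ref{As:DI:16}. Iterating this argument in a Schauder-type loop delivers $u\in C^\infty_b(\R^n\times(a,T))$ for every $a\in(0,T)$. Uniform convergence $u(\cdot,t)\to u_0$ as $t\to 0$ is obtained from properties~\ref{K:SFDE:prop0}--\ref{K:SFDE:prop2} of the approximate unit $G_0^\alpha(\cdot,t)$ together with the boundedness of the Duhamel term (which is $O(t)$ in $L^\infty$). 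Uniqueness within the solution class follows by subtracting two mild solutions and closing a standard Gronwall estimate in $X_\tau$, using the Lipschitz constants supplied by~\ref{As:DI:16} on compact sets that contain both solutions.

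The main obstacle is the coupling between the $L^\infty$ and gradient estimates through nonlinearities that are only controlled by the growth functions $\Lambda_T$ and $\Gamma_{T,R}$; a naive linear Gronwall argument does not suffice, and it is essential that the divergence of $\int 1/\Lambda_T$ and $\int 1/\Gamma_{T,R}$ at infinity provides global-in-time bounds rather than only finite-time blow-up estimates. Making the ``$\liminf$ at a near-maximum'' argument of Theorem~\ref{thm:RieszFeller:globalMax} interact cleanly with the time derivative $\partial_t u$ --- for which there is no pointwise value a priori along the approximating sequence --- is the most delicate point and typically requires either a viscosity-solution interpretation with doubling of variables or a careful regularization of the mild solution in time before taking the limit.
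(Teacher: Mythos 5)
This theorem is quoted verbatim from Droniou--Imbert (\cite[Theorem 3]{Droniou+Imbert:2006}); the paper offers no proof of it, and your sketch follows essentially the same strategy as the cited original and as the paper's own summary at the start of Section~\ref{ap:cp}: Duhamel/mild formulation, a contraction argument in a space controlling both $u$ and $\nabla u$ via the integrable singularity $\|\nabla G^\alpha_0(\cdot,t)\|_{L^1}\lesssim t^{-1/\alpha}$ (which is where $\alpha>1$ enters), a priori $L^\infty$ and gradient bounds obtained from the approximate-maximum property of the fractional Laplacian combined with~\ref{As:DI:17}--\ref{As:DI:18}, and a bootstrap for interior smoothness. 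Your flagging of the genuinely delicate points --- that the divergence of $\int 1/\Lambda_T$ and $\int 1/\Gamma_{T,R}$ is what upgrades finite-time control to bounds on all of $(0,T)$, and that the near-maximum argument must be made compatible with $\partial_t u$ along the approximating sequence --- matches where the cited proof actually spends its effort.
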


A smooth function $F=F(u)$ that depends only on $u$ and satisfies~\ref{As:DI:16} also satisfies~\ref{As:DI:18}.
In this case a simplified proof of Theorem~\ref{thm:CP:Solution:0} 
 allows to show the existence of a solution for the IVP with $u_0\in L^\infty(\R)$.
\begin{thm} \label{thm:CP:Solution}
 Let $\alpha\in(1,2)$, $u_0\in L^{\infty}(\R^n)$ and $F=F(u)$ satisfy~\ref{As:DI:16} and \ref{As:DI:17}.
 There exists a unique solution of~\eqref{IVP:fractalHJequation} in the following sense: for all $T>0$
 \begin{enumerate}[label=(DI\arabic*')]
  \setcounter{enumi}{3}
  \item \label{As:DI:20}
    $u\in C_b(\R^n \times (0,T))$ and 
    for all $a\in(0,T)$ $u\in C^\infty_b(\R^n \times (a,T))$;
  \item \label{As:DI:21}
    $u$ satisfies the partial integro-differential equation~\eqref{IVP:fractalHJequation} on $\R^n \times (0,T)$,
  \item \label{As:DI:22}
    If $u_0\in C_b(\R)$ then $u(\cdot ,t)\to u_0$ uniformly on $\R^n$ as $t\to 0$.
 \end{enumerate}
 There are also the following estimates on the solution:
 for all $0<t<T<\infty$,
 \begin{enumerate}[label=(DI\arabic*)]
  \setcounter{enumi}{6}
  \item \label{As:DI:23}
    $\norm{u(\cdot ,t)}_{L^\infty(\R^n)} \leq (\mathcal{L}_T)^{-1} \big(t+\mathcal{L}_T(\norm{u_0}_{L^\infty(\R^n)})\big)$,
 \end{enumerate}
 where $\mathcal{L}_T$ is defined by~\eqref{LT+GTR},
 and $R$ is any upper bound of $\norm{u}_{L^\infty(\R^n \times (0,T))}$.
\end{thm}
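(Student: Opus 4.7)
The plan is to follow the semigroup approach used by Droniou and Imbert for Theorem~\ref{thm:CP:Solution:0}, exploiting the two simplifications that $F$ depends only on $u$ and that no gradient dependence appears. I would recast the problem in its Duhamel form
\[ u(\cdot,t) = S_t u_0 + \integrall{0}{t}{S_{t-s} F(u(\cdot,s))}{s}, \]
where $(S_t)_{t\geq 0}$ is the convolution semigroup on $L^\infty$ from Theorem~\ref{thm:Bb:semigroup}. Property~\ref{K:SFDE:prop2} gives $\norm{S_t}_{L^\infty\to L^\infty}\le 1$, and~\ref{As:DI:16} with $k=1$ makes $F$ locally Lipschitz, so on a suitable ball in $C_b(\R^n\times[0,T_0])$ the right-hand side is a contraction for $T_0$ small enough and Banach's fixed point theorem produces a unique local mild solution. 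Uniqueness on any joint interval of existence follows from a Gronwall estimate on $\norm{u-\tilde u}_{L^\infty}$.

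The core work is then to extend the local solution to arbitrary $[0,T]$ by establishing the a priori estimate~\ref{As:DI:23}. Writing $M(t):=\sup_x u(\cdot,t)$, the plan is to show that $M$ satisfies $M'(t)\le \Lambda_T(M(t))$ in the sense of Dini derivatives and then compare with the ODE $\dot w=\Lambda_T(w)$, $w(0)=\norm{u_0}_{L^\infty}$, whose solution is precisely $\mathcal{L}_T^{-1}(t+\mathcal{L}_T(\norm{u_0}_{L^\infty}))$. The key input is Theorem~\ref{thm:RieszFeller:globalMax}: along a sequence $(x_k)$ with $u(x_k,t)\to M(t)$, the Riesz--Feller term is controlled with the favourable sign, so that $\partial_t u(x_k,t)\le \Lambda_T(u(x_k,t))+o(1)$ by~\ref{As:DI:17}. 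Applying the same argument to $-u$ gives the two-sided $L^\infty$ bound, and a standard continuation argument based on the local existence result then yields a global mild solution on $[0,T]$.

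Regularity on $\R^n\times(a,T)$ for $a>0$ is bootstrapped from the smoothing properties~\ref{K:SFDE:prop5}--\ref{K:SFDE:prop8} of $\Green$: differentiating the Duhamel formula transfers $x$-derivatives onto the kernel, and splitting $\Green(\cdot,t)=\Green(\cdot,t/2)\ast \Green(\cdot,t/2)$ via~\ref{K:SFDE:prop3} upgrades the solution iteratively to $u\in C^\infty_b(\R^n\times(a,T))$. With $u$ smooth, dominated convergence together with the bound of Proposition~\ref{prop:RieszFeller:estimate} permits differentiation under the integral and shows the mild solution is a classical solution of~\eqref{IVP:fractalHJequation}. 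Convergence $u(\cdot,t)\to u_0$ as $t\searrow 0$ when $u_0\in C_b(\R)$ follows from Corollary~\ref{cor:Cb:semigroup}, since the Duhamel correction is $O(t)$ uniformly by~\ref{K:SFDE:prop2} and the $L^\infty$ bound on $F(u)$. The main obstacle is the a priori bound: in the nonlocal setting one cannot simply evaluate the PDE at an interior maximum, because $u(\cdot,t)$ need not attain its supremum, so Theorem~\ref{thm:RieszFeller:globalMax} has to be applied along a maximizing sequence and the differential inequality for $M(t)$ justified through Dini derivatives or a viscosity-style comparison rather than a direct pointwise calculation.
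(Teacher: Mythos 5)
Your proposal is correct and follows essentially the same route the paper takes: the paper gives no independent proof here but explicitly defers to a simplified version of the Droniou--Imbert argument for Theorem~\ref{thm:CP:Solution:0}, which is precisely the scheme you reconstruct (Duhamel fixed point, a priori $L^\infty$ bound via Theorem~\ref{thm:RieszFeller:globalMax} along maximizing sequences and comparison with $\dot w=\Lambda_T(w)$, kernel smoothing for regularity). The only small wrinkle is that for $u_0\in L^\infty\setminus C_b$ the contraction should be run in $L^\infty(\R^n\times(0,T_0))$ rather than $C_b(\R^n\times[0,T_0])$, with continuity for $t>0$ recovered afterwards from the smoothing of $\Green$.
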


%\paragraph{extensions}
For our purposes we need
 to extend the result of Theorem~\ref{thm:CP:Solution} to the case of all Riesz-Feller operators~$\RieszFeller$ in~\eqref{CP:RD}
 and to adapt the result to admissible functions $f$ which do not satisfy the growth condition~\ref{As:DI:17}
 see also Figure~\ref{fig:f:modification}.

First, Droniou and Imbert note in~\cite[Remark 5]{Droniou+Imbert:2006}
 that their proof of Theorem~\ref{thm:CP:Solution:0} still applies
 if $\Riesz$ is replaced by more general operators
 which satisfy~\cite[Theorem 2]{Droniou+Imbert:2006}
 and whose associated kernel $K_\alpha(x,t)$ has the properties~\cite[(30)]{Droniou+Imbert:2006}
 \begin{enumerate}[label=(P\arabic*)]
  \item \label{As:P1} $K_\alpha\in C^\infty(\R^n \times (0,\infty))$ and $(K_\alpha(\cdot ,t))_{t\to 0}$ is an approximate unit
   (in particular, $K_\alpha\geq 0$ and, for all $t>0$, $\norm{K_\alpha(\cdot ,t)}_{L^1(\R^n)}=1$),
  \item \label{As:P2} $\forall t>0$, $\forall t'>0$, $K_\alpha(\cdot ,t+t')=K_\alpha(\cdot ,t)\ast K_\alpha(\cdot ,t')$,
  \item \label{As:P3} $\exists \mathcal K>0$, $\forall t>0$, $\norm{\nabla K_\alpha(\cdot ,t)}_{L^1(\R^n)}\leq \mathcal K t^{-1/\alpha}$,
 \end{enumerate}
 and~\cite[(59)]{Droniou+Imbert:2006}
 \begin{enumerate}[label=(P\arabic*)]
  \setcounter{enumi}{3}
  \item \label{As:P4} $(0,\infty)\ni t\mapsto K_\alpha(\cdot ,t)\in L^1(\R^n)$ is continuous.
 \end{enumerate}
The Riesz-Feller operators $\RieszFeller$ for $1<\alpha<2$ and $|\theta| \leq \min\{\alpha, 2-\alpha\}$
 satisfy the properties~\ref{As:P1}--\ref{As:P3}, due to Theorem~\ref{thm:RieszFeller:globalMax} and Lemma~\ref{lem:SSPM},
 whereat \ref{As:P4} follows from the regularity of $\Green$ and the scaling property~\ref{K:SFDE:prop1}.
Therefore the result of Theorem~\ref{thm:CP:Solution} still holds
 if the operator $\Riesz$ in~\eqref{IVP:fractalHJequation} is replaced by 
 a Riesz-Feller operator $\RieszFeller$ for $1<\alpha<2$ and $|\theta| \leq \min\{\alpha, 2-\alpha\}$.

Second, the prototype of a function $f$ satisfying assumption~\eqref{As:f:0} is a cubic polynomial of the form
 \[ f_1(u):= u\, (1-u)\, (u-a) \Xx{for some} a\in (0,1)\,, \]
 which satisfies $(\sgn v) f_1(v)\leq \max_{t\in [0,1]} f_1(t)$ for all $v\in\R$
 and hence assumption~\ref{As:DI:17} with a constant $\Lambda$.
\begin{figure}
 \begin{minipage}[b]{0.5\textwidth} 
% \begin{figure} \centering
  \begin{tikzpicture}[scale=1.5]
%  \draw[very thin,color=gray] (-1,-1) grid (2,1);
  \draw[->] (-1.5,0) -- (2.5,0) node[right] {$u$};
  \draw[->] (0,-1.5) -- (0,1.5); %node[above] {$f(u)$};
  \draw[-] (0.6,0) -- (0.6,-0.1) node[below] {$a$};
  \draw[-] (1,0) -- (1,-0.1) node[below] {$1$};

  \draw[very thick,domain=-0.7:1.7,color=blue] plot (\x,{-1*\x*(\x-0.6)*(\x-1)}) node[right] {$f_1(u)$}; %{$f_1(u)=u(1-u)(u-0.6)$};
%   \draw[thin,color=green,domain=-1.5:-0.5] plot (\x,{0.5*(-0.5-0.6)*(-0.5-1)});
%   \draw[thin,color=green,domain=-0.5:1.5] plot (\x,{-1*\x*(\x-0.6)*(\x-1)});
%   \draw[thin,color=green,domain=1.5:2.5] plot (\x,{-1.5*(1.5-0.6)*(1.5-1)}) node[right] {$\overline f_1(u)$};
  \end{tikzpicture}
% \end{figure}
 \end{minipage}
 \begin{minipage}[b]{0.5\textwidth} 
% \begin{figure} \centering
  \begin{tikzpicture}[scale=1.5]
%  \draw[very thin,color=gray] (-1,-1) grid (2,1);
  \draw[->] (-1.5,0) -- (2.5,0) node[right] {$u$};
  \draw[->] (0,-1.5) -- (0,1.5); %node[above] {$f(u)$};
  \draw[-] (0.6,0) -- (0.6,-0.1) node[below] {$a$};
  \draw[-] (1,0) -- (1,-0.1) node[below] {$1$};

  \draw[very thick,domain=-1.1:2.1,smooth,color=blue] plot (\x,{(\x+1)*\x*(\x-0.6)*(\x-1)*(\x-2)}) node[right] {$f_2(u)$}; %{$f_2(u)=(u+1)u(u-0.6)(u-1)(u-2)$};
%   \draw[thin,color=green,smooth,domain=-1.5:-0.5] plot (\x,{(0.5)*(-0.5)*(-1.1)*(-1.5)*(-2.5)});
%   \draw[thin,color=green,smooth,domain=-0.5:1.5] plot (\x,{(\x+1)*\x*(\x-0.6)*(\x-1)*(\x-2)});
%   \draw[thin,color=green,smooth,domain=1.5:2.5] plot (\x,{(2.5)*(1.5)*(0.9)*(0.5)*(-0.5)}) node[right] {$\overline f_2(u)$};
  \end{tikzpicture}
% \end{figure}
 \end{minipage}
 \caption{
  The functions $f_1(u)=u\, (1-u)\, (u-a)$ and $f_2(u)=(u+1)\, u\, (u-a)\, (u-1)\, (u-2)$ for any $a\in (0,1)$
   are bistable in the sense of~\eqref{As:f:0} and depicted in the left and the right figure, respectively.  
  Whereas $f_1$ satisfies the assumptions~\ref{As:DI:16}--\ref{As:DI:18},
   function $f_2$ does not satisfy~\ref{As:DI:17}.
%   However, we are interested in solutions taking values in $[0,1]$
%    and the partial integro-differential equation exhibits a comparison principle.
%   Therefore, we will modify any bistable function $f$ outside of $[0,1]$,
%    such that its modification $\overline f$ satisfies the assumptions~\ref{As:DI:16}--\ref{As:DI:18}.
  }
 \label{fig:f:modification}
\end{figure}
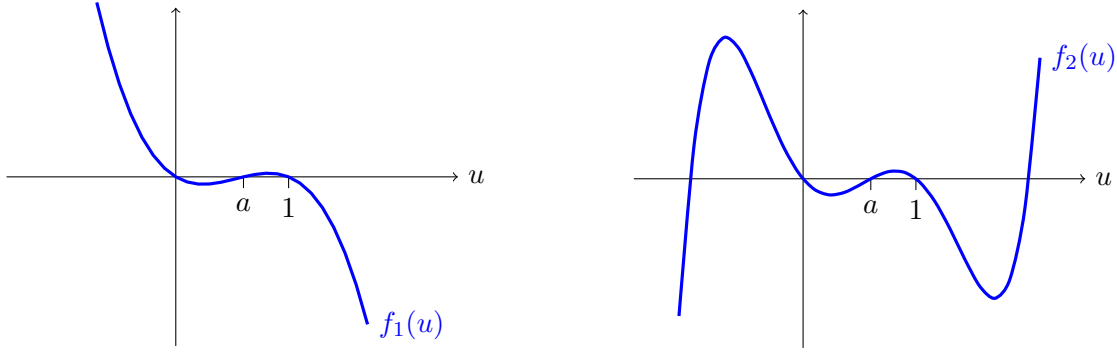
In contrast, other admissible function such as 
 \[ f_2(u):= (u+1)\, u\, (u-a)\, (u-1)\, (u-2) \Xx{for some} a\in (0,1) \]
 do not satisfy assumption~\ref{As:DI:17}.
 The estimate 
 \[ (\sgn v) f_2(v) \leq \Lambda_T(\abs{v})= c(\abs{v}+2)^5 \] 
 for some $c>0$, implies that
 \[
    \integrall{0}{\infty}{\frac1{\Lambda_T(u)}}{u} = \integrall{0}{\infty}{\frac1{c(u+2)^5}}{u} 
       = - \lim_{R\to\infty} \frac1{4c(u+2)^4} \bigg|_{u=0}^{u=R} = \frac1{c 2^6} <\infty \,.
 \]
However, we are interested in solutions taking values in $[0,1]$
 and the partial integro-differential equation exhibits a comparison principle see also Lemma~\ref{lem:comparison:Cb}.
Thus we will modify the function $f_2$ outside of $[0,1]$,
 such that it satisfies the assumptions~\ref{As:DI:16}--\ref{As:DI:18},
 see also Figure~\ref{fig:f:modification}.
Consequently (a generalization of) Theorem~\ref{thm:CP:Solution} applies to the associated Cauchy problem
 and the solution - taking values in $[0,1]$ -
 will be a solution of the original Cauchy problem~\eqref{CP:RD}.

\begin{thm} \label{thm:CP:existence:bistable}
Suppose $1<\alpha\leq 2$, $|\theta| \leq \min\{\alpha, 2-\alpha\}$ and $f\in C^\infty(\R)$ satisfies~\eqref{As:f:0}.
The Cauchy problem~\eqref{CP:RD} with initial condition $u(\cdot ,0)=u_0\in C_b(\R)$ and $0\leq u_0\leq 1$
 has a solution $u(x,t)$ in the sense of Theorem~\ref{thm:CP:Solution}
 satisfying $0\leq u(x,t) \leq 1$ for all $(x,t)\in\R\times(0,\infty)$.
 Moreover, for all $k\in\N$ and $t_0>0$ there exists a $\cK>0$ such that $\norm{u(\cdot ,t)}_{C^k_b(\R)}\leq \cK$ for all $0<t_0<t$. 
% \[ \forall k\in\N: \qquad \exists C>0: \qquad \forall 0<t_0 \Xx{and} \forall t_0<t \qquad \norm{u}_{C^k_b(\R)}\leq C\,. \] 
\end{thm}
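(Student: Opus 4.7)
The plan is to reduce the problem to the Droniou--Imbert framework (Theorem~\ref{thm:CP:Solution}, as extended via properties~\ref{As:P1}--\ref{As:P4} so that $\Riesz$ may be replaced by $\RieszFeller$) by truncating the nonlinearity outside $[0,1]$, then to recover the pointwise bound $0\leq u\leq 1$ from the comparison principle, and finally to derive uniform spatial smoothness by a Duhamel bootstrap.

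First I would fix a small $\delta>0$ and construct a modified nonlinearity $\tilde f\in C^\infty(\R)$ that agrees with $f$ on $[-\delta,1+\delta]$ and is bounded with all derivatives bounded on $\R$ (e.g.\ via $\tilde f=\phi\cdot f$ with a smooth cutoff $\phi$ satisfying $\phi\equiv 1$ on $[-\delta,1+\delta]$ and $\phi\equiv 0$ outside $[-2\delta,1+2\delta]$). In particular $\tilde f(0)=\tilde f(1)=0$ and $\tilde f$ satisfies~\ref{As:DI:16} on each ball, while $(\sgn v)\tilde f(v)\leq\Lambda$ with $\Lambda$ a constant, so~\ref{As:DI:17} holds trivially (since $\integrall{0}{\infty}{1/\Lambda}{a}=\infty$). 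The extended Theorem~\ref{thm:CP:Solution} then yields a unique solution $\tilde u$ of~\eqref{CP:RD} with $f$ replaced by $\tilde f$, satisfying~\ref{As:DI:20}--\ref{As:DI:23}.

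Next, since the constants $0$ and $1$ are stationary solutions of the modified equation and $0\leq u_0\leq 1$, the comparison principle (Lemma~\ref{lem:comparison:Cb}), applied once with the sub-solution $0$ and once with the super-solution $1$, forces $0\leq\tilde u(x,t)\leq 1$ for every $(x,t)\in\R\times(0,\infty)$. Because $\tilde f\equiv f$ on $[0,1]$, $\tilde u$ is in fact a solution of the original Cauchy problem~\eqref{CP:RD}, which gives existence with the required pointwise bounds.

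For the uniform $C^k_b$ estimates I would use, for $t\geq t_0>0$, the Duhamel formula
\[
 u(\cdot,t) = S_{t_0/2}\bigl[u(\cdot,t-t_0/2)\bigr] + \integrall{0}{t_0/2}{ S_s\bigl[f(u(\cdot,t-s))\bigr] }{s},
\]
together with the uniform $L^\infty$-bound from the previous step and the smoothing estimate $\norm{\diff{\Green}{x}(\cdot,s)}_{L^1(\R)}\leq\cK\,s^{-1/\alpha}$ from property~\ref{K:SFDE:prop7} of Lemma~\ref{lem:SSPM}. Distributing one spatial derivative onto $\Green$ and the remaining $k-1$ onto $f(u(\cdot,t-s))$ gives
\[
 \norm{\difff{}{x}{k}S_s[f(u(\cdot,t-s))]}_{L^\infty(\R)} \leq \cK\,s^{-1/\alpha}\,\norm{f(u(\cdot,t-s))}_{C^{k-1}_b(\R)},
\]
which is $s$-integrable on $(0,t_0/2)$ because $\alpha>1$. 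A bootstrap in $k$ then closes the argument: having a uniform bound on $\norm{u(\cdot,\tau)}_{C^{k-1}_b(\R)}$ implies the same on $\norm{f(u(\cdot,\tau))}_{C^{k-1}_b(\R)}$ by the chain rule, and the Duhamel identity upgrades this to a uniform $C^k_b$ bound on $u(\cdot,t)$ for $t\geq t_0$, with constants depending only on $k$, $t_0$, $\alpha$, $\theta$ and $\norm{f}_{C^k([0,1])}$. The main obstacle I expect is not the bootstrap but checking that the Droniou--Imbert fixed-point construction really does transfer verbatim to $\RieszFeller$ under the weakened hypothesis $u_0\in C_b(\R)$ rather than $W^{1,\infty}(\R)$, using only the kernel properties~\ref{As:P1}--\ref{As:P4} and the simplification afforded by the semilinear right-hand side $F=f(u)$ (for which the gradient estimate~\ref{As:DI:24} plays no role).
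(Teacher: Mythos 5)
Your proposal is correct and follows essentially the same route as the paper: truncate $f$ outside a neighborhood of $[0,1]$ so that the growth condition~\ref{As:DI:17} holds trivially, invoke the Riesz--Feller extension of Theorem~\ref{thm:CP:Solution}, recover $0\leq u\leq 1$ from Lemma~\ref{lem:comparison:Cb}, and bootstrap uniform $C^k_b$ bounds from the Duhamel formula using $\norm{\diff{\Green}{x}(\cdot,s)}_{L^1(\R)}\leq\cK s^{-1/\alpha}$ with $\alpha>1$. The only (cosmetic) difference is that you obtain uniformity in $t$ by writing Duhamel over a fixed backward window of length $t_0/2$, whereas the paper restarts the nonlinear semigroup at $t_0,2t_0,3t_0,\dots$ and inducts over these intervals; both arguments rest on the same estimates.
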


\begin{proof}
The first assumption~\ref{As:DI:16} is satisfied,
 since $f$ is a smooth function,
 hence all derivatives are continuous and bounded on any compact interval $[-R,R]$.
The third assumption~\ref{As:DI:18} is satisfied,
 since $F(t,x,v,\nu)=f(v)$ implies 
 \[
    \abs{\nu} \diff{F}{v}(t,x,v,\nu) = \abs{\nu} f'(v) \leq \abs{\nu}\, \max_{s\in[-R,R]} \abs{f'(v)} =: \Gamma_{T,R}(\abs{\nu})\,,
      \qquad \Abs{\diff{F}{x}(t,x,v,\nu)} = 0 \leq \Gamma_{T,R}(\abs{\nu})\,,
 \]
 and
 \[ \integrall{0}{\infty}{\frac1{\Gamma_{T,R}(u)}}{u} = \integrall{0}{\infty}{\frac1{c\, u}}{u} 
       = \tfrac1c \lim_{\epsilon\to 0} \ln(u) \bigg|_{u=\epsilon}^{u=\frac1\epsilon} = \infty \,.
 \]
We are interested in solutions taking values in $[0,1]$.
Moreover, the partial integro-differential equation exhibits a comparison principle,
 such that classical solutions $u(x,t)$ of our Cauchy problem will satisfy $0\leq u\leq 1$.
Therefore, we can modify $f$
 in such a way that its modification $\tilde f$ satisfies assumption~\ref{As:DI:17}
 but does not change the dynamics as long as $u$ takes values in~$[0,1]$.
First we define $f_{min}:= \min_{u\in[0,1]} f(u)$, $f_{max}:= \max_{u\in[0,1]} f(u)$,
 and a bounded function $\overline f(u):= \max\{f_{min} \,, \min\{f(u)\,, f_{max}\}\}$.
Finally we consider a smooth function $\tilde f\in C^\infty(\R)$,
 such that $\tilde f(u)=\overline f(u)=f(u)$ for all $u\in[0,1]$
 and $\abs{\tilde f(u)}\leq \abs{\overline f(u)}$ for all $u\in\R$. 
Then, assumption~\ref{As:DI:17} holds for $\tilde f$, since
 $(\sgn s) \tilde f(v) \leq \Lambda_T(u) := \norm{\tilde f}_{\infty}<\infty$ and
 \[
     \integrall{0}{\infty}{\frac1{\Lambda_T(u)}}{u} = \integrall{0}{\infty}{\frac1{\norm{\tilde f}_{\infty}}}{u} 
       = \lim_{R\to\infty} \frac{u}{\norm{\tilde f}_{\infty}} \bigg|_{u=0}^{u=R} = \infty \,.
 \]
The other assumptions~\ref{As:DI:16} and~\ref{As:DI:18} continue to hold.
Thus, due to (a generalization of) Theorem~\ref{thm:CP:Solution},
 there exists a unique solution to the Cauchy problem 
 \begin{equation} \label{CP:RD:RieszFeller:modified} \begin{cases}
   \diff{u}{t} = \RieszFeller u + \tilde f(u) &\Xx{for} (x,t)\in\R\times(0,T]\,, \\
   u(\cdot ,0)=u_0 &\Xx{for} x\in\R\,. 
 \end{cases} \end{equation}
Due to the assumptions on the initial datum $0\leq u_0\leq 1$ and 
 a comparison principle - formulated in Lemma~\ref{lem:comparison:Cb} - 
 $0\leq u(x,t) \leq 1$ for all $(x,t)\in\R\times[0,T]$.
Thus the solution $u(x,t)$ is a solution of the original Cauchy problem,
 whose uniqueness has to be verified.
Suppose two solutions of~\eqref{CP:RD} with the stated properties exist,
 then they are solutions of the modified Cauchy problem~\eqref{CP:RD:RieszFeller:modified} as well.
 However, the modified Cauchy problem has a unique solution, 
 hence the two solutions are identical. 
 
Due to (a generalization of) Theorem~\ref{thm:CP:Solution} a solution $u$ exists for all $T>0$ on a time interval~$(0,T)$.
However the comparison principle proves that $0\leq u(\cdot,t) \leq 1$ for all $t\geq 0$,
 such that $\norm{u(\cdot ,t)}_{L^\infty(\R)}$ satisfies not only~\ref{As:DI:23}
 but also $\norm{u(\cdot ,t)}_{L^\infty(\R)}\leq 1$ for all $t\geq 0$.
%The solution exists for all time already due to the controlled growth of $\norm{u(\cdot ,t)}_{C_b(\R)}$ see~\ref{As:DI:23}.
The solution $u$ is also a mild solution and satisfies 
 \[ u(x,t) = (\Green(\cdot ,t)\ast u_0)(x) + \integrall{0}{t}{ [\Green(\cdot ,t-\tau)\ast f(u(\cdot ,\tau))](x) }{\tau} \]
 for $t\geq 0$.
For $t\geq t_0>0$ the solution is differentiable and satisfies the mild formulation
 \[ \diff{u}{x}(x,t) = \Big(\diff{\Green(\cdot ,t)}{x}\ast u_0\Big)(x)
     + \integrall{0}{t}{ \Big[\diff{\Green(\cdot ,t-\tau)}{x}\ast f(u(\cdot ,\tau))\Big](x) }{\tau} \]
and hence the estimate  
 \begin{equation} \label{estimate:first-derivative}
    \sup_{x\in\R} \Abs{\diff{u}{x}(x,t)} \leq \cK t^{-\tfrac1\alpha} \underbrace{\norm{u_0}_{L^\infty}}_{\leq 1}
     + \max_{u\in[0,1]} \abs{f(u)}\, \cK\, \frac{t^{1-\tfrac1\alpha}}{1-\tfrac1\alpha}
 \end{equation}
 due to $0\leq u(\cdot,t) \leq 1$ for all $t\geq 0$ and Lemma~\ref{lem:SSPM}.
In particular, assumption $1<\alpha\leq 2$ implies 
 \[ t^{-\tfrac1\alpha}\leq t_0^{-\tfrac1\alpha} \XX{and} t^{1-\tfrac1\alpha}\leq (2t_0)^{1-\tfrac1\alpha} \]
 for all $t\in[t_0,2t_0]$.
Thus, for $t\in[t_0,2t_0]$, estimate~\eqref{estimate:first-derivative} yields
 \[ \sup_{x\in\R} \Abs{\diff{u}{x}(x,t)} 
     \leq \cK t_0^{-\tfrac1\alpha} + \max_{u\in[0,1]} \abs{f(u)}\, \cK\, \frac{(2t_0)^{1-\tfrac1\alpha}}{1-\tfrac1\alpha}\,.
 \]
This gives an estimate on bounded intervals, but not a global estimate on $[t_0,\infty)$.
However, the IVP generates a nonlinear semigroup;
 the solution $u$ of the IVP with initial condition $u(\cdot ,0)=u_0(\cdot)$ 
 is equal to the solution $v$ of the IVP with initial condition $v(\cdot ,t_0)=u(\cdot ,t_0)$ on the time interval $[t_0,\infty)$.
 Hence, $u$ and its derivative $\diff{u}{x}(x,t)$ satisfy
 \[ u(x,t) = (\Green(\cdot ,t-t_0)\ast u(\cdot ,t_0))(x) + \integrall{t_0}{t}{ [\Green(\cdot ,t-\tau)\ast f(u(\cdot ,\tau))](x) }{\tau} \]
 and
 \[
   \diff{u}{x}(x,t)
     = \Big(\diff{\Green(\cdot ,t-t_0)}{x}\ast u(\cdot ,t_0)\Big)(x)
      + \integrall{t_0}{t}{ \Big[\diff{\Green(\cdot ,t-\tau)}{x}\ast f(u(\cdot ,\tau))\Big](x) }{\tau}
 \]
 for $t\geq t_0> 0$. The estimate now reads 
 \[ \sup_{x\in\R} \Abs{\diff{u}{x}(x,t)} \leq \cK (t-t_0)^{-\tfrac1\alpha} \norm{u(\cdot ,t_0)}_{L^\infty}
     + \max_{u\in[0,1]} \abs{f(u)}\, \cK\, \frac{(t-t_0)^{1-\tfrac1\alpha}}{1-\tfrac1\alpha} \]
 for $t\geq t_0> 0$ and $t\in [2 t_0, 3 t_0]$ we obtain again 
 \[
    \sup_{x\in\R} \Abs{\diff{u}{x}(x,t)} 
%       \leq \cK t_0^{-\tfrac1\alpha} \norm{u(\cdot ,t_0)}_{L^\infty} 
%         + \max_{u\in[0,1]} \abs{f(u)}\, \cK\, \frac{(2t_0)^{1-\tfrac1\alpha}}{1-\tfrac1\alpha} \\
      \leq \cK t_0^{-\tfrac1\alpha} 
        + \max_{u\in[0,1]} \abs{f(u)}\, \cK\, \frac{(2t_0)^{1-\tfrac1\alpha}}{1-\tfrac1\alpha} 
 \]
 due to $1<\alpha\leq 2$ and the uniform estimate on $u$.
Whence by induction we obtain the uniform estimate of $\diff{u}{x}(x,t)$ on $(x,t)\in\R\times[t_0,\infty)$,
 and in a similar way the uniform estimates for all other derivatives of $u$.     
\end{proof}

\subsection{Comparison principles and far-field behavior}
\label{ssec:comparison}
\begin{lem} \label{lem:comparison:Cb}
Assume $1<\alpha\leq 2$, $\abs{\theta} \leq \min\{\alpha,2-\alpha\}$, $T>0$
 and $u,v \in C_b(\R\times [0,T])\cap C^2_b(\R\times (t_0,T])$ for all $t_0\in(0,T)$ such that 
 \[
   \diff{u}{t} \leq \RieszFeller u + f(u) \XX{and} \diff{v}{t} \geq \RieszFeller v + f(v) \Xx{in} \R\times (0,T]\,. 
 \]
\begin{enumerate}[label=(\roman*)]
\item If $v(\cdot ,0)\geq u(\cdot ,0)$ then $v(x,t)\geq u(x,t)$ for all $(x,t)\in\R\times (0,T]$.
\item If $v(\cdot ,0) \gneqq u(\cdot ,0)$ then $v(x,t) > u(x,t)$ for all $(x,t)\in\R\times (0,T]$.
\item Moreover,
 there exists a positive continuous function 
 \[ \eta:[0,\infty)\times(0,\infty)\to(0,\infty)\,, \quad (m,t)\mapsto\eta(m,t)\,, \]
 such that if $v(\cdot ,0)\geq u(\cdot ,0)$ then for all $(x,t)\in\R\times(0,T)$
 \[ v(x,t)-u(x,t) \geq \eta(\abs{x},t) \integrall{0}{1}{ v(y,0)-u(y,0) }{y} \,. \]
% \item Moreover,
%  the function $\eta:[1,\infty)\to (0,\infty)$, $m\mapsto \min_{z\in [-m-1,m]} \Green(z,1)$, is positive and non-increasing
%  such that if $v(\cdot ,0)\geq u(\cdot ,0)$ then for some $C>0$ and every $m\geq 1$
%  \[ \min_{x\in [-m,m]} v(x,1)-u(x,1) \geq C \eta(m) \integrall{0}{1}{ v(y,0)-u(y,0) }{y} \,. \]
\end{enumerate}
\end{lem}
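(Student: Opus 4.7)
The plan is to linearize the reaction term, absorb the resulting zero-order coefficient into a nonnegative bounded potential via an exponential time rescaling, and then read off the conclusions from the nonnegativity (and strict positivity) of the Riesz--Feller convolution kernel $\Green$. Setting $W := v - u$, the mean-value identity $f(v) - f(u) = c(x,t)\,W$ with $c(x,t) := \integrall{0}{1}{f'(u + sW)}{s}$ has $|c| \leq L$ for some finite $L$, because $u, v$ are bounded and $f \in C^1$. Subtracting the two differential inequalities yields $\partial_t W \geq \RieszFeller W + c(x,t)\,W$ on $\R \times (0, T]$ with $W(\cdot, 0) \geq 0$, and the rescaling $\widetilde W := e^{Lt} W$ turns this into $\partial_t \widetilde W \geq \RieszFeller \widetilde W + (c + L) \widetilde W$ with $c + L \geq 0$.

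For part~(i), I would pass to the mild formulation using the Riesz--Feller semigroup $S_t = \Green(\cdot,t)\ast$ from Corollary~\ref{cor:Cb:semigroup}. Introducing the nonnegative defect $\rho := \partial_t \widetilde W - \RieszFeller \widetilde W - (c+L) \widetilde W$, a pointwise computation of $\frac{d}{ds} S_{t-s} \widetilde W(\cdot, s) = S_{t-s}\bigl[(c+L)\widetilde W + \rho\bigr](\cdot, s)$ (valid on any subinterval $[t_0, T]$ by the $C^2_b$ regularity of $u, v$ there, then extended down to $s = 0$ by continuity of $\widetilde W$) gives the Duhamel identity
\[
  \widetilde W(\cdot, t) = S_t \widetilde W(\cdot, 0) + \integrall{0}{t}{ S_{t-s}\bigl[(c+L)\widetilde W + \rho\bigr](\cdot, s) }{s}.
\]
By properties~\ref{K:SFDE:prop0} and~\ref{K:SFDE:prop2} of Lemma~\ref{lem:SSPM}, $S_t$ is positivity-preserving and $L^\infty$-contractive, so the first and third terms on the right are nonnegative; decomposing $\widetilde W = \widetilde W^+ - \widetilde W^-$ and dropping these nonnegative contributions yields the scalar estimate $\|\widetilde W^-(\cdot, t)\|_\infty \leq 2L \integrall{0}{t}{\|\widetilde W^-(\cdot, s)\|_\infty}{s}$, and Gr\"onwall forces $\widetilde W^- \equiv 0$, hence $W \geq 0$.

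For parts~(ii) and~(iii), the same Duhamel identity together with $\widetilde W \geq 0$ and $c + L \geq 0$ supplies the sharper lower bound
\[
  W(x, t) \geq e^{-Lt} (S_t W(\cdot, 0))(x) = e^{-Lt} \integral{\R}{ \Green(x - y, t)\, W(y, 0) }{y}.
\]
Our parameter range $1 < \alpha \leq 2$ and $|\theta| \leq 2 - \alpha$ forces $|\theta| < \alpha$ and in particular $\alpha \neq \pm\theta$, so property~\ref{K:SFDE:prop10} of Lemma~\ref{lem:SSPM} gives $\Green(z, t) > 0$ for all $(z, t) \in \R \times (0, \infty)$; part~(ii) follows at once. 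For~(iii) I restrict the integral to $y \in [0, 1]$ and set
\[
  \eta(m, t) := e^{-Lt} \min\bigl\{ \Green(x' - y, t) : |x'| \leq m, \ y \in [0, 1] \bigr\},
\]
which is strictly positive by~\ref{K:SFDE:prop10} (the minimum is attained on a compact set) and continuous in $(m, t)$ by the smoothness of $\Green$ in~\ref{K:SFDE:prop5}; the desired estimate $W(x, t) \geq \eta(|x|, t) \integrall{0}{1}{W(y, 0)}{y}$ is then immediate.

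I expect the main obstacle to be the rigorous derivation of the Duhamel identity for the \emph{super}solution $\widetilde W$, since the semigroup $S_t$ is not strongly continuous on $C_b(\R)$. The delicate point is differentiating $s \mapsto S_{t-s} \widetilde W(\cdot, s)$ pointwise and passing the lower limit $s \to 0^+$ through the convolution; this is precisely what forces the hypothesis $u, v \in C^2_b(\R \times (t_0, T])$ for every $t_0 > 0$, and the integrability and smoothness of $\Green$ collected in~\ref{K:SFDE:prop5}--\ref{K:SFDE:prop7} supply the dominating functions needed for dominated convergence under the convolution.
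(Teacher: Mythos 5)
Your proposal is correct in substance but takes a genuinely different route for part~(i). The paper proves $w:=v-u\geq 0$ by a pointwise barrier argument in the spirit of Chen: assuming $w$ dips below $-\epsilon e^{2\kappa t}$, it builds an explicit comparison function $\omega(x,t)=-\epsilon(\tfrac34+\sigma z(x))e^{2\kappa t}$ touching $w$ from below at an interior point $(x_0,t_0)$, uses the singular-integral representation of $\RieszFeller$ to get $\RieszFeller w(x_0,t_0)\geq\RieszFeller\omega(x_0,t_0)$, and derives a contradiction from the bound in Proposition~\ref{prop:RieszFeller:estimate} once $\kappa$ is large. You instead linearize identically but then pass to the mild formulation and run Gr\"onwall on $\|\widetilde W^-(\cdot,t)\|_\infty$. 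Your route has the advantage that the quantitative lower bound $W(x,t)\geq e^{-Lt}(S_tW(\cdot,0))(x)$ needed for parts~(ii) and~(iii) drops out of the same Duhamel inequality, whereas the paper has to reinvoke part~(i) (comparing $w_2$ with $\Green(\cdot,t)\ast w_2(\cdot,0)$ as super- and exact solutions of the pure diffusion equation) to reach the same bound; for (ii)--(iii) the two arguments then coincide, including the definition of $\eta$ via the minimum of $\Green$ over a compact set. What the paper's barrier argument buys is that it never needs the Duhamel identity: it only uses the pointwise integral representation at an extremal point, which sidesteps the fact that $(S_t)$ is not strongly continuous on $C_b(\R)$.

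Two soft spots in your version, both repairable. First, writing the Duhamel formula as an \emph{identity} presupposes that the defect $\rho$ is integrable in $s$ near $s=0$, which the hypotheses do not give (the $C^2_b$ bounds hold only on $\R\times(t_0,T]$). Since $\rho\geq 0$ and $S_{t-s}$ is positivity preserving, you should drop $\rho$ \emph{before} integrating the differential inequality in $s$, obtaining the one-sided Duhamel inequality
\[
  \widetilde W(\cdot,t)\;\geq\; S_t\widetilde W(\cdot,0)+\integrall{0}{t}{S_{t-s}\bigl[(c+L)\widetilde W\bigr](\cdot,s)}{s}\,,
\]
which is all your Gr\"onwall step and the lower bounds in (ii)--(iii) require. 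Second, the computation of $\frac{d}{ds}S_{t-s}\widetilde W(\cdot,s)$ uses the commutation $\RieszFeller\bigl(\Green(\cdot,\tau)\ast g\bigr)=\Green(\cdot,\tau)\ast\RieszFeller g$ for $g\in C^2_b(\R)$; this is true (Fubini applies because the integral representation in Theorem~\ref{thm:RieszFeller:extension} converges absolutely by Proposition~\ref{prop:RieszFeller:estimate}) but is not stated in the paper and should be proved explicitly if you follow this route.
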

\begin{proof}
\begin{enumerate}[label=(\roman*)]
\item The function $w:=v-u$ satisfies $w\in C_b(\R\times [0,T])\cap C^2_b(\R\times (t_0,T])$ for all $t_0\in(0,T)$,
 $w(\cdot ,0)\geq 0$ in $\R$ and
 \begin{multline*}
  \diff{w}{t} 
  = \diff{}{t} (v-u)
    \geq \RieszFeller (v-u) + f(v) - f(u) \\
  = \RieszFeller (v-u) + \integrall{0}{1}{ f'(\theta v + (1-\theta)u) \, (v-u) }{\theta} 
  = \RieszFeller w + \underbrace{\bigg( \integrall{0}{1}{ f'(\theta v + (1-\theta)u) }{\theta} \bigg)}_{=:k(x,t)} w \,.
 \end{multline*}
In particular, $k:\R\times [0,T]\to\R$, $(x,t)\mapsto k(x,t)$, is a bounded continuous function,
 due to the properties of $u$ and $v$.
To prove $w\geq 0$ in $\R\times (0,T]$,
 we will derive a contradiction following~\cite[page 153]{Chen:1997}.
Assume $w$ takes negative values in $\R\times [0,T]$.
Due to $w\in C_b(\R\times [0,T])$ and $w(\cdot ,0)\geq 0$, for any $\kappa>0$ 
 there exist $\epsilon>0$ and $T\geq \tilde T>0$ such that
 \[ w(x,t)>-\epsilon \exp(2\kappa t) \Xx{in} \R\times [0,\tilde T) \Xx{and} \inf_{x\in\R} w(x,\tilde T) = -\epsilon \exp(2\kappa \tilde T) \,. \]
In the following we use again $T$ instead of $\tilde T$ 
 and assume without loss of generality $w(0,T)<-\tfrac78 \epsilon \exp(2\kappa T)$.
Consider $\omega(x,t):= -\epsilon (\tfrac34 +\sigma z(x)) \exp(2\kappa t)$
 where $\sigma>0$ and $z\in C^\infty(\R)$, $z(0)=1$, $\lim_{x\to\pm\infty} z(x)=3$,
 as well as $3\geq z\geq 1$, $\abs{z'}\leq 1$ and $\abs{z''}\leq 1$ in $\R$.
The function $\omega$ satisfies for $\sigma\geq 0$
 \[ \omega(x,t)= -\epsilon \left(\tfrac34 +\sigma z(x)\right) \exp(2\kappa t)
      \leq -\epsilon \left(\tfrac34 +\sigma\right) \exp(2\kappa t) \]
 where the upper bound is monotone decreasing with respect to $\sigma$.
Thus there exists a $\sigma^*\in (\tfrac18,\tfrac14]$ such that $w\geq\omega$ in $\R\times[0,T]$,
 where $\tfrac18<\sigma^*$ due to the restrictions at $x=0$.
Moreover
 \begin{multline*}
    \lim_{x\to\pm\infty} -\epsilon (\tfrac34 +\sigma^* z(x)) \exp(2\kappa t) = -\epsilon (\tfrac34 + 3 \sigma^*) \exp(2\kappa t) \\
      < -\epsilon \tfrac98 \exp(2\kappa t) < -\epsilon \exp(2\kappa t) \leq \liminf_{x\to\pm\infty} w(x,t)\,.
 \end{multline*}
In summary,
 there exists $\sigma^*\in (\tfrac18,\tfrac14]$ and $(x_0,t_0)\in \R\times(0,T]$
 such that $w\geq\omega$ in $\R\times[0,T]$ and $w(x_0,t_0)=\omega(x_0,t_0)$.
Thus $w-\omega\in C_b(\R\times [0,T])\cap C^2_b(\R\times (t_0,T])$ is a non-negative function
 which attains its minimum at $(x_0,t_0)\in \R\times(0,T]$, hence
 \[ \diff{w}{t}(x_0,t_0)\leq \diff{\omega}{t}(x_0,t_0)\,,
      \qquad \diff{w}{x}(x_0,t_0)=\diff{\omega}{x}(x_0,t_0)\,,
      \qquad \difff{w}{x}{2}(x_0,t_0)\geq \difff{\omega}{x}{2}(x_0,t_0)\,.
 \]
First
 we deduce from the integral representation of $\RieszFeller$ in Theorem~\ref{thm:RieszFeller:extension}
 the estimate $\RieszFeller[w(\cdot ,t_0)](x_0)\geq \RieszFeller[\omega(\cdot ,t_0)](x_0)$.
Second
 we deduce the estimate
\begin{align*}
 - \tfrac74 \epsilon \kappa \exp(2\kappa t_0)
   &\geq \diff{\omega}{t}(x_0,t_0) \geq \diff{w}{t}(x_0,t_0) \\
	 &\geq \RieszFeller[w(\cdot ,t_0)](x_0) + k(x_0,t_0)w(x_0,t_0) \\
	 &\geq \RieszFeller[\omega(\cdot ,t_0)](x_0) - \sup\abs{k}\; \abs{\omega(x_0,t_0)} \\ 
	 &\geq -\cK \|{\omega''}\|_{C_b(\R)} \frac{M^{2-\alpha}}{2-\alpha}
         -4\cK \|\omega'\|_{C_b(\R)} \frac{M^{1-\alpha}}{\alpha-1}
				 -\sup\abs{k}\; \epsilon (\tfrac34 + \sigma^* z(x_0)) \exp(2\kappa t_0) \\ 
	 &\geq -\cK \frac{M^{2-\alpha}}{2-\alpha} \epsilon\sigma^* \exp(2\kappa t_0)
         -4\cK \frac{M^{1-\alpha}}{\alpha-1} \epsilon\sigma^* \exp(2\kappa t_0)
				 -\sup\abs{k}\; \epsilon\tfrac64 \exp(2\kappa t_0)\,, 
\end{align*}
where we use Proposition~\ref{prop:RieszFeller:estimate} with some positive constants $M$ and $\cK$.
Thus if we choose $\kappa>0$ such that
 \[ -\tfrac74 \kappa< -\tfrac{\cK}4 \tfrac{M^{2-\alpha}}{2-\alpha} - \cK \tfrac{M^{1-\alpha}}{\alpha-1}  - \tfrac64 \sup\abs{k}\]
 then we obtain a contradiction. 
Therefore $w\geq 0$ in $\R\times (0,T]$.

\item For another constant $K_2\in\R$, the function $w_2:= \exp(K_2 t) w$ satisfies
 $w_2 \in C_b(\R\times [0,T])\cap C^2_b(\R\times (t_0,T])$ for all $t_0\in(0,T)$, $w_2\geq 0$ in $\R\times (0,T]$, and
 \[ \diff{w_2}{t} \geq \RieszFeller w_2 - c_2(x,t) w_2 \]
 with $c_2(x,t):=-(K_2 + k(x,t))$.
 Choosing $K_2\in\R$ such that $c_2(x,t)=-(K_2 + k(x,t))\leq 0$ and using $w_2\geq 0$ in $\R\times (0,T]$, yields
 \[ \diff{w_2}{t} \geq \RieszFeller w_2 - c_2(x,t) w_2 \geq \RieszFeller w_2 \,. \]
 Due to the first part,
 \begin{align*}
  w_2(x,t) 
   \geq [ \Green(\cdot ,t) \ast w_2(\cdot ,0) ](x) = \exp(K_2 t) [ \Green(\cdot ,t) \ast w(\cdot ,0) ](x) \,.
 \end{align*}
 The assumption $v(\cdot ,0) \gneqq u(\cdot ,0)$ implies that
 there exists $x_0\in\R$ % such that $w(x_0,0)>0$ 
 and $\epsilon>0$ such that $w(x,0)>0$ for all $x\in (x_0-\epsilon,x_0+\epsilon)$ due to continuity of $w$.
 Moreover, the nonlocal diffusion equation $\diff{w}{t} = \RieszFeller w$ generates a convolution semigroup
 with a positive convolution kernel $\Green(x,t)$, i.e. $\Green(x,t)>0$ in $\R\times (0,T]$, see Lemma~\ref{lem:SSPM}. %~\cite{Schneider:1986}.
Therefore, 
 \[ w_2(x,t) \geq \integral{U_\epsilon(x_0)}{ \Green(x-y,t) \, w_2(y,0)}{y}>0 \Xx{for all} (x,t)\in\R \times (0,T] \,, \]
 which implies $w(x,t)>0$ for all $(x,t)\in\R \times (0,T]$.

\item If $v(\cdot ,0)\geq u(\cdot ,0)$ then as before
 \begin{align*}
  w_2(x,t)
   &\geq [ \Green(\cdot ,t) \ast w_2(\cdot ,0) ](x) %= \exp(K_2 t) [ \Green(\cdot ,t) \ast w(\cdot ,0) ](x) \,.
    \geq \integrall{0}{1}{ [\Green(x-y,t) w_2(y,0)] }{y} \\
   &\geq \min_{y\in [0,1]} \Green(x-y,t) \integrall{0}{1}{w_2(y,0)}{y} \,,
 \end{align*}
  whereas the estimates follow from $\Green$ being an integrable positive smooth function,
  and $w_2(\cdot ,0)\geq 0$ in $\R$.
Thus
 \begin{align*}
   \exp(K_2 t) \big(v(x,t)-u(x,t)\big)
     &\geq \min_{z\in [-\abs{x}-1,\abs{x}]} \Green(z,t) \integrall{0}{1}{w(y,0)}{y} \\
     &= \tilde\eta(\abs{x},t) \integrall{0}{1}{ \big(v(y,0)-u(y,0)\big)}{y} \,,
 \end{align*}
 where $\tilde\eta(m,t)= \min_{z\in [-m-1,m]} \Green(z,t)$,
 is a positive continuous function, since $\Green(\cdot ,t)$ for $t>0$ is a positive smooth function.
Consequently the function $\eta:[0,\infty)\times(0,\infty)\to(0,\infty)$, $(m,t)\mapsto \exp(-K_2 t)\tilde\eta(\abs{x},t)$,
 is a positive continuous function, 
 and the statement follows.
\end{enumerate}
\end{proof}

We need to investigate the behavior of solutions in the limits $x\to\pm\infty$,
 see also~\cite[Theorem 5.2]{Volpert+etal:1994} for the case of a system of reaction-diffusion equations with local derivatives.
We consider the Cauchy problem 
\begin{equation} \label{CP:RD:general} \begin{cases}
 \diff{u}{t} = \RieszFeller u + F(u) &\Xx{for} (x,t)\in\R\times (0,\infty)\,, \\
 u(x,0) = u_0(x) &\Xx{for} x\in\R\,,                 
\end{cases} \end{equation}
 for some unknown function $u:\R\times (0,\infty)\to\R$
 and a given bounded continuous function $F:\R\to\R$, $u\mapsto F(u)$,
 satisfying a Lipschitz condition in $u$.

\begin{thm} \label{thm:far-field-behaviour}
Let $1<\alpha\leq 2$ and $\abs{\theta} \leq \min\{\alpha,2-\alpha\}$.
Suppose $u_0\in C_b(\R)$ and that the limits
 \[ \lim_{x\to\pm\infty} u_0(x) = u_{0,\pm} \]
 exist.
If $u\in C_b(\R\times [0,T])\cap C^2_b(\R\times (t_0,T])$ for all $t_0\in(0,T)$ is a solution of the Cauchy problem~\eqref{CP:RD:general}
 then the limits $\lim_{x\to\pm\infty} u(x,t) = \upm(t)$ exist and satisfy 
 \begin{equation} %\label{IVP:limits}
   \Diff{\upm}{t} = F (\upm) \Xx{for} t\in[0,T]\,, \qquad \upm(0)=u_{0,\pm}\,.
 \end{equation}
\end{thm}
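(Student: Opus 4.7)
The plan is a compactness–uniqueness argument on spatial translates, carried out for $x\to+\infty$; the case $x\to-\infty$ is symmetric. Fix any sequence $x_n\to+\infty$ and set $u_n(x,t):=u(x+x_n,t)$. Translation invariance of~\eqref{CP:RD:general} makes each $u_n$ a solution in the same regularity class as $u$, with shifted initial datum $u_n(\cdot,0)=u_0(\cdot+x_n)$; by the hypothesis on $u_0$ this converges pointwise on $\R$ to the constant $u_{0,+}$, while $\norm{u_n}_{L^\infty(\R\times[0,T])}\leq\norm{u}_{L^\infty(\R\times[0,T])}$ uniformly in $n$.

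I would next invoke the uniform $C^2_b$ bounds that $u_n$ inherits from $u$ on every strip $\R\times[t_0,T]$, $t_0>0$, to obtain equicontinuity there.  Arzelà–Ascoli then extracts a subsequence (not relabelled) converging locally uniformly on each such strip to some bounded continuous $u_\infty$.  Each $u_n$ satisfies the mild formulation
\[
u_n(x,t)=\big(\Green(\cdot,t)\ast u_n(\cdot,0)\big)(x)+\integrall{0}{t}{\big(\Green(\cdot,t-\tau)\ast F(u_n(\cdot,\tau))\big)(x)}{\tau}.
\]
Because $\Green(\cdot,t)\in L^1(\R)$ with unit mass (Lemma~\ref{lem:SSPM}), $u_n$ is uniformly bounded, and $F$ is continuous and bounded on the common range, dominated convergence lets me pass to the limit in both terms: the free term converges pointwise to $u_{0,+}$ (using $\Green(\cdot,t)\ast c=c$ for constants $c$), and the Duhamel integral converges to its $u_\infty$-analogue by continuity of $F$ together with pointwise convergence of $u_n$.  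Hence $u_\infty$ is a bounded mild solution of~\eqref{CP:RD:general} with the constant initial datum $u_{0,+}$.

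The spatially constant function $(x,t)\mapsto v(t)$, with $v'=F(v)$ and $v(0)=u_{0,+}$, is also such a solution because $\RieszFeller$ annihilates constants.  The uniqueness part of Theorem~\ref{thm:CP:Solution}, extended to $\RieszFeller$ via the kernel properties~\ref{As:P1}--\ref{As:P4} verified in Section~\ref{sec:RF}, therefore forces $u_\infty(x,t)=v(t)$ for every $(x,t)$, independently of $x$.  Since every subsequence of $\{u_n\}$ admits a further subsequence with the same $x$-independent limit $v(t)$, the original sequence converges: $u(x_n,t)=u_n(0,t)\to v(t)$ for every $t\in[0,T]$ and every $x_n\to+\infty$.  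Thus $\up(t):=\lim_{x\to+\infty}u(x,t)$ exists and solves $\Diff{\up}{t}=F(\up)$ with $\up(0)=u_{0,+}$; the analogous argument at $-\infty$ yields $\um$.  The main obstacle is that $u_n(\cdot,0)\to u_{0,+}$ only pointwise on $\R$, not uniformly, which blocks any direct compactness argument down to $t=0$; the remedy is to pass through the mild formulation, where integrability of $\Green$ converts pointwise convergence of the shifted data into pointwise convergence of the convolution, circumventing the failure of uniform convergence at the initial time.
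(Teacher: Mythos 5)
Your argument is correct, but it takes a genuinely different route from the paper. The paper follows Volpert--Volpert--Volpert and works directly with the Picard iterates of the mild formulation: it shows by induction that each iterate $u^k$ has spatial limits $u^k_\pm(t)$ obeying the recursion $u^{k+1}_\pm(t)=u_{0,\pm}+\int_0^t F(u^k_\pm(\tau))\,\mathrm{d}\tau$ (via dominated convergence against the integrable kernel $\Green$), and then passes to $k\to\infty$ using the uniform convergence of the iterates; no compactness is needed. You instead translate the solution, extract a locally uniform limit by Arzel\`a--Ascoli from the uniform $C^2_b$ bounds on strips $\R\times[t_0,T]$, identify the limit through the mild formulation as a bounded mild solution with constant datum $u_{0,\pm}$, and conclude by uniqueness plus the subsequence trick. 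Your route is more robust (it works whenever one has a priori bounds and uniqueness, without tracking the iteration), at the cost of more machinery; the paper's route is more elementary and gives the spatial limits of $u$ as the uniform limit of explicitly computable quantities. Two points in your write-up deserve a sentence each to be airtight: equicontinuity of the translates in $t$ should be justified (either because $C^2_b(\R\times(t_0,T])$ already controls $\partial_t u$, or by bounding $\partial_t u=\RieszFeller u+F(u)$ via Proposition~\ref{prop:RieszFeller:estimate}); and the uniqueness step is cleanest as a direct Gronwall estimate on the mild formulation for bounded mild solutions (Lipschitz $F$, unit-mass kernel), rather than as an appeal to the classical-solution uniqueness of Theorem~\ref{thm:CP:Solution}, since your limit $u_\infty$ is a priori only a bounded mild solution.
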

\begin{proof}
The result is a variation of~\cite[Theorem 5.2]{Volpert+etal:1994} where the case $D^2_0=\difff{}{x}{2}$ is considered.
The proof holds verbatim, since for $1<\alpha\leq 2$ and $\abs{\theta} \leq \min\{\alpha,2-\alpha\}$
 the fundamental solution~$\Green$ of 
 \[ \diff{u}{t} = \RieszFeller u \Xx{for} (x,t)\in\R\times(0,T] \]
 is for all $t>0$ an integrable positive smooth function $\Green(\cdot ,t)\in L^1(\R)$ with finite mean,
 see Lemma~\ref{lem:SSPM}.
\end{proof}

\begin{proof}[Proof of Theorem~\ref{thm:far-field-behaviour}]
% Due to a Sobolev imbedding, $W^{1,\infty}(\R) \hookrightarrow C_b(\R)$, we consider $u_0\in C_b(\R)$.
The result is a variation of~\cite[Theorem 5.2]{Volpert+etal:1994} where the case $D^2_0=\difff{}{x}{2}$ is considered.
Again, for $1<\alpha\leq 2$ and $\abs{\theta} \leq \min\{\alpha,2-\alpha\}$ the fundamental solution~$\Green$ of 
 \[ \diff{u}{t} = \RieszFeller u \Xx{for} (x,t)\in\R\times(0,T] \]
 is for all $t>0$ an integrable positive smooth function $\Green(\cdot ,t)\in L^1(\R)$ with finite mean,
 see Lemma~\ref{lem:SSPM}.
Like in the proof of~\cite[Theorem 5.1]{Volpert+etal:1994},
 we obtain the unique mild solution as the limit of an iterated sequence
\begin{align*}
 u^0(x,t) &= \integrall{-\infty}{+\infty}{\Green(x-y,t) u_0(y)}{y} \\
 u^{k+1}(x,t) &= u^0(x,t) + \integrall{0}{t}{\integrall{-\infty}{+\infty}{\Green(x-y,t-\tau) F(u^k(y,\tau))}{y}}{\tau}
\end{align*}
for $k\in\N$.
The functions $u^k$ are bounded and continuous, hence measurable.
To study the limits of a solution $u$,
 we consider the limits of the functions $u^k$.
The dominated convergence theorem yields
\begin{multline*}
 u^0_{\pm}(t)
  := \lim_{x\to\pm\infty} u^0(x,t)
  = \lim_{x\to\pm\infty} \integrall{-\infty}{+\infty}{\Green(y,t) u_0(x-y)}{y} \\
  = \integrall{-\infty}{+\infty}{\Green(y,t) \lim_{x\to\pm\infty} u_0(x-y)}{y}
%    = \integrall{-\infty}{+\infty}{\Green(y,t) \lim_{x\to\pm\infty} u_0(x-y)}{y} \\
  = \integrall{-\infty}{+\infty}{\Green(y,t)\, u_{0,\pm}}{y} 
  = u_{0,\pm}\,.
\end{multline*}
A mathematical induction on $k\in\N$ proves that the limits of $u^k$ satisfy
\begin{align*}
 u^{k+1}_{\pm}(t)
  :&= \lim_{x\to\pm\infty} u^{k+1}(x,t) \\
   &= u_{0,\pm} + \lim_{x\to\pm\infty} \integrall{0}{t}{\integrall{-\infty}{+\infty}{\Green(x-y,t-\tau) F(u^k(y,\tau))}{y}}{\tau} \\
   &= u_{0,\pm} + \integrall{0}{t}{\integrall{-\infty}{+\infty}{\Green(y,t-\tau)\, \lim_{x\to\pm\infty} F(u^k(x-y,\tau))}{y}}{\tau} \\
   &= u_{0,\pm} + \integrall{0}{t}{F(u^k_{\pm}(\tau))}{\tau} \,.
\end{align*}
The sequence of functions $u^k_{\pm}(t)$ converges uniformly for $0<t\leq T$ to some function $u_{\pm}(t)$,
 by virtue of the uniform convergence of the sequence of functions $u^k(x,t)$, $k\in\N$.
Passing to the limit, we obtain 
\[ u_{\pm}(t) = u_{0,\pm} + \integrall{0}{t}{F(u_{\pm}(\tau))}{\tau} \,, \]
which is equivalent to the stated differential equation.
\end{proof}

\section{Traveling Wave Problem}
\label{sec:TWP}

We consider the traveling wave problem for the local reaction-nonlocal diffusion equation
\begin{equation} \label{RD:TWP}
  \diff{u}{t} = \RieszFeller u + f(u) \,,\quad x\in\R \,,\quad t\in (0,\infty) \,,
\end{equation}
whereat $1<\alpha\leq 2$, $|\theta| \leq \min\{\alpha, 2-\alpha\}$
 and $f\in C^\infty(\R)$ is a bistable function in the sense of~\eqref{As:f:0}.

\begin{defn}
  A traveling wave solution of~\eqref{RD:TWP} is a solution of the form $u(t,x)=U(\xi)$, 
  for some constant wave speed $c\in\R$, a traveling wave variable $\xi:=x-c t$,
  and a function~$U$ connecting different endstates $\lim_{\xi\to\pm\infty} U(\xi) = \upm$.
\end{defn}
The profile $U$ has to satisfy the traveling wave equation
 \[ - c U'(\xi) = \RieszFeller U + f(U) \]
 where $\RieszFeller$ has to be understood in the sense of the singular integral in Theorem~\ref{thm:RieszFeller:extension}
 which is well-defined for $C^2_b(\R)$ functions due to Proposition~\ref{prop:RieszFeller:estimate}. 

\subsection{Xinfu Chen's Approach and Results}
\label{sec:Chen}
In this section we briefly review the results from \cite{Chen:1997} as they provide the basis for this work. 
Consider the evolution equation
\be
\label{eq:Chen}
\frac{\partial u}{\partial t}(x,t)=\cA [u(\cdot,t)](x),\qquad (x,t)\in\R\times[0,\I)\,,
\ee
where $\cA$ is a nonlinear operator. We shall also need the Fr\'{e}chet derivative of $\cA$ defined 
by 
\benn
\cA'[u](v):=\lim_{\epsilon\ra 0}\frac{\cA[u+\epsilon v]-\cA[u]}{\epsilon}.
\eenn
The basic assumptions on the operator $\cA$ are:

\begin{itemize}
 \item (\textbf{semigroup}) $\cA$ generates a semigroup on $L^\I(\R)$,
 \item (\textbf{translation invariance}) $\cA[u(\cdot +h)](x)=\cA[u(\cdot)](x+h)$ for all $x,h\in\R$,
 \item (\textbf{bistability}) there exists a function $f(\cdot)$
  such that $\cA[\alpha 1]=f(\alpha)1$ for all $\alpha\in\R$ with
 \be
 \label{As:f}
 f\in C^1(\R)\,,\qquad f(0)=0=f(1)\,,\qquad f'(0)<0\,,\qquad f'(1)<0\,,
 \ee
 \item (\textbf{comparison principle}) 
 \be
 \label{eq:comparison}
 \text{if } u_t\geq \cA[u], ~v_t\leq \cA[v],~ u(\cdot,0)\geq v(\cdot,0),~u(\cdot,0)\not\equiv v(\cdot,0)
~\text{ then }u(\cdot,t)> v(\cdot,t)~\forall t>0.
 \ee 
\end{itemize}

Xinfu Chen \cite{Chen:1997} studies the existence, uniqueness and stability of traveling fronts $u(x,t)=U(x-ct)$
 for \eqref{eq:Chen} connecting the two homogeneous stable states 
 {i.e.}~in a moving coordinate frame $\xi=x-ct$ one demands 
\be
\label{eq:wave}
\lim_{\xi\ra -\I}U(\xi)=0\,, \qquad \lim_{\xi\ra \I}U(\xi)=1 \qquad\text{and}\qquad \lim_{|\xi|\ra \I}U'(\xi)=0\,.
\ee
We state the three main results from \cite{Chen:1997}
 which will follow from the semigroup property,
 several variants of the other three properties and additional estimates for $\cA$. 

\begin{thm}(\textit{uniqueness}, \cite[Thm.~2.1]{Chen:1997}) \label{thm:Chen:1}
Suppose the following assumptions hold:
\begin{enumerate}[label=(A\arabic*)]
 \item \label{As:Chen:A1}
  $\cA$ is translation invariant and $f$ is bistable in the sense of \eqref{As:f}.
 \item \label{As:Chen:A2}
  $\cA$ satisfies the comparison principle \eqref{eq:comparison}.
 \item \label{As:Chen:A3}
  There exists constants $K_1>0$ and $K_2>0$ and a probability measure $\nu$ such that for any functions
	$u,v$ with $-1\leq u,v\leq 2$ and every $x\in\R$
	\be
	\label{eq:Chen:A3}
	\left|\cA'[u+v](1)(x)-\cA'[u](1)(x)\right|\leq K_1 \int_\R |v(x-y)|\nu(\dy)+K_2\|v(x+\cdot)\|_{C^0([-1,1])}.
	\ee
\end{enumerate}
Then monotonic traveling waves are unique up to translation. More precisely, suppose \eqref{eq:Chen} has 
a traveling wave $U\in C^1(\R)$ with speed $c$ satisfying \eqref{eq:wave} and $U'(\xi)>0~\forall \xi\in\R$, 
then any other traveling wave solution $(\tilde{U},\tilde{c})$ with $\tilde{U}\in C^0(\R)$ and 
$0\leq \tilde{U}\leq 1$ on $\R$ satisfies
\benn
c=\tilde{c}\qquad \text{and} \qquad \tilde{U}(\cdot)=U(\cdot+\xi_0)~\text{for some fixed $\xi_0\in\R$}
\eenn
{i.e.}~$\tilde{U}$ is a translate of the original wave $U$.
\end{thm}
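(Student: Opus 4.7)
The plan is to follow Chen's sliding-comparison strategy, split into two phases: first establishing $c = \tilde c$, and then producing a translation matching the two profiles. Throughout I would use translation invariance~\ref{As:Chen:A1}, the strong form of the comparison principle~\ref{As:Chen:A2} (which propagates strict inequalities from nontrivial ordering at time zero), and the Fréchet-derivative estimate~\ref{As:Chen:A3}, which controls the linearization of $\cA$ near the stable equilibrium $u\equiv 1$ (and, by a symmetric argument, near $u\equiv 0$).

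\textbf{Phase 1 (speed equality).} The first step is the construction of super- and sub-solutions of~\eqref{eq:Chen} of the form $w_\pm(x,t) = U\bigl(x - ct \pm \xi \pm q(t)\bigr) \pm \delta\,e^{-\beta t}$ for small $\delta$ and suitable $q,\beta>0$. Using \ref{As:Chen:A3}, the perturbed profile solves the equation up to an error that can be absorbed by the negative eigenvalues $f'(0),f'(1)<0$ once we are close enough to the endstates; this is the standard device that lets $q(t)$ saturate to a finite $q_\infty$ as $t\to\infty$. Now, since $U$ is strictly increasing from $0$ to $1$ and $\tilde U$ takes values in $[0,1]$ with the same endstates, a sufficiently large shift $\xi_0$ yields $U(\cdot + \xi_0) - \delta \le \tilde U(\cdot) \le U(\cdot - \xi_0) + \delta$ on all of $\R$. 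Applying the comparison principle~\eqref{eq:comparison} gives, for all $t \ge 0$,
\begin{equation*}
U\bigl(x - c t + \xi_0 - q_\infty\bigr) - \delta \;\le\; \tilde U(x - \tilde c t) \;\le\; U\bigl(x - c t - \xi_0 + q_\infty\bigr) + \delta.
\end{equation*}
Fixing $x = \tilde c t$ and letting $t\to\infty$ forces $(\tilde c - c)t$ to stay in a band where $U$ is not at its limit values, which is only possible if $\tilde c = c$.

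\textbf{Phase 2 (uniqueness up to translation).} With $c=\tilde c$, define
\begin{equation*}
  \eta^* \;:=\; \inf\bigl\{\eta\in\R \,:\, U(\cdot + \eta) \ge \tilde U(\cdot) \text{ on } \R\bigr\}.
\end{equation*}
The set is nonempty and bounded below: non-emptiness comes from Phase~1 sandwich (after letting $t\to\infty$ and absorbing $\delta\to 0$ using once more the super-solution construction from \ref{As:Chen:A3}); boundedness below follows because otherwise an arbitrarily large left-shift of $U$ would dominate $\tilde U$ at a point where $\tilde U$ is bounded away from $0$. By continuity and monotonicity of $U$, $U(\cdot + \eta^*) \ge \tilde U$ everywhere. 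If the inequality were strict and uniform on compacta with tails controlled via the exponential decay built from \ref{As:Chen:A3}, then the super-solution ansatz $U(\cdot + \eta^* - q(t)) - \delta$ would allow us to push $\eta^*$ strictly downward, contradicting minimality. Hence equality occurs at some finite point; viewing $w := U(\cdot + \eta^*) - \tilde U$ as a nonnegative solution of a linear nonlocal equation obtained by the mean-value theorem applied to $\cA'[\,\cdot\,](1)$, the strong comparison part of~\ref{As:Chen:A2} yields $w\equiv 0$, i.e.\ $\tilde U = U(\cdot + \eta^*)$.

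\textbf{Main obstacle.} The heart of the argument is the construction of exponentially decaying supersolutions near the endstates. For local equations this is standard via the phase plane; here one must instead use the integral bound in \ref{As:Chen:A3} together with the negativity of $f'(0),f'(1)$ to show that $\cA'[U]$ applied to a decaying tail function is dominated by the reaction term. This is where the specific form of~\eqref{eq:Chen:A3}—splitting the error into an $L^1$-average against $\nu$ and a local $C^0$ piece—is essential: the former handles the nonlocal contribution and the latter the near-field. Without this estimate one cannot rule out $\tilde U$ decaying to its endstates along a different rate than $U$, which would break both the sandwich in Phase~1 and the finiteness of $\eta^*$ in Phase~2.
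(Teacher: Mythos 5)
Your proposal is correct and follows essentially the same route as the paper's proof in Appendix~\ref{ap:uniq}: the sub-/supersolutions $U\bigl(x-ct+\xi_0\pm\sigma^*\delta[1-e^{-\beta t}]\bigr)\pm\delta e^{-\beta t}$ with saturating shift (Lemma~\ref{lem:subsolution}), the sandwich-and-send-$t\to\infty$ argument for $c=\tilde c$, and the sliding argument on the extremal translate $\xi^*$ combined with the strict comparison principle and the tail control $2\sigma^*U'\le 1$ for $|\xi|\ge M_2$. The only cosmetic difference is that you work with a single extremal shift $\eta^*$ and invoke strong comparison at a touching point, whereas the paper tracks both $\xi_*$ and $\xi^*$; the content is identical.
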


To obtain stability of the traveling wave one has to extend the assumptions (A1)-(A3).

\begin{thm}(\textit{stability}, \cite[Thm.~3.1]{Chen:1997}) \label{thm:Chen:2}
Suppose (A1)-(A3) hold and, in addition, we have:
\begin{enumerate}[label=(B\arabic*)]
 \item \label{As:Chen:B1}
 There exist constants $a^-$ and $a^+$ with $0< a^- \leq a^+<1$ such that $f$ satisfies $f>0$ 
 in $(-1,0)\cup (a^+,1)$ and $f<0$ in $(0,a^-)\cup (1,2)$.
 \item \label{As:Chen:B2}
  There exists a positive non-increasing function $\eta(m)$ defined on $[1,\infty)$ such that 
	for any functions $u(x,t),v(x,t)$ satisfying $-1\leq u,v \leq 2$, $\frac{\partial u}{\partial t}\geq \cA[u]$, 
	$\frac{\partial v}{\partial t}\leq \cA[v]$ and $u(\cdot ,0)\geq v(\cdot ,0)$, there holds 
  \be
	\label{eq:Chen:B2}
    \min_{x\in [-m,m]} \left[ u(x,1)-v(x,1) \right]
    \geq \eta(m) \int_0^1  [u(y,0)-v(y,0)]~ \dy \quad \forall m\geq 1.
  \ee
 \item \label{As:Chen:B3}
  With $K_1$, $K_2$, $\nu$, $u$ and $v$ as in $(A3)$, there holds, for every $x\in\R$,
  \be
	\label{eq:Chen:B3}
    \left| \cA[u+v](x) - \cA[u](x) \right| \leq K_1 \int_\R |v(x-y)|~  \nu(\dy) + 
		K_2 \left\| v'' \right\|_{C^0(\R)} \,.
	\ee
\end{enumerate}
Then monotonic traveling waves are globally exponentially stable. More precisely, suppose
\eqref{eq:Chen} has a traveling wave $U\in C^1(\R)$ with speed $c$ satisfying \eqref{eq:wave} 
and $U'(\xi)>0~\forall \xi\in\R$. Then there exists a constant $\kappa$ such that for any 
$u_0\in L^\I(\R)$ satisfying $0\leq u_0\leq 1$ and
\benn
\liminf_{x\ra \I} u_0(x)>a^+,\qquad \limsup_{x\ra -\I} u_0(x)<a^-,  
\eenn 
the solution $u(x,t)$ of \eqref{eq:Chen} with initial value $u(\cdot,0)=u_0(\cdot)$ satisfies
the exponential stability estimate
\benn
\|u(\cdot,t)-U(\cdot -ct+\xi)\|_{L^\I(\R)}\leq K e^{-\kappa t}\qquad \text{for all $t\leq 0$},
\eenn
where $\xi$ and $K$ are constants depending on $u_0$.
\end{thm}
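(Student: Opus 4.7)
The plan is to prove Theorem~\ref{thm:Chen:2} by the classical Fife--McLeod squeezing argument, adapted to the abstract operator $\cA$ using the quantitative bounds \ref{As:Chen:B2}--\ref{As:Chen:B3}. First, I would show that after a finite waiting time $T_0$ the solution is trapped between two shifts of the traveling wave:
\benn
  U(\cdot - cT_0 - \xi_0) - \delta_0 \leq u(\cdot, T_0) \leq U(\cdot - cT_0 + \xi_0) + \delta_0
\eenn
for some $\xi_0 > 0$ and arbitrarily small $\delta_0 > 0$. The hypotheses $\liminf_{x\to\I} u_0 > a^+$ and $\limsup_{x\to-\I} u_0 < a^-$, combined with comparison against the homogeneous ODE $\dot w = f(w)$ (whose dynamics under \ref{As:Chen:B1} drive the tails towards $0$ and $1$), control the behavior at spatial infinity, while the Harnack-type estimate \ref{As:Chen:B2} propagates the gap from the half-lines into any bounded window.

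Second, I would construct explicit super- and sub-solutions of the Fife--McLeod form
\benn
  \bar u(x, t) = U\bigl(x - ct + \xi_0 + \sigma(1 - e^{-\kappa(t - T_0)})\bigr) + \delta_0 e^{-\kappa(t - T_0)},
\eenn
and symmetrically $\underline u$ with $\xi_0 \mapsto -\xi_0$, $\sigma \mapsto -\sigma$, $+\delta_0 \mapsto -\delta_0$. Writing $V(\cdot , t) := U(\cdot - ct + \xi_+(t))$ with $\xi_+(t) = \xi_0 + \sigma(1 - e^{-\kappa(t - T_0)})$ and $q(t) = \delta_0 e^{-\kappa(t-T_0)}$, the traveling wave identity $-cU' = \cA[U]$ together with translation invariance \ref{As:Chen:A1} gives
\benn
  \bar u_t - \cA[\bar u] = \dot \xi_+ \, V' + \dot q - \bigl\{ \cA[V + q] - \cA[V] \bigr\}.
\eenn
Because $q(t)$ is spatially constant, assumption \ref{As:Chen:B3} applied with the constant perturbation $q$ has $\|q''\|_{C^0} = 0$ and $\int |q| \,\nu(\dy) = q$, so the bracketed increment reduces to a quantity of order $q$. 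In the region where $V$ is close to $\um$ or $\up$, the bistability $f'(\um), f'(\up) < 0$ together with a linearization argument shows the increment is bounded above by $-\alpha q$ for some $\alpha > 0$, so choosing $\kappa \in (0, \alpha)$ forces $\dot q - \{\cA[V+q]-\cA[V]\} \geq 0$; in the complementary bounded transition region one chooses $\sigma$ large enough so that $\dot\xi_+ \inf V'$ (finite and positive on the transition set by monotonicity) dominates the remaining $O(q)$ increment.

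Third, with $\bar u, \underline u$ verified as super/sub-solutions matching the squeeze from step one at time $T_0$, the comparison principle \ref{As:Chen:A2} gives $\underline u(x,t) \leq u(x,t) \leq \bar u(x, t)$ for all $t \geq T_0$. Since the shifts $\xi_\pm(t)$ converge exponentially to $\xi_\pm(\I) = \pm(\xi_0 + \sigma)$ and the amplitude $q(t)$ decays exponentially, uniform Lipschitz continuity of $U$ (from $U' \in C_b(\R)$, guaranteed by monotonicity and the regularity hypothesis $U \in C^1$) converts both the shift convergence and the amplitude decay into the desired bound
\benn
  \norm{u(\cdot ,t) - U(\cdot - ct + \xi)}_{L^\I(\R)} \leq K e^{-\kappa t}
\eenn
for a suitable $\xi \in [\xi_-(\I), \xi_+(\I)]$ and constant $K$ depending on $u_0$.

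The main obstacle is the super/sub-solution verification in step two: for a nonlocal operator $\cA$ the increment $\cA[V + q] - \cA[V]$ cannot be reduced to the pointwise $f'(V) q$ one exploits in the classical PDE case. The bound \ref{As:Chen:B3}, which controls the increment by an $L^1$-convolution against $\nu$ plus a $C^0$ norm of second derivatives, is exactly what makes the argument go through for spatially constant $q$. The delicate part is choosing the four parameters $(\delta_0, \sigma, \kappa, T_0)$ compatibly so that the tail linearization strictly dominates on the outer region while the wave steepness $U' > 0$ strictly dominates on the transition strip, uniformly in $x \in \R$.
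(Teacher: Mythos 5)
Your steps one and two match the paper's strategy (the initial trapping via the tail ODE dynamics and the Harnack estimate \ref{As:Chen:B2}, and the Fife--McLeod sub/supersolutions $U(x-ct+\xi_0\pm\sigma\delta[1-e^{-\beta t}])\pm\delta e^{-\beta t}$ of Lemma~\ref{lem:subsolution}). But step three does not deliver the conclusion, and this is a genuine gap rather than a presentational one. A single application of the comparison principle with your $\bar u,\underline u$ traps $u(\cdot,t)$ between $U(\cdot-ct+\xi_-(t))-q(t)$ and $U(\cdot-ct+\xi_+(t))+q(t)$, where $\xi_\pm(t)\to\pm(\xi_0+\sigma)$. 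The amplitude $q(t)$ decays, but the two bounding translates remain separated by the \emph{fixed} shift $2(\xi_0+\sigma)$ for all time; in the transition region of the front they differ by a quantity of order $\sup U'\cdot 2(\xi_0+\sigma)$, which does not tend to zero. Lipschitz continuity of $U$ cannot convert a sandwich between two distinct translates into convergence to a single translate --- it only yields that $u$ stays within a band of translates of fixed width, plus an exponentially small error. That is orbital-neighborhood stability, not the asserted $\norm{u(\cdot,t)-U(\cdot-ct+\xi)}_{L^\infty}\leq Ke^{-\kappa t}$.

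What is missing is the iteration (squeezing) lemma --- Lemma~\ref{lem:bootstrap} in the paper, Chen's Lemma~3.3 --- which is where \ref{As:Chen:B2} does its real work. Given a sandwich of width $h$ and amplitude $\delta$ at time $\tau$, one observes that at least one of $\integrall{0}{1}{u(y,\tau)-(U(y)-\delta)}{y}$ or $\integrall{0}{1}{(U(y+h)+\delta)-u(y,\tau)}{y}$ is at least $\epsilon_1\min\{h,1\}+\delta$; the Harnack inequality \ref{As:Chen:B2} converts this integral gain into a pointwise gain on a compact set, and the flatness of $U$ outside that set (your $2\sigma^*U'\leq 1$ region) lets one pull the corresponding bounding translate inward by $2\sigma^*\epsilon^*\min\{h,1\}$ at the cost of a small amplitude increase. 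Iterating over unit time steps yields $h_{n+1}\leq h_n-\sigma^*\epsilon^*\min\{h_n,1\}+2\sigma^*\delta_n$ and $\delta_{n+1}\leq e^{-\beta}(\delta_n+\epsilon^*\min\{h_n,1\})$, whence both $h_n$ and $\delta_n$ decay geometrically, the shifts $\xi_\pm$ converge to a common limit $\xi$, and the exponential estimate follows. Without this contraction of the width $h$, the argument cannot identify the asymptotic phase $\xi$. (A secondary imprecision: in step two you invoke \ref{As:Chen:B3} for the increment $\cA[V+q]-\cA[V]$ with constant $q$, but \ref{As:Chen:B3} only bounds its absolute value by $K_1 q$; the \emph{signed} bound $\leq -\alpha q$ near the endstates requires the linearization control \ref{As:Chen:A3} on $\cA'[\cdot](1)$ together with $f'(\upm)<0$, which you gesture at but do not connect to the stated hypotheses.)
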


The strongest set of assumptions is required to show the existence of a traveling wave.

\begin{thm}(\textit{existence}, \cite[Thm.~4.1]{Chen:1997}) \label{thm:Chen:3}
Suppose the following assumptions are satisfied:
\begin{enumerate}[label=(C\arabic*)]
 \item \label{As:Chen:C1}
 $\cA$ is translation invariant and the function $f$ satisfies for some $a\in(0,1)$,
 \benn
 f>0\text{ in $(-1,0)\cup (a,1)$},\quad f<0\text{ in $(0,a)\cup (1,2)$},\quad 
 f'(0)<0,~f'(1)<0,~f'(a)>0.
 \eenn 
 \item \label{As:Chen:C2}
  There exists a positive continuous function $\eta(x,t)$ defined on $[0,\infty)\times (0,\I)$ 
  such that if $u(x,t),v(x,t)$ satisfy $-1\leq u,v \leq 2$, $\frac{\partial u}{\partial t}\geq \cA[u]$, 
	$\frac{\partial v}{\partial t}\leq \cA[v]$ and $u(\cdot ,0)\geq v(\cdot ,0)$, then
  \be
   \label{eq:Chen:C2}
   u(x,t)-v(x,t) \geq \eta(|x|,t) \int_0^1  [u(y,0)-v(y,0)]~ \dy \quad \forall (x,t)\in\R\times(0,\I).
  \ee
 \item \label{As:Chen:C3}
 There exit positive constants $K_1$, $K_2$, $K_3$, and a probability measure $\nu$ such that for 
 any $u,v\in L^\I(\R)$ with $-1\leq u,v\leq 2$, and $x\in\R$ we have
  \bea
    \label{eq:Chen:C3_1} && \left| \cA[u+v](x) - \cA[u](x) \right| \leq K_1 \int_\R |v(x-y)|~  \nu(\dy) 
      + K_2 \left\| v'' \right\|_{C^0([x-1,x+1])},\\
    \label{eq:Chen:C3_2} && \left| \cA[u+v]-\cA[u]-\cA'[u](v)\right|\leq K_3\|v\|^2_{C^0(\R)},\\
    \label{eq:Chen:C3_3} && \left| \cA'[u+v](1)(x) - \cA'[u](1)(x) \right| \leq K_1 \int_\R |v(x-y)|~\nu(\dy) 
      + K_2 \left\| v \right\|_{C^0([x-1,x+1])}. 
  \eea
 \item \label{As:Chen:C4}
 For any function $u_0(\cdot)$ satisfying $0\leq u_0\leq 1$ and $\|u_0\|_{C^3(\R)}<\I$, the solution 
 $u(x,t)$ of \eqref{eq:Chen} with initial condition $u(\cdot,0)=u_0(\cdot)$ satisfies 
 $\sup_{t\in[0,\I)}\|u(\cdot,t)\|_{C^2(\R)}<\I$.
\end{enumerate}
Then there exists a traveling wave $U\in C^1(\R)$ with speed $c$ satisfying \eqref{eq:wave} 
and $U'(\xi)>0~\forall \xi\in\R$.
\end{thm}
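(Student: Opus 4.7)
The plan is to construct a traveling wave as the large-time attractor of the Cauchy problem for~\eqref{eq:Chen} with well-prepared monotone step-like initial data, using~\ref{As:Chen:C2} to enforce monotonicity and~\ref{As:Chen:C3}--\ref{As:Chen:C4} to extract a $C^1$ limiting profile of a fixed speed. I would pick a smooth monotone non-decreasing initial datum $\phi_0 \in C^3(\R)$ with $\phi_0(x)=0$ for $x$ sufficiently negative and $\phi_0(x)=1$ for $x$ sufficiently positive, arranged so that $\phi_0 \geq a+\delta$ on a long right interval and $\phi_0 \leq a-\delta$ on a long left interval. Let $u(x,t)$ denote the solution of~\eqref{eq:Chen} with $u(\cdot,0)=\phi_0$. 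Translation invariance combined with~\eqref{eq:Chen:C2} applied to $u(x+h,t)$ versus $u(x,t)$ yields that $u(\cdot,t)$ is monotone non-decreasing for every $t\geq 0$. A far-field analysis in the spirit of Theorem~\ref{thm:far-field-behaviour}, together with~\ref{As:Chen:C1} and the stability of the endstates $0$ and $1$ as equilibria of the reaction dynamics, ensures that $\lim_{x\to -\infty} u(x,t)=0$ and $\lim_{x\to +\infty} u(x,t)=1$ persist for all $t>0$.

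Next, I would pick a level $\theta\in (a,1)$ and define the front position $\xi(t)$ by $u(\xi(t),t)=\theta$; by monotonicity this is well-defined and continuous. The core analytic step is to show $\xi(t)/t$ converges to a finite limit $c$ as $t\to\infty$. I would proceed by constructing explicit sub- and super-solutions of the form $U_\pm(x - c_\pm t \pm q_\pm(t))$, where $U_\pm$ are traveling-wave-like profiles built from the linearized operators near the endstates $0$ and $1$ (where $f'(0),f'(1)<0$ by~\ref{As:Chen:C1}) and $q_\pm(t)$ are exponentially decaying corrections. The quadratic remainder estimate~\eqref{eq:Chen:C3_2} quantifies the nonlinear defect, while~\eqref{eq:Chen:C3_1} and~\eqref{eq:Chen:C3_3} bound the response of $\cA$ to smooth perturbations; these are precisely what is required to verify the sub- and super-solution inequalities. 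Applying~\eqref{eq:Chen:C2} then sandwiches $u$ between shifted sub- and super-profiles, forcing $c_-=c_+=:c$ and yielding $\xi(t)=ct+O(1)$.

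After that, I would consider the family of shifts $u_T(x,t) := u(x+\xi(T),\, t+T)$ for $T\geq 0$. Assumption~\ref{As:Chen:C4} provides uniform $C^2_b$ bounds, and~\eqref{eq:Chen:C3_1} together with the evolution equation gives uniform equicontinuity in $t$. An Arzel\`a--Ascoli extraction along a sequence $T_n\to\infty$ produces a locally $C^1$ limit $u_{T_n}(x,t)\to U(x-ct)$ where $U\in C^1(\R)$ satisfies the traveling wave equation. Monotonicity in $x$ passes to the limit giving $U'\geq 0$, and the strict inequality $U'(\xi)>0$ for every $\xi$ then follows from the strong form of~\eqref{eq:Chen:C2} applied to $u$ and a slight spatial translate of itself, transported to the limit.

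The main obstacle is the middle step, namely proving that $\xi(t)/t$ has a limit rather than oscillating or drifting unboundedly. In the purely local bistable case this is handled through a Lyapunov/energy argument or phase-plane bookkeeping, but neither tool is available for a general operator $\cA$ satisfying only~\ref{As:Chen:C1}--\ref{As:Chen:C4}. The workaround relies delicately on the three inequalities~\eqref{eq:Chen:C3_1}--\eqref{eq:Chen:C3_3}: the quadratic control in~\eqref{eq:Chen:C3_2} lets one propagate a perturbation ansatz over long time intervals without cumulative linearization error, the split bound in~\eqref{eq:Chen:C3_1} localizes the spatial dependence of $\cA$, and the Lipschitz-type estimate~\eqref{eq:Chen:C3_3} on the Fr\'echet derivative keeps the linearized operator stable under the very perturbations generated by the sub- and super-solution construction. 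Closing this triple of estimates consistently so that the sub- and super-solutions remain ordered and their speeds coincide is the delicate technical heart of the proof.
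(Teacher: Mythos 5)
There is a genuine gap, and it sits exactly where you locate the ``delicate technical heart'': your plan makes the convergence of the front position, $\xi(t)=ct+O(1)$, a prerequisite for the compactness step, and the mechanism you propose for proving it does not work. You want sub- and super-solutions of the form $U_\pm(x-c_\pm t\pm q_\pm(t))$ built ``from the linearized operators near the endstates''; but before existence is established there are no traveling-wave-like profiles $U_\pm$ to insert, and linearization at the stable zeros $0$ and $1$ controls only the tails, not the transition region where $f'(a)>0$ and the dynamics are genuinely nonlinear. The only sub/supersolutions available at this stage (of the type in Lemma~\ref{lem:subsolution:auxiliary}, built from a fixed cutoff $\zeta$) yield a linear bound $|\xi(t)|\leq \bar C t+O(1)$ on the front position, not convergence of $\xi(t)/t$; the estimates \eqref{eq:Chen:C3_1}--\eqref{eq:Chen:C3_3} do not by themselves close this loop. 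A second, related omission: even granting a recentring $\xi(T)$, your Arzel\`a--Ascoli extraction needs a \emph{uniform-in-time bound on the interface width} (the paper's Lemma~\ref{lem:est:z}, $z(1-\delta,t)-z(\delta,t)\leq m_1(\delta)$) to guarantee that the monotone limit actually connects $0$ to $1$ rather than degenerating; you do not supply this.

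The paper's proof avoids the speed question entirely at the extraction stage. It evolves the fixed step $\zeta$, recentres at the $a$-level set $z(a,t)$ with no a priori information about the asymptotics of $z(a,t)$, and uses the interface-width bound together with the uniform $C^k_b$ estimates from \ref{As:Chen:C4} to obtain a monotone subsequential limit $U$ with the correct endstates. Only \emph{then} is $U$ identified as a traveling wave: one runs the evolution from initial datum $U$, shows via the sub/supersolutions of Lemma~\ref{lem:subsolution:W} (built from the evolving solution $v$ itself, not from a putative wave) that $\tilde U(\cdot,t)=\lim_j v(\cdot+z(a,t_j),t_j+t)$, and then applies the sliding argument from the uniqueness proof (the shifts $\xi_*$, $\xi^*$) to conclude $\tilde U(\cdot,t)=U(\cdot-c(t))$ with $c'$ constant. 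The wave speed is an output of this rigidity argument, not an input. Reorganizing your proof around this order of quantifiers --- profile first, speed last --- is the missing idea.
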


Observe that the assumption (C$i$) for $i=1,2,3$ implies (A$i$) as well as (B$i$). Furthermore, the 
first assumption for each theorem prescribes the nonlinear bistability behavior, the second one
is a comparison principle and the third assumption yields estimates on the nonlinear operator $\cA$
as well as on its linearization $\cA'$.\medskip

It is likely that Chen proved the Theorems \ref{thm:Chen:1}--\ref{thm:Chen:3}
 having in mind a general class of integro-differential evolution equations of the form
\be
\label{eq:IDE}
\frac{\partial u}{\partial t}=\delta\frac{\partial^2 u}{\partial x^2}+G(u,J_1\ast S^1(u),\ldots,J_n\ast S^n(u))
\ee
where $\delta\geq 0$ is the diffusion coefficient, $G$ and $S^k$ are smooth functions, $J_k\ast S^k(u)$
denotes the convolution $\int_\R J_k(x-y)S^k(u(y))\dy$ of $S^k(u)$ with a non-negative kernel $J_k\in C^1(\R)$ of unit mass
$\int_\R J_k(y)\dy=1$ and bounded total variation $\int_\R|J_k'(y)|\dy<\I$. 
In \cite[{Section 5}]{Chen:1997} further assumptions are specified 
 such that the conditions~\ref{As:Chen:C1}--\ref{As:Chen:C4} hold,
 which implies the existence, uniqueness and exponential stability of traveling wave solutions for these equations. 
It turns out that the approach does not apply directly
 when we replace the Laplacian in \eqref{eq:IDE} by a more general Riesz-Feller operator. 

\subsection{The Bistable Case with Nonlocal Diffusion}
\label{subsec:TWP}
% We consider a scalar quantity $u: \R_+\times\R \to U\subset\R$, $(t,x) \mapsto u(t,x)$,
%  which is governed by the equation
%  \begin{equation} \label{RD}
%   \diff{u}{t} = \RieszFeller u + f(u) \,,\quad x\in\R \,,\quad t\in\R_+ \,,
%  \end{equation}
% for some function $f:\R\to\R$ and fixed parameters $1<\alpha\leq 2$ and $\abs{\theta} \leq \min\{\alpha,2-\alpha\}$.
% 
% The operator $\cA$ is identified as $\cA[u]:= \RieszFeller [u] + f(u)$.

The analysis of equation~\eqref{RD:TWP} in the Sections \ref{sec:RF} and~\ref{ap:cp} show
 that we only need a relatively mild generalization of Chen's results \cite{Chen:1997}
 which we reviewed in Section \ref{sec:Chen}.

First we identify the operator $\cA$ as $\cA[u]:= \RieszFeller u + f(u)$
 and take a look at the assumptions~\ref{As:Chen:C1}--\ref{As:Chen:C4}.
\begin{enumerate}[label=(C3')]
\item[(C1)]
 The Riesz-Feller operators~$\RieszFeller$ are translational invariant with respect to the spatial variable,
  which follows from their integral representation in Theorem~\ref{thm:RieszFeller:extension}.
 The nonlinearity $u\mapsto f(u)$ depends on the spatial variable only through the function $u$ itself,
  hence the operator is again translational invariant.
 Consequently, the operator $\cA$ is translational invariant, 
  since it is the sum of translational invariant operators.

 Due to translational invariance, the operator $\cA$ maps a constant function to a constant function.
 In particular, $\cA[c \mathbf{1}]= \RieszFeller [c \mathbf{1}] + f(c)\mathbf{1}= f(c)\mathbf{1}$ for all $c\in\R$,
  where $\mathbf{1}$ denotes the constant function $x\mapsto 1$.
 The additional assumptions on $f$ identify the admissible nonlinear functions.
\item[(C2)]
 The property follows Lemma~\ref{lem:comparison:Cb}.

\item \label{As:Chen:C3b} %[(C3b)]
 In the following, we consider $u,v\in L^\I(\R)$ with $-1\leq u,v\leq 2$, see assumption~\ref{As:Chen:C3}. 
 The quantity in~\eqref{eq:Chen:C3_1} is estimated as
 \begin{multline*}
  \abs{ \cA[u+v](x) - \cA[u](x) }
   = \abs{ \RieszFeller [u+v] + f(u+v) - \RieszFeller u - f(u) } \\
  \leq \abs{ \RieszFeller v }(x) + \abs{ f(u+v) - f(u) }(x)
   \leq K_2 \norm{v''}_{C(\R)} + K_4 \norm{v'}_{C(\R)} + K_1 \abs{v(x)}       
 \end{multline*}
 for some positive constants $K_1$, $K_2$ and $K_4$,
 due to Proposition~\ref{prop:RieszFeller:estimate} and 
 \[
  \abs{ f(u+v) - f(u) }(x) = \Abs{ \integrall{0}{1}{ f'(u+t v) }{t}\; v(x) } \leq \norm{f'}_{C([-2,4])} \abs{v(x)} \,.
 \]
 Note that the estimate involves $\norm{v''}_{C(\R)}$ instead of $\norm{v''}_{C([x-1,x+1])}$
  due to the estimate of the Riesz-Feller operator in Proposition~\ref{prop:RieszFeller:estimate}.
 The Fr\'{e}chet derivative $\cA'[u](v)$ of $\cA$ is $\cA'[u](v)= \RieszFeller v + f'(u)v$.
 The second estimate~\eqref{eq:Chen:C3_2} follows from
 \begin{align*}
  \abs{ \cA[u+v]-\cA[u]-\cA'[u](v) }
   &= \abs{ f(u+v) - f(u) - f'(u)v } \\
   &= \Abs{ \integrall{0}{1}{ \integrall{0}{t}{ f''(u+s v) }{s} }{t}\; v^2 }
    \leq \norm{f''}_{C([-2,4])} \abs{v(x)}^2 \,.
 \end{align*}
 The third  estimate~\eqref{eq:Chen:C3_3} follows from
 \begin{align*}
  \abs{ \cA'[u+v](1)(x) - \cA'[u](1)(x) } 
   &= \abs{ f'(u+v) - f'(u) } \\
   &= \Abs{ \integrall{0}{1}{ f''(u+t v) }{t}\; v(x) } \leq \norm{f''}_{C([-2,4])} \abs{v(x)} \,.
 \end{align*}
\item[(C4)]
Due to Theorem~\ref{thm:CP:existence:bistable},
 the Cauchy problem with initial datum $u_0\in C^3(\R)$ and $0\leq u_0\leq 1$
 has a solution $u(x,t)$ which satisfies the properties~\ref{As:DI:20}--\ref{As:DI:23},
 $0\leq u\leq 1$ and the uniform estimates $\sup_{t\in[0,\infty)} \norm{u(\cdot,t)}_{C^2(\R)}<\infty$.
 We observe that a solution $u$ of the IVP with initial datum $u_0\in L^\infty(\R)$ and $0\leq u_0\leq 1$ almost everywhere
  becomes smooth for positive times and its $C^k_b(\R)$-norm for any $k\in\N$ can be uniformly bounded.
\end{enumerate}
The modifications in the estimates in~\ref{As:Chen:C3b} are due to our replacement of a second-order derivative with a Riesz-Feller operator,
 which demand a local estimate versus a global estimate see Proposition~\ref{prop:RieszFeller:estimate}.
Furthermore, we prefer to work in a $C_b$ setting instead of a $L^\infty$ setting.
\begin{thm}
Theorems \ref{thm:Chen:1}-\ref{thm:Chen:3} still hold if each term $K_2\|v''\|_{C^0(\R)}$ is replaced by
\benn
\tilde{K}_2\|v'\|_{C_b(\R)}+K_2\|v''\|_{C_b(\R)}
\eenn
occurring in the inequalities \eqref{eq:Chen:B3} and \eqref{eq:Chen:C3_1}.
\end{thm}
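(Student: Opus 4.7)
The plan is to revisit each step of Chen's proofs of Theorems~\ref{thm:Chen:1}--\ref{thm:Chen:3} and verify that no argument breaks down when the bound $K_2\|v''\|_{C^0(\R)}$ is inflated to $\tilde K_2\|v'\|_{C_b(\R)}+K_2\|v''\|_{C_b(\R)}$. A preliminary observation simplifies the scope: the uniqueness result (Theorem~\ref{thm:Chen:1}) relies only on estimate~\eqref{eq:Chen:A3}, which does not involve derivatives of $v$. Hence its proof goes through verbatim. The modifications affect only stability (Theorem~\ref{thm:Chen:2}) and existence (Theorem~\ref{thm:Chen:3}), and I would inspect each invocation of the estimate involving $\|v''\|_{C^0(\R)}$ in those proofs.

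First I would trace through Chen's stability argument. Estimate~\eqref{eq:Chen:B3} enters in verifying that explicitly constructed sub- and supersolutions of the form $w^{\pm}(x,t)=U(x-ct\pm q(t))\pm p(t)$, with $p(t),q(t)$ decaying like $e^{-\kappa t}$, are indeed super-/subsolutions up to an exponentially small error. The perturbation $v=w^\pm - U(\cdot-ct)$ has $C^k_b$ norms of the same small order in $p_0,q_0$ for all $k$, because $U$ is smooth with bounded derivatives. Therefore the contribution $\tilde K_2 \|v'\|_{C_b(\R)}$ is of the same order as $K_2\|v''\|_{C_b(\R)}$, and can be absorbed at the cost of enlarging $\tilde K_2+K_2$ and slightly shrinking the exponential rate $\kappa$. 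I would check inequality by inequality that no constant is forced to zero.

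Second I would examine the existence argument. Estimate~\eqref{eq:Chen:C3_1} is used in Chen's Leray--Schauder/degree-theoretic construction, in conjunction with the a priori estimates (C4) on $C^2_b(\R)$ solutions of the Cauchy problem. Here the functions $v$ entering the estimate are again differences of $C^k_b$-controlled approximations; by Theorem~\ref{thm:CP:existence:bistable} (and the uniform bounds derived there, cf.~\eqref{estimate:first-derivative}), both $\|v'\|_{C_b(\R)}$ and $\|v''\|_{C_b(\R)}$ are uniformly bounded along the approximating sequence. Consequently the inflated bound is compatible with all a priori estimates used in Chen's compactness and passage-to-the-limit steps, and we again obtain a traveling wave by the same scheme.

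The main obstacle is not conceptual but one of careful bookkeeping: Chen's constructions involve delicate choices of several small parameters, and each such choice must be rechecked to confirm that the extra term $\tilde K_2\|v'\|_{C_b(\R)}$ does not violate any smallness requirement. The key structural fact that makes this routine is that, for every $v$ arising in Chen's proofs, the two quantities $\|v'\|_{C_b(\R)}$ and $\|v''\|_{C_b(\R)}$ are controlled in tandem (by the same small parameter in the stability argument, by the uniform $C^2_b$ bounds in the existence argument). Thus the replacement amounts at most to redefining the constants $K_1,K_2,\tilde K_2$ and slightly reducing the decay rate $\kappa$. The full bookkeeping is deferred to Appendices~\ref{ap:uniq}--\ref{ap:exist}.
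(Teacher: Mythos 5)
Your overall plan coincides with the paper's: the paper's proof of this theorem is itself only a pointer to Appendices~\ref{ap:uniq}--\ref{ap:exist}, where Chen's arguments are re-run with the modified estimate, and your key structural observation -- that every perturbation $v$ to which \eqref{eq:Chen:B3} or \eqref{eq:Chen:C3_1} is applied has $\|v'\|_{C_b(\R)}$ and $\|v''\|_{C_b(\R)}$ controlled by the same small parameter -- is exactly what makes the appendices work. Your remark that Theorem~\ref{thm:Chen:1} is untouched is also correct, since \eqref{eq:Chen:A3} involves no derivatives of $v$ (and in Lemma~\ref{lem:subsolution} the sub-/supersolutions are translates of $U$ plus constants, so the Riesz--Feller contributions cancel identically by translation invariance and no operator estimate is invoked at all).

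Two points in your account are inaccurate, though neither is fatal. First, for the stability theorem the modified estimate does \emph{not} bite on the sub-/supersolutions $w^\pm=U(x-ct\pm q(t))\pm p(t)$ (again, the nonlocal terms cancel exactly there); it bites on the auxiliary profiles built from the cutoff $\zeta(\epsilon\,\cdot)$ in Lemma~\ref{lem:subsolution:auxiliary} and in the wave-speed bound. There $\|v''\|\sim\epsilon^2$ but $\|v'\|\sim\epsilon$, so the new term is of the \emph{same} order as the good terms $-\cK\epsilon a^+\zeta'+\epsilon$ rather than subordinate to them; it is not simply ``absorbed by enlarging constants and shrinking $\kappa$'' but handled by re-choosing the smallness parameter, e.g.\ $\epsI$ is defined implicitly by $\cK\,[\,\epsI^2\|\zeta''\|+\epsI\|\zeta'\|\,]=\min|f|$ on the relevant intervals, which is where the hypothesis $f'(\upm)<0$ (so that $f$ is bounded away from zero there) is used. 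Second, Chen's existence proof is not a Leray--Schauder/degree argument: it is a dynamical one -- evolve the monotone datum $\zeta$ under \eqref{RD}, recentre at the level set $v(\xi(t),t)=a$, and extract a limit profile via Arzel\`a--Ascoli using the uniform $C^k_b$ bounds of Theorem~\ref{thm:CP:existence:bistable}; your conclusion about why the inflated bound is compatible with the a priori estimates is nevertheless correct for that argument.
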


\begin{proof}
Precise statements and details are given in Appendix~\ref{ap:uniq} for uniqueness,
 Appendix~\ref{ap:stab} for stability and Appendix~\ref{ap:exist} for existence.
\end{proof}

Finally we can prove the main result.
\begin{thm}
Suppose $1<\alpha\leq 2$, $\abs{\theta} \leq \min\{\alpha,2-\alpha\}$ and $f\in C^\infty(\R)$ satisfies~\eqref{As:f:0}.
Then equation~\eqref{RD} admits a traveling wave solution $u(x,t)=U(x-ct)$ satisfying~\eqref{As:TWS}.
In addition, a traveling wave solution of~\eqref{RD} is unique up to translations.
Furthermore, traveling wave solutions are globally asymptotically stable in the sense that
 there exists a positive constant $\kappa$ such that 
 if $u(x,t)$ is a solution of~\eqref{RD} with initial datum~$u_0\in C_b(\R)$ satisfying $0\leq u_0\leq 1$ and
 \[ \liminf_{x\to\infty} u_0(x) > a\,, \qquad \limsup_{x\to-\infty} u_0(x) < a\,, \]
 then, for some constants $\xi$ and $K$ depending on $u_0$,
 \[ \norm{u(\cdot,t)-U(\cdot- ct + \xi)}_{L^\infty(\R)} \leq K e^{-\kappa t} \qquad \forall t\geq 0\,. \] 
\end{thm}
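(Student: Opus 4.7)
The plan is to deduce this theorem by identifying the abstract operator $\cA[u] := \RieszFeller u + f(u)$ and verifying the (modified) assumptions \ref{As:Chen:C1}--\ref{As:Chen:C4} from Chen's framework. Since (Ci) implies both (Ai) and (Bi) for $i=1,2,3$, this single verification yields existence (Theorem~\ref{thm:existence}), uniqueness (Theorem~\ref{thm:uniqueness}) and stability (Theorem~\ref{thm:stability}) at once, which together give the three conclusions of the statement. A preliminary affine change of variable rescales the bistable roots $\um<a<\up$ to the normalized form $0<a'<1$ used in the Chen framework; by translation invariance of $\RieszFeller$ this does not alter the structure of the equation.

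Verification of (C1) and (C4) is essentially immediate. For (C1), translation invariance is clear from the singular-integral representation of $\RieszFeller$ in Theorem~\ref{thm:RieszFeller:extension} together with the pointwise action of $f$, and the bistable condition~\eqref{As:f:0} together with the sign structure between the roots can be enforced by the smooth modification of $f$ outside $[\um,\up]$ used in the proof of Theorem~\ref{thm:CP:existence:bistable}. For (C4), the uniform bounds $\sup_{t\geq t_0}\norm{u(\cdot,t)}_{C^k_b(\R)} < \infty$ are precisely what is obtained by the bootstrap/semigroup argument at the end of Theorem~\ref{thm:CP:existence:bistable}. Assumption (C2) is the content of Lemma~\ref{lem:comparison:Cb}(iii), whose positive continuous $\eta(\abs{x},t)$ plays the role of $\eta$ in~\eqref{eq:Chen:C2}.

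The main obstacle is (C3), and specifically the first inequality~\eqref{eq:Chen:C3_1}. Decomposing $\cA[u+v]-\cA[u] = \RieszFeller v + (f(u+v)-f(u))$, the nonlinear contribution is dominated by $\norm{f'}_{C([-2,4])}\abs{v(x)}$ via the mean value theorem, which supplies the term $K_1\int \abs{v(x-y)}\,\nu(\dy)$ with $\nu=\delta_0$. The linear term $\RieszFeller v$, however, cannot be controlled by $\norm{v''}_{C^0([x-1,x+1])}$ as originally required by Chen, because Proposition~\ref{prop:RieszFeller:estimate} naturally produces only a global estimate of the form $\tilde K_2\norm{v'}_{C_b(\R)} + K_2\norm{v''}_{C_b(\R)}$: the splitting at scale $M$ separates the contributions of $v'$ and $v''$ and cannot be localized, since $\RieszFeller$ is genuinely nonlocal. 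This is the precise reason for the amendment of Chen's theorems stated immediately before, where each occurrence of $K_2\norm{v''}_{C^0(\R)}$ is replaced by $\tilde K_2\norm{v'}_{C_b(\R)} + K_2\norm{v''}_{C_b(\R)}$. The two Taylor-expansion estimates \eqref{eq:Chen:C3_2} and \eqref{eq:Chen:C3_3} involve only $f$ and its derivatives, hence are purely pointwise and go through unchanged, delivering $K_3\norm{v}_{C^0(\R)}^2$ and the corresponding bound in terms of $\norm{v}_{C^0([x-1,x+1])}$ via $\norm{f''}_{C([-2,4])}$.

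With (C1)--(C4) verified in the amended form, the generalized Chen theorems from Appendices~\ref{ap:uniq}--\ref{ap:exist} directly supply a monotone profile $U\in C^1(\R)$ with some speed $c$ satisfying the asymptotic conditions~\eqref{As:TWS}, its uniqueness up to translation in the class of monotone profiles, and the exponential stability estimate $\norm{u(\cdot,t)-U(\cdot-ct+\xi)}_{L^\infty(\R)} \leq K e^{-\kappa t}$. The hypotheses on the initial datum, $\liminf_{x\to\infty}u_0 > a$ and $\limsup_{x\to-\infty}u_0 < a$, translate exactly to the thresholds $a^\pm$ of Chen's stability theorem after normalization, closing the argument.
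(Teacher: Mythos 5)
Your proposal is correct and follows essentially the same route as the paper: identify $\cA[u]=\RieszFeller u+f(u)$, verify Chen's conditions \ref{As:Chen:C1}--\ref{As:Chen:C4} (with \ref{As:Chen:C2} supplied by Lemma~\ref{lem:comparison:Cb} and \ref{As:Chen:C4} by Theorem~\ref{thm:CP:existence:bistable}), observe that only the first estimate in \ref{As:Chen:C3} must be replaced by the global bound $\tilde K_2\norm{v'}_{C_b(\R)}+K_2\norm{v''}_{C_b(\R)}$ coming from Proposition~\ref{prop:RieszFeller:estimate}, and then invoke the modified Chen theorems proved in Appendices~\ref{ap:uniq}--\ref{ap:exist}. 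Your explicit remarks on the normalization of the roots and on why the nonlocal operator forces a global rather than local second-derivative bound are accurate elaborations of what the paper leaves implicit.
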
 
\begin{proof}
Under the assumption of this theorem,
 we studied at the beginning of this subsection
 Chen's original conditions~\ref{As:Chen:C1}--\ref{As:Chen:C4}.
We noticed that only in condition~\ref{As:Chen:C3} one estimate has to be modified.
This implies that the same estimate has to be changed also in condition~\ref{As:Chen:B3}.
However, in the Appendices we verify that his approach can be modified
 to obtain the stated results on existence in Theorem~\ref{thm:existence},
 uniqueness in Theorem~\ref{thm:uniqueness}
 and stability in Theorem~\ref{thm:stability} of traveling wave solutions of~\eqref{RD}.
\end{proof}

\appendix

\section{Proof - Uniqueness}
\label{ap:uniq}
The problem~\eqref{RD} under consideration fulfills the assumptions~\ref{As:Chen:A2} and~\ref{As:Chen:A3}
 due to the discussion in Section~\ref{sec:TWP}.
For nonlinear functions satisfying the assumptions~\ref{As:Chen:A1},
 the uniqueness result in Theorem~\cite[Theorem 2.1]{Chen:1997} is applicable.
In the following we reproduce the proof with all modifications.

Consider wave speed $c\in\R$ and traveling wave variable $\xi:=x-c t$.
\begin{defn}
  A traveling wave solution of~\eqref{RD} is a solution of the form $u(x,t)=U(\xi)$, 
  for some function~$U$ that connects different endstates $\lim_{\xi\to\pm\infty} U(\xi) = \upm$.
\end{defn}
A traveling wave solution satisfies the \textit{traveling wave equation} $- c U'(\xi) = \RieszFeller U + f(U)$.
\begin{thm} \label{thm:uniqueness}
 Suppose~\ref{As:Chen:A1} holds and $(U,c)$ is a traveling wave solution of~\eqref{RD} satisfying
 \begin{multline} \label{A:TWS}
  U\in C^1(\R)\,, \quad
   \lim_{\xi\to -\infty} U(\xi) = 0 =:\um \,, \quad
   \lim_{\xi\to +\infty} U(\xi) = 1 =:\up \,, \\
  U'(\xi)>0 \Xx{on} \R \,, \quad
   \lim_{\abs{\xi}\to\infty} U'(\xi) = 0 \,.
 \end{multline}
 Then for any traveling wave solution $(\tilde{U},\tilde{c})$ of~\eqref{RD}
 with 
 \[
  \tilde{U}\in C(\R) \,, \quad
  \lim_{\xi\to\pm\infty} \tilde{U}(\xi) = \upm \XX{and}
  \um\leq \tilde{U} \leq\up \Xx{on} \R \,,
 \]
 we have $\tilde{c}=c$ and $\tilde{U}(\cdot)=U(\cdot+\xi_0)$ for some $\xi_0\in\R$.
\end{thm}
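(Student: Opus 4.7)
My approach follows the sliding-comparison strategy used by Chen~\cite{Chen:1997} in the classical setting, adapted to the non-local equation via the comparison principle of Lemma~\ref{lem:comparison:Cb}. I split the argument into two stages: first $\tilde c = c$, then $\tilde U$ is a translate of $U$.

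For Stage~1, given $h\in\R$, both $u_h(x,t):=U(x-ct+h)$ and $\tilde u(x,t):=\tilde U(x-\tilde c t)$ solve~\eqref{RD:TWP}. The first task is to show that for all sufficiently large $h$ one has $U(\cdot+h)\geq\tilde U(\cdot)$ pointwise on $\R$. On any bounded interval this is immediate once $h$ is so large that $U(\cdot+h)\geq 1-\varepsilon$ there; in the far field one matches the tail asymptotics at $\upm$, both of which are controlled by the linearisation of the traveling-wave equation and hence share the same leading order (depending only on $f'(\upm)$, $c$, $\alpha$, $\theta$). Lemma~\ref{lem:comparison:Cb}(i) then propagates this inequality in time: $U(x-ct+h)\geq\tilde U(x-\tilde c t)$ for all $t\geq 0$. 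Setting $y:=x-\tilde c t$ gives
\[
  U\bigl(y+(\tilde c-c)t+h\bigr)\;\geq\;\tilde U(y)\qquad\forall\,y\in\R,\ t\geq 0.
\]
If $\tilde c<c$, letting $t\to\infty$ forces the left-hand side to $0$, contradicting $\tilde U(y)\to 1$ as $y\to\infty$. A symmetric argument, with the roles of $U$ and $\tilde U$ interchanged, yields $\tilde c\leq c$, so $\tilde c=c$.

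For Stage~2, with $c=\tilde c$, set
\[
  h^{*}:=\inf\{\,h\in\R\mid U(\cdot+h)\geq\tilde U(\cdot)\text{ on }\R\,\}.
\]
Stage~1 together with its lower-tail analogue gives $h^{*}\in\R$. By continuity $U(\cdot+h^{*})\geq\tilde U(\cdot)$; assume for contradiction $w_{0}:=U(\cdot+h^{*})-\tilde U\not\equiv 0$. Viewing $U(\cdot+h^{*})$ and $\tilde U$ as initial data for~\eqref{RD:TWP}, Lemma~\ref{lem:comparison:Cb}(ii) gives strict positivity of the corresponding solution difference for $t>0$, and the quantitative refinement~(iii), after a translation placing a point of positivity of $w_0$ inside $[0,1]$, supplies a uniform positive lower bound on each spatial window $|x|\leq m$. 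Combined with $U'>0$ and uniform continuity of $U$ on $\R$, this bound permits a shift by $h^{*}-\delta$ for some small $\delta>0$ while preserving $U(\cdot+h^{*}-\delta)\geq\tilde U(\cdot)$ on all of $\R$ (the bounded part from the lower bound on the difference, the tails from the matched asymptotics of Stage~1), contradicting minimality of $h^{*}$. Therefore $w_{0}\equiv 0$ and $\tilde U(\cdot)=U(\cdot+h^{*})$.

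The main obstacle in both stages is the far-field comparison near $\upm$. In the classical case $\alpha=2$ this is routine because of exponential decay, but for the Riesz--Feller operators the decay at $\upm$ is only polynomial of order $|\xi|^{-\alpha}$ (as recalled in Subsection~\ref{ssec:FRD}). I would handle this by linearising the traveling-wave equation at $\upm$ using the integral representation of Theorem~\ref{thm:RieszFeller:extension} to show that the leading coefficient of the polynomial tail depends only on $f'(\upm)$, $c$, $\alpha$, $\theta$, and is therefore the same for $U$ and $\tilde U$; the shifts $h$ then suffice to absorb the difference. This uniform tail analysis is the principal adaptation required beyond Chen's original framework.
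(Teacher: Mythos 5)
Your overall architecture (two stages, sliding comparison, Lemma~\ref{lem:comparison:Cb}) is in the spirit of the paper's proof, but there is a genuine gap at the one point where the non-local setting actually bites: the far-field comparison. You propose to establish the pointwise ordering $U(\cdot+h)\geq\tilde U(\cdot)$ on all of $\R$ by matching the leading-order algebraic tails $1-U(\xi)\sim C|\xi|^{-\alpha}$ of the two profiles and then letting the shift $h$ ``absorb the difference.'' This mechanism fails for algebraic decay. If both profiles share the same leading coefficient, then $U(\xi+h)-\tilde U(\xi)=C\bigl(|\xi|^{-\alpha}-|\xi+h|^{-\alpha}\bigr)+o(|\xi|^{-\alpha})=O(|\xi|^{-\alpha-1})+o(|\xi|^{-\alpha})$, so the gain produced by the shift is of strictly lower order than the uncontrolled remainder, and the sign of the difference near $\pm\infty$ cannot be determined for any finite $h$. (This is exactly the contrast with $\alpha=2$, where a shift multiplies an exponential tail by $e^{\lambda h}$ and genuinely dominates.) The same defect reappears in your Stage~2, where the tails after decreasing $h^*$ by $\delta$ are again supposed to be controlled ``from the matched asymptotics.'' Moreover, $\tilde U$ is only assumed continuous with the stated limits, so even deriving its precise tail expansion would require regularity and decay theory the hypotheses do not provide.

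The paper sidesteps all of this with the sub-/supersolutions of Lemma~\ref{lem:subsolution}, $w^\pm(x,t)=U\bigl(x-ct+\xi_0\pm\sigma^*\delta[1-e^{-\beta t}]\bigr)\pm\delta e^{-\beta t}$. One only needs the \emph{approximate} initial ordering $U(\cdot+\xi_1)-\delta_*<\tilde U<U(\cdot+\xi_1+h)+\delta_*$, which is immediate from the common limits at $\pm\infty$ and needs no tail asymptotics; the comparison principle then propagates it, and sending $t\to\infty$ removes the additive $\delta_* e^{-\beta t}$ at the cost of a bounded extra shift $\sigma^*\delta_*$, yielding both $c=\tilde c$ and an exact sandwich $U(\cdot-\sigma^*\delta_*)<\tilde U<U(\cdot+h+\sigma^*\delta_*)$. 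In Step~2 the far field is handled by the purely additive bound $U(\xi+\xi^*-2\sigma^*\hat h)-\tilde U(\xi)>-\hat h$ for $|\xi+\xi^*|\geq M_2+1$ (using only $2\sigma^*U'\leq 1$ there), after which the subsolution with $\delta=\hat h$ converts ``ordering up to $\hat h$'' into a true ordering with net shift $-\sigma^*\hat h$, contradicting the minimality of $\xi^*$. To repair your proof you should replace the tail-matching step by this additive-$\delta$ device; as written, the key pointwise orderings you rely on are not established.
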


First we need to construct sub- and supersolutions.
\begin{lem} \label{lem:subsolution}
 Suppose $(U,c)$ is a traveling wave solution of~\eqref{RD} satisfying~\eqref{A:TWS}.
 Then there exists a small positive constant $\delta_*$ (which is independent of $U$)
 and a large positive constant $\sigma^*$ (which depends on $U$)
 such that for any $\delta\in(0,\delta_*]$ and every $\xi_0\in\R$,
 the functions $w^+$ and $w^-$ defined by
 \begin{equation} \label{function:w:pm}
  w^\pm(x,t) := U\big(x-c t+\xi_0 \pm\sigma^*\delta [1-\exp(-\beta t)]\big) \pm \delta\exp(-\beta t)
 \end{equation}
 with $\beta := \frac12 \min\{-f'(0),-f'(1)\}$ are a supersolution and a subsolution of~\eqref{RD}, respectively.
\end{lem}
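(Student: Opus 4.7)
The plan is to adapt the classical Fife--McLeod construction of super- and sub-solutions, using the translation invariance of $\RieszFeller$ and the fact that $\RieszFeller$ annihilates constants (this is immediate from the integral representation~\eqref{eq:RieszFeller1b}). Write
\[
 \xi^\pm(x,t) := x - c t + \xi_0 \pm \sigma^*\delta[1-e^{-\beta t}]\,, \qquad q(t) := \delta\, e^{-\beta t}\,,
\]
so that $w^\pm(x,t) = U(\xi^\pm(x,t)) \pm q(t)$. A direct computation, together with translation invariance of $\RieszFeller$ in the spatial variable and the traveling wave equation $-cU' = \RieszFeller U + f(U)$, gives
\[
 \cL w^\pm := \diff{w^\pm}{t} - \RieszFeller w^\pm - f(w^\pm)
  = \pm\delta e^{-\beta t} \Big[\sigma^* \beta\, U'(\xi^\pm) - \beta - f'\big(U(\xi^\pm)\pm s\,q(t)\big)\Big]
\]
for some $s=s(x,t)\in(0,1)$ coming from the mean value theorem applied to $f$. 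The proof reduces to showing that the bracket is nonnegative on $\R\times[0,\infty)$ for a suitable choice of $\delta_*$ and $\sigma^*$.

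Next, I would exploit bistability. Since $\beta = \tfrac12\min\{-f'(0),-f'(1)\}$, the continuity of $f'$ provides an $\epsilon_0>0$ such that $f'(u) \leq -2\beta$ for $u\in[-\epsilon_0,\epsilon_0]\cup[1-\epsilon_0,1+\epsilon_0]$. Pick $\delta_* := \epsilon_0/2$ so that $|q(t)| \leq \epsilon_0/2$ whenever $\delta\leq \delta_*$. Because $U$ is continuous with $U(-\infty)=0$, $U(+\infty)=1$, there exists $M>0$ such that $U(\xi)\in[0,\epsilon_0/2]$ for $\xi\leq -M$ and $U(\xi)\in[1-\epsilon_0/2,1]$ for $\xi\geq M$. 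On these two tails the argument of $f'$ lies in one of the two good intervals, so $f'(U\pm sq) \leq -2\beta$ and the bracket is bounded below by $-\beta - f'(U\pm sq) \geq \beta \geq 0$, regardless of the value of $\sigma^*$ (here one uses $U' > 0$ so the $\sigma^*\beta U'$ term is nonnegative).

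The remaining case is the compact middle region $\xi^\pm \in[-M,M]$. Here $U$ need not be close to an endpoint, so $f'$ has no sign; but since $U'$ is continuous and strictly positive on $\R$, the minimum $m := \min_{|\xi|\leq M} U'(\xi)$ is strictly positive. Then it suffices to choose
\[
 \sigma^* := \frac{\beta + \|f'\|_{L^\infty([-\delta_*,1+\delta_*])}}{\beta\, m}\,,
\]
so that $\sigma^*\beta U'(\xi^\pm) \geq \beta + f'(U\pm sq)$ throughout the middle region. With these choices of $\delta_*$ and $\sigma^*$ the bracket is nonnegative everywhere, and hence $\cL w^+ \geq 0$ and $\cL w^- \leq 0$, as required.

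The main obstacle is a bookkeeping one rather than a deep one: verifying that $\RieszFeller$ commutes with the translation $x\mapsto \xi^\pm(x,t)$ and is unaffected by the additive constant $\pm q(t)$, so that $\RieszFeller w^\pm(\cdot,t)(x) = (\RieszFeller U)(\xi^\pm(x,t))$. This is where one must invoke the integral representation from Theorem~\ref{thm:RieszFeller:extension} and Proposition~\ref{prop:RieszFeller:estimate} to justify the pointwise identity for the $C^2_b(\R)$ function $w^\pm(\cdot,t)$. The other slightly delicate point is that $\sigma^*$ genuinely depends on the traveling wave $U$ (through $m$ and $M$), whereas $\delta_*$ depends only on $f$; recording this dependence correctly is essential for the subsequent uniqueness and stability arguments that will feed on this lemma.
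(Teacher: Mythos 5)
Your proof follows essentially the same route as the paper's: the same computation via translation invariance of $\RieszFeller$ and the traveling wave equation reduces everything to nonnegativity of the bracket, the same three-region split (two tails where $U$ is within $O(\delta_*)$ of an endstate, plus a compact middle region), and the same choices of $\delta_*$ (from bistability of $f$) and $\sigma^*$ (from $\inf_{\abs{\xi}\leq M} U'>0$). The only slip is that continuity of $f'$ at the endstates yields $f'(u)\leq -\beta$ on a neighborhood rather than $f'(u)\leq -2\beta$ (since $\beta$ is defined as \emph{half} the minimal slope, equality $f'(0)=-2\beta$ or $f'(1)=-2\beta$ may hold); the weaker bound is exactly what the bracket requires, so the argument is unaffected.
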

\begin{proof}
 The function $w^+(x,t)$ with $y:= x-c t+\xi_0 +\sigma^*\delta [1-\exp(-\beta t)]$ satisfies
 \begin{align*}
  \diff{}{t}w^+ - \RieszFeller w^+ - f(w^+) 
  &= U'(y) \big(-c + \sigma^*\delta\beta \exp(-\beta t)\big) \\
   & \qquad - \delta\beta \exp(-\beta t) - \RieszFeller w^+ - f(w^+) \,; 
 \intertext{a traveling wave satisfies $- c U' = \RieszFeller U + f(U)$
  as well as $\RieszFeller U (y)= \RieszFeller w^+ (x,t)$, hence}
  &= \RieszFeller U(y) + f(U) - \RieszFeller w^+(x,t) - f(w^+) \\
   & \qquad + U'(y) \sigma^*\delta\beta \exp(-\beta t) - \delta\beta \exp(-\beta t) \\
  &= f(U) - f\big(U+\delta\exp(-\beta t)\big) + \delta\beta \exp(-\beta t) \big(U'(y) \sigma^* - 1\big) \,.
 \end{align*}
 Due to the properties~\eqref{A:TWS} of $U$, 
 there exists for any $\delta_*\in (0,\frac12)$ a constant $M=M(U)$ such that
 \begin{equation} \label{A:M}
  U(\xi)>1-\delta_* \Xx{for all} \xi\geq M\,, \quad 
  U(\xi)<\delta_* \Xx{for all} \xi\leq -M\,.
 \end{equation}
 We consider three cases 
 \[ \abs{y}\leq M \,, \quad y<-M \XX{and} y>M \,. \]
\begin{enumerate}
\item 
  In case $\abs{y}\leq M$, the estimate 
  \begin{multline*}
    f(U) - f\big(U+\delta\exp(-\beta t)\big)
    = - \delta\exp(-\beta t) \, \integrall{0}{1}{ f'\big(U+\theta\delta\exp(-\beta t)\big)}{\theta} \\
    \geq - \norm{f'}_{C([-1,2])} \, \delta\exp(-\beta t)    
  \end{multline*}
  yields 
  \begin{multline*}
    f(U) - f\big(U+\delta\exp(-\beta t)\big) + \delta\beta \exp(-\beta t) \big(U'(y) \sigma^* - 1\big) \\
    \geq \delta\exp(-\beta t) \big( - \norm{f'}_{C([-1,2])} + \beta \big(U'(y) \sigma^* - 1\big)\big) \,.
  \end{multline*}
  The last expression is non-negative, if $\sigma^*$ is chosen according to
  \begin{equation} \label{A:sigma*}
      \sigma^* \geq \sup_{\abs{y}\leq M} \frac{ \norm{f'}_{C([-1,2])} + \beta}{\beta U'(y)}
      = \frac{ \norm{f'}_{C([-1,2])} + \beta}{\beta \inf_{\abs{y}\leq M} U'(y)} \,,
  \end{equation}
  where $\inf_{\abs{y}\leq M} U'(y)$ is positive,
  since $U'$ is a continuous positive function and $\abs{y}\leq M$ is a compact subset.
  For $\sigma^*$ in~\eqref{A:sigma*}, $\diff{}{t}w^+ - \RieszFeller w^+ - f(w^+)\geq 0$ for all $\abs{y}\leq M$.
\item 
In case $y\geq M$,
 \begin{multline*}
  f(U) - f\big(U+\delta\exp(-\beta t)\big) + \delta\beta \exp(-\beta t) \big(U'(y) \sigma^* - 1\big) \\
    = \delta\exp(-\beta t) \bigg( \integrall{0}{1}{ - f'\big(U(y)+\theta\delta\exp(-\beta t)\big) - \beta}{\theta}
				    + \beta \sigma^* U'(y) \bigg)
 \end{multline*}
The last expression is non-negative, if $\delta\in(0,\delta_*]$
 and $\delta_*$ is chosen sufficiently small according to 
 \begin{equation} \label{A:delta_0:1a}
  \min_{u\in [1-\delta_*,1+\delta_*]} - f'(u) \geq \beta = \frac12 \min\{ -f'(0),-f'(1) \} \,,
 \end{equation}
 since $\beta \sigma^* U'(y)$ is non-negative anyway.
\item
In case $y\leq -M$,
 \begin{multline*}
  f(U) - f\big(U+\delta\exp(-\beta t)\big) + \delta\beta \exp(-\beta t) \big(U'(y) \sigma^* - 1\big) \\
    = \delta\exp(-\beta t) \bigg( \integrall{0}{1}{ - f'\big(U(y)+\theta\delta\exp(-\beta t)\big) - \beta}{\theta}
				    + \beta \sigma^* U'(y) \bigg)
 \end{multline*}
The last expression is non-negative, if $\delta\in(0,\delta_*]$
 and $\delta_*$ is chosen sufficiently small according to 
 \begin{equation} \label{A:delta_0:2a}
  \min_{u\in [0,2 \delta_*]} - f'(u) \geq \beta = \frac12 \min\{ -f'(0),-f'(1) \} \,,
 \end{equation}
 since $\beta \sigma^* U'(y)$ is non-negative anyway.
\end{enumerate}
Choosing $\delta_*$ sufficiently small 
 such that~\eqref{A:delta_0:1a} and~\eqref{A:delta_0:2a},
 then $M$ sufficiently large such that~\eqref{A:M} 
 and finally $\sigma^*$ sufficiently large such that~\eqref{A:sigma*} are satisfied, respectively, 
 we deduce that 
 \[ \diff{}{t}w^+ - \RieszFeller w^+ - f(w^+) \geq 0 \,. \]

In contrast, to prove that $w^-$ is a subsolution, i.e. 
 \[ \diff{}{t}w^- - \RieszFeller w^- - f(w^-) \leq 0 \,, \]
 we have to choose $\delta_*$ sufficiently small 
 such that
 \begin{equation} \label{A:delta_0:1b}
  \min_{u\in [1-2\delta_*,1]} - f'(u) \geq \beta = \frac12 \min\{ -f'(0),-f'(1) \} \,,
 \end{equation}
 and
 \begin{equation} \label{A:delta_0:2b}
  \min_{u\in [-\delta_*,\delta_*]} - f'(u) \geq \beta = \frac12 \min\{ -f'(0),-f'(1) \} \,,
 \end{equation}
 then $M$ sufficiently large such that~\eqref{A:M} 
 and finally $\sigma^*$ sufficiently large such that~\eqref{A:sigma*} are satisfied, respectively. 

Together, the result follows 
 if we choose $\delta_*$ sufficiently small 
 such that
 \begin{equation} \label{A:delta_0:1}
  \min_{u\in [1-2\delta_*,1+\delta_*]} - f'(u) \geq \beta = \frac12 \min\{ -f'(0),-f'(1) \} \,,
 \end{equation}
 and
 \begin{equation} \label{A:delta_0:2}
  \min_{u\in [-\delta_*,2\delta_*]} - f'(u) \geq \beta = \frac12 \min\{ -f'(0),-f'(1) \} \,,
 \end{equation}
 then $M$ sufficiently large such that~\eqref{A:M} 
 and finally $\sigma^*$ sufficiently large such that~\eqref{A:sigma*} are satisfied, respectively. 
\end{proof}

\begin{proof}[Proof of Theorem~\ref{thm:uniqueness}]
 The proof is taken from the article~\cite[Proof of Theorem 2.1]{Chen:1997},
 whereat we will highlight the differences.

 The problem~\eqref{RD} under consideration fulfills the assumptions~\ref{As:Chen:A2} and~\ref{As:Chen:A3}
  due to the discussion in Section~\ref{sec:TWP}.

 \emph{Step 1}.
 Since $U(\xi)$ and $\tilde{U}(\xi)$ have the same limits as $\xi\to\pm\infty$,
 there exist $\xi_1\in\R$ and $h\gg 1$ such that
 \[ U(\cdot +\xi_1)-\delta_* < \tilde{U}(\cdot ) < U(\cdot +\xi_1+h)+\delta_* \Xx{on} \R \,, \]
 where $\delta_*$ is taken from Lemma~\ref{lem:subsolution}.
 Considering the translated profile $U(\cdot +\xi_1)$ instead of $U$,
 we can set $\xi_1=0$ without loss of generality.
 Comparing $\tilde{U}(x-\tilde{c} t)$ with $w^\pm$ in~\eqref{function:w:pm} 
 (with $\xi_0=0$ for $w^-$ and $\xi_0=h$ for $w^+$),
 we obtain from Lemma~\ref{lem:subsolution} and Lemma~\ref{lem:comparison:Cb}
 \begin{multline*}
    U\big(x-c t -\sigma^*\delta_* [1-\exp(-\beta t)]\big) - \delta_* \exp(-\beta t) \\
    < \tilde{U} (x-\tilde{c} t)
    < U\big(x-c t +h +\sigma^*\delta_* [1-\exp(-\beta t)]\big) +\delta_* \exp(-\beta t)
 \end{multline*}
 for all $x\in\R$ and $t>0$. 
 Keeping $\xi := x-\tilde{c} t$ fixed, 
 sending $t\to\infty$,
 and using $\lim_{\xi\to\pm\infty} U(\xi) = \lim_{\xi\to\pm\infty} \tilde{U}(\xi) = \upm$,
 we then obtain from the first inequality that $c\geq\tilde{c}$ 
 and from the second inequality that $c\leq\tilde{c}$,
 so that $c=\tilde{c}$.
 In addition,
 \begin{equation} \label{ineq:profiles}
  U (\xi -\sigma^*\delta_*) < \tilde{U} (\xi) < U (\xi +h +\sigma^*\delta_*) \qquad \forall \xi\in\R \,.
 \end{equation} 

 \emph{Step 2}.
 Due to~\eqref{ineq:profiles}, the shifts 
 \[ \xi^* := \inf\set{\xi\in\R}{\tilde{U}(\cdot ) \leq U(\cdot +\xi)} \geq -\sigma^*\delta_* \]
 and
 \[ \xi_* := \sup\set{\xi\in\R}{\tilde{U}(\cdot ) \geq U(\cdot +\xi)} \leq h +\sigma^*\delta_* \]
 are well-defined and satisfy $\xi_*\leq\xi^*$. 
 To finish the proof, it suffices to show that $\xi_*=\xi^*$.
 To do this, we use a contradiction argument.
 Hence, we assume that $\xi_*<\xi^*$ and $\tilde{U}(\cdot )\not \equiv U(\cdot +\xi^*)$.

 Since we assume $\lim_{\abs{\xi}\to\infty} U'(\xi) = 0$,
 there exists a large positive constant $M_2=M_2(U)$ such that
 \begin{equation} \label{ineq:M2}
  2 \sigma^* U'(\xi) \leq 1 \Xx{if} \abs{\xi}\geq M_2 \,.
 \end{equation}
 The definition of $\xi^*$ implies $\tilde{U}(\cdot ) \leq U(\cdot +\xi^*)$.
   The functions $\tilde{U}(\cdot )$ and $U(\cdot +\xi^*)$ are stationary solutions of~\eqref{RD}
   whereat $\tilde{U}(\cdot )-U(\cdot +\xi^*)\in C([0,T];C_0(\R))$.
   Thus the comparison result in Lemma~\ref{lem:comparison:Cb} implies $\tilde{U}(\cdot ) < U(\cdot +\xi^*)$ on $\R$.
 Consequently, by the continuity of $U$ and $\tilde{U}$,
 there exists a small constant $\hat h\in (0,\frac1{2\sigma^*}]$ such that
 \begin{equation} \label{ineq:profiles:hat-h}
  \tilde{U}(\xi) < U(\xi+\xi^*-2\sigma^* \hat h) \qquad \forall \xi \xx{with} \abs{\xi+\xi^*}\leq M_2+1 \,.
 \end{equation}
 When $\abs{\xi+\xi^*}\geq M_2+1$, then for some $\theta\in[0,1]$  
 \begin{multline*}
  U(\xi+\xi^*-2\sigma^* \hat h) - \tilde{U}(\xi) 
   > U(\xi+\xi^*-2\sigma^* \hat h) - U(\xi+\xi^*) 
   = -2\sigma^* \hat h U'(\xi+\xi^*-2\theta\sigma^* \hat h)
   > -\hat h 
 \end{multline*}
 by the definition of $M_2$.
 Hence, in conjunction with~\eqref{ineq:profiles:hat-h},
 $U(\xi+\xi^*-2\sigma^* \hat h) +\hat h > \tilde{U}(\xi)$ on $\R$.
 Due to Lemma~\ref{lem:subsolution} and Lemma~\ref{lem:comparison:Cb}, for all $x\in\R$ and $t>0$,
 \begin{equation}
   U\big(x -c t +\xi^* -2\sigma^*\hat h +\sigma^*\hat h [1-\exp(-\beta t)]\big) + \hat h \exp(-\beta t) 
    > \tilde{U} (x-c t).
 \end{equation} 
 Keeping $\xi := x-c t$ fixed and sending $t\to\infty$,
 we obtain $U\big(\xi +\xi^* -\sigma^*\hat h \big) \geq \tilde{U} (\xi)$ for all $\xi\in\R$.
 But this contradicts the definition of $\xi^*$.
 Hence, $\xi_*=\xi^*$, which completes the proof of the theorem. 
\end{proof}

\section{Proof - Stability}
\label{ap:stab}
We follow the proof of Chen in~\cite[Section 3]{Chen:1997}. 
In Section~\ref{sec:TWP}
 we studied the properties~\ref{As:Chen:C1}--\ref{As:Chen:C4} in case of $\cA[u] := \RieszFeller u + f(u)$.
Indeed the properties~\ref{As:Chen:C1}, \ref{As:Chen:C2} and~\ref{As:Chen:C4} are satisfied,
 whereas one estimate in~\ref{As:Chen:C3} has to be modified.
This implies that the properties~\ref{As:Chen:A1}--\ref{As:Chen:A3} and~\ref{As:Chen:B1}--\ref{As:Chen:B2} hold,
 whereas the estimate in property~\ref{As:Chen:B3} has to be modified. 

\begin{thm} \label{thm:stability}
Assume that~\ref{As:Chen:A1}--\ref{As:Chen:A3}, \ref{As:Chen:B1}--\ref{As:Chen:B2} and~\ref{As:Chen:C3b} hold.
Also assume that~\eqref{RD} has a traveling wave solution $(U,c)$ satisfying~\eqref{A:TWS},
 and 
\begin{equation} \label{As:delta} 
 0< \delta \leq \min\bigg\{ \min\{1,1/\sigma^*\} \frac{\delta_*}2\,, \frac{a^-}2\,, \frac{1-a^+}{2} \bigg\} \,. 
\end{equation}
Then there exists a positive constant $\kappa$ 
 such that for any $u_0\in C_b(\R)$ satisfying $0\leq u_0\leq 1$ and
 \begin{equation} \label{As:liminf}
  \liminf_{x\to +\infty} u_0(x) > 1-\delta > a^+\,, \qquad \limsup_{x\to -\infty} u_0(x) < \delta < a^-\,,
 \end{equation}
 the solution $u(x,t)$ of~\eqref{RD} with initial condition $u(\cdot,0)=u_0(\cdot)$ has the property that
 \[ \norm{u(\cdot,t)-U(\cdot-ct+\xi)}_{L^\infty(\R)} \leq K e^{-\kappa t} \quad \forall t\geq 0 \]
 where $\xi$ and $K$ are constants depending on $u_0$.
\end{thm}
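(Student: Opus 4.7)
The plan is to adapt Chen's three-stage squeezing argument from \cite[Section 3]{Chen:1997}, using Lemma~\ref{lem:subsolution} and the strong comparison principle in Lemma~\ref{lem:comparison:Cb} as the main engines. The strategy breaks into a preparation step that traps $u$ between translates of $U$ up to a small additive error, a squeezing step using exponentially decaying sub/supersolutions, and an iteration step that contracts the residual shift gap geometrically in time.

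\textbf{Preparation.} First I would use the far-field assumption~\eqref{As:liminf} together with $f'(0), f'(1)<0$ and the signs of $f$ on $(-1,0)\cup(1,2)$ to produce a time $T_0>0$, a small $\delta_0\in(0,\delta_*]$, and shifts $\xi_-\leq\xi_+$ such that
\[
U(x-cT_0+\xi_-) - \delta_0 \;\leq\; u(x,T_0) \;\leq\; U(x-cT_0+\xi_+)+\delta_0 \qquad \forall x\in\R.
\]
The near-endstate control at $\pm\infty$ comes from comparing $u$ with the spatially constant solutions of the ODE $\dot v=f(v)$, which drive values slightly below $a^-$ down to $0$ and values slightly above $a^+$ up to $1$; the spreading property of $u$ (positivity and unit mass of $\Green$ from Lemma~\ref{lem:SSPM}) combined with the monotonicity and continuity of $U$ then closes the transition region between the two plateaus.

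\textbf{Squeezing.} Feeding the bounds at $T_0$ into Lemma~\ref{lem:subsolution}, the supersolution $w^+$ with $w^+(\cdot,T_0)=U(\cdot-cT_0+\xi_+)+\delta_0$ and the analogous subsolution $w^-$ yield, via Lemma~\ref{lem:comparison:Cb},
\[
U\!\left(x-ct+\xi_--\sigma^*\delta_0[1-e^{-\beta(t-T_0)}]\right)-\delta_0 e^{-\beta(t-T_0)} \leq u(x,t)
\]
and the symmetric upper bound with $\xi_+$ and $+\sigma^*\delta_0$, for all $t\geq T_0$. Taken alone, this produces only a constant-width envelope of effective width $\xi_+-\xi_-+2\sigma^*\delta_0$, so a further iteration is required to make the two shifts coalesce.

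\textbf{Iteration of the shift-gap.} The core of the proof is the claim that after each time increment of length $1$ the pair $(\delta_n,\xi_+^{(n)}-\xi_-^{(n)})$ contracts by a uniform factor $\rho\in(0,1)$. The quantitative lower bound $\eta(|x|,t)$ in Lemma~\ref{lem:comparison:Cb}(iii), applied to the nonnegative difference between $u$ and each envelope, ensures that any positive excess present on a unit interval at time $T_0+n$ becomes a pointwise gap on compact sets at time $T_0+n+1$; this allows the replacement of $(\xi_+^{(n)},\delta_n)$ by $(\xi_+^{(n)}-\gamma,\rho\delta_n)$ for a fixed $\gamma>0$ determined by the minimum of $U'$ on the transition window and the values of $\eta$, and symmetrically for $\xi_-^{(n)}$. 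Iterating yields $\delta_n\leq\delta_0\rho^n$ together with $\xi_+^{(n)}-\xi_-^{(n)}\to 0$ at a geometric rate, so that $\xi:=\lim_n\xi_\pm^{(n)}$ is well-defined and uniform continuity of $U$ converts the envelope estimate into
\[
 \norm{u(\cdot,t)-U(\cdot-ct+\xi)}_{L^\infty(\R)} \leq K e^{-\kappa t}\qquad \forall t\geq 0
\]
with $\kappa=-\log\rho$.

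\textbf{Main obstacle.} The hardest step is this contraction of the shift gap: because $\Green(\cdot,t)$ is only polynomially localized (estimate~\ref{K:SFDE:prop6}) rather than exponentially, the gain from $\eta(|x|,t)$ must first be extracted on a bounded window where $U'$ is bounded below away from zero, then propagated to $|x|\to\infty$ using monotonicity of $U$ and the fact that both envelopes are already within $O(\delta_n)$ of the endstates $0$ and $1$. Throughout, the replacement of $\|v''\|_{C^0(\R)}$ by $\tilde K_2\|v'\|_{C_b}+K_2\|v''\|_{C_b}$ forced by assumption~(C3') must be tracked in every perturbation estimate where Chen used a pure second-derivative bound, in particular in the construction and verification of $w^\pm$ in Lemma~\ref{lem:subsolution}.
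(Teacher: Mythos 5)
Your proposal follows essentially the same route as the paper: it is Chen's squeezing argument, with the initial trapping between translates of $U$ (the paper's Step~1, realized there via the explicit expanding sub/supersolutions of Lemma~\ref{lem:subsolution:auxiliary} built from $\zeta$, which package your ``ODE relaxation plus spreading'' idea into a single comparison function), the sub/supersolutions of Lemma~\ref{lem:subsolution}, and the geometric contraction of the pair (amplitude error, shift gap) via the Harnack-type bound $\eta$ --- which is exactly the content of the paper's Lemma~\ref{lem:bootstrap}. The proposal is correct in structure and identifies the same key difficulties (extracting the gain on a compact window where $U'$ is bounded below, and tracking the modified estimate in (C3')), so no substantive gap to report.
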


\begin{proof}
We follow the four step procedure in~\cite[Proof of Theorem 3.1]{Chen:1997}.

\textbf{Step 1.} We prove that for any admissible $\delta>0$,
 there exist large positive constants $T$ and $H$ such that
 \begin{equation} \label{ineq:U:u}
   U(x-cT-H/2)-\delta \leq u(x,T) \leq U(x-cT+H/2)+\delta \qquad \forall x\in\R\,.
 \end{equation}
First, auxiliary smooth functions $w^\pm(x,t)$ are introduced in Lemma~\ref{lem:subsolution:auxiliary}
 which are constant except on a bounded interval.
The functions
 \begin{multline*}
  w^+(x,t) := (1+\delta) - [ 1-(a^- -2\delta) e^{-\epsilon t} ] \zeta(-\epsilon(x-\xi^+ +\cK t)) \\
  \XX{and} w^-(x,t) := -\delta + [ 1-(1-a^+ -2\delta) e^{-\epsilon t} ] \zeta(\epsilon(x-\xi^- -\cK t))
 \end{multline*}
 are a supersolution and a subsolution of~\eqref{RD}, respectively,
 for any $\delta\in (0\,, \min\{a^-/2\,, (1-a^+)/2\} ]$, $\xi^\pm\in\R$
 and appropriate constants $\epsilon=\epsilon(\delta)$ and $\cK=\cK(\delta)$.
Thus
 \begin{equation} \label{ineq:auxiliary}
   w^-(x,T) \leq u(x,T) \leq w^+(x,T) \qquad \forall x\in\R 
 \end{equation}
 will follow for a suitable choice of the parameters $\xi^\pm$ and Lemma~\ref{lem:comparison:Cb}.
In particular, we have to choose $\xi^\pm$ such that
 \begin{equation} \label{As:ID} 
   w^-(x,0) \leq u(x,0)=u_0(x) \leq w^+(x,0) \qquad \forall x\in\R\,.
 \end{equation}
Due to assumption~\eqref{As:liminf},
 the biggest $x^*\in\R$ such that $u_0(x^*)=1-\delta>a^+$ is a finite number.
Moreover, $w^-(x,0)\leq a^+ +\delta$ for all $x\in\R$
 where assumption~\eqref{As:delta} implies $a^+ +\delta\leq 1-\delta$.
Thus the choice $\xi^-=x^*$ implies the estimate
 \[ w^-(x,0)\leq \begin{cases}
                   -\delta &\xx{for all} x\leq x^*\,, \\
                   a^+ +\delta &\xx{for all} x\geq x^*\,,
                 \end{cases}
 \]
 hence $w^-(x,0)\leq u_0(x)$ for all $x\in\R$.

Again, due to assumption~\eqref{As:liminf},
 the smallest $x_*\in\R$ such that $u_0(x_*)=\delta$ is a finite number.
Moreover, $w^+(x,0)\geq a^- -\delta$ for all $x\in\R$
 where assumption~\eqref{As:delta} implies $\delta\leq a^- -\delta$.
Thus the choice $\xi^+=x_*$ implies the estimate
 \[ w^+(x,0)\geq \begin{cases}
                   a^- -\delta &\xx{for all} x\leq x_*\,, \\
                   1 +\delta &\xx{for all} x\geq x_*\,,
                 \end{cases}
 \]
 hence $u_0(x)\leq w^+(x,0)$ for all $x\in\R$. 
Consequently, for our choice of parameters $\xi^\pm$,
 Lemma~\ref{lem:comparison:Cb} implies estimate~\eqref{ineq:auxiliary} for all $T>0$.

Finally, we determine $T>0$, $H>0$ and $\delta_U>0$ such that
 \[ U(x-cT-H/2) -\delta_U \leq w^-(x,T) \XX{and} w^+(x,T) \leq U(x+cT+H/2) +\delta_U \]
 for all $x\in\R$ hold.
The functions $U(\cdot )$ and $w^\pm$ are continuous differentiable and monotone.
Moreover $w^-(x,t) := -\delta + [ 1-(1-a^+ -2\delta) e^{-\epsilon t} ] \zeta(\epsilon(x-\xi^- -\cK t))$
 satisfies $w^-(x,T)=1-\delta -(1-a^+ -2\delta) e^{-\epsilon T}\geq a^+ +\delta$ for all $x\geq \xi^- +\cK T+\frac4\epsilon$.
Choose $H^->0$ and $\delta_U$ such that
 \[ U(x-cT-H^-/2) -\delta_U = -\delta \Xx{for} x=\xi^- +\cK T+\frac4\epsilon\,. \]
Then $U(x-cT-H^-/2) -\delta_U \leq w^-(x,T)$ for all $x\in\R$, if 
 \begin{equation} \label{C:1}
  -\delta>-\delta_U \XX{and} a^+ +\delta \geq 1-\delta_U \,.
 \end{equation}
In contrast, $w^+(x,t) := (1+\delta) - [ 1-(a^- -2\delta) e^{-\epsilon t} ] \zeta(-\epsilon(x-\xi^+ +\cK t))$ 
 satisfies
 \[ w^+(x,T) = \begin{cases}
                 \delta +(a^- -2\delta) e^{-\epsilon T} &\xx{for all} x\leq \xi^+ -\cK T-\frac4\epsilon\,, \\
                 1+\delta &\xx{for all} x\geq \xi^+ -\cK T\,.
               \end{cases}
 \]
Choose $H^+>0$ and $\delta_U$ such that
 \[ U(x-cT+H^+/2) +\delta_U = 1+\delta \Xx{for} x=\xi^+ -\cK T-\frac4\epsilon\,. \]
Then $U(x-cT+H^+/2) +\delta_U \geq w^+(x,T)$ for all $x\in\R$, if 
 \begin{equation} \label{C:2}
  \delta_U \geq \delta +(a^- -2\delta) e^{-\epsilon T} \XX{and} 1+\delta_U>1+\delta \,.
 \end{equation}
The conditions $-\delta>-\delta_U$ and $1+\delta_U>1+\delta$ are equivalent to $\delta_U>\delta$.
We consider $d:=\delta_U-\delta>0$.
Inequality $\delta_U \geq \delta +(a^- -2\delta)$ implies condition $\delta_U \geq \delta +(a^- -2\delta) e^{-\epsilon T}$,
 hence we choose $d\geq a^- -2\delta> 0$.
Condition $a^+ +\delta \geq 1-\delta_U = 1-\delta-d$ is equivalent to $d\geq 1-a^+-2\delta> 0$.
On the one hand,
 we have to choose $\delta_U$ according to $\delta_U=\delta+d$ where $d\geq \max\{a^- -2\delta\,, 1-a^+-2\delta\}> 0$.  
On the other hand,
 we have to choose $\delta_U$ small enough
 such that the assumption $\delta\in(0,\min\{1,1/\sigma^*\} \delta_*/2]$ in Lemma~\ref{lem:bootstrap} is fulfilled.
These objectives can be met, if we consider $a^\pm$ as parameters which can be chosen sufficiently small.
Finally, due to the monotonicity of the functions $U(\cdot )$ and $w^\pm$,
 inequality~\eqref{ineq:U:u} will hold for the choice $H=\max\{H^-\,, H^+\}$.

Having established estimate~\eqref{ineq:U:u},
 the subsequent steps of the proof - where the exponential rate~$\kappa$ is derived - apply verbatim.
\end{proof}

In the sequel, $\zeta\in C^\infty(\R)$ is a fixed function having the following properties:
\begin{equation} \label{fct:zeta}
\begin{cases}
 \zeta(s)=0 &\Xx{if} s\leq 0\,, \\
 \zeta(s)=1 &\Xx{if} s\geq 4\,, \\
 0<\zeta'(s)<1 \xx{and} \abs{\zeta''(s)}\leq 1 &\Xx{if} s\in (0,4)\,.
\end{cases}
\end{equation}
\begin{lem} \label{lem:subsolution:auxiliary}
 Assume that~\ref{As:Chen:B1} holds.
 Then for every $\delta\in (0\,, \min\{a^-/2\,, (1-a^+)/2\} ]$,
 there exists a small positive constant $\epsilon=\epsilon(\delta)$ and 
 a large positive constant $\cK=\cK(\delta)$ such that, for every $\xi\in\R$,
 the function $w^+(x,t)$ and $w^-(x,t)$ defined by
 \begin{align*}
   w^+(x,t) &:= (1+\delta) - [ 1-(a^- -2\delta) e^{-\epsilon t} ] \zeta(-\epsilon(x-\xi+\cK t))\,, \\
   w^-(x,t) &:= -\delta + [ 1-(1-a^+ -2\delta) e^{-\epsilon t} ] \zeta(\epsilon(x-\xi-\cK t))\,,
 \end{align*}
 are respectively a supersolution and a subsolution of~\eqref{RD} in $\R\times (0,\infty)$.
\end{lem}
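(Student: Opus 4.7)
My plan is to verify
\[ \diff{w^+}{t} - \RieszFeller w^+ - f(w^+) \geq 0 \quad\text{and}\quad \diff{w^-}{t} - \RieszFeller w^- - f(w^-) \leq 0 \]
on $\R\times(0,\infty)$ via a three-region case analysis based on whether $\zeta$ is at its global minimum $0$, at its global maximum $1$, or in the transition interval $(0,4)$. I focus on $w^-$; the computation for $w^+$ is entirely symmetric. Writing $\eta := \epsilon(x-\xi-\cK t)$ and $A(t) := 1-(1-a^+-2\delta)e^{-\epsilon t}$, one has $w^-(x,t) = -\delta + A(t)\zeta(\eta) \in C^\infty_b$, so $\RieszFeller w^-$ is well defined by Proposition~\ref{prop:RieszFeller:estimate}. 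A direct Fourier computation combined with translation invariance gives the dilation identity $\RieszFeller w^-(x,t) = A(t)\,\epsilon^\alpha\,(\RieszFeller\zeta)(\eta)$, with $\|\RieszFeller\zeta\|_{C_b}$ finite and independent of $\epsilon,\cK,\delta$.

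In the outer regions $\eta\leq 0$ and $\eta\geq 4$ the function $\zeta$ attains its global minimum $0$ and its global maximum $1$, respectively. At such points each integrand in the singular-integral representation~\eqref{eq:RieszFeller1b} for $\RieszFeller\zeta(\eta)$ has a definite sign, yielding $\RieszFeller w^-\geq 0$ in the first region and $\RieszFeller w^-\leq 0$ in the second. In the first region $w^-\equiv -\delta$ and $\partial_t w^-=0$, so the desired inequality reduces to $f(-\delta)>0$, which holds by~\ref{As:Chen:B1}. In the second region $w^- = -\delta + A(t)$ lies in $[a^++\delta,\,1-\delta] \subset (a^+, 1)$ for the admissible range of $\delta$, so $f(w^-) \geq c_0(\delta)>0$, and the inequality reduces to
\[ \epsilon(1-a^+-2\delta)e^{-\epsilon t} + \epsilon^\alpha\|\RieszFeller\zeta\|_{C_b} \leq c_0(\delta), \]
which I arrange by choosing $\epsilon = \epsilon(\delta)$ sufficiently small, independent of $\cK$.

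The main obstacle is the transition region $0<\eta<4$, where $\RieszFeller\zeta(\eta)$ has no sign information and $f(w^-)$ may change sign. Control must come entirely from the drift term in
\[ \diff{w^-}{t} = \epsilon(1-a^+-2\delta)e^{-\epsilon t}\zeta(\eta) - \epsilon\cK A(t)\zeta'(\eta). \]
The second summand is bounded above by $-\epsilon\cK (a^++2\delta)\min_{[0,4]}\zeta' < 0$ (using $A(t) \geq A(0) = a^+ + 2\delta$), while the remaining contributions $\epsilon(1-a^+-2\delta)e^{-\epsilon t}\zeta$, $\RieszFeller w^-$, and $f(w^-)$ are each bounded by constants depending on $\delta$ and $\epsilon$ but not on $\cK$. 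Fixing $\epsilon = \epsilon(\delta)$ from the outer-region step and then taking $\cK = \cK(\delta)$ sufficiently large forces the drift term to dominate and yields the subsolution property on all of $\R\times(0,\infty)$. The corresponding computation for $w^+$ follows the same template: $w^+\equiv 1+\delta$ on $\{-\epsilon(x-\xi+\cK t)\leq 0\}$ with $f(1+\delta)<0$, $w^+\in[\delta,a^--\delta]\subset(0,a^-)$ on $\{-\epsilon(x-\xi+\cK t)\geq 4\}$ with $f(w^+)<0$, and a symmetric large-$\cK$ argument handles the transition.
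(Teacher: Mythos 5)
Your overall strategy---bound the nonlocal term by a quantity that is $O(\epsilon^\alpha)$ or $O(\epsilon)$, use the sign of $f(w^-)$ where it is favorable, and let the drift term $-\epsilon\cK A(t)\zeta'(\eta)$ absorb the rest---is the same as the paper's, which bounds $\abs{\RieszFeller w^-}$ via Proposition~\ref{prop:RieszFeller:estimate} (your dilation identity $\RieszFeller w^- = A(t)\epsilon^\alpha(\RieszFeller\zeta)(\eta)$ is a clean alternative) and then defers the sign analysis to Chen's Lemma~3.2. Your two outer-region cases and the computation of $\diff{w^-}{t}$ are correct. However, the transition-region argument has a genuine gap: you bound the drift term above by $-\epsilon\cK(a^++2\delta)\min_{[0,4]}\zeta'$ and assert this is strictly negative, but $\min_{[0,4]}\zeta'=0$. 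Since $\zeta\in C^\infty(\R)$ is constant outside $(0,4)$, $\zeta'$ vanishes at $s=0$ and $s=4$; hence $\zeta'(\eta)$ is arbitrarily small near the ends of the transition interval, and no choice of $\cK$ makes $-\epsilon\cK A(t)\zeta'(\eta)$ dominate there. Control cannot ``come entirely from the drift term'' on all of $0<\eta<4$.

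The repair is a finer case split inside the transition region keyed to the value of $\zeta(\eta)$ (equivalently of $w^-$), which is what Chen's Lemma~3.2 does. For a suitable small threshold $\delta_1=\delta_1(\delta)$: if $\zeta(\eta)\leq\delta_1$ then $w^-\in[-\delta,-\delta+\delta_1]\subset(-1,0)$, and if $\zeta(\eta)\geq 1-\delta_1$ then $w^-\geq -\delta+(a^++2\delta)(1-\delta_1)>a^+$ while $w^-<1-\delta$, so in both sub-cases \ref{As:Chen:B1} gives $f(w^-)\geq c_1(\delta)>0$ and the outer-region argument (small $\epsilon$) applies even though $\zeta'(\eta)$ may be tiny. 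Only on the remaining middle set $\{\delta_1\leq\zeta(\eta)\leq 1-\delta_1\}$, which is a compact subset of $\{0<\eta<4\}$ on which $\zeta'$ is continuous and positive and hence bounded below by some $\rho(\delta_1)>0$, does your large-$\cK$ argument go through. With that subdivision the proof closes; without it, the step ``taking $\cK$ sufficiently large forces the drift term to dominate'' fails near $\eta=0^+$ and $\eta=4^-$.
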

\begin{proof}
%  \emph{ 
%    We only prove the assertion of the lemma for $w^-$.
%    The proof for $w^+$ is analogous and is omitted.
%    By translational invariance, we need only consider the case $\xi=0$.
%    Since $\cA[w^-(x,t)1]=f(w^-(x,t))1$,
%    \begin{multline*}
%      \diff{w^-}{t}(x,t)-\cA[w^-(\cdot ,t)](x) \\
%        = - C\epsilon [ 1-(1-a^+ -2\delta) e^{-\epsilon t} ] \zeta'
%          + \epsilon (1-a^+ -2\delta) e^{-\epsilon t} \zeta
%          - \cA[w^-(\cdot ,t)](x) \\
%        \leq - C\epsilon a^+ \zeta' + \epsilon - f(w^-(x,t)) - \big[ \cA[w^-(\cdot ,t)](x)-\cA[w^-(x,t)1](x) \big] \,.
%    \end{multline*}
%  }
%  We estimate for $(x,t)\in \R\times (0,\infty)$ the quantity $\abs{\cA[w^-(\cdot ,t)](x)-\cA[w^-(x,t)1](x)}$,
%  whereat $w^-(\cdot ,t)$ denotes a function and $w^-(x,t)1$ a constant.
%  Thus
%  \begin{align*}
%   \Abs{\cA[w^-(\cdot ,t)](x)-\cA[w^-(x,t)1](x)}
%     &= \Abs{ \RieszFeller[w^-(\cdot ,t)](x) + f(w^-(x,t)) - \underbrace{\RieszFeller[\overbrace{w^-(x,t)1}^{\text{a constant}}](x)}_{=0} - f(w^-(x,t)) } \\
%     &= \Abs{ \RieszFeller[w^-(\cdot ,t)](x) }
%      \leq C \big[ \norm{\difff{w^-}{x}{2}}_{C_b} + \norm{\diff{w^-}{x}}_{C_b} \big] 
%      \leq C \epsilon \,,
%  \end{align*}
 We only prove the assertion of the lemma for $w^-$.
 The proof for $w^+$ is analogous and is omitted.
 By translational invariance, we need only consider the case $\xi=0$.

 We estimate 
 \[ \diff{w^-}{t}(x,t) - \cA[w^-(\cdot ,t)](x) = \diff{w^-}{t}(x,t) - \RieszFeller[w^-(\cdot ,t)](x) - f(w^-(x,t)) \,. \]
 On the one hand 
 \[
   \diff{w^-}{t}(x,t) 
     = - \cK\epsilon [ 1-(1-a^+ -2\delta) e^{-\epsilon t} ] \zeta'
       + \epsilon (1-a^+ -2\delta) e^{-\epsilon t} \zeta
     \leq - \cK\epsilon a^+ \zeta' + \epsilon \,.
 \]
 due to the assumptions on $\zeta$ and $\delta$.
 On the other hand,
 \begin{equation*}
   \Abs{ \RieszFeller[w^-(\cdot ,t)](x) }
     \leq \cK \Bigg[ \Norm{\difff{w^-}{x}{2}}_{C_b(\R\times [0,T])} + \Norm{\diff{w^-}{x}}_{C_b(\R\times [0,T])} \Bigg] 
     \leq \cK \epsilon \,,
 \end{equation*}
 due to proposition~\ref{prop:RieszFeller:estimate}, the assumptions on $\zeta$ and 
 \begin{align*} 
  \diff{w^-}{x} = \epsilon [ 1-(1-a^+ -2\delta) e^{-\epsilon t} ] \zeta'(\epsilon(x-\xi+\cK t))
 \intertext{as well as} 
  \difff{w^-}{x}{2} = \epsilon^2 [ 1-(1-a^+ -2\delta) e^{-\epsilon t} ] \zeta''(\epsilon(x-\xi+\cK t)) \,.
 \end{align*}
 Consequently, the estimate 
 \begin{equation} \label{ineq:w-}
   \diff{w^-}{t}(x,t) - \cA[w^-(\cdot ,t)](x)
     \leq - \cK_1 \epsilon a^+ \zeta' - f(w^-) + \cK_2 \epsilon 
 \end{equation}
 for some positive constants $\cK_1$ and $\cK_2$ follows.
 To show that $w^-$ is a subsolution,
 we have to find $\epsilon$ and $\cK_1$ such that the right-hand side of~\eqref{ineq:w-} is negative.
 This is possible by the same arguments as in~\cite[Proof of Lemma 3.2]{Chen:1997}.
\end{proof}
\begin{remark}
 The notions of sub- and supersolutions are only meaningful in presence of a comparison principle.
\end{remark}

\begin{lem} \label{lem:bootstrap}
Assume that the hypothesis of Theorem~\ref{thm:stability} hold
 and the constants $\delta_*$ and $\sigma^*$ are taken from Lemma~\ref{lem:subsolution}.
Then there exist a small positive constant $\epsilon^*$ (independent of $u_0$)
 such that if, for some $\tau\geq 0$, $\xi\in\R$, $\delta\in(0,\min\{1,1/\sigma^*\} \delta_*/2]$, and $h>0$, there holds
 \begin{equation} \label{ineq:sandwich}
  U(x-c\tau+\xi) -\delta \leq u(x,\tau) \leq U(x-c\tau+\xi+h)+\delta \qquad \forall x\in\R\,,
 \end{equation}
 then for every $t>\tau+1$,
 there exist $\hat\xi(t)$, $\hat\delta(t)$, and $\hat h(t)$ satisfying
 \begin{align*}
  & \hat\xi(t) \in [\xi-\sigma^*\delta\,, \xi+h+\sigma^*\delta]\,, \\ %\Xx{($\sigma^*$ is as in~\eqref{ineq:M2}),} \\
  & \hat\delta(t) \leq e^{-\beta(t-\tau-1)} [\delta+\epsilon^* \min\{h\,, 1\}]\,, \\
  & \hat h(t) \leq [h-\sigma^*\epsilon^* \min\{h\,, 1\}] + 2\sigma^* \delta\,,
 \end{align*}
 such that~\eqref{ineq:sandwich} holds with $(\tau,\xi,\delta,h)$ replaced by $(t,\hat\xi(t),\hat\delta(t),\hat h(t))$.
\end{lem}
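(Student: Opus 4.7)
\medskip
\noindent\textbf{Proof plan for Lemma~\ref{lem:bootstrap}.}

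The plan is to combine two ingredients that have already been prepared: (a) the supersolution/subsolution pair from Lemma~\ref{lem:subsolution}, which lets us sandwich $u$ between two perturbed traveling waves whose $\delta$-penalty decays like $e^{-\beta(t-\tau)}$ at the price of a drift of at most $\sigma^*\delta$; and (b) the quantitative version of the comparison principle, Lemma~\ref{lem:comparison:Cb}(iii), which turns the strict gap between a subsolution and a supersolution into a pointwise lower bound after unit time, proportional to the $L^1([0,1])$ integral of the initial gap.

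First I would apply Lemma~\ref{lem:subsolution} twice on the time slab $[\tau,t]$: once with base shift $\xi_0=\xi$ to produce a subsolution $w^-$ lying below $u(\cdot,\tau)$, and once with base shift $\xi_0=\xi+h$ to produce a supersolution $w^+$ lying above $u(\cdot,\tau)$. Lemma~\ref{lem:comparison:Cb}(i) then upgrades~\eqref{ineq:sandwich} to the inequality
\[
U\bigl(x-ct+\xi-\sigma^*\delta[1-e^{-\beta(t-\tau)}]\bigr)-\delta e^{-\beta(t-\tau)}
\leq u(x,t)
\leq U\bigl(x-ct+\xi+h+\sigma^*\delta[1-e^{-\beta(t-\tau)}]\bigr)+\delta e^{-\beta(t-\tau)}.
\]
This already yields the desired range $\hat\xi(t)\in[\xi-\sigma^*\delta,\xi+h+\sigma^*\delta]$ and exponential decay of the vertical penalty, but it does not yet compress $h$.

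To gain the reduction in $h$, I would exploit the fact that $U$ is strictly monotone, so that for $h\in(0,1]$ one has $U(\cdot+\xi+h)-U(\cdot+\xi)\geq c_U\,h$ on any fixed compact window where $U'$ is bounded below, while for $h>1$ the gap $U(\cdot+\xi+h)-U(\cdot+\xi)$ is bounded below by a positive constant on a comparable window. Consequently $\int_0^1[w^+(y,\tau)-w^-(y,\tau)]\,dy\geq c_0\min\{h,1\}$ for a constant $c_0$ depending only on $U$. Applying Lemma~\ref{lem:comparison:Cb}(iii) to the ordered pair $(w^+,w^-)$ on the slab $[\tau,\tau+1]$ gives a pointwise lower bound $w^+(x,\tau+1)-w^-(x,\tau+1)\geq c_0\eta(|x|,1)\min\{h,1\}$, and hence, after applying Lemma~\ref{lem:comparison:Cb}(i) again on $[\tau+1,t]$, the same type of gap between the upper bound $U(\cdot+\xi+h+\sigma^*\delta)+\delta e^{-\beta(t-\tau)}$ and $u(\cdot,t)$ on every compact window in $x$.

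The hard part, and the one step where Chen's construction needs real care, is to convert this vertical gap into a horizontal shift, i.e.\ into an actual reduction of $h$. I would proceed as in~\cite[Section 3]{Chen:1997}: choose a bounded window $|x|\leq R$ large enough that outside it the traveling wave $U(\cdot-ct+\xi+h+\sigma^*\delta)$ is within $c_0\eta(R,1)\min\{h,1\}/2$ of its asymptotic values, so the tails are handled automatically by $0\leq u\leq 1$ combined with the decaying $\delta$-penalty; on the window, use the uniform lower bound $\inf_{|y|\leq R+\sigma^*} U'(y)>0$ to trade a vertical margin of size $\sigma^*\epsilon^*\min\{h,1\}$ for a horizontal shift of the same size divided by this infimum, absorbed into the $+\sigma^*\delta$ term. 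Choosing $\epsilon^*$ as the product of $c_0$, $\eta(R,1)$, and this inverse slope — all depending only on $U$, not on $u_0$ — produces the stated estimates, with the residual decayed penalty becoming $\hat\delta(t)=e^{-\beta(t-\tau-1)}[\delta+\epsilon^*\min\{h,1\}]$ after reapplying Lemma~\ref{lem:subsolution} on $[\tau+1,t]$. A symmetric argument would give the analogous compression on the subsolution side, but only one side is needed for the stated inequality.
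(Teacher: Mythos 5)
Your overall architecture matches the paper's proof: sandwich $u$ between the sub- and supersolutions of Lemma~\ref{lem:subsolution}, use the quantitative comparison principle to produce a pointwise gap after unit time on a compact window, trade that vertical gap for a horizontal shift via $\inf U'$ on the window, control the tails via the flatness of $U$ there, and restart Lemma~\ref{lem:subsolution} at time $\tau+1$. However, there is a genuine logical gap at the central step. You apply Lemma~\ref{lem:comparison:Cb}(iii) to the ordered pair $(w^+,w^-)$ and conclude that $w^+(\cdot,\tau+1)-w^-(\cdot,\tau+1)$ is bounded below, and then assert ("and hence") that the same type of gap holds between the upper bound and $u(\cdot,t)$. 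This does not follow: knowing that the two bounding waves separate from each other says nothing about where $u$ sits between them, and $u$ may hug either bound. The paper instead uses an $L^1([0,1])$ dichotomy at the initial time: since $\int_0^1[U(y+\overline h)-U(y)]\,dy\geq 2\epsilon_1\overline h$ with $\overline h=\min\{h,1\}$, at least one of $\int_0^1[u(y,0)-U(y)+\delta]\,dy\geq\epsilon_1\overline h+\delta$ or $\int_0^1[U(y+\overline h)+\delta-u(y,0)]\,dy\geq\epsilon_1\overline h+\delta$ must hold, and the quantitative comparison principle is then applied to the pair involving $u$ itself (namely $(u,w^-)$ in the first case), which is what produces a pointwise separation of $u$ from that particular bounding wave. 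The case analysis is unavoidable because you do not know in advance which of the two bounds gets shifted; your remark that "only one side is needed" inverts this — you need whichever side the dichotomy hands you.

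A secondary, smaller imprecision: you say the tails are "handled automatically by $0\leq u\leq1$ combined with the decaying $\delta$-penalty." The actual mechanism in the paper is the flatness condition $2\sigma^*U'(\xi)\leq1$ for $|\xi|\geq M_2$ from \eqref{ineq:M2}, which shows that the horizontal shift by $2\sigma^*\epsilon^*\overline h$ costs at most an extra vertical penalty $\epsilon^*\overline h$ in the tails; this is precisely where the $\epsilon^*\min\{h,1\}$ term in $\hat\delta(t)$ comes from, and it is why $\epsilon^*$ must also be capped by $\delta_*/2$ and $1/(2\sigma^*)$ so that the enlarged penalty $q=\delta e^{-\beta}+\epsilon^*\overline h$ stays admissible for the second application of Lemma~\ref{lem:subsolution}. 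You write the correct final form of $\hat\delta(t)$, but the plan as stated does not supply the estimate that justifies it.
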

\begin{proof}
Equation~\eqref{RD} is autonomous, 
 hence invariant with respect to spatial translations and time shifts.
Thus we set $\xi=0$ and $\tau=0$ without loss of generality and obtain
 \[ U(x) -\delta \leq u(x,0) \leq U(x+h)+\delta \qquad \forall x\in\R\,. \]
We want to deduce 
 \begin{equation} \label{ineq:sandwich:2}
    U\big(x-c t-\sigma^*\delta [1-e^{-\beta t}]\big) - \delta e^{-\beta t}
      \leq u(x,t)
      \leq U\big(x-c t+h+\sigma^*\delta [1-e^{-\beta t}]\big) + \delta e^{-\beta t}\,,
 \end{equation}
 with the help of Lemma~\ref{lem:comparison:Cb}.
First, the functions
 \begin{equation} \label{w:minus:lemma33}
    w^-(x,t) := U\big(x-c t-\sigma^*\delta [1-e^{-\beta t}]\big) - \delta e^{-\beta t}
     \xx{with} w^-(x,0) = U(x) -\delta\,,
 \end{equation}
 and
 \begin{equation} \label{w:plus:lemma33}
    w^+(x,t) := U\big(x-c t+h+\sigma^*\delta [1-e^{-\beta t}]\big) + \delta e^{-\beta t}
     \xx{with} w^+(x,0) = U(x+h)+\delta\,,
 \end{equation}
 are a subsolution and a supersolution of~\eqref{RD}, respectively, due to Lemma~\ref{lem:subsolution}.
Second, a solution $u(x,t)$ of~\eqref{RD} with initial datum $u(\cdot ,0)=u_0(\cdot )\in C_b(\R)$ and $0\leq u_0\leq 1$  
 satisfies $0\leq u(x,t)\leq 1$ for all $(x,t)\in\R\times(0,T]$,
 due to Theorem~\ref{thm:CP:existence:bistable}.
Finally, inequality~\eqref{ineq:sandwich:2} follows from the comparison principle in Lemma~\ref{lem:comparison:Cb}. 

Define $\overline h:=\min\{h,1\}$ and $\epsilon_1:= \frac12 \min_{\xi\in[0,2]} U'(\xi)$,
 such that $\integrall{0}{1}{U(y+\overline h)-U(y)}{y}\geq 2\epsilon_1 \overline h$.
Due to~\eqref{ineq:sandwich:2}, 
 at least one of the estimates
 \[ \integrall{0}{1}{u(y,0)-U(y)+\delta}{y}\geq \epsilon_1 \overline h +\delta
      \Xx{or} \integrall{0}{1}{U(y+\overline h)+\delta-u(y,0)}{y}\geq \epsilon_1 \overline h +\delta
 \]
 is true.
Here the first case is considered, whereas the second case is similar and omitted.
Comparing $u$ with $w^-$ in~\eqref{w:minus:lemma33}
 and using property~\ref{As:Chen:B2} - see also Lemma~\ref{lem:comparison:Cb} - yields 
 \begin{multline} \label{HI:lemma33}
    \min_{x\in [-M_2-2-\abs{c}\,, M_2+2+\abs{c}]} 
       u(x,1) -\big[ U\big(x-c-\sigma^*\delta [1-e^{-\beta}]\big) - \delta e^{-\beta} \big] \\
     \geq \eta(M_2+2+\abs{c}) \integrall{0}{1}{ u(y,0) -(U(y) - \delta) }{y} 
       \geq \eta \epsilon_1 \overline h + \eta \delta
 \end{multline}
with $\eta:=\eta(M_2+2+\abs{c})$.
Defining $\xi_1:= c+\sigma^*\delta [1-e^{-\beta}]$, which satisfies
 \[ -\abs{c} \leq \xi_1 = c+\sigma^*\delta [1-e^{-\beta}] \leq \abs{c} + \sigma^* \min\{1,1/\sigma^*\} \delta_*/2 \leq \abs{c} + 1\,,  \]
and
 \begin{equation} \label{def:epsilon:star}
    \epsilon^*:= \min\bigg\{ \frac{\delta_*}{2}\,,
        \frac1{2\sigma^*}\,, \min_{x\in [-M_2-2-2\abs{c}\,, M_2+2+2\abs{c}]} \frac{\eta\epsilon_1+\eta\delta/\overline{h}}{2\sigma^* U'(x)} \bigg\}
 \end{equation}
 yields
 \[
  U(x-\xi_1+2\sigma^*\epsilon^*\overline h) - U(x-\xi_1)
   = U'(\theta) 2\sigma^*\epsilon^* \overline h
   \leq \eta\epsilon_1 \overline h +\eta\delta
 \]
 for all $x\in [-M_2-1-\abs{c}\,, M_2+1+\abs{c}]$
 and some $\theta\in[x-\xi_1\,, x-\xi_1+2\sigma^*\epsilon^*\overline h]$.
Consequently, together with~\eqref{HI:lemma33},
 \[ u(x,1) \geq U(x-\xi_1+2\sigma^*\epsilon^*\overline h) - \delta e^{-\beta}
     \qquad \forall x\in [-M_2-1-\abs{c}\,, M_2+1+\abs{c}]\,.
 \]
In contrast, for $\abs{x}\geq M_2+1+\abs{c}$, the definition of $M_2$ in~\eqref{ineq:M2} yields 
 $U (x-\xi_1)\geq U (x-\xi_1+2\sigma^* \epsilon^*\overline h)-\epsilon^* \overline h$.
 Together with~\eqref{ineq:sandwich:2} for $t=1$ and the previous estimate, we obtain 
 \[ u(x,1) \geq U(x-\xi_1) - \delta e^{-\beta}
     \geq U(x-\xi_1+2\sigma^*\epsilon^*\overline h) - \big(\delta e^{-\beta} + \epsilon^* \overline h\big)
     \qquad \forall x\in \R\,.
 \]
Next, we want to show
 \[ u(x,1+\tau) \geq U(x-c\tau-\xi_1+2\sigma^*\epsilon^*\overline h-\sigma^* q(1-e^{-\beta\tau}))-q e^{-\beta\tau} =:w^-_2(x,\tau) \]
 for $q:= \delta e^{-\beta} + \epsilon^* \overline h$ and all $\tau\geq 0$.
The estimate $q= \delta e^{-\beta} + \epsilon^* \overline h \leq \delta_*$ and Lemma~\ref{lem:subsolution} imply that 
 \[
    w^-_2(x,\tau) %:= U(x-c\tau-\xi_1+2\sigma^*\epsilon^*\overline h-\sigma^* q(1-e^{-\beta\tau}))-q e^{-\beta\tau}
     \XX{with} w^-_2(x,0) := U(x-\xi_1+2\sigma^*\epsilon^*\overline h)-q
 \]
%  \[
%     w^-_2(x,\tau) := U(x-c\tau-\xi_1+2\sigma^*\epsilon^*\overline h-\sigma^* q(1-e^{-\beta\tau}))-q e^{-\beta\tau}
%      \xx{with} w^-_2(x,0) := U(x-\xi_1+2\sigma^*\epsilon^*\overline h)-q
%  \]
 is a subsolution of~\eqref{RD}.
Thus we deduce from Lemma~\ref{lem:comparison:Cb}, $u(x,1+\tau) \geq w^-_2(x,\tau)$ for all $\tau\geq 0$.
Furthermore we conclude
 \begin{multline*}
  u(x,1+\tau) \geq w^-_2(x,\tau) = U(x-c\tau-\xi_1+2\sigma^*\epsilon^*\overline h-\sigma^* q(1-e^{-\beta\tau}))-q e^{-\beta\tau} \\
    \geq U(x-c\tau-c+\sigma^*\epsilon^*\overline h-\sigma^* \delta)-e^{-\beta\tau}\big(\delta + \epsilon^* \overline h \big)\,,
 \end{multline*}
 using the definitions of $\xi_1$ and $q$, and the monotonicity of $U$.
Hence, setting $t=1+\tau$, $\hat\xi(t):= \sigma^*\epsilon^*\overline h-\sigma^* \delta$,
 and $\hat\delta(t)=e^{-\beta(t-1)}(\delta+\epsilon^*\overline h)$,
 we obtain from the last inequality the lower bound.
Whereas, estimate~\eqref{ineq:sandwich:2}
 with $\hat h(t):= [h+\sigma^*\delta(1-e^{-\beta t})]-\hat\xi (t) = h-\sigma^*\epsilon^*\overline{h}+\sigma^*\delta[2-e^{-\beta\tau}]$,
 implies the upper bound.
\end{proof}
Finally we prove that the speed $c$ of the traveling wave is bounded following the proof of~\cite[Theorem 3.5]{Chen:1997}.
Given the existence of a traveling wave solution, Hans Engler also proved that the wave speed has to be finite~\cite{Engler:2010}. 
\begin{thm}
Assume that~\ref{As:Chen:B1}, \ref{As:Chen:A2} and~\ref{As:Chen:C3b} hold.
Then for any traveling wave solution $(U,c)$ of~\eqref{RD}, 
 the wave speed $c$ satisfies
 \begin{equation} \label{est:speed} 
   \abs{c} \leq \speedI := \dfrac{\norm{f}_{C([0,1])}}{\epsI} \dfrac{3+a}{a}
 \end{equation}
 where $a:=\min\{a^-,1-a^+\}$ and $\epsI$ is a positive constant defined implicitly by
 \[ \rho(\epsI) := \cK \Big[ \epsI^2 \norm{\zeta''}_{C_b(\R\times [0,T])} + \epsI \norm{\zeta'}_{C_b(\R\times [0,T])} \Big]
      = \min\set{\abs{f(s)}}{ s\in [\tfrac{a}{3},\tfrac{2a}{3}]\cup [1-\tfrac{2a}{3},1-\tfrac{a}{3}]} \] 
 whereat the constant $\cK$ is determined in Proposition~\ref{prop:RieszFeller:estimate} and function $\zeta(s):=\tfrac12 [1+\tanh(s)]$.
\end{thm}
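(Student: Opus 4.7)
My plan is to follow the sub-/super-solution argument of~\cite[Theorem 3.5]{Chen:1997}, adapted to the Riesz-Feller setting, and to argue by contradiction by exhibiting a competing travelling profile of speed $\speedI$ that $U$ cannot outpace. Up to the reflection $x\mapsto -x$ (which sends $\theta\mapsto -\theta$ and preserves the admissible range of skewness) combined with $u\mapsto 1-u$ (which preserves the bistability class~\eqref{As:f:0}), we may assume $c>0$; the case $c=0$ is vacuous since $\speedI>0$. Suppose for contradiction that $c>\speedI$ and introduce the competing profile
\[ v(x,t):=\zeta\bigl(\epsI(x-\speedI t+\xi_0)\bigr), \qquad \zeta(s)=\tfrac12(1+\tanh s), \]
for a shift $\xi_0\in\R$ to be chosen. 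The profile $v$ is $C^\infty$, monotone, translates rigidly to the right with speed $\speedI$, and has limits $0$ at $-\infty$ and $1$ at $+\infty$.

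Next I would verify that $v$ is a subsolution of~\eqref{RD:TWP}. A direct calculation yields $\partial_t v=-\speedI\epsI\zeta'$, while Proposition~\ref{prop:RieszFeller:estimate} applied with $M=1$ furnishes the uniform two-sided bound $\lvert\RieszFeller v(x,t)\rvert\leq \rho(\epsI)$. The inequality $\partial_t v-\RieszFeller v-f(v)\leq 0$ is then checked region by region in the value of $v(x,t)\in(0,1)$: in the two bistable reservoirs $[a/3,2a/3]$ and $[1-2a/3,1-a/3]$ the reaction lower bound $\lvert f(v)\rvert\geq\rho(\epsI)$, with the correct sign supplied by~\ref{As:Chen:B1}, absorbs the Riesz-Feller contribution; in the three complementary intervals the negative term $-\speedI\epsI\zeta'$ controls the residual, the factor $(3+a)/a$ in $\speedI$ being calibrated precisely for this purpose.

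For the contradiction, choose $\xi_0$ sufficiently negative so that $v(\cdot,0)\leq U(\cdot)$ pointwise on $\R$; this is possible because the two profiles share the endstates $0$ and $1$, and pushing $v$'s transition region arbitrarily far to the right makes $v$ lie below any profile with the same endstates. Lemma~\ref{lem:comparison:Cb} applied to the subsolution $v$ and the exact solution $u(x,t):=U(x-ct)$ then forces $v(x,t)\leq U(x-ct)$ on $\R\times[0,\infty)$. However the gap $(c-\speedI)t$ grows linearly, so for large $t$ the transition point of $U(\cdot-ct)$ lies strictly to the right of the transition point of $v(\cdot,t)$; picking $x$ between the two transitions yields $v(x,t)\approx 1$ while $U(x-ct)\approx 0$, a contradiction.

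The main obstacle is the subsolution verification. Unlike the local Laplacian, whose symbol $-\xi^2$ gives $v_{xx}=\epsI^2\zeta''$ a definite sign on either side of the inflection point, the Riesz-Feller operator only admits the symmetric estimate $\rho(\epsI)$, so one must match both signs and magnitudes region by region. The implicit definition of $\epsI$ through $\rho(\epsI)=\min\lvert f\rvert$ on the bistable reservoirs, together with the prefactor $(3+a)/a$ in $\speedI$, is tuned precisely so that the algebra closes in every region, and the choice $\zeta(s)=\tfrac12(1+\tanh s)$ is convenient because the $C_b$-norms of its first two derivatives are available in closed form.
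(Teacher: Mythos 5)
Your overall strategy (a rigidly translating comparison profile built from $\zeta(s)=\tfrac12[1+\tanh s]$, the uniform bound $\abs{\RieszFeller v}\leq\rho(\epsI)$ from Proposition~\ref{prop:RieszFeller:estimate}, the comparison principle, and a contradiction from the linearly growing gap $(c-\speedI)t$) is exactly the paper's, and your reflection trick to reduce the lower bound to the upper bound is a legitimate substitute for the paper's separate supersolution $w^+$. However, there is a genuine gap in the subsolution verification: the unshifted profile $v(x,t)=\zeta(\epsI(x-\speedI t+\xi_0))$, with range $(0,1)$, is \emph{not} a subsolution of~\eqref{RD}. In the tail regions $v\in(0,a/3)$ and $v\in(1-a/3,1)$ both $f(v)$ and $\zeta'$ tend to $0$, while the only available control on the nonlocal term is the \emph{uniform} constant $\rho(\epsI)>0$; hence $\partial_t v-\RieszFeller v-f(v)\leq -\speedI\epsI\zeta'+\rho(\epsI)-f(v)\to\rho(\epsI)>0$ along the tails, and the inequality cannot close. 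Worse, on the "reservoir" $[a/3,2a/3]\subset(0,a^-)$ assumption~\ref{As:Chen:B1} gives $f<0$, which is the \emph{wrong} sign for a subsolution there: you need $f(v)\geq\rho(\epsI)$, not $\abs{f(v)}\geq\rho(\epsI)$.

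The paper's construction avoids both problems by a vertical shift and stretch: with $\delta=a/3$ it takes $w^-(x,t)=-2\delta+(1+\delta)\zeta(\epsI(x-\speedI t))$, after first modifying $f$ outside $[0,1]$ so that $f\equiv-f(\tfrac a3)>0$ on $[-1,-\tfrac a3]$. Then where $\zeta$ is small one has $w^-\in(-2\delta,-\delta]$ and $f(w^-)=-f(\tfrac a3)\geq\rho(\epsI)$ with the correct (positive) sign, the upper endstate $1-\delta$ lies in $(a^+,1)$ where $f>0$ and $f(w^-)\geq\rho(\epsI)$, and only in the middle band $w^-\in(-\delta,1-2\delta)$ does one invoke the lower bound $\zeta'\geq\tfrac{2\delta}{(1+\delta)^2}$ (via $\zeta'=2\zeta(1-\zeta)$) to let $-\epsI\speedI(1+\delta)\zeta'$ absorb $2\norm{f}_{C([0,1])}$; this is where the factor $\tfrac{3+a}{a}=\tfrac{1+\delta}{\delta}$ in $\speedI$ comes from. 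Without the downward shift of the left endstate below $0$ (and the accompanying modification of $f$), the region-by-region verification you sketch does not go through.
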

\begin{proof}
Estimate~\eqref{est:speed} will be proven with the help of explicit sub- and supersolutions in traveling wave form.
Due to assumption~\ref{As:Chen:B1}, $0<a\leq\tfrac12$ and 
 $\min\set{\abs{f(s)}}{ s\in [\tfrac{a}{3},\tfrac{2a}{3}]\cup [1-\tfrac{2a}{3},1-\tfrac{a}{3}]}>0$.
The traveling wave $U$ takes only values in $[0,1]$,
 hence we can modify $f$ without loss of generality such that $\norm{f}_{C([0,1])}= \norm{f}_{C([1,2])}$ as well as
 $f(u)=-f(\tfrac{a}{3})>0$ for $u\in[-1,-\tfrac{a}{3}]$ and $f(u)=-f(1-\tfrac{a}{3})<0$ for $u\in[1+\tfrac{a}{3},2]$.

To prove the upper bound $c\leq \speedI$, we will use a subsolution $w^-(x,t)$.
We recall the definition of $\epsI$ and $\speedI$ in the statement of the theorem
 and define $\zeta(s):=\tfrac12 [1+\tanh(s)]$, $\delta=\tfrac{a}{3}$, $w^-(x,t):= -2\delta + (1+\delta) \zeta(\epsI(x-\speedI t))$.
A direct calculation like in the proof of Lemma~\ref{lem:subsolution:auxiliary} yields
 \begin{multline} \label{est:speed:sub}
    \diff{w^-}{t}(x,t) - \cA[w^-(\cdot ,t)](x) = \diff{w^-}{t}(x,t) - \RieszFeller[w^-(\cdot ,t)](x) - f(w^-(x,t)) \\
     \leq -\epsI\speedI (1+\delta) \zeta'(\epsI(x-\speedI t)) + \rho(\epsI) - f(w^-(x,t))  
 \end{multline}
 where 
 \begin{multline*}
    \Abs{ \RieszFeller[w^-(\cdot ,t)](x) }
     \leq \cK \Bigg[ \Norm{\difff{w^-}{x}{2}}_{C_b(\R\times [0,T])} + \Norm{\diff{w^-}{x}}_{C_b(\R\times [0,T])} \Bigg] \\
     \leq \cK \Big[ \epsI^2 \norm{\zeta''}_{C_b(\R\times [0,T])} + \epsI \norm{\zeta'}_{C_b(\R\times [0,T])} \Big] 
     =: \rho(\epsI) \,,
 \end{multline*}
 due to proposition~\ref{prop:RieszFeller:estimate}.
To show that $w^-(x,t)$ is a subsolution, i.e. $\diff{w^-}{t}(x,t) - \cA[w^-(\cdot ,t)](x) \leq 0$ for all $(x,t)\in\R\times(0,\infty)$,
 we consider three subcases $\zeta\in(0,\tfrac{\delta}{1+\delta}]$, $\zeta\in(\tfrac{\delta}{1+\delta},\tfrac{1}{1+\delta})$
 and $\zeta\in [\tfrac{1}{1+\delta},1)$.
First $\zeta\in(0,\tfrac{\delta}{1+\delta}]$ implies that
 $w^-(x,t)= -2\delta + (1+\delta) \zeta(\epsI(x-\speedI t)) \in (-2\delta, -\delta]$
 hence $f(w^-(x,t))=-f(\tfrac{a}{3})>0$ and $f(w^-(x,t))\geq \rho(\epsI)$.
In this case the right hand side of~\eqref{est:speed:sub} is nonnegative.
Second $\zeta\in(\tfrac{\delta}{1+\delta},\tfrac{1}{1+\delta})$ implies that
 $w^-(x,t)= -2\delta + (1+\delta) \zeta(\epsI(x-\speedI t)) \in (-\delta,1-2\delta)$
 hence $f(w^-(x,t))$ has no definite sign.
However the right hand side of~\eqref{est:speed:sub} satisfies
 \[ -\epsI\speedI (1+\delta) \zeta'(\epsI(x-\speedI t)) + \rho(\epsI) - f(w^-(x,t))
      \leq -\epsI\speedI (1+\delta) \tfrac{2\delta}{(1+\delta)^2} + 2 \norm{f}_{C([0,1])} \leq 0
 \]
 for our choice of $\epsI$ and $\speedI$, 
 and the identity 
 \[ \min \set{ \zeta'(s) }{\zeta(s)\in(\tfrac{\delta}{1+\delta},\tfrac{1}{1+\delta})} = \frac{2\delta}{(1+\delta)^2} \]
 using $\zeta'(s)=\tfrac12 (1-\tanh^2(s)) = \tfrac12 (1-(2\zeta(s)-1)^2) = - 2 \zeta (\zeta-1)$.
Third $\zeta\in [\tfrac{1}{1+\delta},1)$ implies 
 $w^-(x,t)= -2\delta + (1+\delta) \zeta(\epsI(x-\speedI t)) \in [1-2\delta, 1-\delta)$
 hence $f(w^-(x,t))>0$ and $f(w^-(x,t))\geq \rho(\epsI)$.
In this case the right hand side of~\eqref{est:speed:sub} is nonnegative.
Therefore $\diff{w^-}{t}(x,t) - \cA[w^-(\cdot ,t)](x) \leq 0$ for all $(x,t)\in\R\times(0,\infty)$, hence $w^-(x,t)$ is a subsolution.

Like in the first step of the proof of Theorem~\ref{thm:stability},
 we can find $X\gg 1$ such that $U(\cdot )\geq w^-(\cdot -X,0)$
 and deduce $U(x-ct)\geq w^-(x-X,t)= w^-(x-\speedI t-X,0)$ for all $(x,t)\in\R\times(0,\infty)$
 from Lemma~\ref{lem:comparison:Cb}.
Setting $\xi=x-ct$ yields $U(\xi)\geq w^-(\xi +(c - \speedI)t -X,0)$ for all $(\xi,t)\in\R\times (0,\infty)$.
In case of $c\geq \speedI$ taking the limit $t\to\infty$ would lead to a contradiction with $U(\cdot )\geq w^-(\cdot -X,0)$,
 hence the estimate $c\leq \speedI$ follows.  

To prove the lower bound $-\speedI\leq c$,
 we use a supersolution $w^+(x,t):=\delta+(1+\delta) \zeta(\epsI(x+\speedI t))$.
\end{proof}

% \textbf{TODO: put fully adapted proof for stability here; check carefully: it may not be exponentially stable... see numerics.}

\section{Proof - Existence}
\label{ap:exist}
\begin{thm} \label{thm:existence}
Assume that the assumptions~\ref{As:Chen:C1}, \ref{As:Chen:C2}, \ref{As:Chen:C3b} and~\ref{As:Chen:C4} hold.
There exists a traveling wave solution $(U,c)$ of~\eqref{RD} that satisfies~\eqref{A:TWS}.
\end{thm}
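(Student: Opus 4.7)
The plan is to adapt Xinfu Chen's existence argument \cite[Theorem~4.1]{Chen:1997} to the Riesz--Feller setting, using the Cauchy problem theory, the comparison principle, and the far-field behaviour already developed in Sections~\ref{sec:RF}--\ref{ap:cp} together with the a priori wave-speed bound from Appendix~\ref{ap:stab}. The modification of (C3) into~\ref{As:Chen:C3b} --- replacing $K_2\|v''\|_{C_b}$ by $\tilde K_2\|v'\|_{C_b}+K_2\|v''\|_{C_b}$ --- does not alter the overall logic of Chen's proof, since Theorem~\ref{thm:CP:existence:bistable} furnishes uniform $C^k_b$ bounds on solutions of the Cauchy problem for $t\geq t_0>0$ and arbitrary $k\in\N$, so the extra first-derivative contribution is uniformly controlled wherever Chen estimated the second-derivative term.

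First I would fix a smooth monotone initial datum $u_0\in C^\infty(\R)$ with $0\leq u_0\leq 1$, $u_0'>0$ on $\R$, and $\lim_{x\to\pm\infty}u_0(x)=u_\pm$, chosen to lie between an explicit subsolution and supersolution of traveling wave type (as built in Lemma~\ref{lem:subsolution:auxiliary}). Theorem~\ref{thm:CP:existence:bistable} then produces a global classical solution $u(x,t)$ with $0\leq u\leq 1$ and uniform $C^k_b$ estimates for $t\geq t_0>0$. Applying the comparison principle (Lemma~\ref{lem:comparison:Cb}) to $u(x,t)$ and the spatial translate $u(x+h,t)$ preserves strict monotonicity in $x$ for all $t>0$, while Theorem~\ref{thm:far-field-behaviour} guarantees $\lim_{x\to\pm\infty}u(x,t)=u_\pm$ for every $t\geq 0$, since $u_\pm$ are stable equilibria of $\dot v=f(v)$. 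I then define the interface position $\xi(t)$ implicitly by $u(\xi(t),t)=a$ (well-defined and continuous by strict monotonicity) and consider the shifted solutions $v_n(x,t):=u(x+\xi(t_n),t+t_n)$ along a sequence $t_n\to\infty$. The a priori wave-speed bound $|c|\leq\speedI$ controls the increment $\xi(t_n+s)-\xi(t_n)$ on compact time intervals, and Arzel\`{a}--Ascoli together with the uniform $C^k_b$ estimates extracts an entire solution $v_\infty(x,t)$ of~\eqref{RD} as a subsequential limit, with $v_\infty(0,0)=a$, $v_\infty(\cdot,t)$ monotone in $x$, and endstates $u_\pm$.

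The heart of the argument, and the main obstacle, is to show that $v_\infty$ rigidly translates, i.e.\ $v_\infty(x,t)=U(x-ct)$ for some profile $U$ and speed $c$. Following Chen, this is forced by a squeezing estimate analogous to Lemma~\ref{lem:bootstrap}: assumption~\ref{As:Chen:C2} together with the modified estimates in~\ref{As:Chen:C3b} implies that any two monotone solutions sandwiched within a narrow band converge to a common translate at exponential rate, so the $\omega$-limit set of $u(x+\xi(t),t)$ modulo spatial translations is reduced to a single orbit. As $t_n\to\infty$ the increments $\xi(t_n+\tau)-\xi(t_n)$ must therefore converge to $c\tau$ for a unique constant $c$, which yields $v_\infty(x,t)=v_\infty(x-ct,0)$. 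Setting $U(\xi):=v_\infty(\xi,0)$, the strict inequality $U'>0$ on $\R$ follows from Lemma~\ref{lem:comparison:Cb}\,(ii) applied to $U$ and a small spatial translate of itself; the decay $\lim_{|\xi|\to\infty}U'(\xi)=0$ then follows from the existence of the endstate limits combined with the uniform $C^2_b$ regularity, completing~\eqref{A:TWS}, while the wave speed is automatically bounded by $\speedI$.
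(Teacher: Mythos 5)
Your proposal follows essentially the same route as the paper's proof (which is Chen's): evolve a monotone, front-like initial datum, recentre at the level set $u=a$, extract a limit profile along $t_j\to\infty$ by Arzel\`a--Ascoli using the uniform $C^k_b$ bounds of Theorem~\ref{thm:CP:existence:bistable}, and force rigid translation of the limit by a squeezing/sliding argument based on the comparison principle. The paper takes $\zeta$ from \eqref{fct:zeta} as the initial datum and, rather than extracting an entire solution, identifies $\lim_j v(\cdot+z(a,t_j),t_j+t)$ with the solution $\tilde U(\cdot,t)$ of the Cauchy problem with datum $U$ and then shows $\xi_*=\xi^*$ by the sliding method of the uniqueness proof; these are cosmetic differences.

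One point needs care, because as written your argument is circular in two places. The a priori bound $\abs{c}\leq\speedI$ and Lemma~\ref{lem:bootstrap} are both established in the paper under the hypothesis that a traveling wave satisfying \eqref{A:TWS} already exists: the wave-speed theorem bounds the speed of an assumed wave, and Lemma~\ref{lem:bootstrap} builds its sub- and supersolutions from the profile $U$ via Lemma~\ref{lem:subsolution}. Neither can be invoked before the wave is constructed. The paper's substitute is to manufacture everything from the evolving solution $v$ itself: the explicit traveling sub- and supersolutions of Lemma~\ref{lem:subsolution:auxiliary} (moving with speed $\cK(\delta)$) control the interface displacement in place of the wave-speed bound; the uniform interface-width estimate of Lemma~\ref{lem:est:z} and the derivative lower bound \eqref{est:dvdx} replace the properties of $U'$ used in Lemma~\ref{lem:subsolution}; and Lemma~\ref{lem:subsolution:W} provides the squeezing barriers $W^\pm$ built from $v$ rather than from $U$. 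Your phrase ``analogous to Lemma~\ref{lem:bootstrap}'' gestures at this, but the construction of these $v$-based barriers is the actual technical content of the existence proof and should be made explicit rather than borrowed from the stability appendix.
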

\begin{proof}
Step 1:
Consider the IVP 
\begin{equation} \label{IVP:zeta}
 \begin{cases}
  \diff{v}{t} = \cA[v] &\xx{in} \R\times(0,\infty) \,, \\
  v(\cdot ,0) = \zeta(\cdot )    &\xx{in} \R\,,
 \end{cases}
\end{equation}
where the function $\zeta$ is defined in~\eqref{fct:zeta}.
The idea is to show that for some diverging sequence $(t_j)_{j\in\N}$
 the sequence $(v(\cdot +\xi(t_j),t_j))_{j\in\N}$ - where $v(\xi(t),t)=a$ for all $t\geq 0$ - 
 has a pointwise limit $U(\cdot )$
 which is the profile of a traveling wave solution of~\eqref{RD}. 

The IVP~\eqref{IVP:zeta} has a unique solution $v\in C^\infty_b(\R\times (t_0,\infty))$ for any $t_0>0$
 due to Theorem~\ref{thm:CP:existence:bistable}, which satisfies 
 \begin{equation} \label{x-limits-v} 
  0\leq v(\cdot ,t)\leq 1\,, \qquad \lim_{x\to-\infty} v(x,t)=0 \XX{and} \lim_{x\to+\infty} v(x,t)=1 \Xx{for all} t\geq 0 
 \end{equation}
 due to Theorem~\ref{thm:far-field-behaviour}.
The function $v$ is monotone increasing in $x$, 
 since $v(x,0)=\zeta(x)\leq\zeta(x+h)=v(x+h,0)$ and the comparison principle~\ref{As:Chen:C2}.
The function $v$ is smooth for positive times, hence $v_x(x,t)>0$ for all $(x,t)\in\R\times(0,\infty)$;
 actually $v_x(x,t)\geq \eta(\abs{x},t)\zeta(1)>0$ for all $(x,t)\in\R\times(0,\infty)$ 
 follows from studying the difference quotients $\frac{v(x+h,t)-v(x,t)}{h}$ with the help of~\ref{As:Chen:C2}.
Then the implicit function theorem implies the existence of a smooth function $z:(0,1)\times (0,\infty)\to \R$, %$(a,t)\mapsto z(a,t)$, 
 such that $v(z(\tilde a,t),t)=\tilde a$ for all $(\tilde a,t)\in (0,1)\times (0,\infty)$.
The following three lemmas can be proved in the same way as in Step 2 of the proof of~\cite[Theorem 4.1]{Chen:1997}.
\begin{lem} \label{lem:est:z}
Under the assumptions of Theorem~\ref{thm:existence},
 there exist a large positive constant $\delta_1$ and a function $m_1:(0,\delta_1/2]\to (0,\infty)$ 
 such that for all $\delta\in(0,\delta_1/2]$ the function $z:(0,1)\times (0,\infty)\to \R$ satisfies
 \begin{equation} \label{est:z}
  z(1-\delta,t)-z(\delta,t) \leq m_1(\delta) \qquad \forall t\geq 0\,.
 \end{equation}
\end{lem}

\begin{lem}
Under the assumptions of Theorem~\ref{thm:existence},
 for every $M>0$ there exists a constant $\hat\eta(M)>0$ such that
 \begin{equation} \label{est:dvdx}
  \diff{v}{x}(x+z(a,t),t)\geq \hat\eta(M) \qquad \forall t\geq 1\,, \quad x\in[-M,M]\,.
 \end{equation}
\end{lem}

Similar to Lemma~\ref{lem:subsolution} sub- and supersolutions of~\eqref{RD} are constructed.
\begin{lem} \label{lem:subsolution:W}
Under the assumptions of Theorem~\ref{thm:existence},
 there exists a small positive constant $\delta_0$ %(which is independent of $U$)
 and a large positive constant $\sigma_2$ %(which depends on $U$)
 such that for any $\delta\in(0,\delta_0]$ and every $\xi\in\R$,
 the functions $W^+$ and $W^-$ defined by
 \begin{equation} \label{function:W:pm}
  W^\pm(x,t) := v\big(x+\xi \pm\sigma_2\delta [1-\euler^{-\beta t}]\big) \pm \delta\euler^{-\beta t}
 \end{equation}
 with $\beta := \frac12 \min\{-f'(0),-f'(1)\}$ are a supersolution and a subsolution of~\eqref{RD}, respectively.
\end{lem}

\begin{lem}
Under the assumptions of Theorem~\ref{thm:existence},
 there exists a sequence $(t_j)_{j\in\N}$ and a non-decreasing function $U:\R\to(0,1)$,
 such that $(t_j)_{j\in\N}$ diverges to $+\infty$ as $j\to+\infty$ and 
 \[ \lim_{j\to\infty} v(\xi+z(a,t_j),t_j) = U(\xi) \Xx{for all} \xi\in\R\,. \]
Moreover $U$ satisfies $\lim_{\xi\to -\infty} U(\xi) = 0$ and $\lim_{\xi\to +\infty} U(\xi) = 1$. %the properties~\eqref{A:TWS}.
\end{lem}
\begin{proof}
The sequence $\{f_k(\cdot ):=v(\cdot +z(a,k),k)\}_{k\in\N}$ of real-valued functions on $\R$
 consists of bounded functions which are uniformly equicontinuous.
Due to the Arzela-Ascoli theorem there exists a subsequence $\{k_j\}_{j\in\N}$
 and a bounded continuous function $U\in C_b(\R)$
 such that $f_{k_j}(\cdot )=v(\cdot +z(a,k_j),k_j)\to U(\cdot )$ for $j\to\infty$
 uniformly on compact subsets of $\R$.
Obviously, the function $U$ inherits from the function $v$ the properties $U(0)=a$, $0\leq U\leq 1$,
 and to be non-decreasing in~$x$.
For sufficiently small positive $\delta$ estimate~\eqref{est:z} implies
 $U(-m_1(\delta))\leq \delta$ and $U(m_1(\delta))\geq 1-\delta$
 consequently $\lim_{\xi\to-\infty} U(\xi)=0$ and $\lim_{\xi\to\infty} U(\xi)=1$.

First we show $U(\xi)\leq \delta$ for all $\xi\leq -m_1(\delta)$:
Estimate~\eqref{est:dvdx} implies
 $U(\xi+h)-U(\xi)\geq \hat\eta(\abs{\xi}+1)h\geq 0$ for all $h\in[0,1]$ and all $\xi\in\R$.
Therefore we only need to show $U(-m_1(\delta))\leq \delta$,
 whereat $v(-m_1(\delta)+z(a,k_j),k_j)\to U(-m_1(\delta))$ for $j\to\infty$.
The function $v(x,t)$ is monotone increasing in the first argument, 
 hence the function $z(\tilde a,t)$ is monotone increasing in its first argument as well.
Due to Lemma~\ref{lem:est:z} for $\delta\in(0,\delta_1/2]$ with $\delta<a<1-\delta$ we deduce $z(\delta,t)<z(a,t)<z(1-\delta,t)$,
 \[ -m_1(\delta)+z(a,t)\leq -z(1-\delta,t)+z(\delta,t)+z(a,t)< z(\delta,t)\,, \]
 $v(-m_1(\delta)+z(a,k_j),k_j)< v(z(\delta,t),t) = \delta$ and finally
 \[ v(-m_1(\delta)+z(a,k_j),k_j)\to U(-m_1(\delta)) \leq \delta \Xx{for} j\to\infty\,. \] 
In a similar way we show $v(\xi+z(a,t))> 1-\delta$ for all $\xi> m_1(\delta)$
 and deduce $U(\xi)\geq 1-\delta$ for all $\xi> m_1(\delta)$. 

Moreover the convergence $\lim_{j\to\infty} v(\xi+z(a,t_j),t_j) = U(\xi)$ is uniform on $\R$:
For sufficiently small $\delta>0$ we deduce for all $j\in\N$ that
\[ \abs{ U(\xi) - v(\xi+z(a,t_j),t_j) } \leq \abs{U(\xi)} + \abs{v(\xi+z(a,t_j),t_j)} \leq \delta \qquad \forall \xi\leq -m_1(\delta/2) \]
and $\abs{ U(\xi) - v(\xi+z(a,t_j),t_j) } \leq \abs{1-U(\xi)} + \abs{1-v(\xi+z(a,t_j),t_j)} \leq \delta \qquad \forall \xi\geq m_1(\delta/2)$. 
Due to the uniform convergence on compact intervals, 
 we can choose $J(\delta)$ sufficiently large such that
\[ \abs{ U(\xi) - v(\xi+z(a,t_j),t_j) } \leq \delta \qquad \forall \xi\in [-m_1(\delta/2),m_1(\delta/2)] \XX{and} \forall j\geq J(\delta) \]
hence - using the short hand notation $w(\xi,t_j):=U(\xi) - v(\xi+z(a,t_j),t_j)$ - 
\[ \sup_{\xi\in\R} \abs{w(\xi,t_j)} =
     \max \{ \sup_{\xi\in (-\infty,-m_1(\frac{\delta}{2}))} \abs{w(\xi,t_j)},
             \sup_{\xi\in[-m_1(\frac{\delta}{2}),m_1(\frac{\delta}{2})]} \abs{w(\xi,t_j)}, \sup_{\xi\in(m_1(\frac{\delta}{2}),\infty)} \abs{w(\xi,t_j)} \}
     \leq \delta
\]
for all $j\geq J(\delta)$.

More precisely, the solution $v$ is a smooth function for positive times and has uniformly bounded derivatives
 due to Theorem~\ref{thm:CP:existence:bistable}.
Therefore the Arzela-Ascoli Theorem implies that $U\in C^m_b(\R)$ of any order $m\in\N$
 and the existence of a diverging sequence $\{k_j\}_{j\in\N}$
 such that $f_{k_j}(\cdot )=v(\cdot +z(a,k_j),k_j)\to U(\cdot )$ for $j\to\infty$
 uniformly w.r.t. the $C^m$ norm on compact subsets of $\R$.
Moreover the function $v$ converges to constant endstates, 
 whereat its spatial derivative of any order converge to zero in the limits $x\to\pm\infty$.
These properties are passed on to the function $U$ and
 - as before with the help of Lemma~\ref{lem:est:z} -
 the convergence $f_{k_j}(\cdot )=v(\cdot +z(a,k_j),k_j)\to U(\cdot )$ for $j\to\infty$
 turns out to be uniform on $\R$.
\end{proof}
\begin{lem}
Under the assumptions of the previous lemma,
 the function $U$ is the profile of a traveling wave solution of~\eqref{RD} and satisfies~\eqref{A:TWS}. 
\end{lem}
\begin{proof}
The IVP 
\begin{equation} \label{IVP:U}
 \begin{cases}
  \diff{\tilde{U}}{t} = \cA[\tilde{U}] &\xx{in} \R\times(0,\infty) \,, \\
  \tilde{U}(\cdot ,0) = U(\cdot )    &\xx{in} \R\,,
 \end{cases}
\end{equation}
has a unique solution $\tilde U\in C^\infty_b(\R\times (t_0,\infty))$ for any $t_0>0$
 due to Theorem~\ref{thm:CP:existence:bistable}.
First we need to establish  
 \begin{equation} \label{limit:tildeU}
  \lim_{j\to\infty} v(\xi+z(a,t_j),t_j+t) = \tilde{U}(\xi,t) \Xx{for all} (\xi,t)\in\R\times(0,\infty)\,.
 \end{equation}
For any $\hat\epsilon>0$ there exists $J(\hat\epsilon)$ such that if $j>J(\hat\epsilon)$ then
 \[ v(\cdot -\hat\epsilon+z(a,t_j),t_j) -\hat\epsilon < U(\cdot ) < v(\cdot +\hat\epsilon+z(a,t_j),t_j) +\hat\epsilon\,. \]
Considering these functions as the initial data of the IVP~\eqref{IVP:U},
 we obtain from Lemma~\ref{lem:subsolution:W} the estimate
 \begin{multline*}
   v(\cdot -\hat\epsilon+z(a,t_j)-\sigma_2\hat\epsilon [1-\euler^{-\beta t}],t_j+t) -\hat\epsilon\euler^{-\beta t} \leq \tilde U(\cdot ,t) \\
     \leq v(\cdot -\hat\epsilon+z(a,t_j)+\sigma_2\hat\epsilon [1-\euler^{-\beta t}],t_j+t) +\hat\epsilon\euler^{-\beta t}\,.
 \end{multline*}
Noticing that $\tilde U$ is smooth and taking the limit $\hat\epsilon\to 0$
 and then $j\to\infty$ yields statement~\eqref{limit:tildeU}.
The first estimate is rewritten as 
 \[ v(\cdot +z(a,t_j),t_j+t) -\hat\epsilon\euler^{-\beta t} \leq \tilde U(\cdot +\hat\epsilon+\sigma_2\hat\epsilon [1-\euler^{-\beta t}],t) \] 
 taking the limits yields 
 \[ \limsup_{j\to\infty} v(\cdot +z(a,t_j),t_j+t) \leq \tilde U(\cdot ,t)\,. \]
Using the second estimate yields 
 \[ \liminf_{j\to\infty} v(\cdot +z(a,t_j),t_j+t) \geq \tilde U(\cdot ,t)\,. \]
Taken together
 \[ \tilde U(\cdot ,t) \leq \liminf_{j\to\infty} v(\cdot +z(a,t_j),t_j+t) \leq \limsup_{j\to\infty} v(\cdot +z(a,t_j),t_j+t) \leq \tilde U(\cdot ,t)\]
 we deduce statement~\eqref{limit:tildeU}. 
The monotonicity of $v$ w.r.t to $x$ and its limiting behavior
 allow to find a large positive constant $m_0$ such that $v(\cdot -m_0,1)-\delta_0\leq v(\cdot ,0)\leq v(\cdot +m_0,1)+\delta_0$.
Again a comparison principle and Lemma~\ref{lem:subsolution:W} imply 
 \[
  v(\cdot -m_0-\sigma_2\delta_0(1-\euler^{-\beta t}),t+1)-\delta_0\euler^{-\beta t}\leq v(\cdot ,t)
    \leq v(\cdot +m_0+\sigma_2\delta_0(1-\euler^{-\beta t}),t+1)+\delta_0\euler^{-\beta t}
 \]
 whereat evaluating at $\xi+z(a,t)$, setting $t=t_j$ and taking the limit $j\to\infty$ yields
 \begin{equation} \label{estimate:4.10}
  \tilde{U} (\xi-m_0-\sigma_2 \delta_0, 1) \leq U(\xi) \leq \tilde{U} (\xi+m_0+\sigma_2 \delta_0, 1)
    \Xx{for all} \xi\in\R\,. 
 \end{equation}
To prove the statement we show that $\tilde{U}(\cdot ,t)=U(\cdot -ct)$ for some $c\in\R$ and all $t$.
Due to estimate~\eqref{estimate:4.10} the numbers 
 \[ \xi_*:=\sup\set{\xi\in\R}{\tilde{U}(\cdot +\xi,1)\leq U(\cdot )}
     \Xx{and} \xi^*:=\inf\set{\xi\in\R}{U(\cdot )\leq \tilde{U}(\cdot +\xi,1)} 
 \]
are well-defined and satisfy $ -m_0-\sigma_2 \delta_0 \leq \xi_*\leq\xi^* \leq m_0+\sigma_2 \delta_0$.
However $\xi_*=\xi^*$ arguing as in the proof of Theorem~\ref{thm:uniqueness}.
In particular we noted that $U\in C^\infty_b(\R)$ 
 and for some diverging sequence $\{k_j\}_{j\in\N}$
 the convergence $v(\cdot +z(a,k_j),k_j)\to U(\cdot )$ for $j\to\infty$
 is uniform w.r.t. the $C^m_b(\R)$-norm for any order $m\in\N$.
In a similar way we can establish that $\lim_{x\to\pm\infty} \tilde U_x(x,t) = 0$ for all $t\geq 0$ 
 and the uniform convergence $v(\cdot +z(a,t_j),t_j+t)\to \tilde U(\cdot ,t)$ w.r.t. the $C^1_b(\R)$-norm for all $t>0$. 

Comparing $\tilde{U}(\cdot ,t)$ with $U(\cdot )$ for $t\in(1,2]$ in the same way
 one obtains the existence of a function $c:[1,2]\to\R$ with $c(1)=\xi_*=\xi^*$
 such that $\tilde{U}(\cdot ,t)=U(\cdot -c(t))$.
The function $c$ is differentiable and equation $\diff{\tilde{U}}{t} = \cA[\tilde{U}]$ implies 
 $-c'(t)U'(\xi) = \cA[U](\xi)$.
The left hand side of the identity does not depend on $t$ explicitly (only through $\xi$),
 hence $c'(t)$ is constant for all $t$ and $(U,c')$ is a traveling wave solution of~\eqref{RD}.
To establish the properties of $U'$ in~\eqref{A:TWS},
 we notice that $\tilde{U}$ and hence $U$ are bounded smooth functions 
 approaching constant endstates in the limits $\xi\to\pm\infty$.  
\end{proof}
\end{proof}

\section{L\'evy Processes and Semigroups}
\label{ap:LevyProcesses+Semigroups}
The following section is a verbatim excerpt of the book ``L\'evy processes and infinitely divisible distributions''
 by Ken-iti Sato~\cite{Sato:1999}.

\begin{defn}\cite[Definition 7.1]{Sato:1999}
 A probability measure $\mu$ on $\R^d$ is \emph{infinitely divisible} if, for any positive integer $n$, 
 there is a probability measure $\mu_n$ on $\R^d$ such that $\mu$ is the $n$-fold convolution of~$\mu_n$,
 i.e. $\mu = \underbrace{\mu_n \ast \ldots \ast \mu_n}_{{=n-times}}$.
\end{defn}

The Fourier transform of a convolution of probability measures is the product of their characteristic functions.
Thus a probability measure $\mu$ is infinitely divisible if and only if, 
for each $n$, an $n$th root of the characteristic function $\hat{\mu}(z)$ can be chosen in such a way 
that it is the characteristic function of some probability measure. 
Here we use the short hand notation $\hat\mu(z)=\Fourier[\mu](z)$
 for the (extension of the) Fourier transform (to measures).
\newline

A stable probability measure is a special case of an infinitely divisible probability measure~\cite[Definition 13.1]{Sato:1999},
whose characteristic function are determined by the L\'evy-Khintchine formula~\eqref{eq:LKF}.
Let $D:=\{x\in \R^d \; | \; |x|\leq 1 \}$ be the closed unit ball.
\begin{thm}\cite[Theorem 8.1]{Sato:1999} \label{thm:LK}
 \begin{enumerate}
  \item \label{LKformula} 
   If $\mu$ is an infinitely divisible distribution on $\R^d$, then
   \begin{multline} \label{eq:LKF}
    \hat{\mu}(z)= \exp\bigg[ -\frac{1}{2} <z,Az> + i <\gamma,z> \\
     + \int_{\R^d} (e^{i<z,x>} - 1 - i<z,x> 1_D(x)) \nu(dx) \bigg], \quad z\in\R^d \,,                 
   \end{multline}
   where $A$ is a symmetric nonnegative-definite $d \times d$ matrix, $\gamma\in \R^d$, 
   $\nu$~is a measure on $\R^d$ satisfying
   \begin{equation} \label{eq:LevyMeasure}
    \nu(\{0\}) = 0 \quad \text{and} \quad \int_{\R^d} min(1,|x|^2) \nu(dx) < \infty \,.
   \end{equation}
  \item The representation of $\hat{\mu}(z)$ in \eqref{LKformula} by $A$, $\nu$, and $\gamma$ is unique.
  \item Conversely, if $A$ is a symmetric nonnegative-definite $d \times d$ matrix, 
   $\nu$ is a measure satisfying \eqref{eq:LevyMeasure}, and $\gamma\in\R^d$, 
   then there exists an infinitely divisible distribution $\mu$ whose characteristic function is given by~\eqref{eq:LKF}.
 \end{enumerate}
\end{thm}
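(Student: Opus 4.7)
The plan is to prove the three assertions in order: first the forward direction that every infinitely divisible $\mu$ admits a L\'evy--Khintchine representation, then uniqueness of the generating triplet, and finally the converse constructive direction.

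For part~(1), the starting point is to show $\hat\mu(z)\neq 0$ for all $z\in\R^d$. If $\mu=\mu_n^{*n}$, then $|\hat\mu_n|^2$ is the characteristic function of $\mu_n*\tilde\mu_n$ (where $\tilde\mu_n(B)=\mu_n(-B)$) and is real, nonnegative, and $\leq 1$; from $|\hat\mu_n|^{2n}=|\hat\mu|^2$ one obtains $|\hat\mu_n(z)|^2\to 1_{\{\hat\mu(z)\neq 0\}}$, and a continuity argument together with $\hat\mu(0)=1$ rules out zeros of $\hat\mu$. This allows the definition of the distinguished continuous logarithm $\psi(z)=\log\hat\mu(z)$ with $\psi(0)=0$, whence $\hat\mu_n(z)=e^{\psi(z)/n}$ and
\[
n\bigl(\hat\mu_n(z)-1\bigr) \;=\; n\int_{\R^d}\bigl(e^{i\langle z,x\rangle}-1\bigr)\,\mu_n(dx) \;\longrightarrow\; \psi(z)
\]
uniformly on compact sets in $z$. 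Introducing the compensator, rewrite the left side as
\[
i\langle z,\gamma_n\rangle+\int_{\R^d}\bigl(e^{i\langle z,x\rangle}-1-i\langle z,x\rangle 1_D(x)\bigr)\,\rho_n(dx),
\]
where $\rho_n:=n\mu_n$ and $\gamma_n:=n\int_D x\,\mu_n(dx)$. The technical core is then to show that the finite measures $\tau_n(dx):=\min(1,|x|^2)\,\rho_n(dx)$ are uniformly bounded in total variation and possess a vaguely convergent subsequence with limit $\tau$; one splits $\tau$ into a part supported at the origin (producing the Gaussian covariance $A$ through the second-order small-$x$ behavior) and a part on $\R^d\setminus\{0\}$ (producing $\nu$). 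Passing to the limit in the compensated integral while accounting for the truncation defect yields both the drift $\gamma=\lim\gamma_n$ and the L\'evy--Khintchine formula.

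For part~(2), suppose two triplets $(A,\nu,\gamma)$ and $(A',\nu',\gamma')$ yield the same $\psi$. The quadratic growth of the Gaussian term at infinity is isolated by averaging: $-\lim_{r\to 0}r^{-2}\mathrm{Re}\,\psi(rz)=\tfrac12\langle z,Az\rangle$ once the integral against $\nu$ is controlled by dominated convergence via $\int\min(1,|x|^2)\,\nu(dx)<\infty$, so $A=A'$. Subtracting the Gaussian and drift contributions leaves the purely nonlocal piece, which by Fourier inversion on test functions vanishing near the origin determines $\nu$ on $\R^d\setminus\{0\}$; finally the imaginary-part expansion near $z=0$ pins down $\gamma$.

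For part~(3), given admissible $(A,\nu,\gamma)$, truncate by setting $\nu_k:=1_{\{|x|>1/k\}}\nu$, a finite measure. Then
\[
\hat\mu_k(z) := \exp\!\Bigl[-\tfrac12\langle z,Az\rangle + i\langle z,\gamma_k\rangle + \int_{\R^d}\bigl(e^{i\langle z,x\rangle}-1\bigr)\,\nu_k(dx)\Bigr],
\]
with $\gamma_k:=\gamma-\int_{\{1/k<|x|\leq 1\}}x\,\nu(dx)$, is the characteristic function of a convolution of a Gaussian, a translated point mass, and a compound Poisson distribution with intensity $\nu_k$, hence corresponds to a genuine infinitely divisible probability measure $\mu_k$. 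The hypothesis $\int\min(1,|x|^2)\,\nu(dx)<\infty$ gives $\hat\mu_k(z)\to\hat\mu(z)$ pointwise as $k\to\infty$, with the right-hand side continuous at $z=0$; L\'evy's continuity theorem then produces a probability measure $\mu$ with $\hat\mu$ equal to the prescribed L\'evy--Khintchine expression, and infinite divisibility of $\mu$ follows because each $\mu_k$ is infinitely divisible (or equivalently, because $\hat\mu^{1/n}$ is again of L\'evy--Khintchine form with triplet $(A/n,\nu/n,\gamma/n)$).

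The main obstacle will be in part~(1): establishing vague compactness of $\tau_n$ together with the correct identification of the limiting Gaussian part at the origin and the L\'evy measure away from it, all while controlling the compensating drift term $\gamma_n$. The delicate point is that the small-$x$ and large-$x$ behaviors of $\rho_n$ are of fundamentally different nature, and decoupling them requires the auxiliary weight $\min(1,|x|^2)$ and uniform estimates coming from the fact that $n(\hat\mu_n-1)$ already converges to a smooth function $\psi$.
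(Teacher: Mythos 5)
You should first be aware that the paper contains no proof of this statement: Appendix~\ref{ap:LevyProcesses+Semigroups} is explicitly declared to be a verbatim excerpt from Sato's book, and Theorem~\ref{thm:LK} is imported there as \cite[Theorem 8.1]{Sato:1999} purely as background for the representation formula in Theorem~\ref{thm:RieszFeller:extension}. So there is no in-paper argument to compare against; your sketch has to stand on its own. In outline it follows the classical route (absence of zeros of $\hat\mu$, distinguished logarithm, $n(\hat\mu_n-1)\to\psi$, compactness of the compensated measures, and truncation plus L\'evy's continuity theorem for the converse), which is essentially Sato's own proof strategy, and parts of it (notably part~(3)) are correct as written.

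There are, however, two genuine gaps. First, in part~(2) the formula $-\lim_{r\to 0}r^{-2}\,\mathrm{Re}\,\psi(rz)=\tfrac12\langle z,Az\rangle$ is false as stated: since $r^{-2}\bigl(1-\cos(r\langle z,x\rangle)\bigr)\to\tfrac12\langle z,x\rangle^2$ pointwise, Fatou's lemma gives $\liminf_{r\to 0}\bigl[-r^{-2}\,\mathrm{Re}\,\psi(rz)\bigr]\geq\tfrac12\langle z,Az\rangle+\tfrac12\int\langle z,x\rangle^2\,\nu(dx)$, and the right-hand side is $+\infty$ whenever $\int_{|x|>1}|x|^2\,\nu(dx)=\infty$ --- which is exactly the situation for the non-Gaussian stable laws this paper works with. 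The Gaussian part is isolated at \emph{infinity}, not at the origin: $\lim_{r\to\infty}r^{-2}\psi(rz)=-\tfrac12\langle z,Az\rangle$, because there the $\nu$-integral is $O(r^{-2})$ on $\{|x|>1\}$ and dominated by $\tfrac12\langle z,x\rangle^2$ on $\{|x|\leq 1\}$. Your phrase ``at infinity'' suggests you intended this, but the step as written would fail. Second, in part~(1) the actual mathematical content --- the uniform bound on $\tau_n(\R^d)$, the extraction of a vague limit, and the identification of $A$ and $\nu$ from that limit --- is announced as ``the main obstacle'' rather than carried out, and the obstacle is real: the kernel $g(z,x)=\bigl(e^{i\langle z,x\rangle}-1-i\langle z,x\rangle 1_D(x)\bigr)/\min(1,|x|^2)$ admits \emph{no} continuous extension to $x=0$, since its limit along a ray $x=su$, $|u|=1$, $s\downarrow 0$, is $-\tfrac12\langle z,u\rangle^2$ and hence direction-dependent. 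Consequently vague convergence $\tau_n\to\tau$ alone does not allow passage to the limit in the compensated integral; one must treat a shrinking neighborhood of the origin separately (its second moments $\int_{|x|\leq\epsilon}\langle z,x\rangle^2\rho_n(dx)$ produce the quadratic form $A$ via a diagonal argument) while simultaneously controlling the drifts $\gamma_n$. Until that is done, part~(1) is a plan rather than a proof.
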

The triplet $(A,\nu,\gamma)$ in Theorem~\ref{thm:LK} is called the \emph{generating triplet} of $\mu$. 
The matrix $A$ and measure $\nu$ are called, respectively, 
 the \emph{Gaussian covariance matrix} and the \emph{L\'evy measure} of $\mu$. 
When $A=0$, $\mu$ is called \emph{purely non-Gaussian}~\cite[Definition 8.2]{Sato:1999}.
\begin{remark} \label{remark:LevyMeasure1D}
If $d=1$, then $S=\{-1,1\}$ and any non-trivial $\alpha$-stable distribution with $0<\alpha<2$ has absolutely continuous L\'evy measure 
\begin{equation}
\label{eq:LevyMeasure1D}
\nu(dx) = 
 \begin{cases}
  c_1 x^{-1-\alpha} & \text{on } (0,\infty)\,, \\
  c_2 |x|^{-1-\alpha} & \text{on } (-\infty,0)\,,
 \end{cases}
\end{equation}
with $c_1\geq 0$, $c_2\geq 0$, $c_1+c_2 >0$ due to~\cite[Theorem 14.3]{Sato:1999}.
\end{remark}
\begin{remark}\cite[Remark 8.4]{Sato:1999} \label{remark:AlternativeRepresentation}
 The integrand of the integral in the right-hand side of~\eqref{eq:LKF} is integrable with respect to $\nu$, 
 because it is bounded outside of any neighborhood of $0$ and 
 \begin{equation}
  e^{i<z,x>} - 1 - i<z,x> 1_D(x) = O(|x|^2) \quad \text{as} \quad |x|\to 0
 \end{equation}
 for fixed $z$. 
 There are many other ways of getting an integrable integrand. 
 Let $c(x)$ be a bounded measurable function from $\R^d$ to $\R$ satisfying 
 \begin{align}
  c(x) &= 1+o(|x|) && \text{as } |x|\to 0 \,, \\
  c(x) &= O(1/|x|) && \text{as } |x|\to \infty \,.
 \end{align}
 Then~\eqref{eq:LKF} is rewritten as 
 \begin{multline} \label{eq:LKF:C}
  \hat{\mu}(z)= \exp\bigg[ -\frac{1}{2} <z,Az> + i <\gamma_c,z> \\
   + \int_{\R^d} (e^{i<z,x>} - 1 - i<z,x> c(x)) \nu(dx) \bigg]\,, \quad z\in\R^d\,,                 
 \end{multline}
 with $\gamma_c\in\R^d$ defined by
 \begin{equation}
  \gamma_c = \gamma + \int_{\R^d} x(c(x)-1_D(x))\; \nu(dx) \,.
 \end{equation}
 We denote the triplet in~\eqref{eq:LKF:C} by $(A,\nu,\gamma_c)_c$. 
 It is also a generating triplet and~\eqref{eq:LKF:C} is also called a L\'evy-Khintchine representation. 
 \newline 

Alternative representations: 
\begin{enumerate}
 \item 
  If $\nu$ satisfies the additional condition 
  \begin{equation} \label{eq:condition:0}
    \int_{|x|\leq 1} |x| \, \nu(dx) < \infty\,, 
  \end{equation}
  then using the zero function as $c$, we [are] getting
  \begin{multline} \label{eq:LKF:0}
   \hat{\mu}(z)= \exp\bigg[ -\frac{1}{2} <z,Az> + i <\gamma_0,z> 
    + \int_{\R^d} (e^{i<z,x>} - 1) \nu(dx) \bigg]\,, \quad z\in\R^d \,,                
  \end{multline}
  with $\gamma_0 \in\R^d$. 
  This is the representation by the triplet $(A,\nu,\gamma_0)_0$. 
  The constant $\gamma_0$ here is called the \emph{drift} of $\mu$. 
 \item 
  If $\nu$ satisfies the additional condition 
  \begin{equation} \label{eq:condition:1}
   \int_{|x|>1} |x| \, \nu(dx) < \infty\,,
  \end{equation} 
  then letting $c(x)$ be the constant function $1$, 
  we have the representation by the triplet $(A,\nu,\gamma_1)_1$:
  \begin{multline} \label{eq:LKF:1}
   \hat{\mu}(z)= \exp\bigg[ -\frac{1}{2} <z,Az> + i <\gamma_1,z> \\
    + \int_{\R^d} (e^{i<z,x>} - 1 - i<z,x>) \nu(dx) \bigg], \quad z\in\R^d,                 
  \end{multline}
  We will call the constant $\gamma_1$ the \emph{center} of $\mu$. 
  It will be shown in~\cite[Chapter 5, Example (25.23)]{Sato:1999} 
  that finiteness of $\int_{|x|>1} |x| \, \nu(dx)$ is equivalent to 
  finiteness of the mean of $\mu$, $\int_{\R^d} x \, \mu(dx)$, 
  and that $\gamma_1=\int_{\R^d} x \, \mu(dx)$. 
  Thus the center and the mean are identical. 
\end{enumerate}
 We note that [in the triplets] $A$ and $\nu$ are invariant no matter what function $c(x)$ we choose. 
\end{remark}

Moreover, for every infinitely divisible distribution $\mu$ on $\R^d$,
 there is a L\'evy process $(X_t)_{t\geq 0}$ (such that $P_{X_1}=\mu$),
 which is unique up to identity in law~\cite[Corollary 11.6]{Sato:1999}.
This L\'evy process $(X_t)_{t\geq 0}$ is called the L\'evy process corresponding to $\mu$.
The generating triplet $(A,\nu,\gamma)$ of $\mu$ is called
 the generating triplet of the L\'evy process $(X_t)_{t\geq 0}$.

Suppose that ${X_t}$ is a L\'evy process on $\R^d$ corresponding to an infinitely divisible distribution $\mu=P_{X_1}$. 
The transition function $P_t(x,B)$ is defined by
\begin{equation} \label{eq:TS} % transition semigroup
 P_t(x,B) = \mu^t(B-x) \quad \text{for } t\geq 0, \, x\in\R^d, \, B\in\mathcal{B}(\R^d)\,,
\end{equation}
as in \cite[Equation (10.8)]{Sato:1999}. 
Define for $f\in C_0(\R^d)$, i.e. $f\in C(\R^d)$ and $\lim_{|x|\to\infty} f(x)=0$,
\[
 (P_t f)(x) = \int_{\R^d} P_t(x, \dy) f(y) = \int_{\R^d} \mu^t( \dy) f(x+y) = E[f(x+X_t)]\,.
\]
Then $P_t f\in C_0(\R^d)$ by the Lebesgue convergence theorem. 
The following is a major result in the theory of L\'evy processes~\cite[Section 31]{Sato:1999}.
\begin{thm} \cite[Theorem 31.5]{Sato:1999} \label{thm:IG}
 Suppose $\{X_t\}$ is a L\'evy process on~$\R^d$ with generating triplet $(A,\nu,\gamma)$, 
 whereat $A=(A_{jk})\in\R^{d\times d}$ and $\gamma=(\gamma_j)\in\R^d$.
 The associated family of operators $\{P_t \,|\, t\geq 0 \}$ in~\eqref{eq:TS}
 is a strongly continuous semigroup on $C_0(\R^d)$ with norm $\|P_t\|=1$. 
 Let $L$ be its infinitesimal generator. 
 Then $C^{\infty}_c(\R^d)$ is a core of $L$, $C^2_0(\R^d) \subset \mathcal{D}(L)$, and
 \begin{multline} \label{eq:IG} % infinitesimal generator
  Lf(x) = \frac{1}{2} \sum_{j,k=1}^{d} A_{jk} \frac{\partial^2 f}{\partial x_j \partial x_k}(x) 
   + \sum_{j=1}^{d} \gamma_{j} \frac{\partial f}{\partial x_j}(x) + \\
   + \int_{\R^d} \bigg( f(x+y) - f(x) - \sum_{j=1}^{d} y_{j} \frac{\partial f}{\partial x_j}(x) 1_D(y) \bigg) \; \nu( \dy)
 \end{multline}
 for $f\in C^2_0(\R^d)$ and $D=\{x\in\R^d \;|\; |x|\leq 1\}$.
\end{thm}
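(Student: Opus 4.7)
The plan is to establish the semigroup properties first, then identify the generator on the Schwartz class via Fourier analysis, extend the representation to $C^2_0(\R^d)$, and finally prove that $C^\infty_c(\R^d)$ is a core.

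First I would verify that $\{P_t\}_{t\geq 0}$ is a strongly continuous contraction semigroup on $C_0(\R^d)$. The semigroup property $P_{t+s}=P_t P_s$ follows from $\mu^{t+s}=\mu^t\ast\mu^s$, which is a consequence of the stationary independent increments of the L\'evy process $(X_t)_{t\geq 0}$. The bound $\norm{P_t}=1$ is immediate since $\mu^t$ is a probability measure, while the mapping property $P_t\colon C_0\to C_0$ uses the tightness of $\mu^t$ and dominated convergence. Strong continuity $P_t f\to f$ as $t\downarrow 0$ follows from continuity in probability of $X_t$ combined with the uniform continuity of $f\in C_0$.

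Second, I would compute $Lf$ on Schwartz functions by Fourier analysis. For $f\in\SchwartzTF(\R^d)$, the L\'evy--Khintchine formula~\eqref{eq:LKF} yields $\Fourier[P_t f](\xi)=\exp(t\psi(\xi))\Fourier[f](\xi)$ where $\psi$ is the exponent appearing in~\eqref{eq:LKF}. Then
\[
  \Fourier\!\left[t^{-1}(P_t f-f)\right]\!(\xi) = \frac{\exp(t\psi(\xi))-1}{t}\,\Fourier[f](\xi) \longrightarrow \psi(\xi)\Fourier[f](\xi)
\]
in $L^1(\R^d)$, using the at most quadratic growth of $|\psi(\xi)|$ and the rapid decay of $\Fourier[f]$. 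Inverting the Fourier transform term by term recovers the Gaussian part, the drift part, and the compensated jump integral in~\eqref{eq:IG}.

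Third, I would extend the formula to $C^2_0(\R^d)$. For $f\in C^2_0$, split the jump integral in~\eqref{eq:IG} at $|y|=1$. On $D=\{|y|\leq 1\}$, the second-order Taylor expansion
\[
  f(x+y)-f(x)-\textstyle\sum_{j=1}^{d} y_j \diff{f}{x_j}(x) = \sum_{j,k} y_j y_k \integrall{0}{1}{(1-s)\, \difff{f}{x_j \partial x_k}{2}(x+sy)}{s}
\]
yields an integrand bounded by $C|y|^2\norm{D^2 f}_{C_b}$, which is $\nu$-integrable by~\eqref{eq:LevyMeasure}. On $\R^d\setminus D$ the integrand is bounded by $2\norm{f}_\infty$ and $\nu(\R^d\setminus D)<\infty$. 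Combining these bounds with the strong continuity of $\SG$, one shows that $t^{-1}(P_t f-f)\to Lf$ in $C_0(\R^d)$-norm via dominated convergence, hence $C^2_0\subset \dom(L)$ with the stated formula.

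Finally, I would show that $C^\infty_c(\R^d)$ is a core. The cleanest route is the Hille--Yosida range condition: for $\lambda>0$ large, verify that $(\lambda I-L)(C^\infty_c(\R^d))$ is dense in $C_0(\R^d)$. Equivalently, given $f\in\dom(L)$ one can use a mollification $f_n:=\chi_n\,(f\ast\rho_n)$ with smooth cutoffs $\chi_n$ and a mollifier $\rho_n$; the already established integro-differential formula for $Lf_n$, together with translation invariance of $L$ under convolution, then gives $f_n\to f$ and $Lf_n\to Lf$ in $C_0(\R^d)$. The principal difficulty will be extending the representation from $\SchwartzTF$ to $C^2_0$ while upgrading the convergence $t^{-1}(P_t f-f)\to Lf$ from pointwise to $C_0$-uniform: the singularity of the L\'evy measure at $y=0$ must be absorbed by the compensator $1_D(y)$ combined with a second-order Taylor bound on $f$, and the large-jump contribution must be controlled uniformly in $x$. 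Ensuring that these estimates remain valid in the sup norm, rather than merely in $L^p$ or pointwise, is the main technical point.
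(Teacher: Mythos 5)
You should first be aware that the paper offers no proof of this statement to compare against: Theorem~\ref{thm:IG} is imported verbatim from Sato's monograph \cite[Theorem 31.5]{Sato:1999} (Appendix~\ref{ap:LevyProcesses+Semigroups} announces itself as an excerpt of that book), and the result is used as a black box. So your proposal can only be measured against the standard textbook argument, and in outline it follows exactly that route: semigroup axioms, Fourier identification of $L$ on $\SchwartzTF(\R^d)$, Taylor estimates for the integro-differential expression, mollification and cutoff for the core. Steps 1 and 2 are sound, modulo sign bookkeeping: with the probabilists' convention used here one gets $\Fourier[P_tf](\xi)=e^{t\psi(-\xi)}\Fourier[f](\xi)$ rather than $e^{t\psi(\xi)}\Fourier[f](\xi)$, and the signs cancel again upon inversion, so the final formula is unaffected.

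The genuine gap is in your passage from $\SchwartzTF(\R^d)$ to $C^2_0(\R^d)$. You assert that for $f\in C^2_0$ one shows $t^{-1}(P_tf-f)\to Lf$ in sup norm ``via dominated convergence.'' Dominated convergence with respect to which measure? One has $t^{-1}(P_tf-f)(x)=t^{-1}\int_{\R^d}\big(f(x+y)-f(x)\big)\,\mu^t(\dy)$, and the measures $t^{-1}\mu^t$ vary with $t$; their compensated convergence to the triplet data $(A,\nu,\gamma)$ is essentially equivalent to the statement being proved, so as written this step is circular. The correct gluing mechanism is closedness of the generator: for $f\in C^2_0$ choose $f_n\in\SchwartzTF(\R^d)$ (mollify, then cut off) with $f_n\to f$, $Df_n\to Df$, $D^2f_n\to D^2f$ uniformly; your own Taylor bounds give
\[
\norm{L^{\#}(f_n-f)}_{L^\infty(\R^d)}\leq C\Big(\norm{f_n-f}_{L^\infty(\R^d)}+\norm{D(f_n-f)}_{L^\infty(\R^d)}+\norm{D^2(f_n-f)}_{L^\infty(\R^d)}\Big)\,,
\]
where $L^{\#}$ denotes the right-hand side of~\eqref{eq:IG}; hence $Lf_n=L^{\#}f_n\to L^{\#}f$ uniformly, and closedness of $L$ yields $f\in\mathcal{D}(L)$ with $Lf=L^{\#}f$. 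The same bound, together with the commutation $L(f\ast\rho)=(Lf)\ast\rho$ (valid because $P_t$ commutes with convolutions), then gives the core property --- but note that your single-index choice $f_n=\chi_n(f\ast\rho_n)$ is not justified: for $f\in\mathcal{D}(L)$ that is merely continuous, $\norm{D^2(f\ast\rho_n)}_{L^\infty(\R^d)}$ may blow up like the inverse square of the mollification parameter, so the cutoff error is not controlled unless you first fix the mollification parameter, let the cutoff radius tend to infinity, and only then refine the mollifier (a diagonal argument).
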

The semigroup $\{P_t\}$ on $C_0(\R^d)$ is called transition semigroup of $\{X_t\}$. 
\begin{remark} \label{remark:semigroup:differences}
  If the probability measure $\mu^t$ has a probability density function $K(\cdot ,t)$,
  then the transition semigroup $\{P_t\}$ of the L\'evy process $\{X_t\}$ satisfies
  \begin{align*}
    (P_t f)(x) 
      &= \int_{\R^d} P_t(x, \dy) f(y) = \\
      &= \int_{\R^d} \mu^t( \dy) f(x+y) = \int_{\R^d} f(x+y) K(y,t) \dy
    \intertext{and using the substitution $z=-y$} 
      &= \int_{\R^d} f(x-z) K(-z,t) \d[z] = (f\ast K(-\cdot ,t))(x) \,. 
  \end{align*}
  Thus we have to be careful,
  when we compare this transition semigroup with convolution semigroup generated by $\RieszFeller$.
\end{remark}

\subsection{Semigroup generated by a Riesz-Feller operator}
In the following, we collect the most important facts on Feller and Markov semigroups 
 from the books~\cite{Applebaum:2004} and~\cite{Jacob:2001}.
% Feller semigroup
$B_b(\R^d)$ is the space of bounded Borel measurable functions from $\R^d$ to $\R$,
 and is a Banach space with the supremum norm.
$C_0(\R^d)$ is the space of continuous functions from $\R^d$ to $\R$ that vanish as $\abs{x}\to\infty$.
\begin{defn}
A family $(S_t)_{t\geq 0}$ of linear operators $S_t: C_0(\R^d)\to C_0(\R^d)$ is a \emph{Feller semigroup}
 if it is a strongly continuous semigroup of contractions such that
\begin{enumerate}
%\item $S_0 = \Id$.
%\item $S_s S_t = S_{s+t}$ for all $s,t\geq 0$.
\item $f\geq 0 \quad \Rightarrow \quad S_t f\geq 0 \quad$ for all $t\geq 0$, $f\in C_0(\R^d)$.
%\item $S_t$ is a contraction, i.e. $\norm{S_t}\leq 1$ for all $t\geq 0$.
\item $S_t 1 = 1$ for all $t\geq 0$.
%\item $\lim_{t\to 0} \norm{S_t f-f} = 0$ for all $f\in C_0(\R^d)$.
\end{enumerate}
\end{defn}
The definition of Feller semigroups is not consistent throughout the literature.
\begin{defn}
Let $(\tau_a,a\in\R^d)$ be the translational group acting in $B_b(\R^d)$,
 so that $(\tau_a f)(x)=f(x-a)$ for all $a,x\in\R^d$, $f\in B_b(\R^d)$.
A semigroup $(S_t)_{t\geq 0}$ is called \emph{translational invariant},
 if $S_t \tau_a = \tau_a S_t$ for each $t\geq 0$, $a\in\R^d$.
\end{defn}
The Green functions $\Green$ are L\'{e}vy strictly $\alpha$-stable distributions due to Lemma~\ref{lem:SSPM},
 hence there exists a L\'evy process $(X_t)_{t\geq 0}$ (such that $P_{X_1}=\mu$),
 which is unique up to identity in law~\cite[Corollary 11.6]{Sato:1999}.
This L\'evy process $(X_t)_{t\geq 0}$ is a Feller process, 
 hence the associated transition semigroup $(P_t)_{t\geq 0}$ is a Feller semigroup.
This Feller semigroup is translational invariant due to~\cite[Theorem 3.3.1]{Applebaum:2004}
 and satisfies~\cite[Theorem 3.3.3]{Applebaum:2004},
 see also~\cite[Theorem 31.5]{Sato:1999}.
% L^p Markov semigroup
\begin{defn}[{\cite[Section 3.4]{Applebaum:2004}}] \label{def:MarkovSG}
We fix $1\leq p<\infty$
 and let $(S_t, t\geq 0)$ be a strongly continuous contraction semigroup of operators in $L^p(R^d)$.
We say that it is \emph{sub-Markovian} if $f\in L^p(R^d)$ and
 \[ 0\leq f\leq 1 \text{  a.e.} \quad \Rightarrow \quad 0\leq S_t f\leq 1 \text{  a.e.} \]
 for all $t\geq 0$.
Any semigroup on $L^p(R^d)$ can be restricted to the dense subspace $C_c(R^d)$.
If this restriction can then be extended to a semigroup on $B_b(R^d)$
 that satisfies $S_t 1 = 1$ then the semigroup is said to be \emph{conservative}.
A semigroup that is both sub-Markovian and conservative is said to be $L^p$-Markov.
\end{defn}
\begin{prop} \label{prop:MarkovSG}
For $0<\alpha\leq 2$, $\abs{\theta} \leq \min\{\alpha,2-\alpha\}$ and $1\leq p<\infty$, 
 the Riesz-Feller operator~$\RieszFeller$ generates an $L^p$-Markov semigroup  
 \benn
  S_t: L^p(\R) \to L^p(\R)\,, \quad u_0 \mapsto S_t u_0 = \Green(\cdot,t)\ast u_0 \,. 
 \eenn
If $p=2$ then 
\[ (S_t f)(x) = \FourierInv[e^{t \psi^\alpha_\theta(\cdot)} \hat f(.)](x) \]
for all $t\geq 0$, $x\in\R^d$, $f\in L^2(\R^d)$.
Moreover,
 the infinitesimal generator $A$ of $S_t$ satisfies 
 \[ (A f)(x) = \FourierInv [\psi^\alpha_\theta \hat f](x) \]
 for all $f\in D_A$ whereat $D_A=H^\alpha(\R^n)$.
\end{prop}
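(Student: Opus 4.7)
The plan is to assemble four pieces: (i) the basic semigroup/contraction structure on $L^p(\R)$ established earlier; (ii) the sub-Markovian and conservative properties; (iii) the Fourier-multiplier representation on $L^2$; and (iv) the identification of the generator with domain $H^\alpha$.

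First, for $0<\alpha\leq 2$, $|\theta|\leq\min\{\alpha,2-\alpha\}$ with $|\theta|<1$, Proposition~\ref{prop:SFDE:semigroup} already yields a strongly continuous contraction semigroup $S_t u_0=\Green(\cdot,t)\ast u_0$ on $L^p(\R)$ for $1\leq p<\infty$, relying on the probability density properties \ref{K:SFDE:prop0}--\ref{K:SFDE:prop4} of $\Green$ from Lemma~\ref{lem:SSPM}. The extremal cases $(\alpha,\theta)=(1,\pm1)$ must be handled separately: there $\Green(\cdot,t)=\delta_{x\mp t}$ and $S_t$ is simply a translation, manifestly a strongly continuous contraction semigroup on each $L^p(\R)$. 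To see the $L^p$-Markov property I invoke the Remark following Proposition~\ref{prop:SFDE:semigroup}: \ref{K:SFDE:prop0} gives positivity, and together with \ref{K:SFDE:prop2} ($\|\Green(\cdot,t)\|_{L^1}=1$) and Young's inequality it yields $0\leq S_t f\leq 1$ a.e.\ whenever $0\leq f\leq 1$ a.e. For conservativity, restrict to $C_c(\R)$ and extend via convolution with $\Green(\cdot,t)\in L^1(\R)$ to a bounded operator on $B_b(\R)$; the identity $S_t\mathbf{1}=\Green(\cdot,t)\ast\mathbf{1}=\|\Green(\cdot,t)\|_{L^1}\mathbf{1}=\mathbf{1}$ is then immediate.

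Second, for $p=2$ the convolution becomes a Fourier multiplication: the very definition $\Green(\cdot,t)=\FourierInv[\exp(t\psi^\alpha_\theta(\cdot))]$ in~\eqref{eq:Green} together with the convolution theorem yields $\Fourier[S_t f](\xi)=e^{t\psi^\alpha_\theta(\xi)}\hat f(\xi)$ first for $f\in\SchwartzTF(\R)$ and then, by density and Plancherel, for every $f\in L^2(\R)$, so that $(S_tf)(x)=\FourierInv[e^{t\psi^\alpha_\theta}\hat f](x)$ in the $L^2$ sense.

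Third, for the generator: given $f\in H^\alpha(\R)$, set $g:=\FourierInv[\psi^\alpha_\theta\hat f]$, which lies in $L^2(\R)$ because $|\psi^\alpha_\theta(\xi)|=|\xi|^\alpha$. By Plancherel,
\[
\Norm{\tfrac{S_tf-f}{t}-g}_{L^2}^2
= \tfrac{1}{2\pi}\integral{\R}{\Abs{\tfrac{e^{t\psi^\alpha_\theta(\xi)}-1}{t}-\psi^\alpha_\theta(\xi)}^2 |\hat f(\xi)|^2}{\xi}.
\]
The integrand tends to $0$ pointwise as $t\searrow 0$ and admits a uniform bound $C|\xi|^{2\alpha}|\hat f(\xi)|^2\in L^1(\R)$ via the elementary estimate $|(e^z-1)/t-z/t|\leq 2|z|/t$ together with $\mathrm{Re}(\psi^\alpha_\theta)\leq 0$ to keep $|e^{t\psi^\alpha_\theta}|\leq 1$. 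Dominated convergence then gives $L^2$-convergence, proving $H^\alpha(\R)\subset D_A$ with the stated formula. For the reverse inclusion, if $f\in D_A$ then $(S_tf-f)/t\to Af$ in $L^2$; taking Fourier transforms and extracting a subsequence that converges a.e.\ forces $\Fourier[Af](\xi)=\psi^\alpha_\theta(\xi)\hat f(\xi)$, and since $\Fourier[Af]\in L^2$ this forces $|\xi|^\alpha\hat f\in L^2$, i.e.\ $f\in H^\alpha(\R)$.

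The most delicate step is the domain identification $D_A=H^\alpha(\R)$: producing a uniform $L^1$ dominant for the Fourier-side difference quotient requires splitting into $|t\psi^\alpha_\theta(\xi)|\leq 1$ (Taylor expansion gives an $O(|\psi^\alpha_\theta|^2 t)$ bound that is then dominated by $|\psi^\alpha_\theta|^2$) and $|t\psi^\alpha_\theta(\xi)|>1$ (use $|e^{t\psi^\alpha_\theta}-1|\leq 2$ together with $t^{-1}\leq|\psi^\alpha_\theta|$). All remaining pieces are straightforward bookkeeping from Lemma~\ref{lem:SSPM} and the standard convolution calculus.
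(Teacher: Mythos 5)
Your proof is correct, but it follows a genuinely different route from the paper's. The paper proves this proposition almost entirely by citation: it identifies $\Green$ as a strictly $\alpha$-stable distribution (Lemma~\ref{lem:SSPM}), invokes the existence of the associated L\'evy process \cite[Corollary 11.6]{Sato:1999}, quotes \cite[Theorem 3.4.2]{Applebaum:2004} for the statement that the transition semigroup of a L\'evy process is $L^p$-Markov, identifies that transition semigroup with the convolution semigroup $S_t$ via Remark~\ref{remark:semigroup:differences} (which requires a reflection of the kernel that one must handle with some care), and then cites \cite[Exercise 3.4.3; Theorem 3.4.4]{Applebaum:2004} for the $L^2$ multiplier representation and the generator with domain $H^\alpha$. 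You instead give a self-contained analytic argument: the contraction-semigroup structure from Proposition~\ref{prop:SFDE:semigroup}, positivity and mass one of $\Green$ for the sub-Markov and conservative properties, the convolution theorem plus Plancherel for the $L^2$ multiplier form, and an explicit dominated-convergence argument for $D_A=H^\alpha(\R)$. Your domination $\bigl|\tfrac{e^{t\psi}-1}{t}-\psi\bigr|\leq 2|\psi|$, valid because $\mathrm{Re}\,\psi^\alpha_\theta\leq 0$, is exactly the right bound to capture all of $H^\alpha$ (the naive Taylor bound would only give $H^{2\alpha}$), and your a.e.-subsequence argument for the reverse inclusion is standard and sound. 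Your approach has two concrete advantages: it makes the generator-domain identification verifiable without consulting the probabilistic literature, and it explicitly covers the extremal cases $(\alpha,\theta)=(1,\pm1)$, where $\Green(\cdot,t)$ is a Dirac mass and Proposition~\ref{prop:SFDE:semigroup} (which assumes $\abs{\theta}<1$) does not apply directly, whereas the statement of Proposition~\ref{prop:MarkovSG} does not exclude them; the paper's citation chain passes over this point silently. What the paper's route buys is brevity and a direct link to the probabilistic framework (Feller semigroups, cores, etc.) that it exploits again in Theorem~\ref{thm:IG} and the surrounding discussion.
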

\begin{proof}
The Green functions $\Green$ are L\'{e}vy strictly $\alpha$-stable distribution due to Lemma~\ref{lem:SSPM},
 hence there exists a L\'evy process $(X_t)_{t\geq 0}$ (such that $P_{X_1}=\mu$),
 which is unique up to identity in law~\cite[Corollary 11.6]{Sato:1999}.
This L\'evy process $(X_t)_{t\geq 0}$ defines a transition semigroup $(P_t)_{t\geq 0}$,
 which is an $L^p$-Markov semigroup for $1\leq p<\infty$ due to~\cite[Theorem 3.4.2]{Applebaum:2004}.
This transition semigroup $(P_t)_{t\geq 0}$ can be identified with the convolution semigroup $(S_t)_{t\geq 0}$,
 see also Remark~\ref{remark:semigroup:differences}. 
Moreover, for $p=2$ the additional properties are discussed in~\cite[Exercise 3.4.3]{Applebaum:2004}
 and~\cite[Theorem 3.4.4]{Applebaum:2004}.
\end{proof}

Due to Theorem~\ref{thm:RieszFeller:extension},
 a non-degenerate Riesz-Feller operator $\RieszFeller$ operator can be extended to a linear operator from $C^2_b(\R)$ to $C_b(\R)$.
Theorem~\ref{thm:IG} states that a non-degenerate Riesz-Feller operator $\RieszFeller$ is the generator of a Feller semigroup,
 which we extend to a semigroup on $B_b(\R)$ ($L^\infty(\R)$) in Theorem~\ref{thm:Bb:semigroup}.
This extended semigroup is not strongly continuous,
 since $\Green$ is a continuous probability density,
 see also the discussion in~\cite[page 427 ff.]{Jacob:2001}.
A priori it is not clear,
 how the extension of the semigroup $(S_t)_{t\geq 0}$
 and the extension of the infinitesimal generator $\RieszFeller$ are related.
This issue is discussed in~\cite{Schilling:1998}, see also~\cite[Section 4.8]{Jacob:2001}.

Specifically, if the extended semigroup $\tilde S_t$ is not strongly continuous on $B_b(\R)$,
 then the infinitesimal generator cannot be defined as $\tilde A u = \lim_{t\to 0} \frac{\tilde S_t u - u}{t}$,
 where the limit is understood in the strong sense on $B_b(\R)$.
However, the following result is true.
\begin{lem}[{\cite[Lemma 4.8.7]{Jacob:2001}}]
Let $(S_t)_{t\geq 0}$ be a Feller semigroup with extension $(\tilde S_t)_{t\geq 0}$ onto the space $B_b(\R)$.
Then we have for all $u\in C_b(\R)$
\[ \lim_{t\to 0} \tilde S_t u(x) = u(x) \]
uniformly on compact sets $K\subset\R$.
\end{lem}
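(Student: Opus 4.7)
\medskip

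\noindent\textbf{Proof plan.}
The plan is to reduce the convergence for a general $u\in C_b(\R)$ to the strong continuity of $(S_t)_{t\geq 0}$ on $C_0(\R)$ by a cutoff argument, exploiting the conservativity $\tilde S_t\mathbf{1}=\mathbf{1}$ built into the definition of a Feller semigroup. Throughout, the extension $\tilde S_t$ is understood to be positivity preserving, conservative, and to agree with $S_t$ on $C_0(\R)$; equivalently, $\tilde S_t u(x)=\int_\R u(y)\,P_t(x,\dy)$ for the associated transition kernel $P_t(x,\cdot)$.

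\smallskip

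\noindent\textbf{Step 1 (setup and decomposition).}
First, fix a compact set $K\subset\R$ and $\varepsilon>0$, and pick a cutoff function $\chi\in C_c(\R)$ with $0\leq\chi\leq 1$ and $\chi\equiv 1$ on an open neighborhood of $K$. For $u\in C_b(\R)$ decompose
\[
    u \;=\; u_1 + u_2\,, \qquad u_1:= u\,\chi\in C_c(\R)\subset C_0(\R)\,, \qquad u_2:=u\,(1-\chi)\in C_b(\R)\,,
\]
so that $u_2\equiv 0$ on $K$. For every $x\in K$ this yields the pointwise identity
\[
    \tilde S_t u(x) - u(x)
       \;=\; \bigl[\tilde S_t u_1(x) - u_1(x)\bigr] \;+\; \tilde S_t u_2(x)\,,
\]
since $u(x)=u_1(x)$ on $K$.

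\smallskip

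\noindent\textbf{Step 2 (control of the $C_0$-piece).}
Because $u_1\in C_0(\R)$ and $(S_t)_{t\geq 0}$ is strongly continuous on $C_0(\R)$, we have
\[
    \sup_{x\in\R}\bigl|\tilde S_t u_1(x)-u_1(x)\bigr|=\|S_t u_1-u_1\|_{C_0(\R)} \xrightarrow[t\to 0]{} 0\,,
\]
so in particular this term tends to $0$ uniformly on $K$.

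\smallskip

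\noindent\textbf{Step 3 (control of the far-field piece via conservativity).}
Since $|u_2|\leq\|u\|_\infty(1-\chi)$ and $\tilde S_t$ is positivity preserving,
\[
    \bigl|\tilde S_t u_2(x)\bigr| \;\leq\; \|u\|_\infty\,\tilde S_t(1-\chi)(x) \;=\; \|u\|_\infty\bigl(1-\tilde S_t\chi(x)\bigr)\,,
\]
where the last equality uses conservativity $\tilde S_t \mathbf{1}=\mathbf{1}$ and linearity. Now $\chi\in C_c(\R)\subset C_0(\R)$, so strong continuity on $C_0(\R)$ again yields $\|S_t\chi-\chi\|_\infty\to 0$ as $t\to 0$. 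Since $\chi(x)=1$ for $x\in K$, it follows that
\[
    \sup_{x\in K}\bigl(1-\tilde S_t\chi(x)\bigr)
       \;=\;\sup_{x\in K}\bigl(\chi(x)-\tilde S_t\chi(x)\bigr)
       \;\leq\;\|S_t\chi-\chi\|_{C_0(\R)} \xrightarrow[t\to 0]{} 0\,.
\]

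\smallskip

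\noindent\textbf{Step 4 (conclusion).}
Combining Steps 1--3 gives
\[
    \sup_{x\in K}\bigl|\tilde S_t u(x)-u(x)\bigr|
       \;\leq\; \|S_t u_1-u_1\|_{C_0(\R)} + \|u\|_\infty\,\|S_t\chi-\chi\|_{C_0(\R)}
       \xrightarrow[t\to 0]{} 0\,,
\]
which is the claimed uniform convergence on the compact set $K$.

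\smallskip

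\noindent\textbf{Main obstacle.}
There is no hard analytic obstacle; the only subtle point is that $\tilde S_t$ is generally not strongly continuous on $B_b(\R)$ (or on $C_b(\R)$), so one cannot directly appeal to the $C_0(\R)$ strong continuity for $u\in C_b(\R)$. The trick is exactly the reduction via $1-\chi=\mathbf{1}-\chi$ together with conservativity, which transfers the analysis of the non-$C_0$ remainder $1-\chi$ to the compactly supported function $\chi$, where strong continuity applies.
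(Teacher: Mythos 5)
The paper never proves this lemma: it appears in Appendix~\ref{ap:LevyProcesses+Semigroups}, which is explicitly a verbatim excerpt from Sato's and Jacob's books, and the statement is simply quoted as \cite[Lemma 4.8.7]{Jacob:2001}. So there is no internal proof to compare against; your argument stands or falls on its own, and it is correct. The cutoff decomposition $u = u\chi + u(1-\chi)$, strong continuity on $C_0(\R)$ for the compactly supported piece, and the bound $|\tilde S_t(u(1-\chi))| \leq \|u\|_\infty(1-\tilde S_t\chi)$ for the remainder constitute the standard proof of this fact, and each step is justified. The only point worth making explicit is the one you flag yourself: positivity and conservativity of the extension $\tilde S_t$ are not free, but in this paper's setting they are built in --- the definition of a Feller semigroup given just above the lemma includes positivity preservation and $S_t 1 = 1$ (a condition that only makes sense for the kernel extension, since $\mathbf{1}\notin C_0(\R)$), and the extension to $B_b(\R)$ is precisely the one given by the transition kernels $P_t(x,\cdot)$, which inherits positivity and total mass one. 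Note also that conservativity is genuinely needed: without $\tilde S_t\mathbf{1}=\mathbf{1}$ your Step 3 collapses, since $\tilde S_t(1-\chi)$ could stay bounded away from $0$ even on $K$; so your identification of this as the crux, rather than a routine detail, is accurate.
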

\begin{defn}[{\cite[Definition 4.8.6]{Jacob:2001}}]
A Feller semigroup $(S_t)_{t\geq 0}$ is called a \emph{strong Feller semigroup}
 if for all $t>0$ the operator $\tilde S_t$ maps $B_b(\R)$ into $C_b(\R)$.

We call $(S_t)_{t\geq 0}$ a \emph{$C_b$-Feller semigroup}
 if for each $t\geq 0$ the restriction of $\tilde S_t$ to $C_b(\R)$ maps $C_b(\R)$ into itself.  
\end{defn}
\cite[Example 4.8.21]{Jacob:2001} shows that the Feller semigroup $(S_t)_{t\geq 0}$ is a strong Feller semigroup.
\cite[Example 4.8.26]{Jacob:2001} establishes a relation between the $C_b$-extension of $\RieszFeller$ 
 and the $B_b$-extension of the strong Feller semigroup.

\begin{lem}[{\cite[Lemma 4.8.12]{Jacob:2001}}]
Let $\SG$ be a $\Cb$-Feller semigroup on $\Co$ with extension $\SGextension$ on $\Bb$.
For every $u\in\Cb$ there exists at most one element $g\in\Cb$ such that 
 \[ \tilde S_t u(x) - u(x) = \integrall{0}{t}{ (\tilde S_s g)(x) }{s} \]
 holds for all $t>0$ and $x\in\R^n$.
Moreover, we have 
 \[ g(x) = \lim_{t\to 0} \frac{\tilde S_t u(x) - u(x)}{t} \]
 uniformly on compact sets.
\end{lem}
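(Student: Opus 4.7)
The plan is to derive the lemma from exactly one prior ingredient, namely the preceding Lemma 4.8.7 of Jacob (restated in the excerpt): for every $h\in C_b(\R^n)$ one has $\tilde S_s h(x)\to h(x)$ as $s\to 0^+$, uniformly on compact subsets of $\R^n$. With this in hand, the cleanest route is to establish the explicit limit formula for $g$ first and read off uniqueness as an immediate consequence, rather than proving the two claims separately.

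First I would prove the representation $g(x)=\lim_{t\to 0^+} t^{-1}(\tilde S_t u(x)-u(x))$ uniformly on compacta. Fix $u\in C_b$ and $g\in C_b$ satisfying the stated integral identity. Dividing both sides by $t>0$ and subtracting $g(x)$ gives the exact identity
\[
\frac{\tilde S_t u(x)-u(x)}{t}-g(x)=\frac{1}{t}\int_0^t\bigl(\tilde S_s g(x)-g(x)\bigr)\,ds.
\]
Now fix a compact set $K\subset\R^n$ and $\varepsilon>0$. Because $g\in C_b$, the cited Lemma 4.8.7 furnishes a $\delta=\delta(K,\varepsilon)>0$ such that $\sup_{x\in K}|\tilde S_s g(x)-g(x)|<\varepsilon$ for all $s\in(0,\delta]$. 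For any $t\in(0,\delta]$ the triangle inequality applied inside the averaged integral then yields
\[
\sup_{x\in K}\Abs{\frac{\tilde S_t u(x)-u(x)}{t}-g(x)}\leq\frac{1}{t}\int_0^t\sup_{x\in K}|\tilde S_s g(x)-g(x)|\,ds\leq\varepsilon,
\]
which is the desired uniform convergence on $K$.

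Uniqueness then follows at once. If $g_1,g_2\in C_b$ both satisfy $\tilde S_t u(x)-u(x)=\int_0^t\tilde S_s g_i(x)\,ds$ for all $t>0$, $x\in\R^n$, the first step applied to each gives, for every $x\in\R^n$,
\[
g_1(x)=\lim_{t\to 0^+}\frac{\tilde S_t u(x)-u(x)}{t}=g_2(x),
\]
so $g_1\equiv g_2$ in $C_b$.

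I do not expect a serious obstacle: the only delicate point is verifying that uniformity in $s$ on $K$ transfers to uniformity of the Cesaro-type average in $t$ on $K$, and this is handled by the straightforward triangle inequality above. The one thing to be careful about is that Lemma 4.8.7 is stated precisely for elements of $C_b$, and we are invoking it on $g\in C_b$ (supplied by hypothesis), not on $u$ directly; no stronger continuity of the extended semigroup is needed, which is exactly why working with a $C_b$-Feller semigroup suffices.
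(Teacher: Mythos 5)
Your proposal is correct: the identity $\frac{\tilde S_t u(x)-u(x)}{t}-g(x)=\frac{1}{t}\int_0^t(\tilde S_s g(x)-g(x))\,ds$ combined with Lemma 4.8.7 applied to $g\in\Cb$ gives the uniform-on-compacta limit formula, and uniqueness is an immediate corollary. The paper itself states this lemma as a citation of \cite[Lemma 4.8.12]{Jacob:2001} without reproducing a proof, and your argument is precisely the standard one behind that reference, so there is nothing to add.
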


\begin{defn}[{\cite[Definition 4.8.14]{Jacob:2001}}]
Let $\SG$ be a $\Cb$-Feller semigroup with generator $(A,D(A))$ on $\Co$.
Further let $\SGextension$ be the extension of $\SG$ to $\Bb$.
The $\Cb$-extension of $(A,D(A))$ or the $\Cb$-generator of $\SGextension$
 is the operator $(\tilde A,D(\tilde A))$ defined by
\[ D(\tilde A):=
    \Set{ u\in\Cb }{ \text{$\lim_{t\to 0} \frac{\tilde S_t u(x) - u(x)}{t}$ exists uniformly on compact sets} } 
\]
and 
\[ \tilde A u(x):= \lim_{t\to 0} \frac{\tilde S_t u(x) - u(x)}{t}\,, \qquad u\in D(\tilde A)\,. \]
\end{defn}

\begin{remark}[{\cite[Remark 4.8.15]{Jacob:2001}}]
Clearly $(\tilde A,D(\tilde A))$ is an extension of $(A,D(A))$,
 i.e. $D(A)\subset D(\tilde A)$ and $\tilde A|_{D(A)} = A$.
Moreover, typical relations between $(A,D(A))$ and $\SG$
 do also hold for $(\tilde A,D(\tilde A))$ and $(\tilde S_t|_{\Cb})_{t\geq 0}$.
In particular, we have for $u\in D(\tilde A)$ that $\tilde S_t u\in D(\tilde A)$ and
 \[ \Diff{}{t} \tilde S_t u = \tilde A \tilde S_t u = \tilde S_t \tilde A u\,. \]
For $u\in\Cb$ and $t\geq 0$
 it follows that $\integrall{0}{t}{\tilde S_s u}{s} \in D(\tilde A)$ and 
 \[ \tilde S_t u - u = \tilde A \integrall{0}{t}{ \tilde S_s u }{s}\,, \]
 as well as 
 \[ \tilde S_t u - u = \integrall{0}{t}{ \tilde A \tilde S_s u }{s} = \integrall{0}{t}{ \tilde S_s \tilde A u }{s} \]
 for $u\in D(\tilde A)$.
\end{remark}

\bibliographystyle{plain}
\bibliography{AK}

\end{document}